\newtheorem{thm}{\bf Theorem}[section]
\newtheorem{lem}[thm]{\bf Lemma}
\newtheorem{cor}[thm]{\bf Corollary}
\newtheorem{prop}[thm]{\bf Proposition}
\theoremstyle{definition}
\newtheorem{defn}[thm]{\bf Definition}
\theoremstyle{remark}
\newtheorem{rem}[thm]{\bf Remark}
\newtheorem{ass}[thm]{\bf Assumption}
\newtheorem{prob}[thm]{\bf Problem}
\newtheorem{ex}[thm]{\bf Example}
\newtheorem{ques}[thm]{\bf Question}
\newcommand{\A}{\mathcal A}
\newcommand{\B}{\mathcal B}
\newcommand{\G}{\mathcal G}
\newcommand{\su}{\mathfrak{su}(2)}
\newcommand{\bb}{\mathbb}
\newcommand{\cs}{\mathit{cs}}
\def\ri{\rightarrow}
\newcommand{\pr}{\operatorname{pr}}
\newcommand{\al}{\alpha}
\def\Om{\Omega}
\def\cal{\mathcal}
\def\mdeg{\operatorname{mdeg}}
\newcommand{\wt}{\widetilde}
\newcommand{\R}{\mathbb R}
\newcommand{\Z}{\mathbb Z}
\def\q{{\mathbb Q}}
\newcommand{\co}{\mathbb C}
\def\bb{\mathbb}
\def\ker{\operatorname{Ker}}
\def\im{\operatorname{Im}}
\def\ad{\operatorname{ad}}
\def\ind{\operatorname{ind}}
\def\rank{\operatorname{Rank}}
\def\grad{\operatorname{grad}}
\def\End{\operatorname{End}}
\def\hol{\operatorname{Hol}}
\def\aut{\operatorname{Aut}}
\def\id{\text{Id}}
\def\inner<#1>{\langle #1 \rangle}
\def\:{\colon}
\def\trace{\operatorname{Tr}}
\def\hom{\operatorname{Hom}}
\def\dis{\displaystyle}
\begin{document}

\title{Seifert hypersurfaces of 2-knots and Chern-Simons functional}

\author{Masaki Taniguchi}
\date{}
\setcounter{tocdepth}{2}

\maketitle

\begin{abstract}
For a given smooth $2$-knot in $S^4$, we relate the existence of a smooth Seifert hypersurface of a certain class to the existence of irreducible $ SU(2)$-representations of its knot group. For example, we see that any smooth $2$-knot having the Poincar\'e homology $3$-sphere as a Seifert hypersurface has at least four irreducible $SU(2)$-representations of its knot group. This result is false in the topological category. The proof uses a quantitative formulation of instanton Floer homology. Using similar techniques, we also obtain similar results about codimension-$1$ embeddings of homology $3$-spheres into closed definite $4$-manifolds and a fixed point type theorem for instanton Floer homology.

\end{abstract}

\tableofcontents

\section{Introduction}\label{Introduction}
\subsection{Chern-Simons functional for oriented  2-knots}\label{Seifert}
A $2$-knot is a smooth embedding from $S^2$ into $S^4$. The classification problem of isotopy classes of $2$-knots has been studied since the introduction of the subject by Artin in 1925(\cite{Ar25}). There are several diagrammatic approaches to the study of 2-knots including motion pictures (\cite{Fo62}), surface knot diagrams (\cite{Ro98}), and chart diagrams (\cite{K17}), as well as invariants of 2-knots, including (twisted) Alexander polynomials (\cite{Al28}) and quandle cocycle invariants (\cite{K17}).   We focus, instead, on {\it Seifert hypersurfaces} of $2$-knots. 
\begin{defn}Let $K$ be an oriented $2$-knot in $S^4$. We call a closed oriented connected $3$-manifold $Y$ a (smooth) {\it Seifert hypersurface of} $K$ if there exists a smooth embedding $f: Y \setminus B^3 \to S^4$ such that $f|_{\partial ( Y \setminus B^3)} =K$ as oriented manifolds, where $B^3$ is a small open $3$-ball.  
\end{defn}
We consider the following problem: what are the Seifert hypersurfaces for a given $2$-knot? For a given $1$-dimensional knot $k$ in $S^3$, topological types of Seifert surfaces are determined by the Seifert genus $g(k)$ of $k$. In \cite{OS04}, Ozsv\'{a}th and Szab\'{o} proved that the Seifert genus of a 1-knot can be computed from its knot Floer homology. However, in the case of $2$-knots, the detection of topological types of Seifert hypersurfaces remains an open problem even for the unknot. One difficulty comes from the difference between smooth and topological Seifert hypersurfaces. For example, the Poincar\'e homology $3$-sphere is a Seifert hypersurface of the unknot in the topological category but not in the smooth category. Our main result relates the existence of smooth Seifert hypersurfaces of a certain class to the existence of irreducible $SU(2)$-representations of its knot group. The main result is proved by using a {\it quantitative formulation} of instanton Floer homology. 
In order to state our main result, we introduce maps \footnote{The invariants $\{ \cs_{K,j} \}$ can be defined for every oriented null-homologous $2$-knot $K$ embedded into a fixed closed oriented $4$-manifold $X$. See \cref{general cs_K}.} 
 \[
 \cs_{K,j} : R(K,j):= \hom (G_j (K), SU(2)) / SU(2)  \to (0,1], 
 \]
 where the group $G_j (K)$ is the kernel of the composite homomorphism
\begin{align}\label{psij}
\psi_j:  G(K):= \pi_1(S^4 \setminus K)  \xrightarrow{\text{Ab}} H_1(S^4 \setminus K; \Z) \cong  \Z \xrightarrow{\mod j } \Z/ j\Z
\end{align}
and the action of $SU(2)$ on $\hom (G_j (K), SU(2))$ is given by the conjugation. The map $\cs_{K,j}$ is an analog of the Chern-Simons functional; see \cref{cs for k}. 
The set $\im \cs_{K,j } \subset (0,1]$ is finite and is an invariant of the pair $(K,j)$.
 First, we provide several fundamental properties of $\{\cs_{K,j} \colon R(K,j) \to (0,1]\}_{j \in \Z_{>0}}$ containing a relationship to the Chern-Simons functionals of Seifert hypersurfaces:  
 \begin{prop} \label{2-knot1} The functionals $\{\cs_{K,j} \colon R(K,j) \to (0,1]\}_{j \in \Z_{>0}}$ satisfy the following conditions: 
 \begin{enumerate}
 \item Let $Y$ be a Seifert hypersurface of a given oriented $2$-knot $K$. Then,
\[
\im\cs_{K,j} \subset \im\cs_Y
\]
holds for any $j \in \Z_{>0}$, where $\im\cs_Y$ is given by
 \[
\im\cs_Y := \set  {\cs_Y(\rho)  | \rho: \text{a flat $SU(2)$-connection on $Y$} }\cap (0,1] 
 \]
and $\cs_Y$ is the $SU(2)$-Chern-Simons functional for $Y$. Moreover, if $Y$ is a Seifert $3$-manifold, then 
\[
\im\cs_{K,j} \subset \q \cap (0,1].
\]
\item For any $j \in \Z_{>0}$, 
\[
 \im\cs_{K_1,j} \cup \im\cs_{K_2,j}  \subset \im\cs_{K_1\# K_2,j}.
\]
\item For all positive integers $ m$ and $j$,
\[
\im\cs_{K,j}\subset  \im\cs_{K, mj}. 
\] 
\item The relation between $\im \cs_{K, j}$ and $\im \cs_{-K, j}$ is given by 
\[
\im \cs_{ K, j } = \set { 1-r  | r \in   \im\cs_{-K,j} \cap (0,1)  } \cup \{1\}. 
\]
If $ K$ is reversible ( i.e. $K$ is isotopic to $-K$), then $1-r \in  \im\cs_{K,j}$  for any $r \in \im\cs_{K,j} \cap (0,1)$.
\item Suppose $\im\cs_{K,j} \cap (0,1)$ is non-empty for $j \in \Z_{>0}$. Then there exist $ 2\#(\im\cs_{K,j} \cap (0,1))$ $SU(2)$-irreducible representations of $G_j(K)$.
 \end{enumerate}
 \end{prop}
 We first calculate $\im \cs_{ K, j }$ for {\it ribbon 2-knots}. Here a ribbon $2$-knot means a $2$-knot obtained as the boundary of the union of disjoint embedded 3-disks in $\R^4$ with some number of disjoint $3$-dimensional $1$-handles attached. (For more details, see \cite{Y69} and \cite[Subsection 5.6]{K17}. ) 
Property (1) in \cref{2-knot1} implies the following.  
 \begin{cor}\label{ribbon}If $K$ is a ribbon 2-knot, then $\im\cs_{K,j}= \im\cs_{U,j} =\{1\}$ for any $j\in \Z_{>0}$, where $U$ is the 2-unknot.
 \end{cor}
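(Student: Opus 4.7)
The plan is to reduce the corollary to property (1) of \cref{2-knot1} by exhibiting a standard Seifert hypersurface for a ribbon $2$-knot. The input I would rely on is the classical fact that every ribbon $2$-knot $K$ admits a Seifert hypersurface of the form $Y_n := \#^n(S^1 \times S^2)$ for some $n \geq 0$, obtained by resolving the ribbon self-intersections of a singular $3$-ball bounded by $K$ via $1$-handle surgeries. In particular the unknot $U$ is ribbon with $Y_0 = S^3$. Given this, the corollary reduces to computing $\im \cs_{Y_n}$.

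The core step is to show $\im \cs_{Y_n} = \{1\}$. The key point is that $Y_n$ bounds $W_n := \natural^n (S^1 \times D^3)$ and the inclusion $Y_n \hookrightarrow W_n$ induces an isomorphism $\pi_1(Y_n) \cong F_n \cong \pi_1(W_n)$. Since $SU(2)$-bundles on a $3$-manifold are trivial, any flat connection $A$ on $Y_n$ is described by a representation $\rho \colon F_n \to SU(2)$, which extends across $\pi_1(W_n) = F_n$ to a flat $SU(2)$-connection $\widetilde A$ on $W_n$. The standard relative formula
\[
\cs_{Y_n}(A) \equiv \fact \int_{W_n} \trace(F_{\widetilde A} \wedge F_{\widetilde A}) \pmod{\Z}
\]
then gives $\cs_{Y_n}(A) \equiv 0 \pmod{\Z}$, i.e., the value $1$ in the $(0,1]$ convention of the paper. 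Hence $\im \cs_{Y_n} = \{1\}$.

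Assembling: for a ribbon $K$ (and for $U$ with $n = 0$), \cref{2-knot1}(1) yields $\im \cs_{K,j} \subset \im \cs_{Y_n} = \{1\}$. The reverse inclusion $\{1\} \subset \im \cs_{K,j}$ should come from the trivial representation of $G_j(K)$, which sits in $R(K,j)$ with $\cs_{K,j}$-value $1$ (corresponding to Chern-Simons level $0$ modulo $\Z$). Putting the two inclusions together gives $\im \cs_{K,j} = \im \cs_{U,j} = \{1\}$.

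The only genuinely non-formal step is the initial ribbon-to-$\#^n(S^1 \times S^2)$ Seifert construction; the Chern-Simons vanishing is then forced by the $\pi_1$-isomorphism $Y_n \hookrightarrow W_n$ and the flatness of the extension $\widetilde A$. I therefore expect the proof in the paper to consist of citing (or briefly recalling) this ribbon Seifert surface and invoking the vanishing calculation outlined above.
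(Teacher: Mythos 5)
Your proposal is correct and follows essentially the same route as the paper: both cite Yanagawa's result that a ribbon $2$-knot has $\#^n(S^1\times S^2)$ as a Seifert hypersurface and then apply part (1) of \cref{2-knot1} to reduce to showing $\Lambda_{\#^n(S^1\times S^2)}\subset \Z$. The only difference is in that last computation — the paper deduces it from the connectedness of $R(\#^n(S^1\times S^2))$ together with local constancy of $\cs_Y$ on flat connections, whereas you extend each flat connection over $\natural^n(S^1\times D^3)$ and invoke the relative Chern--Simons formula; both arguments are valid and standard.
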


Next, we give calculations of $\{\cs_{K, j}\}_{j\in \Z_{>0}}$ for a certain class of twisted spun knots. 
 \begin{prop}\label{cal2}
 Let $T(p,q)$ be the $(p,q)$-torus knot, $M(p,q,r)$ the Montesinos knot of type $(p,q,r)$ for a pairwise relative prime tuple $(p,q,r)$ of positive integers, and $k(p/q)$ any $2$-bridge knot such that $\Sigma^2(k (p/q) ) = L(p,q)$, where $\Sigma^2(k)$ is the double branched cover of $k \subset S^3$.
\begin{enumerate}
\item For any $m \in \Z_{>0}$ and $j \in  \Z_{>0}$, 
 \[
\im\cs_{K(T(p,q),m), j} =\im\cs_{\Sigma(p,q,m)}  , 
 \]
where $K(k, m)$ is the twisted $m$-spun knot of the knot $k$. For the definition of the twisted $m$-spun knot, see \cite[Subsection 6.1]{K17}. 
  \item For any $j \in  \Z_{>0}$, 
 \[
\im\cs_{K(M(p,q,r),2), j} = \im\cs_{\Sigma(p,q,r)}  . 
 \]
 \item If $p$ is odd and satisfies the condition
 \[
 \left\{ s \in \set{ 2, \cdots , p-2} \middle | \frac{s^2-1} {p} \in \Z \right\} = \emptyset, 
 \]
  then, for any $j \in  \Z_{>0}$, 
 \[
 \im\cs_{K(k (p/q) ,2), j} = \left\{ -\frac{n^2r}{p}  \mod 1  \middle| 0 \leq n \leq \left\lceil  \frac{p}{2} \right\rceil  \right\},
 \] 
 where $r$ is any integer satisfying $qr\equiv -1 \mod p$ and $\lceil - \rceil$ is the ceiling function. 
  \end{enumerate}
 \end{prop}
 In \cite{FS90}, Fintushel-Stern gave an algorithm to compute $\im\cs_{\Sigma(p,q,r)} $ when $\Sigma(p,q,r)$ is a homology $3$-sphere. We give explicit calculations of $\im\cs_{K,j}$ for several $2$-knots in Yoshikawa's table in \cite{Y94} and twisted spun $2$-knots of $3_1$. 
 \begin{ex}We calculate $\im \cs_{K,j}$ for $8_1$, $10_1$ and $10_2$ in Yoshikawa's table (\cite{Y94}) and $k$-twisted spun $2$-knots of $3_1$.
 \begin{itemize}
 \item The $2$-knots $8_1$ and $10_1$ are spun $2$-knots $K(3_1, 0)$ and $K(4_1,0)$. It is known that spun knots $K(k,0)$ are ribbon. \footnote{It is known that $K(k,0)$ admits a surface diagram without triple intersections. For further details, see \cite{Sat02}. Such a 2-knot is known to be ribbon (\cite[Subsection 4.5]{K17}). } Therefore, $\im\cs_{8_1,j} = \im\cs_{10_1, j}= \{ 1\}$ for any $j\in \Z_{>0}$.
 \item The $2$-knot $10_2$ is the $2$-twisted spun knot $K(3_1, 2)$ of $3_1$. The $2$-knot $K(3_1, 2)$ has $\Sigma(2,2,3) = L(3,1)$ as a Seifert hypersurface so we have $\im\cs_{10_2, j} = \left\{ \frac{2}{3}, 1\right\}$ for any $j \in \Z_{>0}$ (\cite{KiKl90}).
 \item Since $3_1 =T(2,3)$, $-K(3_1, 6k-1)$ has $-\Sigma(2,3,6k-1)$ as a Seifert hypersurface for each $k\in \Z_{>0}$. Thus,
 \[
 \im \cs_{-K(3_1, 6k-1)}  = \left\{  \frac{12( 3k^2 - k + 3 l^2) +1
 }{24 (6k-1)}  \mod 1  \middle |\  l \in \{k, \cdots 5k-1\} \cap {2 \Z} \right\} \cup \{1\}.
 \]
 For example, $\im cs_{-K(3_1,5)} = \{ 1/120, 49/120, 1\}$.
 \end{itemize}
 \end{ex}
 
  In order to state the main theorem, we also need two kinds of Floer theoretic invariants: 
 \begin{itemize}
\item[(i)] In \cite{D18}, Daemi introduced a sequence of invariants $\Gamma_Y(k) \in [0, \infty]$ of an oriented homology $3$-sphere $Y$ parametrized by $k \in \Z$. In \cite{NST19}, Nozaki, Sato and the author introduced similar invariants $r_s(Y)\in (0, \infty]$ of oriented homology 3-spheres parametrized by $s \in [-\infty, 0]$. These invariants are defined by using a quantitative formulation of instanton Floer homology. 

\item[(ii)]  For an oriented homology $3$-sphere $Y$, $s \in [-\infty, 0]$ and $k \in \Z_{>0}$, we introduce invariants \[
l^s_{Y} ,  l^k_Y \in \Z_{>0} \cup \{\infty\}\text{ and } l_Y \in \Z_{>0} \cup \{\infty\}
\]
which satisfy the inequality 
 \[
 \max_{s \in  [-\infty, 0], k \in \Z_{>0}} \{l^s_Y ,  l^k_Y\} \leq l_Y.
 \]
If $Y$ is a Seifert homology $3$-sphere, $l_Y$ coincides with $2|\lambda (Y) |$, where $\lambda(Y)$ is the Casson invariant of $Y$.
 \end{itemize}
In terms of $r_s(Y)$, $\Gamma_Y(k)$, $l^s_Y$ and $l^k_Y$, our main result\footnote{\cref{2-knot2} can be generalized to any oriented $2$-knot $K$ embedded into a fixed closed negative definite 4-manifold $X$ with $0=[K] \in H_2(X; \Z)$.} is as follows. 
 \begin{thm}\label{2-knot2}
  Let $Y$ be an oriented homology $3$-sphere and $K$ an oriented $2$-knot.   \begin{enumerate}
\item 
   Suppose that $l^s_{Y}<\infty$ and $r_s(Y)<\infty$ for some $s \in [-\infty, 0]$.
If $Y$ is a Seifert hypersurface of $K$, then 
\[
 r_s(Y)-  \lfloor r_s(Y) \rfloor  \in \bigcup_{1 \leq j \leq l^s_{Y}} \im\cs_{K,j},  
\]
where  $\lfloor x  \rfloor := \begin{cases} \max \{ n \in \Z | n  \leq x\} \text{ if } x\notin \Z \\ 
x-1 \text{ if } x \in \Z \\
\end{cases}$. \footnote{Note that our convention of $\lfloor -  \rfloor $ is different from the usual floor function. }

\item Suppose that $l^k_{Y}<\infty$ and $\Gamma_{-Y}(k)<\infty$ for some $k \in \Z_{>0}$.
If $Y$ is a Seifert hypersurface of $K$, then 
\[
 \Gamma_{-Y} (k)- \lfloor  \Gamma_{-Y} (k)\rfloor   \in \bigcup_{1 \leq j \leq l^k_{Y}} \im\cs_{K,j} . 
\]
\end{enumerate}
 \end{thm}
 We also prove a sufficient condition for $l_Y< \infty$.
\begin{thm} \label{finiteness1}If the Chern-Simons functional of $Y$ is {\it Morse-Bott}\footnote{For the definition of the Morse-Bott property, see \cref{per}.}, then $l_Y<\infty$. 
\end{thm}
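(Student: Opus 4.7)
The plan is to exploit the Morse--Bott hypothesis to reduce the question to a finite combinatorial count of critical points of a generic perturbation of the Chern--Simons functional. Under the Morse--Bott assumption, the flat moduli space $R(Y)=\hom(\pi_1(Y),SU(2))/SU(2)$, which is the unperturbed critical set of $\cs_Y$ on $\A(Y)/\G(Y)$, decomposes into finitely many compact smooth components, each having non-degenerate normal Hessian. In particular, $\cs_Y$ takes only finitely many values on $R(Y)$, and each component is compact.

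Next, I would invoke a standard holonomy perturbation argument in the style of Floer--Taubes--Donaldson: for a generic arbitrarily small admissible perturbation $\pi$, the perturbed Chern--Simons functional $\cs_Y+\pi$ becomes Morse, and each compact Morse--Bott component of $R(Y)$ contributes only finitely many non-degenerate critical points (bounded, for small $\pi$, by the topology of the component). Consequently the critical set of $\cs_Y+\pi$ is a finite set, so the corresponding perturbed instanton Floer chain complex has finite rank.

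I would then translate finiteness of this perturbed critical set into finiteness of $l_Y$. From the definition of $l_Y$ (and its refinements $l^s_Y$, $l^k_Y$) as counts extracted from instanton Floer-theoretic data associated to $Y$, the rank of the perturbed chain complex provides an upper bound for $l_Y$; in the Seifert case this is consistent with the identification $l_Y=2|\lambda(Y)|$ via Taubes' formula for the Casson invariant. Thus the Morse--Bott assumption implies $l_Y<\infty$, which in turn dominates $l^s_Y$ and $l^k_Y$.

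The main obstacle is to choose a single perturbation $\pi$ that is small enough to preserve the qualitative Morse--Bott structure near every component of $R(Y)$, and at the same time is admissible for all the variants (the $s$-parameter and the $k$-parameter versions) used in the definitions of $l^s_Y$ and $l^k_Y$. Once this is handled, finiteness follows uniformly; technically this amounts to a careful compactness and transversality argument for the moduli spaces of perturbed instantons on $\R\times Y$, mirroring the setup already used to define $r_s(Y)$ and $\Gamma_Y(k)$ in \cite{NST19,D18}.
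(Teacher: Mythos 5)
Your overall strategy --- perturb each Morse--Bott component of the critical set to a Morse situation and count the resulting non-degenerate critical points --- is the same as the paper's. But there is a genuine gap at the step where you claim that a \emph{generic} small perturbation produces a number of critical points ``bounded, for small $\pi$, by the topology of the component.'' The Morse inequalities bound that count from \emph{below} by the topology of the component; they give no upper bound, and one can easily produce a sequence of admissible Morse perturbations $\pi_n$ with $\|\pi_n\|\to 0$ for which the number of critical points near a positive-dimensional component grows without bound (e.g.\ by inducing Morse functions with more and more critical points on an $S^2$-component). This matters because $l_Y$ is defined through $\sup_{n}\# R^*_{\pi_n}(Y)$ along a sequence $\{\pi_n\}$ with $\|\pi_n\|\to 0$: finiteness of the critical set for each fixed $\pi$ is not enough, and you must exhibit one specific sequence along which the count is \emph{uniformly} bounded.

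The paper closes exactly this gap in \cref{MB per}: it fixes a single Morse function $g_r$ on each critical component $C_r$, builds from it a ``Morse--Bott type'' holonomy perturbation whose restriction to a tubular neighborhood of $C_r$ is a cutoff pullback of $g_r$ through the holonomy embedding $\psi$, and then scales it by $t$. A splitting of the tangent bundle into low and high eigenspaces of the Hessian, together with the rescaled gradient section obtained by dividing the projected gradient by $t$ and the implicit function theorem, shows that for all sufficiently small $t>0$ the critical points of the perturbed functional near $C_r$ are in bijection with $\operatorname{Crit}(g_r)$ --- the same finite set for every $t$. A further small perturbation in the sense of \cite{SaWe08} then achieves regularity without changing the critical set, which is the other point your sketch leaves open. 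You would need to supply this construction (or an equivalent device producing a uniform count along a sequence $\pi_n\to 0$); once that is done, the passage to $l_Y<\infty$ is immediate from the finiteness of the set of critical values, and the remarks about $l_Y^s$, $l_Y^k$ and Taubes' theorem are not needed for this statement.
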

For example, the Chern-Simons functionals of finite connected sums of Seifert homology $3$-spheres are Morse-Bott. A sufficient condition for $r_s(Y)<\infty$ and $\Gamma_{-Y}(1)<\infty$ is given by $h(Y) <0$ (See \cite{D18}, \cite{NST19}), where $h(Y)$ is the Fr\o yshov invariant of $Y$ (\cite{Fr02}). \cref{2-knot2} gives a relation between Seifert hypersurfaces and $SU(2)$-representations of $G_j(K)$ in the following sense: 
   \begin{thm}\label{rep1}
 Let $Y$ be an oriented homology $3$-sphere and $K$ an oriented $2$-knot. Suppose $Y$ is a Seifert hypersurface of $K$.
 \begin{enumerate}
 \item  If $r_s(Y)<\infty$ and $l^s_Y<\infty$ for some $s \in [-\infty, 0]$ and $Y$ is a Seifert hypersurface of $K$, then there exists a positive integer $l$ with $l \leq l^s_Y$ such that there exists an irreducible representation 
 \[
 \rho\colon G_l(K)  \to SU(2).
 \]
 In particular, If  $l^s_Y=1$ and $r_s(Y)<\infty$ for some $s \in [-\infty, 0]$, then there exists an irreducible representation $\rho : G(K) \to SU(2)$. 
 \item  If $\Gamma_{-Y}(k)<\infty$ and $l^k_Y<\infty$ for some $ k \in \Z_{>0}$ and $Y$ is a Seifert hypersurface of $K$, then there exists a positive integer $l$ with $l \leq l^k_Y$ such that there exists an irreducible representation 
 \[
 \rho\colon G_l(K)  \to SU(2).
 \]
 In particular, if  { $\Gamma_{-Y}(k)<\infty$} and $l^k_{Y}=1$ for some $k \in \Z_{>0}$, then there exists an irreducible representation $\rho : G(K) \to SU(2)$. 
 \end{enumerate}
\end{thm}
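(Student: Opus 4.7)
The plan is to deduce \cref{rep1} directly from \cref{2-knot2} combined with part (5) of \cref{2-knot1}. The substantive Floer-theoretic content is already packaged into \cref{2-knot2}, so what is left is a short formal deduction identifying the correct index $j$ and verifying that the relevant image meets the open interval $(0,1)$.

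For part (1), I would invoke part (1) of \cref{2-knot2} under the hypotheses $r_s(Y)<\infty$, $l^s_Y<\infty$ and that $Y$ is a Seifert hypersurface of $K$, which gives
\[
r_s(Y) - \lfloor r_s(Y) \rfloor \;\in\; \bigcup_{1 \leq j \leq l^s_Y} \im \cs_{K,j}.
\]
Hence there is an integer $l$ with $1\leq l \leq l^s_Y$ such that $r_s(Y) - \lfloor r_s(Y) \rfloor \in \im \cs_{K,l}$. I would next observe that this value lies in the open interval $(0,1)$: the fractional part $r-\lfloor r\rfloor$ is in $[0,1)$ by definition of the floor function, while $\im\cs_{K,l}\subset(0,1]$ by construction, so the value $0$ is excluded. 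Consequently $\im \cs_{K,l} \cap (0,1)$ is non-empty, and part (5) of \cref{2-knot1} produces at least $2\,\#(\im \cs_{K,l} \cap (0,1)) \geq 2$ irreducible $SU(2)$-representations of $G_l(K)$. For the ``in particular'' clause, when $l^s_Y = 1$ one is forced to take $l=1$; since $\psi_1\colon G(K)\to \Z/\Z$ is the zero homomorphism, $G_1(K) = \ker\psi_1 = G(K)$, so the representation is defined on the full knot group.

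Part (2) is obtained by the same argument, replacing part (1) of \cref{2-knot2} by its part (2) and replacing $r_s(Y)$ and $l^s_Y$ throughout by $\Gamma_{-Y}(k)$ and $l^k_Y$. I expect this deduction to be essentially mechanical, since the Floer-theoretic work has been subsumed into \cref{2-knot2}; the only subtlety worth highlighting is the codomain convention for $\cs_{K,j}$, which is precisely what ensures that the fractional part delivered by \cref{2-knot2} automatically lies in the open interval $(0,1)$ where the hypothesis of part (5) of \cref{2-knot1} is met. No new geometric input is needed beyond the results already stated in the excerpt.
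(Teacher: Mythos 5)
Your proposal is correct and follows essentially the same route as the paper: the paper's proof likewise applies \cref{2-knot2} to conclude that $r_s(Y)$ (mod $1$) lies in $\im\cs_{K,j}\cap(0,1)$ for some $j\leq l^s_Y$, and then invokes \cref{existence of irred} (the content of part (5) of \cref{2-knot1}) to produce the irreducible representations of $G_j(K)$. Your explicit justification that the fractional part lands in the open interval $(0,1)$, and the observation that $G_1(K)=G(K)$ for the ``in particular'' clause, are small details the paper leaves implicit but are exactly right.
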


 As a corollary, we have the following result: 
\begin{cor}\label{rep3}Let $n$ be a positive integer. Then the knot group of any $2$-knot having $\Sigma(2,3,6n-1)$ as a Seifert hypersurface has at least two irreducible $SU(2)$-representations. Moreover, when $n=1$, the knot group admits at least four irreducible $SU(2)$-representations. 
\end{cor} 
Freedman \cite{F82} proved that for any homology $3$-sphere $Y$, there is a locally flat topological embedding from $Y$ into $S^4$. 
 This means that $Y$ can be realized as a Seifert hypersurface of the unknot when we admit locally flat topological embeddings in the definition of Seifert hypersurfaces. 
  Thus, \cref{rep3} is false for topological Seifert hypersurfaces.

\subsection{Embeddings of $3$-manifolds into negative definite $4$-manifolds}\label{Exi}
Existence of embeddings is a fundamental problem in differential topology. It is well-known that every orientable closed $3$-manifold can be embedded in $S^5$. However, the following problem is quite difficult in general. 
\begin{prob}
For a given $4$-manifold $X$ and $c \in H_3(X; \Z)$, which $3$-manifold $Y$ can be embedded in $X$ with $[Y]=c \in H_3(X; \Z)$ ? 
\end{prob}
This problem has been studied in several situations (\cite{H96}, \cite{GL83}, \cite{K88}, \cite{H09}, \cite{D15}). For example, if $p$ is an integer with $|p | >1$, then $L(p,q)$ cannot be embedded into $S^4$ (\cite{Ha38}).  
As another example, by Donaldson's theorem A (\cite{D83}), the Poincar\'e homology $3$-sphere cannot be smoothly embedded into $S^4$.  However, by Freedman's result (\cite{F82}), it does admit locally flat embedding into $S^4$. 
Thus we see that the smooth and locally flat topological embedding problems are different.  
Our main result relates the existence of embeddings of homology $3$-spheres $Y$ of a certain type into a definite $4$-manifold $X$ to the existence of irreducible representations $\pi_1(X) \to  SU(2)$. 
In order to state our main result, we recall from \cite{Ma18} the maps
\[
\cs_{X,c}^j : R(X_{j,c}):= \hom (\pi_1(X_{j,c}), SU(2)) /SU(2) \to (0,1]
\]
defined for an oriented closed connected $4$-manifold $X$ and a class $c\in H_3(X; \Z)$ having certain properties (see \cref{Preliminaries}) with parameter $j \in \Z_{>0}$, where $\{X_{j,c}\}$ is the $j$-fold cyclic covering space of $X$ corresponding to $c$. The functional $\cs_{X,c}^j$ is an analog of the Chern-Simons functional. For the precise definition, see \eqref{cs_YjX}. 
\begin{thm}\label{emb1}Let $Y$ be an oriented homology $3$-sphere and $X$ be a closed connected oriented negative definite $4$-manifold. 
Suppose that there exists a smooth embedding from $Y$ to $X$ with $0\neq [Y] \in H_3(X;\Z)$.  
\begin{enumerate}
\item If $r_s(Y)<\infty$ and $l_Y^s<\infty$ for some $s \in [-\infty, 0]$, then
\[
 r_s(Y)-  \lfloor r_s(Y) \rfloor  \in  \bigcup_{1\leq j \leq l^s_Y} \im\cs_{X, [Y]}^j.
\] 
\item If $\Gamma_{-Y}(k)<\infty$ and $l_Y^k<\infty$ for some $k \in \Z_{>0}$, then
\[
\Gamma_{-Y} (k)  - \lfloor \Gamma_{-Y}(k) \rfloor  \in  \bigcup_{1\leq j \leq l^k_Y} \im\cs_{X, [Y]}^j.
 \]
\end{enumerate}
If $[Y]=0$, then 
\[
\infty = r_s(Y)= r_s(-Y) =  \Gamma_{-Y}(k) =\Gamma_{Y}(k)
\]
for any $s \in [-\infty, 0]$ and $k \in \Z_{>0}$. 
\end{thm}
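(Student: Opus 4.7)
The plan is to prove \cref{emb1} by adapting the strategy that yields \cref{2-knot2}: the pair $(S^4, Y)$ arising from a Seifert hypersurface of a $2$-knot is replaced by the more general pair $(X, Y)$, and the role previously played by the vanishing intersection form of $S^4$ is now played by the negative definiteness of $X$. The geometric input is the same: a homology $3$-sphere $Y$ embedded in a closed oriented $4$-manifold carrying a preferred non-trivial class in $H_3$, with a corresponding family of cyclic coverings $X_{j,[Y]} \to X$ on which the Chern-Simons functional $\cs^j_{X,[Y]}$ is defined.

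First I would set up the cut-and-stack model of $X_{j,[Y]}$: cutting $X$ along (a representative of) $Y$ yields a compact $4$-manifold $W$ with $\partial W = Y \sqcup (-Y)$, and $X_{j,[Y]}$ is obtained by gluing $j$ cyclic copies of $W$. A flat $SU(2)$-connection on $X_{j,[Y]}$ restricts to flat connections on the $j$ internal copies of $Y$, and its Chern-Simons value, measured by $\cs^j_{X,[Y]}$, is determined up to integers by those restrictions and the relative Chern-Simons contributions on the $W$-pieces. Next I would carry out the instanton-theoretic step: finiteness of $r_s(Y)$ and $l_Y^s$ (resp.\ $\Gamma_{-Y}(k)$ and $l_Y^k$) guarantees a non-trivial class at filtration level at most $l_Y^s$ (resp.\ $l_Y^k$) in an appropriate instanton Floer-type complex of $Y$; running the ASD compactness and bubbling argument on end-periodic $4$-manifolds modeled on the negative definite cyclic covers of $X$, one extracts a non-trivial flat $SU(2)$-connection on $X_{j,[Y]}$ for some $1 \le j \le l_Y^s$ (resp.\ $l_Y^k$) whose Chern-Simons value reduces mod $1$ to $r_s(Y) - \lfloor r_s(Y) \rfloor$ (resp.\ $\Gamma_{-Y}(k) - \lfloor \Gamma_{-Y}(k) \rfloor$). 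Negative definiteness of $X$ is used here to control the reducible locus and to make the energy accounting that pins down the fractional part work as in the $S^4$ case.

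For the final assertion, if $[Y]=0$ then $Y$ bounds a codimension-zero submanifold $W \subset X$, and both $W$ and the closure of its complement in $X$ inherit negative definite intersection forms; by the definitions of $r_s$ and $\Gamma$ in \cite{NST19} and \cite{D18}, a homology $3$-sphere bounding a negative definite $4$-manifold has both invariants equal to $+\infty$, and applying this to $Y$ and $-Y$ gives all four infinities. The main obstacle will be in the instanton step: one must match the degree $j$ of the cyclic cover produced by the bubbling analysis with the length parameters $l_Y^s, l_Y^k$ from the preliminaries, and ensure that the surviving Floer class at the asserted filtration level is detected by a flat $SU(2)$-connection on $X_{j,[Y]}$ itself rather than by a broken trajectory that would force $j$ to exceed the stated bound. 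Handling this length bookkeeping, together with verifying that no spurious reducible contributions enter the energy count, is where the negative definiteness of $X$ does the hard work.
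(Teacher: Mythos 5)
Your treatment of the $[Y]=0$ case is essentially the paper's: cut $X$ along $Y$ into two negative definite pieces and invoke the monotonicity of $r_s$ and $\Gamma$ under negative definite cobordisms. But your main analytic step goes a different way from the paper, and the difference is exactly where the argument has a gap. You propose to run ASD compactness and bubbling on \emph{end-periodic} $4$-manifolds modeled on the cyclic covers of $X$; that is the method of \cite{Ma18}, and the present paper explicitly abandons it for this theorem (``We do not use the ASD moduli spaces on $4$-manifolds with periodic ends. We only use that on $4$-manifolds with cylindrical ends.''). The end-periodic route needs non-degeneracy of $\cs_Y$ (an assumption absent from \cref{emb1}) and yields a covering-degree bound of the shape $2\#R(Y)+3$ as in \cref{Ma18}, not the bound $j\leq l_Y^s$ (resp.\ $l_Y^k$) asserted here. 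The paper instead works on the \emph{cylindrical-end} manifold $W^*[0,l_Y^s-1]$ obtained by stacking copies of $\overline{X\setminus Y}$: the convergence theorem (\cref{seq}) produces perturbed instantons whose energy concentrates at level $r_s(Y)$, the limit is $C^l$-close to a Floer-index-one critical point on each of the internal copies of $Y$, and since $l_Y^s$ is by definition an upper bound for the number of such critical points near that level, the pigeonhole principle forces two slices $j<j'$ to carry isomorphic restrictions; the piece $W[j,j']$ then closes up to $X_{j'-j+1,[Y]}$ carrying a flat connection with the required Chern--Simons value. You correctly flag the ``length bookkeeping'' as the main obstacle, but you offer no mechanism for it; the pigeonhole argument on cylindrical ends \emph{is} that mechanism, and without it the union over $1\leq j\leq l_Y^s$ is not obtained.

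A second omission: the filtered-Floer input (the identity $I(W)\theta^{[s,r]}_{Y}=c(W)\theta^{[s,r]}_{Y}$ and its analogue for $\Gamma_{-Y}(k)$) requires the cobordism $W_0=\overline{X\setminus Y}$ to satisfy $H_1(W_0;\R)=0$, which fails for a general negative definite $X$ with $b_1(X)>1$. The paper first reduces to $H_1(X;\R)\cong\R$ by surgering out framed loops generating the kernel of $PD([Y])$ in $H_1(X;\Z)$, using \cref{surgery} to show that $\im\cs^j$ only shrinks under such surgery and that negative definiteness is preserved. Your proposal starts the instanton argument directly on $X$ and never addresses this reduction.
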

The proposition below provides fundamental properties of $\{cs_{X,c}^j\}_{j\in \Z_{>0}}$.
\begin{prop}\label{emb2}Let $Y$ be an oriented closed connected $3$-manifold and $X$ a closed connected oriented $4$-manifold. 
\begin{enumerate}
\item 
If there exists an embedding from $Y$ to $X$ with $0\neq [Y] \in H_3(X;\Z)$ then  
\[
 \im \cs^j_{X,[Y]}  \subset \im\cs_Y
\]
for any $j$. 
\item Suppose that $\im\cs_{X,c}^j \cap (0,1)\neq \emptyset$ for $j \in \Z_{>0}$. Then there exist $2\# (\im\cs_{X,c}^j \cap (0,1))$ irreducible $SU(2)$-representations of $\pi_1(X_{j,c})$.
\item If $R(X_{j,c})$ is connected, then $\im\cs_{X,c}^j = \{1\}$.
\end{enumerate}
\end{prop}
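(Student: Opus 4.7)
My plan is to treat the three parts separately, drawing on the construction of $\cs^j_{X,c}$ from \cite{Ma18}.

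For part~(3), the key step is to show that $\cs^j_{X,c}$ is locally constant on $R(X_{j,c})$: a tangent vector at a flat connection is a closed $\ad$-valued $1$-form, and the first variation of the defining integral in that direction is exact, hence vanishes by Stokes. Connectedness of $R(X_{j,c})$ then forces $\cs^j_{X,c}$ to be constant. Since the trivial representation always defines a class in $R(X_{j,c})$ and its Chern--Simons value lies in $\Z$, represented by $1$ in the $(0,1]$-normalisation, the constant value is $1$, giving $\im\cs^j_{X,c}=\{1\}$.

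For part~(2), I would first observe that any reducible $\rho\colon\pi_1(X_{j,c})\to SU(2)$ factors through a maximal torus, so the associated flat connection has integral Chern--Simons value and $\cs^j_{X,c}([\rho])=1$. Thus every class with $\cs^j_{X,c}\in(0,1)$ is necessarily irreducible, giving one irreducible per value in $\im\cs^j_{X,c}\cap(0,1)$. The factor of $2$ would follow from exhibiting a free $\Z/2$-action on this set of irreducible classes that preserves $\cs^j_{X,c}$; a natural candidate is the involution obtained by tensoring with the non-trivial character of a $\Z/2$-quotient of $\Z/j\Z$ (equivalently, pulling back through the associated double covering of $X_{j,c}$), which fixes a class only when $\rho$ is reducible.

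For part~(1), I would use $[Y]\neq 0$ to realise $Y$ as a regular fibre of the map $X\to S^1$ Poincar\'e-dual to $[Y]$; then $p_j^{-1}(Y)\subset X_{j,c}$ splits as a disjoint union of $j$ embedded copies $Y_1,\dots,Y_j$, each diffeomorphic to $Y$. Given a flat connection $A$ on $X_{j,c}$, cutting along a chosen $Y_i$ produces a cobordism $W_i$ from $Y$ to $Y$, and the construction of $\cs^j_{X,c}$ as a relative integral of a Chern--Simons-type form along a path of connections on $W_i$ reduces modulo $\Z$ to $\cs_Y([A|_{Y_i}])$ by Stokes. Taking $(0,1]$-representatives on both sides gives $\im\cs^j_{X,c}\subset\im\cs_Y$. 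The main obstacle is precisely this reduction identity: while morally a cut-and-paste Stokes argument, it requires unpacking the definition of \cite{Ma18} and carefully matching the auxiliary data (reference reducible and bounding chain) with the intrinsic Chern--Simons form on $Y$, keeping track of the curvature contributions from $W_i$ modulo $\Z$. Once this identity is in hand, parts~(2) and~(3) are essentially formal.
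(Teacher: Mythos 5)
Parts (1) and (3) of your proposal follow essentially the paper's route. The crux you identify for (1) --- the identity $\cs^j_{X,c}(a)=\cs_Y(a|_{Y_i}) \bmod \Z$ obtained by cutting along a copy of $Y$ --- is exactly the paper's formula, proved there by computing $-\frac{1}{8\pi^2}\int_{W[-\infty,\infty]}\trace(F_{A_a}\wedge F_{A_a})$ over the cut-open $\Z$-cover; the paper then deduces the local constancy needed for (3) from this same restriction identity rather than from a direct first-variation argument, but both are fine, and the rest of (3) (connectedness plus the value $1$ at the trivial connection) is identical.

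The genuine gaps are in part (2). First, your justification that reducibles take the value $1$ --- ``factors through a maximal torus, so the associated flat connection has integral Chern--Simons value'' --- is a non sequitur: abelian flat connections on a closed $3$-manifold (e.g.\ a lens space) generally have non-integral Chern--Simons invariants, so integrality does not follow from reducibility. The paper instead argues that the reducible locus is connected and contains the trivial connection and invokes local constancy of $\cs^j_{X,c}$. Second, your candidate involution is not well defined: a character of the deck group $\Z/j\Z$ is a character of $\pi_1(X)/\pi_1(X_{j,c})$, not of $\pi_1(X_{j,c})$, so it cannot be tensored against a representation of $\pi_1(X_{j,c})$, and $\Z/j\Z$ has no nontrivial $\Z/2$-quotient when $j$ is odd. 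The relevant involution in the paper is the regluing ambiguity along $Y$: an irreducible flat connection on the cut-open cobordism $W[0,j-1]$ has boundary stabilizer $\{\pm 1\}$, producing two closed-up connections $\rho_+$ and $\rho_-$ that differ by tensoring with the $\{\pm 1\}$-valued character of $\pi_1(X_{j,c})$ induced by the class $p_j^*c$ (the residual $\Z$-cover), not by any character of $\Z/j\Z$. Third, your freeness claim --- that the involution ``fixes a class only when $\rho$ is reducible'' --- is false: an irreducible binary dihedral representation satisfies $\rho\otimes\chi\cong\rho$ for a nontrivial quadratic character $\chi$. The paper instead distinguishes $\rho_+$ from $\rho_-$ by comparing their holonomies along a loop $l$ with $l\cdot[Y]=1$, which differ by the central element $-1$; some argument of this kind, tied to the specific character and the specific pair of gluings, is required, and a generic ``free $\Z/2$-action on irreducibles'' does not exist.
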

Since $\Sigma(2,3,5)$ satisfies a nice Floer theoretic condition (see \cref{Image1}), we can detect $\im\cs_{X,c}$ from the critical values of the Chern-Simons functional of $\Sigma(2,3,5)$ when $X$ contains $\Sigma(2,3,5)$ as a smooth submanifold. 
\begin{thm}\label{emb3}Suppose $X$ is a negative definite $4$-manifold containing $\Sigma(2,3,5)$ as a smooth submanifold. 
Then 
\[
\im \cs^j_{X,[-\Sigma(2,3,5)]} = \left\{ \frac{1}{120} , \frac{49}{120} ,1\right\} \subset (0,1]
\]
for any $j \in \Z_{>0}$.
In particular, $\pi_1(X)$ admits at least four irreducible $SU(2)$-representations. 
\end{thm}
Note that $\Sigma(2,3,5 ) \times S^1$ satisfies the assumption of \cref{emb3} and there are exactly four irreducible $SU(2)$-representations on $\Sigma(2,3,5 ) \times S^1$. Moreover, we prove the following existence result for $SU(2)$-representations:
\begin{thm}\label{emb5}
There is an $S^2$-component $C$ in the $SU(2)$-representation space $R(\dis \Sigma(2,3,5,7))$ of $\Sigma(2,3,5,7)$ satisfying the following property: for any a closed definite $4$-manifold $X$ containing $\dis \Sigma(2,3,5,7)$ as a smooth submanifold, all elements in $C$ extend as $SU(2)$-representations of $X$. Therefore, $\pi_1(X)$ admits an uncountable family of irreducible $SU(2)$-representations. In particular, this implies the knot group of any $2$-knot having $ \Sigma(2,3,5,7)$ as a Seifert hypersurface has an uncountable family of irreducible $SU(2)$-representations. 
\end{thm} 
Lastly, we state a non-existence result for embeddings of Seifert homology $3$-spheres. 
\begin{thm}\label{emb4}Let $Y$ be a Seifert homology 3-sphere of a type $\Sigma(a_1, \cdots , a_n)$. Suppose the Fr\o yshov invariant $h(Y)$ of $Y$ is non-zero. 
Then $Y$ cannot be smoothly embedded in any negative definite $4$-manifold $X$ such that the $SU(2)$-representation space $R(X_{j,c})$ of $X_{j,c}$ is connected for all $j$. 
\end{thm}
If $\pi_1(X)$ is a free group or isomorphic to $\Z^l$ for some $l \in \Z_{>0}$, then $R(X)$ is connected. 
Moreover, if 
\[
R(a_1, \cdots, a_n ) =\frac{2}{a} -3 +n +\sum_{i=1}^n \frac{2}{a_i}\sum_{k=1}^{a_i-1} \cot \left( \frac{a \pi k}{a_i} \right) \cot \left(\frac{\pi k}{a_i}\right) \sin^2 \left(\frac{\pi k}{a_i}\right)>0,
\]
then $h(\Sigma (a_1, \cdots, a_n ) )>0$. (See \cite{D18}) 
\subsection{Fixed point theorems for $SU(2)$-representation spaces}
Since instanton Floer homology is modeled on the infinite dimensional Morse homology of the Chern-Simons functional of $3$-manifolds, it is interesting to ask whether or not there is a Lefschetz type fixed point theorem for instanton Floer homology. Ruberman and Saveliev showed the following theorem. 
\begin{thm}[Ruberman-Saveliev, \cite{RS04}]\label{RS}Let $h$ be an orientation preserving self-diffeomorphism on $Y$ with some non-degenerate condition described in \cite[(3.7)]{RS04}.  Then 
\[
\lambda_{FO} (X_h(Y)) = \frac{1}{2} L ( h_* , I(Y)),  
\]
where $X_h(Y)$ is the mapping torus of $h:Y \to Y$, $\lambda_{FO}(X)$ is the Furuta-Ohta invariant introduced in \cite{FO93} and $L ( h_* , I_*(Y))$ is the Lefschetz number of $h_* : I_*(Y) \to I_*(Y)$ introduced in \cite{RS04}. 
\end{thm}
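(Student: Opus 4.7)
The plan is to realize $\lambda_{FO}(X_h(Y))$ as a signed count of fixed points of $h^{*}$ on the irreducible $SU(2)$ character variety $R^{*}(Y)$, and to identify that count, up to the normalization $\tfrac12$, with the Lefschetz number $L(h_{*}, I_{*}(Y))$ via a Morse-theoretic fixed-point formula in instanton Floer homology.

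First, I would analyze the ASD moduli space on the mapping torus $X_h(Y) = (Y \times \mathbb{R})/(y,t)\sim(h(y),t-1)$ by pulling back to its infinite cyclic cover $Y \times \mathbb{R}$. Finite-energy ASD solutions descending to $X_h(Y)$ are translation-periodic up to gauge, so their common asymptotic limit $[\rho]\in R^{*}(Y)$ must satisfy $h^{*}[\rho]=[\rho]$, and a choice of intertwiner $h^{*}\rho \cong \rho$ determines such a descent uniquely up to gauge. The non-degeneracy condition of \cite{RS04} 3.7 guarantees that every such fixed orbit is isolated and is cut out transversely in the index zero stratum, so $\lambda_{FO}(X_h(Y))$ reduces to a weighted count of pairs $([\rho], g)$ with $[\rho]$ a fixed point and $g$ an intertwiner, taken with the standard spectral-flow signs defining the Furuta-Ohta invariant.

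Next, I would carry out the parallel computation on the Floer side. A Morse-type perturbation of $\cs_Y$ can be chosen compatibly with $h$, producing a chain automorphism $h_{\#}$ of the instanton chain complex $CI_{*}(Y)$ that permutes the critical generators. A chain-level Lefschetz identity
\[
L(h_{*}, I_{*}(Y)) \;=\; \sum_{i}(-1)^{i}\,\mathrm{tr}\bigl(h_{\#\,i}\bigr)
\]
then collapses to a signed count over generators fixed by $h_{\#}$, which are exactly the $h^{*}$-fixed irreducible flat connections appearing in the previous step. Thus both sides of the claimed identity are naturally expressed as signed sums indexed by $\mathrm{Fix}(h^{*}|_{R^{*}(Y)})$, and only the weightings remain to be matched.

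The main obstacle is reconciling the two sign conventions and producing the factor $\tfrac12$. Each fixed class $[\rho]\in R^{*}(Y)$ has stabilizer $\{\pm 1\}$ in the $SU(2)$ gauge group, and this $\{\pm 1\}$-ambiguity in the choice of intertwiner $g$ is the source of the mismatch between the gauge-theoretic count on $X_h(Y)$ (which sees pairs $([\rho],g)$) and the Lefschetz count on $Y$ (which sees only the fixed class $[\rho]$); tracking this ambiguity carefully through the definition of $\lambda_{FO}$ is what supplies the prefactor $\tfrac12$. After this, identifying the spectral-flow orientation at $(\rho,g)$ with the Lefschetz sign coming from the action of $dh^{*}$ on the $\mathbb{Z}/8$-graded Floer chain at $[\rho]$ is a careful but essentially local deformation computation, valid under the non-degeneracy hypothesis of \cite{RS04} 3.7. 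The most delicate step is arranging the exclusion of reducibles on both sides through an equivariant perturbation that simultaneously preserves the $h$-action and the Furuta-Ohta setup; this is precisely the role played by the non-degeneracy condition, and is where the orientation-preserving hypothesis on $h$ enters substantively.
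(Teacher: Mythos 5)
This theorem is not proved in the paper at all: it is quoted verbatim from Ruberman--Saveliev \cite{RS04} as background for the fixed-point theorems of the introduction, so there is no ``paper's own proof'' to compare against. What you have written is a reconstruction of the argument in \cite{RS04}, and in outline it matches that argument: identify the degree-zero irreducible ASD moduli space on $X_h(Y)$ with the irreducible flat moduli space, identify the latter with (a double cover of) $\operatorname{Fix}(h^*|_{R^*(Y)})$ via restriction to the fiber, choose an $h$-equivariant perturbation so that the induced chain map is a signed permutation of the Floer generators, and apply the Hopf trace formula to reduce $L(h_*,I_*(Y))$ to a signed count of fixed generators. One remark: for the unperturbed picture you do not need the translation-periodic analysis on the infinite cyclic cover; on the closed mapping torus the relevant bundle has $c_2=0$, so ASD already forces flatness by Chern--Weil, and the two-to-one correspondence $i^*\colon R^*(X_h(Y))\to R^h(Y)$ (which this paper itself quotes as \eqref{two to one}) does the rest.

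The one place where your accounting is genuinely off is the factor $\tfrac12$. You attribute it to the $\{\pm1\}$-ambiguity in the intertwiner $g$, but that ambiguity makes the moduli space on $X_h(Y)$ a \emph{double cover} of $\operatorname{Fix}(h^*|_{R^*(Y)})$, i.e.\ it multiplies the gauge-theoretic count by $2$ relative to the Lefschetz count --- the wrong direction to produce a $\tfrac12$. The prefactor comes from combining this factor of $2$ with the normalization $\lambda_{FO}(X)=\tfrac14\#\mathcal{M}^*(X)$ built into the Furuta--Ohta definition: $\tfrac14\cdot 2=\tfrac12$. Without invoking that $\tfrac14$ explicitly, your argument as written would yield $\lambda_{FO}=2L$ rather than $\tfrac12 L$. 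The remaining points you flag (matching the spectral-flow sign at $(\rho,g)$ with the sign of $h_\#$ on the orientation line at $\rho$, and the role of the non-degeneracy hypothesis of \cite{RS04}~3.7 in avoiding non-equivariant perturbations) are indeed the technical heart of the Ruberman--Saveliev proof and are correctly identified, though of course not carried out here.
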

\cref{RS} implies the following fixed point theorem for instanton Floer homology. 
\begin{cor}[Ruberman-Saveliev, \cite{RS04}]
Under the same assumption of \cref{RS}, if $L ( h_* , I(Y)) \neq 0$, then $h^* : R^*(Y) \to  R^*(Y)$ has a fixed point, where $R^*(Y)$ is the set of conjugacy classes of irreducible $SU(2)$-representations of $\pi_1(Y)$. \footnote{In general, $\lambda_{FO} (X_h(Y))$ can be defined for any orientation preserving diffeomorphism. Moreover, if $\lambda_{FO} (X_h(Y))\neq 0$, then $h^* : R^*(Y) \to R^*(Y)$ has a fixed point. }
\end{cor}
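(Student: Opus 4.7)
The plan is to deduce the fixed-point statement directly from Theorem \ref{RS} together with the definition of the Furuta--Ohta invariant as a signed count of irreducible flat $SU(2)$-connections and the mapping-torus presentation of $\pi_1(X_h(Y))$. First, Theorem \ref{RS} gives $\lambda_{FO}(X_h(Y)) = \frac{1}{2} L(h_*, I(Y)) \neq 0$. By the construction in \cite{FO93} (and the version used in \cite{RS04}), under the non-degeneracy hypothesis of \cite{RS04} 3.7 the invariant $\lambda_{FO}(X_h(Y))$ is a well-defined signed count of gauge equivalence classes of irreducible flat $SU(2)$-connections on $X_h(Y)$, so its non-vanishing produces at least one irreducible representation
\[
\rho\colon \pi_1(X_h(Y)) \longrightarrow SU(2).
\]

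Next, I would use the standard mapping-torus presentation $\pi_1(X_h(Y)) \cong \pi_1(Y) \rtimes_{h_*} \Z$: it is generated by $\pi_1(Y)$ and an element $t$ coming from the $S^1$-direction, subject to $t g t^{-1} = h_*(g)$ for every $g \in \pi_1(Y)$. Applying $\rho$ to this relation gives
\[
\rho(t)\, \rho(g)\, \rho(t)^{-1} = \rho(h_*(g)) = \bigl(\rho|_{\pi_1(Y)} \circ h_*\bigr)(g) \quad \text{for all } g \in \pi_1(Y),
\]
which says exactly that $\rho|_{\pi_1(Y)}$ and $h^*\bigl(\rho|_{\pi_1(Y)}\bigr) = \rho|_{\pi_1(Y)} \circ h_*$ are conjugate in $SU(2)$ via $\rho(t)$. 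Hence $[\rho|_{\pi_1(Y)}]$ is a fixed point of $h^*$ on the full representation variety $R(Y) = \hom(\pi_1(Y), SU(2))/SU(2)$.

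The remaining step, and the one I expect to be the main obstacle, is to promote this fixed point from $R(Y)$ to $R^*(Y)$, i.e.\ to arrange that the restriction $\rho|_{\pi_1(Y)}$ is itself irreducible. If $\rho|_{\pi_1(Y)}$ were reducible, its image would lie (up to conjugation) in $U(1) \subset SU(2)$, and the corresponding flat connection on $X_h(Y)$ would have non-trivial stabilizer along the $\pi_1(Y)$-directions; such connections are precisely what the non-degeneracy hypothesis in \cite{RS04} 3.7 rules out of the signed count contributing to $\lambda_{FO}(X_h(Y))$. Therefore at least one of the irreducible flat $SU(2)$-connections counted by $\lambda_{FO}(X_h(Y))$ must restrict irreducibly to $\pi_1(Y)$, yielding the desired fixed point in $R^*(Y)$. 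The algebraic part of the argument is formal; the substance of the proof lies in this last exclusion, which is implicit in the framework of \cite{RS04} and simply needs to be invoked carefully.
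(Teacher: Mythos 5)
Your overall route is the same one the paper has in mind: the paper gives no proof of this corollary, citing \cite{RS04}, but the mechanism it relies on elsewhere (in the proof of \cref{mapping torus}) is exactly the restriction correspondence \eqref{two to one}, namely that an irreducible flat $SU(2)$-connection on the mapping torus $X_h(Y)$ restricts to an $h^*$-fixed point in $R^*(Y)$. Your first two steps (non-vanishing of $\lambda_{FO}$ produces an irreducible flat connection on $X_h(Y)$; the semidirect-product presentation shows the restriction to $\pi_1(Y)$ is conjugation-fixed by $h_*$) are correct and match this.

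The one place your argument goes wrong is the justification of the final step. The non-degeneracy hypothesis of \cite{RS04}~3.7 concerns non-degeneracy of the flat connections (vanishing of twisted cohomology), not their stabilizers, and it is not what excludes a reducible restriction. The correct and much simpler reason is that $Y$ is an integral homology $3$-sphere: a reducible $SU(2)$-representation of $\pi_1(Y)$ is abelian, hence factors through $H_1(Y;\Z)=0$ and is trivial. If $\rho|_{\pi_1(Y)}$ were trivial, then $\rho$ would factor through $\pi_1(X_h(Y))/\langle\langle \pi_1(Y)\rangle\rangle \cong \Z$, so $\rho$ itself would be abelian and hence reducible on $\pi_1(X_h(Y))$, contradicting its irreducibility. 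With that substitution your proof is complete and agrees with the one implicit in the paper and in \cite{RS04}.
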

We prove a similar fixed point theorem by applying \cref{emb1} to mapping tori of diffeomorphisms. 
\begin{thm}\label{emb6}
Let $Y$ be an oriented homology $3$-sphere and $h$ an orientation preserving self-diffeomorphism of $Y$.
\begin{enumerate}
\item 
 If $r_s(Y)<\infty$ and $l^s_Y<\infty$ for some $s \in [-\infty, 0]$, then there exist a positive number $l \leq l^s_Y$ such that 
\[
(h^* )^l \colon R^*(Y) \to R^*(Y)
\]
 has a fixed point. 
 \item  If {$\Gamma_{-Y}(k)<\infty$} and $l^k_Y<\infty$ for some $k \in \Z_{>0}$, then there exist a positive number $l \leq l^k_Y$ such that 
\[
(h^* )^l \colon R^*(Y) \to R^*(Y)
\]
 has a fixed point. 
 \end{enumerate}
\end{thm}
Combining this with \cref{emb5}, we obtain the following result. 
\begin{thm} \label{emb7} There exists an $S^2$-component $C$ of $R^*( \Sigma (2,3,5,7))$ satisfying the following condition: for any orientation preserving diffeomorphism $h$ on $ \Sigma (2,3,5,7)$, the fixed point set of 
\[
h^* : R^*( \Sigma (2,3,5,7)) \to R^*(  \Sigma (2,3,5,7))
\]
contains $C$.
\end{thm}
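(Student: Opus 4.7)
The plan is to reduce the statement to Theorem~\ref{emb5} by feeding it the mapping torus of $h$. Given an orientation-preserving diffeomorphism $h\colon Y \to Y$ with $Y := \#_n \Sigma(2,3,5,7)$, I form the mapping torus
\[
X_h := (Y \times [0,1]) \big/ \bigl((x,0) \sim (h(x),1)\bigr),
\]
which is a closed oriented $4$-manifold fibering over $S^1$ with fiber $Y$; the fiber embedding $\iota\colon Y \hookrightarrow X_h$ realizes $Y$ as a submanifold of $X_h$.

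First I would verify that $X_h$ satisfies the hypotheses of Theorem~\ref{emb5}. Because $Y$ is an integral homology $3$-sphere and $h$ preserves orientation, $h_*$ acts as the identity on $H_*(Y;\Z)$. The Wang exact sequence
\[
\cdots \to H_k(Y) \xrightarrow{1-h_*} H_k(Y) \to H_k(X_h) \to H_{k-1}(Y) \xrightarrow{1-h_*} H_{k-1}(Y) \to \cdots
\]
then forces $H_2(X_h;\Z) = 0$, so the intersection form on $X_h$ is trivial and $X_h$ is (vacuously) a closed definite $4$-manifold containing $\#_n\Sigma(2,3,5,7)$ as a submanifold.

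Next I would invoke Theorem~\ref{emb5}: every $[\rho] \in C_n$ admits an extension $\tilde\rho\colon \pi_1(X_h) \to SU(2)$, meaning $\tilde\rho \circ \iota_*$ is conjugate to a representative $\rho$ of $[\rho]$. The standard short exact sequence $1 \to \iota_*(\pi_1(Y)) \to \pi_1(X_h) \to \pi_1(S^1) \to 1$ presents $\pi_1(X_h)$ as a semidirect product in which a generator $t$ of the $\Z$-quotient conjugates $\iota_*(\pi_1(Y))$ by $h_*$; concretely $t\,\iota_*(\gamma)\,t^{-1} = \iota_*(h_*(\gamma))$ for every $\gamma \in \pi_1(Y)$. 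Setting $A := \tilde\rho(t) \in SU(2)$ and applying $\tilde\rho$ to this relation gives $A\,\rho(\gamma)\,A^{-1} = \rho(h_*(\gamma))$, i.e. $\rho \circ h_* = A\,\rho\,A^{-1}$ as homomorphisms, which translates to $h^*[\rho] = [\rho]$ in $R^*(Y)$. Thus every class in $C_n$ lies in the fixed set of $h^*$.

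The main obstacle here is really the preparatory check that $X_h$ has vanishing intersection form, so that the definiteness hypothesis of Theorem~\ref{emb5} is met for an arbitrary orientation-preserving $h$; the translation from \emph{extension over $\pi_1(X_h)$} to \emph{$h^*$-invariance in $R^*(Y)$} is then an immediate consequence of the conjugation relation in the mapping torus group, and the theorem follows.
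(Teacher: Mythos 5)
Your proposal is correct and follows essentially the same route as the paper: apply \cref{emb5} to the mapping torus of $h$ (which is vacuously definite since $H_2$ vanishes) and then translate extendability over $\pi_1(X_h)$ into $h^*$-invariance of the conjugacy class. The only cosmetic difference is that the paper cites the Ruberman--Saveliev two-to-one correspondence $i^*\colon R^*(X_h(Y))\to R^h(Y)$ at this step, whereas you re-derive the needed implication directly from the semidirect-product relation $t\,\iota_*(\gamma)\,t^{-1}=\iota_*(h_*(\gamma))$; both arguments are the same in substance.
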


This paper is organized as follows: In \cref{Preliminaries}, we review the invariants $\{\cs_{X,c}^j\}$, $\{r_s(Y)\}$ and $\{\Gamma_Y(k)\}$ appearing in the theorems in \cref{Introduction}. In \cref{Invariants lY}, we define $\{l^s_Y\}$, $\{l^k_Y\}$ and $l_Y$ and show several properties of these invariants. In \cref{Some remarks for QX}, we establish formal properties of the invariants $\{\cs_{X, c}^j\}$ including a connected sum formula, the behavior of $\{\cs_{X, c}^j\}$ for $j$, and a surgery formula. 
In \cref{Invariants of 2-knots}, we introduce a family of invariants $\{\cs_{K, j}\}$ of $2$-knots and using the results of \cref{Some remarks for QX}, we show \cref{2-knot1}. In \cref{Morse type perturbation}, we give sufficient conditions (\cref{finiteness1}) for finiteness of $l^s_Y$, $l^k_Y$ and $l_Y$. 
In \cref{Convergence}, using a technique of instanton Floer theory, we show the existence of flat connections on 4-manifolds under assumptions of the existence of embeddings and prove Theorems~\ref{2-knot2} and \ref{emb1}. In \cref{Extendability}, we prove \cref{emb5}. In \cref{Examples}, we prove Theorems~\ref{emb3} and \ref{rep1}. In this section, we also compare $\{\cs_{K,j}\}$ with other $2$-knot invariants which can be used to obstruct a certain class of Seifert hypersurfaces.

\section*{Acknowledgements}
The author would like to express his deep gratitude to Mikio Furuta for the helpful suggestions. The author also appreciates Aliakbar Daemi for discussing an alternative proof of \cref{Ma18} using his invariants $\{\Gamma_Y(k)\}$.
The author would also like to express his appreciation to Mizuki Fukuda for asking him about examples of $2$-knots. The author also wishes to thank Seiichi Kamada for informing him of several results for ribbon 2-knots and giving him several interesting questions. The author would  like to express his appreciation to Nobuo Iida for suggesting to consider $2$-knots in general $4$-manifolds. 
The author also appreciates Kouki Sato for giving me several comments of the first draft.  The author wishes to thank Junpei Yasuda for informing me a relation between the triple intersection number of a 2-knot and ribbon 2-knots. 
The author was supported by JSPS KAKENHI Grant Number 17J04364, 20K22319, the Program for Leading Graduate Schools, MEXT, Japan and JPSJ Grant-in-Aid Scientific Research on
Innovative Areas Discrete Geometric Analysis for Materials Design (Grant No.\ 17H06461), MEXT, Japan and RIKEN iTHEMS
Program.

\section{Preliminaries}\label{Preliminaries}
\subsection{Chern-Simons functional $\{\cs^j_{X,c}\}_{j \in \Z_{>0}}$(\cite{Ma18})} \label{Inv QX}
Let $X$ be a closed connected oriented $4$-manifold. We fix a class $c \in H_3(X; \Z ) \cong  H^1(X; \Z) \cong [X, B\Z]$.  The class $c$ determines a covering space
\[
p_c\colon \wt{X}^c \to X
\]
up to isomorphism. 
If $c$ is not equal to $0$, then $\wt{X}^c$ is connected. We impose the following condition on $c$:
\begin{ass}\label{assc}\footnote{For example, $2[\{1\}\times S^3]$ in $H_3(S^1\times S^3; \Z)$ cannot be represented by a connected oriented $3$-manifold. }
The class $c$ can be represented by a connected oriented $3$-manifold $Y$.
\end{ass}
Suppose that $c$ is not equal to $0$. We fix a smooth classifying map $\tau\colon X \to S^1$ of $p_c$ and a lift $\wt{\tau}: \wt{X}^c  \to \R$. 
 Let $\wt{R}(X)$ be the set of $SU(2)$-connections on $X \times SU(2)$ modulo null-homotopic $SU(2)$-gauge transformations. For $a \in \wt{R}(X)$, define
\[
cs_{X,c} (a) := -\frac{1}{8\pi^2}\int_{\wt{X}^c} \trace(F_{A_a} \wedge F_{A_a} ) , 
\]
where $A_a$ is a smooth $SU(2)$-connection on $\wt{X}^c \times SU(2)$ such that 
\[
A_a |_{\wt{\tau}^{-1} (-\infty, -1] } = p_c^* a|_{\wt{\tau}^{-1} (-\infty, -1] } \text{ and }A_a |_{\wt{\tau}^{-1} [1, \infty) } = 0.
\]
 The function $\cs_{X,c} :  \wt{R}(X) \to \R$ does not depend on the choices of additional data $\tau$, $\widetilde{\tau}$, $ A_a$, representative of $a$ and isomorphism class of $\wt{X}^c$. If $c =0$, then we define $\cs_{X,c}$ to be the zero map. Note that 
\[
\pi_0(\operatorname{Map}(X,SU(2))) \cong [X, SU(2) ],
\]
where $[X, SU(2) ]$ is the set of homotopy classes of maps from $X$ to $SU(2)$. Fix an oriented closed $3$-manifold $Y$ embedded in $X$ such that $[Y]=c\in H_3(X;\Z)$. If $[Y]$ is not zero in $H_3(X;\Z)$, then $X \setminus Y$ is connected. Here, we suppose that $Y$ is connected. 
In this case, every $0$-dimensional framed submanifold in $Y \cup_{\id} -Y$ bounds some $1$-dimensional framed submanifold in $X$.
By the Pontryagin construction, we see that every continuous map $Y \to SU(2)$ can be extended to a continuous map $X \to SU(2)$. Since $c$ is not zero, then $X \setminus Y$ is connected. Let $W_0$ be the connected compact cobordism from $Y$ to itself obtained by cutting $X$ open along $Y$. We will use the following notations. 
\begin{itemize}
\item[(i)] The manifold $W_i$ is a copy of $W_0$ for $i \in \Z$.
\item[(ii)] We denote $\partial(W_i)$ by $Y^i_+\cup Y^i_-$ where $Y^i_+$(resp. $Y^i_-$) is equal to $Y$(resp. $-Y$) as oriented manifolds.
\item[(iii)] For  $(m,n)\in (\Z\cup \{ -\infty\} ) \times (\Z\cup \{\infty\})$ with $m<n$, we set
\begin{align} \label{def of W}
\displaystyle W[m,n]:=\coprod_{m\leq i \leq n} W_i  / \{Y^j_- \sim Y^{j+1}_+\ j \in \{m,\cdots ,n\}\}.
\end{align}
\end{itemize}
Note that $W[-\infty, \infty] \to X$ is isomorphic to $\wt{X}^c  \to X$ as $\Z$-covering spaces. Using the identification, we have 
\begin{align}\label{cross sect}
cs_{X,c}^j (a) &= -\frac{1}{8\pi^2} \int_{W[-\infty, \infty] } \trace (F_{A_a} \wedge F_{A_a}) \\
 &=  \cs(i_Y^* a ) - \cs(i_Y^* 0 ) =  \cs(i_Y^* a )
\end{align}
for every element $a \in \wt{R}(X_{j,c})$. The formula \eqref{cross sect} gives the following formula: 
\begin{prop} For any element $g \in \pi_0(\operatorname{Map}(X,SU(2)))$, 
\begin{align}\label{degree formula}
cs_{X,c} (g^* a) = \deg (g|_Y) +\cs_{X,c} ( a).
\end{align}
\end{prop}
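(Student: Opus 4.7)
The plan is to derive the identity from the cross-section identification used in \eqref{cross sect} together with the classical gauge-transformation law for the $3$-dimensional Chern--Simons functional.

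First, the same cross-section argument that establishes \eqref{cross sect} also yields $\cs_{X,c}(a) = \cs_Y(i_Y^{*}a)$ on the nose (the contribution from the trivial reference connection vanishes). Applying this to both $a$ and $g^{*}a$, and using the obvious fact that restriction commutes with pull-back, namely $i_Y^{*}(g^{*}a) = (g|_Y)^{*}(i_Y^{*}a)$, one obtains
\[
\cs_{X,c}(g^{*}a) - \cs_{X,c}(a) \;=\; \cs_Y\!\bigl((g|_Y)^{*}\, i_Y^{*} a\bigr) - \cs_Y(i_Y^{*} a),
\]
where $\cs_Y$ denotes the $3$-dimensional $SU(2)$ Chern--Simons functional.

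Second, I would invoke the standard $3$-dimensional identity
\[
\cs_Y(h^{*} B) - \cs_Y(B) \;=\; \frac{1}{24\pi^{2}}\int_Y \trace\!\bigl((h^{-1}\,dh)^{3}\bigr) \;=\; \deg(h),
\]
valid for any $SU(2)$-connection $B$ on $Y$ and any smooth map $h \colon Y \to SU(2)\cong S^{3}$. This identity is obtained by a Chern--Weil computation on the cylinder $Y\times [0,1]$ with an interpolating connection between $B$ and $h^{*}B$, followed by Stokes' theorem and the identification of the resulting boundary integral with the mapping degree via the standard volume form on $SU(2)$. Substituting $h = g|_Y$ and $B = i_Y^{*}a$ gives the proposition.

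The main obstacle is the bookkeeping of normalizations and signs, so that the correction is precisely $+\deg(g|_Y)$ (rather than with the opposite sign) in the conventions of \eqref{cross sect}. Alternatively, one could argue directly on $\wt{X}^{c}$: given any valid extension $A_a$ of $a$, build $A_{g^{*}a}$ as a truncation of $\wt{g}^{*}A_a$ interpolated to the trivial connection near the $+\infty$ end using that $SU(2)$ is connected; the Chern--Weil difference $-\tfrac{1}{8\pi^{2}}\int(\trace F_{A_{g^{*}a}}^{2} - \trace F_{A_a}^{2})$ concentrates on the region where the interpolation occurs, and a slicing argument on a cross-sectional copy of $Y$ reduces it to the same $3$-dimensional degree computation.
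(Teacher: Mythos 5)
Your argument is correct and follows essentially the same route as the paper, which derives the proposition directly from the cross-section identity \eqref{cross sect} (i.e.\ $\cs_{X,c}(a)=\cs_Y(i_Y^*a)$) combined with the standard $3$-dimensional gauge-transformation law $\cs_Y(h^*B)-\cs_Y(B)=\deg(h)$; the paper simply leaves these two steps implicit. Your additional remark on verifying the sign against the paper's normalization of $\cs_Y$ is the only point requiring care, and your alternative direct computation on $\wt{X}^c$ is a harmless elaboration of the same idea.
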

By \eqref{degree formula}, we obtain a map
\[
cs_{X,c} : R( X) := \wt{R}(X) /\pi_0(\operatorname{Map}(X,SU(2))) \to \R/\Z\cong (0,1].
\]
We call $\cs_{X,c}$ the {\it Chern-Simons functional for }$(X,c)$. If we consider a $C^\infty$ topology on $R( X)$, $\cs_{X,c}$ is a continuous map. Note that $R( X)$ is compact, $\im\cs_{X,c}$ has a minimal value. 
\begin{lem}\label{func}
Suppose we have another pair $(X',c')$ of a closed oriented 4-manifold $X'$ with $c' \in H_3(X';\Z)$ and an orientation preserving diffeomorphism  $f: X \to X' $ satisfying $f^*c=c'$. Then 
\[
cs_{X',c'} (a) =\cs_{X, c} ( f^* a) 
\] 
holds. 
\end{lem}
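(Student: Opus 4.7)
The plan is to show this by direct naturality of all the ingredients in the definition of $\cs_{X,c}$: the classifying map, its lift to the $\Z$-cover, the connection extension, and the integrand itself.

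First I would choose the auxiliary data on $X'$, namely a smooth classifying map $\tau' \colon X' \to S^1$ for $p_{c'} \colon \wt{X'}^{c'} \to X'$ together with a lift $\wt{\tau'} \colon \wt{X'}^{c'} \to \R$. Then I would transport all of this back via $f$: set $\tau := \tau' \circ f$, and observe that since $f$ pulls $c'$ back to $c$ (interpreted via the $H_3 \cong H^1 \cong [-,B\Z]$ identifications), the map $\tau$ is a classifying map for $p_c \colon \wt{X}^c \to X$. In particular, $f$ lifts uniquely, once a basepoint lift is fixed, to an orientation-preserving diffeomorphism $\wt{f} \colon \wt{X}^c \to \wt{X'}^{c'}$, and one may take $\wt{\tau} := \wt{\tau'} \circ \wt{f}$.

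Next, for any $a \in \wt{R}(X')$, choose a connection extension $A_a$ on $\wt{X'}^{c'} \times SU(2)$ satisfying the trivialization conditions used to define $\cs_{X',c'}(a)$. Then $\wt{f}^{\,*} A_a$ is a smooth $SU(2)$-connection on $\wt{X}^c \times SU(2)$ which, by the choices above, satisfies the analogous trivialization conditions for the pulled-back connection $f^*a$. Therefore it is a legitimate choice of extension in the definition of $\cs_{X,c}(f^* a)$. Since the curvature is natural, $F_{\wt{f}^{\,*} A_a} = \wt{f}^{\,*} F_{A_a}$, and since $\wt{f}$ is an orientation-preserving diffeomorphism, the change-of-variables formula gives
\[
\int_{\wt{X}^c} \trace(F_{\wt{f}^{\,*} A_a} \wedge F_{\wt{f}^{\,*} A_a}) = \int_{\wt{X'}^{c'}} \trace(F_{A_a} \wedge F_{A_a}),
\]
so the lifted functionals on $\wt{R}$ satisfy $\cs_{X,c}(f^* a) = \cs_{X',c'}(a)$.

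Finally, to get the identity at the level of $R(X) = \wt{R}(X)/\pi_0(\operatorname{Map}(X, SU(2)))$ I would check that pullback by $f$ sends null-homotopic gauge transformations on $X'$ to null-homotopic gauge transformations on $X$, and conversely, because $f$ is a diffeomorphism; hence $f^*$ descends to a map $R(X') \to R(X)$ and the displayed equality passes to the quotient by \eqref{degree formula} (the value $\cs_{X,c}(f^*a) \in \R/\Z$ is independent of the choice of extension and of representative in the $\pi_0(\operatorname{Map})$-orbit). The only step requiring care is the claim that compatible classifying data and compatible lifts $\wt{f}$ can be chosen simultaneously; this is where the hypothesis $f^*c = c'$ is used. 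Once that is set up the rest is a straightforward naturality argument, so I do not anticipate any real obstruction.
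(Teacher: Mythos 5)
Your proposal is correct and is essentially an expanded version of the paper's own (one-line) proof, which simply invokes "functoriality of integrations and $\Z$-covering spaces"; you have spelled out exactly that naturality argument, including the lift of $f$ to the $\Z$-covers and the change-of-variables step for the Chern--Weil integral. No further comment is needed.
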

\begin{proof}This follows from functoriality of integration and $\Z$-covering spaces.  
\end{proof}

If $c$ is not zero,  the class $c$ determines a homomorphism $\rho_c\colon H_1(X;\Z ) \to \Z$. This gives us a surjective homomorphism 
\begin{align}\label{phij}
\phi_j:= \rho_c \circ \text{Ab} \colon \pi_1(X) \to  \im \rho_c \circ \text{Ab} =  i_c \Z \cong \Z \to \Z/j\Z
\end{align}
for some $i_c \in \Z_{> 0}$. 
For each $j \in \Z_{>0}$, we denote by 
\[
p_j \colon X_{j,c}\to X
\]
the covering space corresponding to $\ker \phi_j$. Note that the closed $4$-manifold obtained by identifying the boundary components of $W[0,j-1]$ is diffeomorphic to $X_{j,c}$. 
We also have a $\Z$-covering space 
\[
p^c_j \colon  p_j^* \wt{X}^c  \to X_{j,c}
\]
for each $ j \in \Z_{>0}$. This corresponds to the class $p_j^* c \in H^1( X_{j,c};  \Z) \cong [X_{j,c} ,S^1] $. 
\begin{defn}Fix a pair $(X,c)$ consisting of an oriented closed $4$-manifold X and a class $c \in H_3(X; \Z)$ satisfying \cref{assc}. Suppose $c\neq 0$.
Corresponding to the $(X_{j,c}, p_j^*c)$, we have a family of maps 
\begin{align}\label{cs_YjX}
\set {cs_{X,c}^j :=\cs_{X_{j,c},p_j^*c}  \colon R( X_{j,c}) \to (0,1]}_{j\in \Z_{>0}}.
\end{align}
When $c=0$, we set $\cs_{X,c}^j  = 1$ for all $j$.
\end{defn}
To compare $ \{cs_{X,c}^j \}_{j \in \Z_{ >0}}$ with the critical values of the Chern-Simons functional of oriented $3$-manifolds, we will use the following definition. 
\begin{defn} For an oriented closed $3$-manifold $Y$, we define
\begin{align}\label{Lambda}
\Lambda_Y = \set {\cs_Y(a) \in \R| a \text{ is an $SU(2)$-flat connection on }Y } \text{ and }
\end{align}
\begin{align} \label{Lambda*}
\Lambda^*_Y =\set {\cs_Y(a) \in \Lambda | a \text{ is an irreducible $SU(2)$-flat connection on }Y}, 
\end{align}
where $\cs_Y$ is the Chern-Simons functional of $Y$.
\end{defn}
\begin{prop}[\cref{emb2}, 1]\label{value of QX}
For any oriented connected $4$-manifold, $0\neq c \in H_3(X;\Z)$ with \cref{assc} and $j \in \Z_{>0}$, 
\[
\im\cs_{X,c}^j \subset (\Lambda_Y \cap (0,1]).
\]
 If $Y$ is a Seifert $3$-manifold, then $\im\cs_{X,c}^j \subset \q \cap (0,1]$.
\end{prop}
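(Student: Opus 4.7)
The proof is a direct application of the cross-section formula \eqref{cross sect} derived just above the statement, applied to the cover $(X_{j,c},p_j^*c)$. First I would use \cref{assc} to realise $c$ by a connected oriented $3$-manifold $Y\subset X$; cutting $X$ open along $Y$ yields the connected cobordism $W_0$, and by construction (see \eqref{def of W}) the cover $X_{j,c}$ is the cyclic gluing of $j$ copies $W_0,\dots,W_{j-1}$ along their boundaries. In particular $X_{j,c}$ contains a submanifold $\widetilde Y\cong Y$ representing $p_j^*c$, and the infinite cyclic cover of $X_{j,c}$ classified by $p_j^*c$ is canonically identified with $\widetilde X^c\to X$, so the defining integral of $\cs_{X_{j,c},p_j^*c}=\cs_{X,c}^j$ is precisely the integral over $W[-\infty,\infty]$ used to establish \eqref{cross sect}.

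For any class $[a]\in R(X_{j,c})$ pick a flat $SU(2)$-connection representative $a$. Then \eqref{cross sect} applied to $(X_{j,c},p_j^*c)$ with the cross-section $\widetilde Y$ gives
\[
\cs_{X,c}^j(a)\ \equiv\ \cs_Y(i_{\widetilde Y}^*a)\pmod{\Z}.
\]
Because $a$ is flat, its restriction $i_{\widetilde Y}^*a$ is a flat $SU(2)$-connection on $Y$, so $\cs_Y(i_{\widetilde Y}^*a)\in\Lambda_Y$. The set $\Lambda_Y$ is $\Z$-invariant (a gauge transformation of degree $k$ on $Y$ shifts $\cs_Y$ by $k$), and $\cs_{X,c}^j$ takes values in $(0,1]$ by definition, so reducing modulo $\Z$ into $(0,1]$ yields $\cs_{X,c}^j(a)\in\Lambda_Y\cap(0,1]$, proving the first inclusion.

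For the Seifert case I would invoke the classical computations (Fintushel-Stern, Kirk-Klassen, Auckly) showing that every flat $SU(2)$-connection on a Seifert fibered $3$-manifold has rational Chern-Simons value; combined with the first inclusion this gives $\im\cs_{X,c}^j\subset\q\cap(0,1]$. The inequality $Q_{X,c}^j\geq\nu(Y)$ then follows at once by taking minima in the containment. I do not anticipate a substantial obstacle since the main ingredient, the transgression identity \eqref{cross sect}, has already been established; the only bookkeeping is to ensure a connected lift $\widetilde Y$ of $Y$ exists inside $X_{j,c}$, which is immediate from the cyclic gluing construction of the cover.
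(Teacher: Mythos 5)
Your proposal is correct and follows essentially the same route as the paper: realise $c$ by a connected $Y$, cut $X$ open to build $W[0,j-1]$ and identify $W[-\infty,\infty]\to X_{j,c}$ with the $\Z$-cover, then apply the cross-section identity \eqref{cross sect}/\eqref{formula} to get $\cs_{X,c}^j(a)=\cs_Y(i_Y^*a)\in\Lambda_Y$, with the Seifert case handled by the known rationality of Chern--Simons invariants of flat connections (the paper cites \cite{A94}). The only difference is presentational: you spell out the $\Z$-invariance of $\Lambda_Y$ and the flatness of the restriction, which the paper leaves implicit.
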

\def\proofname{Proof of \cref{value of QX}}
\begin{proof}
Suppose that $Y$ is an oriented connected codimension $1$ submanifold of $X$ with $[Y]=c$. If $c$ is not zero, then $X \setminus Y$ is connected. Let $W_0$ be the compact cobordism from $Y$ to itself given by $\overline{X \setminus Y}$. Then, identifying boundaries of $W[0,j]$, gives us $X_{j,c}$ as in \eqref{def of W}. Fix an element $a \in \wt{R}(X_{j,c})$ such that 
\[
\cs_{X,c}^j (a)   \leq 1. 
\]
Note that $W[-\infty, \infty] \to X_{j,c}$ is isomorphic to $p^c_j \colon  p_j^* \wt{X}^c  \to X_{j,c}$. Then 
\begin{align}
\cs_{X,c}^j (a) &= -\frac{1}{8\pi^2} \int_{W[-\infty, \infty] } \trace (F_{A_a} \wedge F_{A_a}) \\
 \label{formula} &=  \cs_Y(i_Y^* a ) - \cs_Y(i_Y^* 0 ) =  \cs_Y(i_Y^* a ).
\end{align}
This gives the conclusion. If $Y$ is a Seifert $3$-manifold, it is shown in \cite{A94} that $\Lambda_Y \subset \q$. Therefore, we have $\im\cs_{X,c}^j \subset \q \cap (0,1]$. 
\qed \end{proof}
\begin{prop}\label{loc const}The maps $\cs_{ X,c}^j$ are locally constant with respect to the $C^\infty$-topology.
In particular, for any $j \in \Z_{>0}$ and a pair $(X,c)$ satisfying \cref{assc}, $\im\cs_{X,c}^j$ is a finite set. 
\end{prop}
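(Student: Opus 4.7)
The plan is to exploit the cross-section formula \eqref{cross sect} already used in the proof of \cref{value of QX}. Fix a connected oriented codimension-$1$ submanifold $Y \subset X_{j,c}$ with $[Y] = p_j^* c$, which exists by \cref{assc} applied to $(X_{j,c}, p_j^* c)$. Then for every $a \in R(X_{j,c})$ we have
\[
\cs_{X,c}^j(a) = \cs_Y(i_Y^* a),
\]
so it suffices to show that the right-hand side is locally constant as a function of $a \in R(X_{j,c}) = \hom(\pi_1(X_{j,c}), SU(2))/SU(2)$. The restriction map $i_Y^* \colon R(X_{j,c}) \to R(Y)$ is manifestly continuous (it is induced by the group homomorphism $\pi_1(Y) \to \pi_1(X_{j,c})$ at the level of holonomies), so the problem reduces to local constancy of $\cs_Y$ on the moduli space $R(Y)$ of flat $SU(2)$-connections on $Y$.

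This last fact is classical. The first variation of the three-dimensional Chern-Simons functional at a connection $b$ on $Y$ in the direction $\alpha \in \Omega^1(Y; \mathfrak{su}(2))$ is $(d\cs_Y)_b(\alpha) = -\frac{1}{4\pi^2}\int_Y \operatorname{Tr}(F_b \wedge \alpha)$, which vanishes whenever $F_b = 0$. Hence every flat connection is a critical point of $\cs_Y$, so along any smooth path of flat connections $\cs_Y$ is constant. Because $R(Y)$ is a real semialgebraic set (with finitely many connected components, each containing a dense smooth locus through which any two points can be joined by smooth paths in the flat moduli space) and $\cs_Y$ is continuous, $\cs_Y \colon R(Y) \to \R/\Z$ is locally constant. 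Composing with the continuous map $i_Y^*$ yields local constancy of $\cs_{X,c}^j$.

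For the finiteness statement, note that $X_{j,c}$ is compact, so $\pi_1(X_{j,c})$ is finitely generated, and $\hom(\pi_1(X_{j,c}), SU(2))$ is a closed subvariety of a finite product of copies of the compact group $SU(2)$; quotienting by conjugation keeps $R(X_{j,c})$ compact and Hausdorff. A locally constant $\R/\Z$-valued function on a compact space has finite image, since its level sets constitute a disjoint open cover admitting a finite subcover. The only subtle point I expect to have to justify carefully is that the $C^\infty$-topology on connections used in the statement agrees, after passing to gauge equivalence classes of flat connections, with the algebraic topology on $\hom(\pi_1(X_{j,c}), SU(2))/SU(2)$ used above; this is standard via the holonomy correspondence, and is the only step where any real care is needed.
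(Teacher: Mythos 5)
Your proof is correct and follows essentially the same route as the paper: reduce via the cross-section formula \eqref{formula} to $\cs_{X,c}^j(a) = \cs_Y(i_Y^* a)$, invoke local constancy of $\cs_Y$ on the flat moduli space, and use compactness of $R(X_{j,c})$ for finiteness. You supply more detail than the paper (first variation of $\cs_Y$, the semialgebraic path-connectedness needed to pass from ``constant along smooth paths'' to ``locally constant,'' and the holonomy identification of topologies), but the underlying argument is identical.
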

\begin{proof}Fix a closed oriented connected $3$-manifold $Y$ such that $[Y]=c \in H_3(X;\Z)$. Using \eqref{formula}, we have $\cs_{X,c}^j (a) =\cs_Y ( a|_Y)$ for any $j \in \Z_{>0}$. If $\rho_t$ is a path of $SU(2)$-flat connections, then $\cs_{X,c}^j (\rho_t) =\cs_Y ( \rho_t|_{Y})=\cs_Y (\rho_0|_Y)$ since $\cs_Y$ is locally constant.  By compactness of $R(X_{j,c})$, $\im\cs_{X,c}^j$ is a finite set. 
\qed
\end{proof}
The following gives us a sufficient condition for triviality of $\{cs_{X,c}^j \}_{j \in \Z_{>0}}$.
\begin{prop}If $X$ is connected and $\pi_1 (X_{j,c})$ is isomorphic to $\Z^l$ or free for $j \in \Z_{>0}$, then 
\[
\im \cs_{X,c}^j = \{1\}.
\]
In particular, $\im\cs_{S^1\times S^3, c} = \im\cs_{T^4, c} = \im\cs_{T^2 \times S^2,c}= \im\cs_{ \#_m S^1\times S^3, c}  = \{1\}$ for any class $c$ and $m \in \Z_{>0}$.
\end{prop}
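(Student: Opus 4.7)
The plan is to combine the local constancy of $\cs_{X,c}^j$ proved in \cref{loc const} with the fact that, under either hypothesis on $\pi_1(X_{j,c})$, the representation variety $\hom(\pi_1(X_{j,c}),SU(2))/SU(2)$ is path-connected. Once this connectedness is in place, \cref{loc const} forces $\cs_{X,c}^j$ to take a single value on $R(X_{j,c})$, and because the trivial representation is always present and sent by $\cs_{X,c}^j$ to $0\in\R/\Z$, i.e.\ to $1$ under the convention $\R/\Z\cong(0,1]$, the image must be $\{1\}$.

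The connectedness is checked case by case. If $\pi_1(X_{j,c})\cong F_n$ is free of rank $n$, then evaluating on a free basis identifies $\hom(F_n,SU(2))$ with $SU(2)^n$; since $SU(2)\cong S^3$, this space is connected, as is its conjugation quotient. If $\pi_1(X_{j,c})\cong\Z^l$, I would use the classical fact that the centralizer of any non-central element of $SU(2)$ equals its unique maximal torus. Consequently every commuting $l$-tuple in $SU(2)$ either has all entries central or lies simultaneously in a single maximal torus $T$; in either situation a straight-line homotopy inside a common maximal torus (after simultaneous conjugation to a standard one) joins the tuple to $(I,\ldots,I)$, proving that $\hom(\Z^l,SU(2))/SU(2)$ is path-connected.

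For the listed examples, I would compute the covers directly. For $X=S^1\times S^3$, $T^4$, or $T^2\times S^2$, the fundamental group is $\Z$, $\Z^4$, or $\Z^2$, and every finite-index subgroup of $\Z^k$ is again isomorphic to $\Z^k$, so $\pi_1(X_{j,c})$ is free abelian. For $X=\#_m S^1\times S^3$, $\pi_1(X)=F_m$ is free, and the Nielsen--Schreier theorem guarantees that every finite-index subgroup of a free group is itself free; hence $\pi_1(X_{j,c})$ is free. The main claim therefore applies in each case. The only non-formal ingredient is the connectedness of the commuting variety in $SU(2)^l$; everything else is a direct application of \cref{loc const} together with the observation that the trivial representation always lies in $R(X_{j,c})$.
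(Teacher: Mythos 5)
Your proof is correct and follows essentially the same route as the paper: the paper likewise observes that $R(X_{j,c})$ is connected when $\pi_1(X_{j,c})$ is free or free abelian and then invokes the local constancy of $\cs_{X,c}^j$ together with the value at the trivial connection. You simply supply the details the paper leaves implicit (the connectedness of the commuting variety in $SU(2)^l$ via common maximal tori, and the identification of the finite-index subgroups via Nielsen--Schreier), so no further comment is needed.
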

\begin{proof}
Since $R(X_{j,c}) =\hom (\pi_1(X_{j,c}) , SU(2) ) / SU(2) $, if  $\pi_1 (X_{j,c})$ is isomorphic to $\Z^l$ or free, then $R(X_{j,c}) $ is connected. By \cref{loc const}, $\cs_{X,c}^j(a) =\cs_{X,c}^j (0) =0$.
\qed
\end{proof}
The $4$-manifolds below give non-trivial examples of $\{\cs_{X,c}^j\}$. 
\begin{ex}Let $Y$ be an oriented closed connected $3$-manifold. Then 
\[
\im \cs_{Y \times S^1, [Y] }^j = \Lambda_Y \cap (0,1]
\]
for any $j \in \Z_{>0}$. This is a consequence of \eqref{formula}. 
\end{ex}
\begin{lem}\label{fund prop} For any positive integer $ m$,
\[
\im\cs_{ X,  c}^{j} \subset  \im\cs_{X,c}^{mj}
\]
for any $j, m  \in \Z_{>0}$. 
\end{lem}
 \begin{proof}Note that $X_{mj,c}$ is the total space of a $\Z/m\Z$ covering space $p_{m,j}: X_{mj,c} \to X_{j,c}$. Choose $\rho \in \wt{R}^* ( X_{j,c})$ such that $\cs_{X, c}^j (\rho)<1$. We put $\rho':= p_{m,j}^* \rho$. Then, one can check that 
 \[
\cs_{X_{j,c}, c} (\rho) =\cs_{(mj)_cX, p^*_j c} (\rho') .
 \]
 This completes the proof. 
 \qed \end{proof}
 Although there are many non-trivial examples of $\cs_{ X,c}^j$, we could not find the example of a pair $(X, c)$ whose $\cs_{X,c}^j$ is not constant with respect to $j$. 
 \begin{ques}
 Is there a $4$-manifold $X$ with a class $c\in H_3(X; \Z)$ such that $\{\im \cs_{X,c}^j\}$ is not constant with respect to $j$?
 \end{ques}
 The following tell us that non-triviality of $\im \cs^j_{X,c}$ implies the existence of irreducible $SU(2)$-representations of $\pi_1(X_{j,c})$.
 \begin{lem}\label{existence of irred}
There exist $2 \#( \im \cs^j_{X,c} \cap (0,1))$ irreducible $SU(2)$-representations of $\pi_1(X_{j,c})$.
 \end{lem}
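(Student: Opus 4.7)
The plan is to combine the cross-section identity \eqref{cross sect}, $\cs^j_{X,c}(a) = \cs_Y(i_Y^*a)$, with a vanishing argument on the reducible stratum and a central $\{\pm 1\}$-twist supplied by $c \neq 0$. Set $n := \#(\im \cs^j_{X,c} \cap (0,1))$ and, for each such $r$, pick a representative $[\rho_r] \in R(X_{j,c})$.

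\textbf{Irreducibility.} If $\rho_r$ were reducible its restriction to $\pi_1(Y)$ would factor through $U(1) \subset SU(2)$; for a flat abelian $SU(2)$-connection both $A\wedge A$ (commuting coefficients kill the cubic) and $dA$ (flatness) vanish, so the Chern--Simons integrand is identically zero and $\cs_Y(\rho_r|_Y) = 0$. Through \eqref{cross sect} this forces $\cs^j_{X,c}(\rho_r) \equiv 0 \pmod{\Z}$, i.e.\ $r = 1$, contradicting $r \in (0,1)$. Hence every $\rho_r$ is irreducible, already giving $n$ distinct irreducible classes.

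\textbf{Doubling.} Because $c \neq 0$, $\rho_c \circ \text{Ab}\colon \pi_1(X) \to \Z$ is non-zero and restricts non-trivially to the finite-index subgroup $\pi_1(X_{j,c}) = \ker \phi_j$; postcomposing with $\Z \to \{\pm 1\}$ yields a non-trivial scalar-valued character $\chi\colon \pi_1(X_{j,c}) \to \{\pm 1\} \subset SU(2)$. Define $\rho'_r(\gamma) := \chi(\gamma)\rho_r(\gamma)$: this is an $SU(2)$-homomorphism (scalar-valued twist), irreducible because $\rho_r$ is, and has the same Chern--Simons value $r$. Indeed, a loop on a connected embedded $Y \subset X_{j,c}$ Poincar\'e-dual to $p_j^*c$ has zero intersection number with $Y$ (a pushoff lies in the tubular neighbourhood of $Y$, disjoint from $Y$), so $\chi|_{\pi_1(Y)}$ is trivial, $\rho'_r|_Y = \rho_r|_Y$, and \eqref{cross sect} gives $\cs^j_{X,c}(\rho'_r) = \cs_Y(\rho_r|_Y) = r$.

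\textbf{Distinctness and main obstacle.} Across distinct $r$-values the pairs $\{[\rho_r],[\rho'_r]\}$ are automatically disjoint because $\cs^j_{X,c}$ is a conjugation invariant. Within a single pair, $[\rho_r] = [\rho'_r]$ would yield $g \in SU(2)$ with $g\rho_r(\gamma)g^{-1} = -\rho_r(\gamma)$ for every $\gamma \notin \ker \chi$, forcing $g$ to be a pure-imaginary unit quaternion anticommuting with each $\rho_r(\gamma)$ on that coset and pinning the image of $\rho_r$ inside a binary dihedral subgroup of $SU(2)$. A direct binary dihedral Chern--Simons calculation places $r \in \frac{1}{2}\Z$, contradicting $r \in (0,1)\setminus\{1/2\}$; the lone corner value $r=1/2$ is handled by producing an alternative companion to $[\rho_r]$ via the deck action of $\Z/j\Z$ on $R(X_{j,c})$ or by twisting against another element of $\hom(\pi_1(X_{j,c}),\{\pm 1\})$. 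The main obstacle is precisely this binary dihedral corner case: formalising the back-up construction so that the companion lands in $R^*(X_{j,c})$ with the same Chern--Simons value $r$ and a conjugacy class genuinely distinct from $[\rho_r]$.
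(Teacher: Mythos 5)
Your construction is essentially the paper's proof in algebraic clothing: the sign--twist $\rho'_r=\chi\cdot\rho_r$ is exactly what one gets by cutting $X_{j,c}$ open along $Y$ and regluing the flat connection with the two central gluing parameters $\pm 1$, which is how the paper produces its pair $\rho_+,\rho_-$. However, three steps of your argument have real problems. The first is the irreducibility step: it is false that a reducible flat $SU(2)$-connection on a general $3$-manifold $Y$ has vanishing Chern--Simons invariant. Your pointwise computation of the integrand is only valid when the connection admits a \emph{global} $\mathfrak{u}(1)$-valued gauge representative, which fails when the reducing line bundle is a nontrivial (torsion) bundle; indeed the paper itself records that $\Lambda_{L(p,q)}$ consists of the values $-n^2r/p \bmod 1$, all realized by reducibles. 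The paper instead argues that the reducible locus of $R(X_{j,c})$ is connected and invokes the local constancy of $\cs^j_{X,c}$ (\cref{loc const}); whichever route one takes, some hypothesis beyond what you used is needed, and your stated justification does not give the conclusion.

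The second problem is that your character $\chi$ can be trivial: the image of $\pi_1(X_{j,c})=\ker\phi_j$ under $\rho_c\circ\mathrm{Ab}$ is $j i_c\Z$, so composing with $\Z\to\{\pm1\}$ kills it whenever $j i_c$ is even. The character you actually need is ``intersection number mod $2$ with a \emph{single} lift $Y_0\subset X_{j,c}$ of $Y$'' (the algebraic counterpart of the paper's gluing parameter), which is always surjective because $X_{j,c}\setminus Y_0$ is connected; it is not the mod-$2$ reduction of $p_j^*c$, which is dual to all $j$ lifts simultaneously. The third problem is that the binary dihedral case is not closed: the claim that a binary dihedral image forces $r\in\tfrac12\Z$ is unjustified, and the $r=1/2$ ``back-up construction'' is only a sketch. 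In fact no case division on $r$ is needed. If $g\rho_r g^{-1}=\chi\rho_r$ with $\chi$ nontrivial, then $g\neq\pm1$, so $\rho_r(\ker\chi)$ lies in the maximal torus centralizing $g$; since loops in $Y_0$ push off to have zero intersection with $Y_0$, one has $i_*\pi_1(Y)\subset\ker\chi$, hence $i_Y^*\rho_r$ is reducible, contradicting $\cs_Y(i_Y^*\rho_r)=r\in(0,1)$ once the first step is repaired. (For what it is worth, the paper's own distinctness argument -- comparing holonomies along a loop $l$ with $l\cdot[Y]=1$, which differ by a sign -- is conclusive only when $\trace\rho_+(l)\neq 0$ and thus silently excludes the same binary dihedral configuration; you were right to flag this as the crux, but the lemma requires you to actually close it.)
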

 \begin{proof} The subspace of reducible $SU(2)$-representations is connected. 
Thus, for a reducible representation $\rho$, one has $\cs^j_{X,c}(\rho)=1$ by \cref{loc const}. Therefore, for every element $r \in  \im \cs^j_{X,c} \cap (0,1)$, we have an irreducible representation $\rho : \pi_1(X_{j,c}) \to SU(2)$ such that $\cs_{X,c}^j (\rho)  =r$. Fix an oriented connected $3$-manifold $Y$ representing the class $c \in H_3(X; \Z)$. As in the construction above, we can reconstruct $X_{j,c}$ by gluing \[
W[0,j-1]:= W_0 \cup_Y W_1 \cup_Y \cdots \cup_Y W_{j-1}
\]
 along the boundaries. We regard $\rho$ as an irreducible connection $\wt{\rho}$ on $W[0,j-1]$. The restrictions of $\wt{\rho}$ on the boundaries $Y^+_0$ and $Y^-_j $ of $W[0,j]$ are isomorphic each other. Moreover, by the definition of $\cs_{X,c}^j$, we have equalities 
 \[
 \cs_{X,c}^j (\rho) =\cs_Y \left( \wt{\rho}|_{Y^+_0} \right) = -\cs_Y \left( \wt{\rho}|_{Y^-_j} \right) \in (0,1).
  \]
  This implies that $\wt{\rho}|_{Y^+_0}$ is an irreducible connection on $Y^+_0$. 
 There is a sign ambiguity when gluing $\wt{\rho}$ along $Y^+_0 \cup Y^+_j$. We denote the two resulting glued connections by $\rho_+$ and $\rho_-$. One of $\rho_+$ and $\rho_-$ is isomorphic to $\rho$. We see that $\rho_+$ and $\rho_-$ are not gauge equivalent: suppose there is a gauge transformation $g$ on $X_{j,c}$ such that $g^* \rho_+ = \rho_-$. Then by restricting $g^* \rho_+$ and $\rho_-$ to $W[0,j-1]$, we obtain 
 \[
 g^*\wt{\rho} = g^* \rho_+|_{W[0,j-1]} = \rho_-|_{W[0,j-1]}= \wt{\rho}.
 \]
 Since $\wt{\rho}$ is irreducible, we conclude that $g= \pm1$. We take a based loop $l \subset X_{j,c}$ such that $l \cdot [Y]=1$. Then the holonomies of the connections $\rho_+|_{l}$ and $\rho_-|_{l}$ obtained by the pull-back satisfy
 \[
\operatorname{Hol}_{\rho_+|_{l}} = -  \operatorname{Hol}_{\rho_-|_{l}}  \in SU(2)
 \]
 by construction of the $\rho_\pm$. This contradicts the assumption that $\rho_+= g^* \rho_+ = \rho_-$. 
 This completes the proof.

 \qed
 \end{proof}
 \subsection{Holonomy perturbations}
 Our main tool of this paper is instanton Floer theory. We refer the reader to \cite{Do02} and \cite{Fl88} for the construction of instanton Floer homology. 
 In instanton Floer theory, we consider parturbations of the Chern-Simons functional. First, we review the set $\cal{P}(Y)$ of perturbations used in ordinary Floer theory. 
 Let $\cal{F}_d$ be the set of $d$ embeddings of $S^1 \times D^2$ into $Y$ for $d \in \Z_{>0}$. Fix $m \gg 2$, we denote by $C^m_{\text{ad}}(SU(2), \R)$ the set of adjoint invariant real valued $C^m$-functions. 
The set of perturbations is 
 \[
\cal{P}(Y):= \bigcup_{d \in \Z_{>0}} \cal{F}_d \times C^m_{\text{ad}}(SU(2), \R)^d.
\]
 In this paper, we treat a slightly larger class $\cal{P}^* (Y)$ of perturbations than $\cal{P}(Y)$. The class $\cal{P}^* (Y)$ was used in \cite{Sav02} to calculate the instanton homologies of Seifert homology $3$-spheres. 
 \begin{defn}
 We define the set of perturbations \footnote{We regard $\cal{P}(Y)$ as a subset of $\cal{P}^*(Y)$ via 
\[
\displaystyle (f, h) \mapsto \left(f,h, (x_i)_{1 \leq i \leq d} \mapsto \sum_{1 \leq i \leq d} x_i \right).
\]} by 
\[
\cal{P}^* (Y):= \bigcup_{d \in \Z_{>0}} \cal{F}_d \times C^m_{\text{ad}}(SU(2), \R)^d \times C^m (\R^d, \R).
\]
\end{defn}

We fix a volume form $ d\mu$ on $D^2$ such that $\text{supp } d\mu \subset \text{int} D^2$ and $\int_{D^2} d\mu =1$. For a triple $\pi=(f,h, q) \in \cal{P}^* (Y)$, we define the {\it perturbed Chern-Simons functional} by
\[
cs_{Y, \pi}=\cs_Y+ h_{\pi}: \wt{\B}^*(Y) \to \R, 
\]
where 
\begin{itemize}
\item $\wt{\B}^*(Y)$ is the quotient set 
\[
\left(\A^*(Y):= \set{ \text{irreducible }SU(2)\text{-connections on } Y\times SU(2) } \right)/ \operatorname{Map}^0 (Y, SU(2)),
\]
where $\operatorname{Map}^0 (Y, SU(2))$ is the set of smooth maps whose mapping degrees are zero, 
\item  $cs_Y$ is { the Chern-Simons functional} given by 
\[
cs_Y(a) := - \frac{1}{8\pi^2} \int_Y \trace \left(a \wedge da + \frac{2}{3} a\wedge a \wedge a \right), \text{ and}
\]
\item $\dis h_\pi(a) :=  q \left( \left( \int_{x\in D^2} h_i \hol_{f_i(s,x)}(a) \right)_{1\leq i \leq d}\right)$.
\end{itemize}
We often use an $L^2_k$-completion and a Banach manifold structure on $\wt{\B}^*(Y)$ for a fixed $k>2$.
Note that the map $\cs_{Y, \pi}$ descends to a map 
\[
\cs_{Y, \pi} : \B^*(Y):= \A^*(Y) /  \operatorname{Map} (Y, SU(2))  \to \R /\Z.
\]
We now write down the formal gradient vector field of $\cs_{Y, \pi}$. Fix a Riemann metric $g_Y$ on $Y$. 
Then, identifying $\su$ with its dual by the Killing form, 
we can regard the derivative $h'_i$ as a map $h'_i \colon SU(2) \to \su$. The holonomy of the loops $\{f_i(s,x) \mid s\in S^1\}$ gives us a section $\hol_{f_i(s,x)}(a)$ of the bundle $\aut P_Y$ over $\im f_i$. The bundle map induced by $h_i' \colon \aut P_Y \to \ad P_Y$, then gives us a section $h_i'(\hol_{f_i(s,x)}(a))$ of $\ad P_Y$ over $\im f_i$. We now describe the gradient-flow equation of $\cs_{Y,\pi}$ with respect to the $L^2$-metric:
\begin{align}\label{grad}
 \frac{\partial}{\partial t} a_t=- \grad_{a_t} \cs_{Y,\pi} = *_{g_Y}\left( F(a_t)+ \sum 
_{i=1}^m \partial_i q_{ (h_i(\hol(a_t)_{f_i(s,x)}))_{1 \leq i \leq m}   }  h'_i(\hol(a_t)_{f_i(s,x)})(f_i)_*{\pr}_2^*d\mu  \right),
\end{align}
where $\pr_2$ is the projection $\pr_2 \colon S^1\times D^2 \ri D^2$ and $*_{g_Y}$ is the Hodge star operator with respect to $g_Y$. 
(For the calculation of the gradient, see \cite{BD95}.)
We denote $\pr_2^*d\mu $ by $\eta$.
We set
\[
\widetilde{R}(Y)_\pi:= \left\{a \in \widetilde{\B}(Y) \Biggm | \grad_{a} \cs_{Y,\pi}  =0 \right\},
 \]
 and 
 \[
 \widetilde{R}^*(Y)_\pi:= \widetilde{R}(Y)_\pi \cap \widetilde{\B}^*(Y).
 \]
 When we consider a smooth manifold structure on $\widetilde{R}^*(Y)_\pi$, 
we use an $L^2_k$-topology\footnote{These topologies on $R^*(Y)_\pi$ do not depend on the choice of $k>2$ if $\pi$ is a smooth perturbation.} for some $k>2$.
The solutions of \eqref{grad} correspond to connections $A$ over $Y\times \R$ which satisfy the equation:
\begin{align}\label{pASD}
F^+(A)+ \pi(A)^+=0,
\end{align}
where
\begin{itemize}
\item the 2-form $\pi(A)$ is given by 
\[
\sum 
_{i=1}^m \partial_i q_{ (h_i(\hol(a_t)_{f_i(t, x, s)}))_{1 \leq i \leq m}   }   h'_i(\hol(A)_{\tilde{f}_i(t,x,s)}) \otimes {(\tilde{f}_i)}_* (\pr_1^* \eta),
\]
\item the map $\pr_1$ is the projection map from $(S^1\times D^2) \times \R$ to $S^1\times D^2$,
\item $F^+(A) = \frac{1}{2}(1+*F(A))$ where $*$ is the Hodge star operator with respect to the product metric on $Y\times \R$, and similarly for $\pi(A)^+$, and 
\item $\tilde{f}_i\colon  S^1\times D^2\times \R \ri Y\times \R$ is the embedding given by $f_i\times id$ for each $i$. 
\end{itemize}

We define $\|\pi\|= \| (f,h , q) \|   := \|q \circ h\|_{C^m}$. We also define non-degenerate and regular perturbations for elements in $\mathcal{P}^*(Y)$ in the same way as in the case of $\mathcal{P}(Y)$ (See \cite{Ma18} for further details.) For an oriented homology $3$-sphere $Y$ and a fixed metric $g_Y$ on $Y$, there exists a positive integer $d$, a collection of embeddings $f \in  \cal{F}_d$ and a Bair subset $Q$ of $C^m_{\text{ad}}(SU(2), \R)^d \times C^m (\R^d, \R)$ such that $(f,u)$ is non-degenerate and regular for $u \in Q$. 

For a $4$-manifold $W$ with cylindrical ends, we also use a large class of perturbations. Let $\cal{F}_d(W)$ be the set of $d$ embeddings from $S^1 \times D^3$ to $W$ for any $d \in \Z_{>0}$.  Fix a volume form $ d\nu$ on $D^3$ such that $\text{supp } d\nu \subset \text{int} D^3$ and $\int_{D^3} d\nu =1$. 
We define 
\[
\cal{P}^* (W):= \bigcup_{d \in \Z_{>0}} \cal{F}_d(W) \times C^m_{\text{ad}}(SU(2), \R)^d \times C^m (\R^d, \R).
\]
 For $\pi=(g,h, q) \in \cal{P}^* (W)$, we have the {\it perturbed ASD equation with respect to }$\pi$ defined by 
 \begin{align}\label{pASD1}
F^+(A)+\left(  \sum 
_{i=1}^m \partial_i q_{ (h_i(\hol(a_t)_{f_i(t, x, s)}))} dh_i \hol_{g_i(t,x)}(A)   \otimes (g_i)_*\pr_2^* d \nu \right)^+=0, 
 \end{align}
 where $\pr_2$ is the projection $\pr_2 : S^1\times D^3 \to D^3$.
 We will often write the part 
 \[
 \sum 
_{i=1}^m \partial_i q_{ (h_i(\hol(a_t)_{f_i(t, x, s)}))} dh_i \hol_{g_i(t,x)}(A)   \otimes (g_i)_*\pr_2^* d \nu
\]
by $\pi(A)$. 

 \subsection{Moduli spaces of perturbed ASD equations}
 In this section, we review the construction of the cobordism map in instanton Floer theory. 
In this paper, we only use the moduli space of solutions to ASD equations on manifolds of the form $Y \times \R$ and $W^*$, where $W$ is a negative definite cobordsim from $Y$ to itself and $W^*$ is defined by 
\begin{align}\label{def of W*}
Y \times \R_{\leq 0}\cup_Y W \cup_Y Y\times \R_{\geq 0} .
\end{align}
 We assume that $H_1(W; \R)=0$.
Fix a regular non-degenerate perturbation $\pi \in \mathcal{P}^*(Y)$. For two irreducible critical points $a$, $b \in \wt{R}^*(Y)_\pi$, we will define moduli spaces $M^{Y}(a,b)_\pi$ and $M(a,W^*, \theta)_{\pi_W}$. 

Fix a positive integer $q\geq3$. Let $A_{a,b}$ be an $SU(2)$-connection on $Y \times \R$ satisfying $A_{a,b}|_{Y\times (-\infty,1]}=p^*a$ and $A_{a,b}|_{Y\times [1,\infty)}=p^*b$ where $p$ is the projection $Y\times \R \ri Y$.
We then define
\begin{align}\label{*}
M^Y(a,b)_\pi:=\left\{A_{a,b}+c  \Bigm| c \in \Omega^1(Y\times \R)\otimes \su_{L^2_q}\text{ satisfying } \eqref{pASD} \right\}/ \G(a,b),
\end{align}
where $\G(a,b)$ is given by 
\[
\G(a,b):=\left\{ g \in \aut(P_{Y\times \R})\subset {\End(\mathbb{C}^2)    }_{L^2_{q+1,\text{loc}}} \Bigm| \nabla_{A_{a,b}}(g) \in L^2_q \right\}.
\]
The action of $\G(a,b)$ on $\left\{A_{a,b}+c \Bigm| c \in \Omega^1(Y\times \R)\otimes \su_{L^2_q}\text{ satisfying }\eqref{pASD} \right\}$ is given by pull-backs of connections. The space $\R$ acts on $M^Y(a,b)_\pi$ by translation. We denote by $\theta$ the product $SU(2)$-connection on $Y$.
We also have moduli spaces $M^{Y}(a,\theta)_\pi$ defined by similar way as $M^Y(a,b)_\pi$ but we use a weighted norm to define $M^{Y}(a,\theta)_\pi$. (See \cite{NST19}.)

Next, for two irreducible critical points $a$, $b \in \wt{R}(Y)_\pi$, let $A_{a,b}$ be an $SU(2)$-connection on $W^*$ satisfying $A_{a,b}|_{Y\times (-\infty,1]}=p^*a$ and $A_{a,b}|_{Y\times [1,\infty)}=p^*b$ where $p$ is the projection $Y\times \R \ri Y$.
We define 
\begin{align}\label{**}
M(a,W^*, b)_\pi:=\left\{A_{a,b}+c  \Bigm| c \in \Omega^1(W^*)\otimes \su_{L^2_q}\text{ satisfying } \eqref{pASD1} \right\}/ \G(a,b),
\end{align}
where $\G(a,b)$ is given by the same formula as in the case of $Y \times \R$.

\subsection{Invariants $\{r_s(Y)\}_{s \in [-\infty, 0]}$ and Daemi's invariants $\{\Gamma_Y(k)\}_{k \in \Z}$} \label{r0}
In this section, we review of two families of $(0,\infty]$-valued homology cobordism invariants $\{r_s(Y)\}_{s \in [-\infty, 0]}$ and $\{\Gamma_Y(k)\}_{k \in \Z}$ of homology $3$-spheres. To define $\{r_s(Y)\}_{s \in [-\infty, 0]}$, we use $\Z$-graded filtered instanton Floer homology whose filtration comes from the Chern-Simons functional. On the other hand, Daemi used $\Z/8\Z$-graded instanton homology with some local coefficient coming from Chern-Simons functional to define $\{\Gamma_Y(k)\}_{k \in \Z}$. For more details on $\{r_s(Y)\}$ and $\{\Gamma_Y(k)\}$, see \cite{NST19} and \cite{D18}.

\subsubsection{Invariants $\{r_s(Y)\}_{s \in [-\infty, 0]}$ }\label{ss}
For an oriented homology $3$-sphere $Y$, we review the definition of $\{r_s(Y)\}_{s \in [-\infty, 0]}$. This invariant was defined in \cite{NST19} to analyze the structure of the homology cobordism group of homology $3$-spheres.  For  $r,s \in [-\infty ,\infty)$ satisfying $-\infty \leq s \leq 0 \leq r<\infty$ such that $r$ is a regular value of $\cs_Y$, we have a filtered instanton Floer cohomology $I^1_{[s,r]}(Y)$. 
In this paper, we use the class of perturbations $\cal{P}^*(Y)$ instead of $\cal{P}(Y)$ used in \cite{NST19}. 
 \begin{defn}\label{epsilon perturbation} Let $Y$ be an oriented homology $3$-sphere and $g_Y$ be a Riemannian metric on $Y$.
 For $\varepsilon>0$, we define a class of perturbations $\mathcal{P}_\varepsilon^*(Y,g)$ as a subset of $\mathcal{P}^* (Y)$ consisting of elements which satisfy 
 \begin{enumerate} 
 \item \label{eps}$| h_\pi(a) | < \varepsilon \text{ for all } a \in \widetilde{\B}(Y)$ and 
 \item \label{eps2}$\| \grad_g h_{\pi}(a) \|_{L^4}< \frac{\varepsilon}{2}, \| \grad_g h_{\pi}(a) \|_{L^2}< \frac{\varepsilon}{2}$  for all $a \in \widetilde{\B}(Y)$.
 \end{enumerate}
 \end{defn}
 We choose a suitable small $ \varepsilon$ by the following argument. 
Let $\{R_\alpha\}$ be the connected components of ${R}^*(Y)$. Let $U_\al$ be a neighborhood of $R_\al$ in $\B(Y)$ with respect to the $C^\infty$-topology such that $U_\al \cap U_\beta = \emptyset$ if $\al \neq \beta$ and $\{U_\al\}$ is a covering of $R^*(Y)$.  We take all lifts of $U_\al$ with respect to $\text{pr}:\widetilde{\B}_Y \to \B_Y$. Since $\operatorname{Map} (Y,SU(2)) / \operatorname{Map} _0 (Y,SU(2))$ is isomorphic to $\Z$, we denote all lifts by $\{U_\al^i\}_{i \in \Z}$.
 In addition, we assume the following conditions on $U_\al^i$.
\begin{itemize}
\item If $a \in U^i_\al$,  $|\cs(a)-\cs(R_\al)| <  \min \{ \frac{ d(r, \Lambda_Y)}{8}, \frac{ d(s, \Lambda_Y)}{8}\}$, where $d(r, \Lambda_Y)$ is given by
\[
d(r, \Lambda_Y) := \min\{| r - a | \in \R_{>0} \mid a \in \Lambda_Y\}.
\]
\item $U^i_\al$ has no reducible connections.
\end{itemize}
Note that, for any element $\rho \in \wt{R}(Y)$, we have unique $\al$ and $i\in \Z$ such that $\rho \in U_\al^i$. 
 
By the Uhlenbeck compactness theorem, we can take a sufficiently small real number $\epsilon_1(Y,g, \{U_\al\})>0$ satisfying the following condition:
\begin{align}\label{nbd}
\text{ If }a \in \B^*(Y) \text{ and }\|F(a)\|_{L^2} \leq \epsilon_1(Y,g, \{U_\al\}) \text{, then } a \in U_\al \text{ for some $\al$}.
\end{align}

\begin{defn}Now we take the supremum value
\[
\epsilon_1(Y,g,r,s) := \frac{1}{2}\sup_{ \{U_\al\}}  \epsilon_1(Y,g, \{U_\al\}) ,
\]
where $\{U_\al\}$ runs over all coverings of $\{R_\al \}$ given as above method. 
We define
\[
\epsilon(Y,r,s,g):= \begin{cases}  
\min \{ \epsilon_1(Y,g), \frac{ d(s, \Lambda_Y)}{8}, \frac{ d(r, \Lambda_Y)}{8},\frac{ \lambda_Y }{32} \} & \text{if $s \in \R_Y$,} \\ 
\min \{ \epsilon_1(Y,g), \frac{ d(r, \Lambda_Y)}{8},\frac{ \lambda_Y }{32} \} & \text{if $s \in \Lambda_Y$}, \\
 \end{cases}
\]
where $\lambda_Y:= \min \{ |a-b| \mid a,b \in \Lambda_Y \text{ with }a\neq b  \}$. 
We then define 
 \[
 \cal{P}^*(Y,g,r,s) := \cal{P}^*_{\varepsilon(Y, g ,r ,s ) }(Y,g). 
 \]
 \end{defn}
 We also use the notation  $\lambda_Y:= \min \{ |a-b| \mid a,b \in \Lambda_Y \text{ with }a\neq b  \}$. 
Then we define a class of perturbations which we will use later. 
  For a non-degenerate perturbation $\pi \in \cal{P}^*(Y)$, we consider a map 
 \begin{align}\label{index}
 \ind : \wt{R}(Y)_{\pi} \to \Z
 \end{align}
 called the {\it Floer index} in order to construct $\Z$-gradings on Floer's chain complexes. 
 Fix two elements $r,s \in [-\infty ,\infty)$ satisfying $-\infty \leq s \leq 0 \leq r<\infty$ and $r \in \Lambda^*_Y$. 
 For a metric $g$ on $Y$, a non-degenerate regular perturbation $\pi \in \mathcal{P}(Y,r,s,g)$,
the (co)chains of the filtered instanton Floer (co)homologies are defined by
\[ 
CI^{[s,r]}_i(Y, \pi):=\begin{cases} 
 \Z \left\{ [a] \in \widetilde{R}^*(Y)_\pi \Bigm| \ind(a)=i,\  s<\cs_{Y,\pi}(a)<r \right\} & \text{if $s \in \R_{Y}$,} \vspace{0.5ex}\\
 \Z \left\{ [a] \in \widetilde{R}^*(Y)_\pi \Bigm| \ind(a)=i,\  s- \frac{\lambda_Y}{2} <\cs_{Y,\pi}(a)<r \right\} & \text{if $s \in \Lambda_{Y}$} \\
\end{cases} 
\]
and
\[
CI^i_{[s,r]}(Y, \pi):= \hom(CI_i^{[s,r]}(Y, \pi),\Z), 
\]
 where $\lambda_Y:= \min \{ |a-b| \mid a\neq b , a,b \in \Lambda_Y\}$. 
The (co)boundary maps 
\[
\partial^{[s,r]} \colon CI_i^{[s,r]}(Y, \pi) \ri CI_{i-1}^{[s,r]}(Y, \pi) \ (\text{resp.\ $\delta^r \colon CI^i_{[s,r]}(Y)\ri CI^{i+1}_{[s,r]} (Y)$})
\]
are given by the restriction of Floer's usual differential 
\[
\partial (a) := \sum_{b \in \widetilde{R}^*(Y)_\pi \text{ satisfying } \ind(b)=i-1}\# (M^Y(a,b)_\pi/\R) b\  
\] 
  (resp.\ $\delta^{[s,r]}:=(\partial^{[s,r]})^*$).  For further details of $\partial$, see \cite[Subsection 5.2]{Do02}. 

There is a cohomology class $\theta^{[s,r]}_Y \in I^1_{[s,r]} (Y)$ defined by 
\begin{align}
\theta^{[s,r]}_Y([a]):= \# (M^Y(a,\theta)_{\pi }/\R).
\end{align}

 As in the discussion in \cite[Section~3.3.1]{Do02}, one can see that $I^{[s,r]}_*(Y)$ and $\theta^{[s,r]}_Y \in I^1_{[s,r]} (Y)$ does not depend on the choice of $\pi \in \mathcal{P}^*(Y,g,r,s)$. Therefore, $I^{[s,r]}_*(Y)$ and $\theta^{[s,r]}_Y \in I^1_{[s,r]} (Y)$ are equivalent to the original definitions in \cite{NST19}. 
\begin{defn}\cite[Definition 3.1]{NST19}
We define
\[
r_s(Y):= \sup \left\{ r\in \R_{\geq 0}  \middle| 0= \theta_Y^{[s,r]}\in I^1_{[s,r]}(Y) \right\}. 
\]
\end{defn} 
In this paper, we will use the following property of the class $\theta_Y^{[s,r]} $.
Let $W$ be a negative definite cobordism such that $\partial W= Y_0 \cup (-Y_1)$ satisfying $H_1(W; \R) =0$. Let $I(W)\colon I^1_{[s,r]} (Y_0) \to I^1_{[s,r]} (Y_1)$ be the cobordism map introduced in \cite{NST19}. 
\begin{prop}\cite[Lemma 2.12]{NST19} Suppose that $H_* (W; \R) \cong H_*(S^3; \R)$.  For two real numbers $r,s \in \R$ satisfying $s \leq 0 \leq r$ and $r$ is regular value of $\cs_Y$, 
\[
I(W) (\theta_{Y_1}^{[s,r]}) = c(W) \theta_{Y_0}^{[s,r]}, 
\]
where $c(W)= \# H_1(W; \Z)$.
\end{prop}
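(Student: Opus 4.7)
The plan is to unfold the definitions at the cochain level and reduce the statement to a count of flat reducible connections on $W$. A cocycle representative of $\theta_{Y_0}^{[s,r]}$ assigns to each index-$1$ critical point $a \in \wt R^*(Y_0)_\pi$ with $\cs_{Y_0}(a)\in [s,r]$ the signed count $\#(M^{Y_0}(a,\theta)_\pi/\R)$, while the cobordism map $I(W)$ at the cochain level is given by
\[
I(W)(\alpha)(b) = \sum_a \alpha(a)\cdot \#M(a, W^*, b)_{\pi_W}.
\]
By the standard gluing theorem for pASD moduli (stretching a long neck at the copy of $Y_0$ on the boundary of $W$), the composition
\[
\sum_a \#(M^{Y_0}(a,\theta)_\pi/\R)\cdot \#M(a, W^*, b)_{\pi_W}
\]
is identified with $\#M(\theta, W^*, b)_{\pi_W}$, the signed count of the $0$-dimensional moduli on $W^*$ with trivial asymptotic at the $Y_0$-end and asymptotic $b$ at the $Y_1$-end. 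Thus it suffices to evaluate this count.

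Next, stretch a long neck at the $Y_1$-end of $W^*$. Limit solutions decompose as an instanton on $Y_0\times \R_{\leq 0}\cup W$ with trivial asymptotic at $Y_0$ and some asymptotic $\rho$ at $Y_1$, glued to a broken trajectory on $Y_1\times \R$ from $\rho$ to $b$. A virtual-dimension count exploiting the hypothesis $H_*(W;\R)\cong H_*(S^3;\R)$ (so in particular $b_2(W)=0$, making negative definiteness automatic) and $H^1(W;\R)=0$ forces the interior piece to be reducible and flat, and $\rho=\theta$. The remaining factor is then precisely $M^{Y_1}(\theta,b)_\pi/\R$, which represents the cochain value $\theta_{Y_1}^{[s,r]}(b)$.

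Flat reducible connections on $W$ whose restriction to both $Y_0$ and $Y_1$ is trivial correspond to $U(1)$-characters of $\pi_1(W)$ vanishing on the images of $\pi_1(Y_i)$. Since $Y_0$ and $Y_1$ are integer homology spheres, $H_1(Y_i;\Z)=0$ and this constraint is automatic, so such flat reducibles are in bijection with $\hom(H_1(W;\Z),U(1))$, a set of cardinality $c(W)=\#H_1(W;\Z)$ by Pontryagin duality. A standard orientation computation — the deformation-obstruction complex for each reducible is acyclic in this $\R$-homology $S^3$ setting — shows every contribution has sign $+1$. Assembling,
\[
I(W)(\theta_{Y_0}^{[s,r]})(b)=c(W)\cdot \#(M^{Y_1}(\theta,b)_\pi/\R)=c(W)\cdot \theta_{Y_1}^{[s,r]}(b),
\]
which is the claimed identity at the cohomology level.

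The main obstacle will be ensuring that the gluing and breaking arguments remain compatible with the action-window $[s,r]$ filtration: no broken trajectory may have an intermediate critical value escaping $[s,r]$, and no bubbling may jump the action by more than what is controlled by the perturbation. This is precisely what the $\varepsilon$-controlled perturbation class $\mathcal{P}^*(Y,g,r,s)$ from \cref{epsilon perturbation} is engineered to guarantee, in direct parallel with the unfiltered analogue that is classical in Donaldson theory. A secondary subtlety is fixing the orientation conventions on $M(a,W^*,b)_{\pi_W}$ so that the $c(W)$ flat reducibles all count with the same sign $+1$ rather than cancel in pairs.
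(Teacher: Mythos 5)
Your overall strategy --- reducing the identity to moduli-space counts on $W^*$ and identifying the factor $c(W)$ with the signed count of flat reducible connections on $W$ that are trivial on both ends --- is the same mechanism the paper relies on (the paper imports this proposition from \cite{NST19}; the underlying count reappears in the proof of \cref{seq}, where the three boundary strata of a compactified one-dimensional moduli space are listed and \eqref{important} is deduced). However, your two ``gluing'' steps assert on-the-nose equalities of signed counts of zero-dimensional moduli spaces, and neither holds as stated. Stretching a neck is a one-parameter deformation, so the count of a zero-dimensional moduli space is not preserved along it: it changes by the ends of the parametrized moduli space, which record breakings at \emph{all} intermediate critical points, not only at $\theta$. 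The correct argument fixes an index-one generator $a$ of the $Y_0$-complex and examines the compactification of the one-dimensional space $M(a,W^*,\theta)$ (with $\theta$ at the \emph{outgoing} end); its boundary has three strata, $\bigcup_{\ind(b)=0}M^{Y_0}(a,b)_\pi/\R\times M(b,W^*,\theta)$, then $M^{Y_0}(a,\theta)_\pi/\R\times M(\theta,W^*,\theta)$, and $\bigcup_{\ind(c)=1}M(a,W^*,c)\times M^{Y_1}(c,\theta)_\pi/\R$, whose total signed count vanishes. The first stratum is exactly the coboundary correction your equalities omit; without it the cochain-level identities you write are false, and with it one obtains the identity only in cohomology --- which is all the proposition claims, but your write-up never produces this term.

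There is also a structural mismatch in your first step: the cocycle $\theta_{Y_0}^{[s,r]}$ pairs an index-one generator $a$ with trajectories running \emph{from} $a$ \emph{to} $\theta$, i.e.\ with $M^{Y_0}(a,\theta)_\pi/\R$. Such a trajectory cannot be glued onto the incoming end of an element of $M(a,W^*,b)$ to produce an element of $M(\theta,W^*,b)$; the broken configurations at the incoming end of $\overline{M(\theta,W^*,b)}$ would involve $M^{Y_0}(\theta,a)$, a different (and differently-dimensioned) space. So the quantity $\#M(\theta,W^*,b)$ never arises, and the trivial connection must be kept at the outgoing end throughout, which is why the factor $c(W)$ appears as $\#M(\theta,W^*,\theta)$ multiplying $\theta_{Y_0}(a)$ rather than via your two-stage degeneration. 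Finally, your count of flat reducibles needs care: gauge-equivalence classes of reducible flat $SU(2)$-connections on $W$ form $\hom(H_1(W;\Z),U(1))/(z\sim \bar z)$, so obtaining exactly $c(W)=\#H_1(W;\Z)$ requires the standard weighting and orientation analysis at the reducibles rather than a plain bijection with $\hom(H_1(W;\Z),U(1))$.
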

\begin{thm}\cite[Theorem 1.1]{NST19} \label{main NST}The invariants $ \{r_s(Y)\}_{s \in [-\infty, 0]}$ satisfy the following conditions: 
\begin{itemize}
\item  For $s$, $s_1$, $s_2\in [-\infty , 0 ]$ with $s=s_1 +s_2 $, 
\begin{align}\label{conn sum formula}
r_s(Y_1 \# Y_2 ) \geq \min \{r_{s_1}(Y_1)+s_2, r_{s_2}(Y_2)+s_1 \}
\end{align}
holds. 
\item  If there exists a negative definite cobordism $W$ with $\partial W= Y_1 \amalg -Y_2$, then the inequality
\begin{align} \label{def ineq}
r_s (Y_2) \leq r_s(Y_1) \end{align}
holds for any $s \in [-\infty, 0]$.
\end{itemize}
\end{thm}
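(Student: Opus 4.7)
Both assertions are really statements about when the distinguished class $\theta_Y^{[s,r]} \in I^1_{[s,r]}(Y)$ vanishes, since by definition $r_s(Y) = \sup\{r : \theta_Y^{[s,r]} = 0\}$. So in each case I would translate the inequality into an implication about vanishing of theta classes and exploit functoriality/gluing properties of instanton Floer theory, combined with the cobordism-map formula for $\theta$ quoted just before the theorem.

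\textbf{Cobordism inequality.} Given a negative definite cobordism $W$ with $\partial W = Y_1 \sqcup (-Y_2)$, I would first reduce to the case $H_*(W;\R) \cong H_*(S^3;\R)$ by attaching 2-handles along a set of loops generating $H_1(W;\Z) \otimes \q$, with framings chosen so that the intersection form remains negative definite. This replaces $W$ by a rational homology cobordism $\widetilde W$ that is still negative definite. Applying the proposition in the text, the induced cobordism map sends $\theta_{Y_?}^{[s,r]}$ to $c(\widetilde W)\, \theta_{Y_?}^{[s,r]}$ with $c(\widetilde W) = \#H_1(\widetilde W;\Z)$, a positive integer. Propagating non-vanishing of theta through this identity yields the desired monotonicity $r_s(Y_2) \leq r_s(Y_1)$. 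The technical points here are: (i) checking that the interior surgery can be done while preserving negative definiteness, and (ii) verifying that the perturbation on $W$ extending those on the two ends keeps the ASD moduli space inside the action window $[s,r]$ used to define the filtered theory.

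\textbf{Connected sum formula.} For $Y_1 \# Y_2$, I would stretch the neck along the separating $S^2 \subset Y_1 \# Y_2$. The product $SU(2)$-connection is the only flat connection on $S^2$, so a gluing/Mayer--Vietoris analysis for filtered instanton cohomology yields a pairing description in which $\theta_{Y_1 \# Y_2}^{[s,r]}$ is built from $\theta_{Y_1}^{[\,\cdot\,,\,\cdot\,]}$ and $\theta_{Y_2}^{[\,\cdot\,,\,\cdot\,]}$. The key point is the additivity of Chern--Simons values under the connected sum: an ASD trajectory on $(Y_1 \# Y_2) \times \R$ asymptotic to the product connection on one end and to a critical set of action at most $r$ on the other end breaks, after neck stretching, into a trajectory on $Y_1 \times \R$ together with a trajectory on $Y_2 \times \R$, and the action of the pieces sums (modulo $\Z$) to the action on $Y_1 \# Y_2$. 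Consequently, if $r < \min\{r_{s_1}(Y_1)+s_2,\ r_{s_2}(Y_2)+s_1\}$, then after energy bookkeeping, the broken trajectory has actions lying in $[s_1, r - s_2] \subset [s_1, r_{s_1}(Y_1))$ on the $Y_1$ side (forcing $\theta_{Y_1}^{[s_1,\cdot]} = 0$) or in $[s_2, r - s_1]$ on the $Y_2$ side; in either case the relevant theta class already vanishes, hence $\theta_{Y_1\#Y_2}^{[s,r]} = 0$, which gives $r_s(Y_1 \# Y_2) \geq r$ and, taking sup, \eqref{conn sum formula}.

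\textbf{Main obstacle.} The cobordism inequality is comparatively formal once the proposition is in hand, so the central difficulty is the connected sum case: one must carefully keep track of how the action windows $[s_1, \cdot]$ and $[s_2, \cdot]$ recombine into $[s_1+s_2, \cdot]$ under neck stretching, and rule out contributions from reducible connections (where bubbling of instantons near the neck region could in principle lower energy below the cut-off). The additive shift by $s_2$ in $r_{s_1}(Y_1) + s_2$ is precisely the accounting term that bounds the action that can be absorbed by the other summand, and establishing it rigorously is the heart of the proof.
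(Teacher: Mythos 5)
This theorem is quoted from \cite{NST19}: the present paper contains no proof of it, so there is no internal argument to compare yours against. The only ingredient the paper does supply is the formula $I(W)(\theta_{Y_0}^{[s,r]})=c(W)\,\theta_{Y_1}^{[s,r]}$ for a negative definite cobordism with $H_1(W;\R)=0$, and you correctly identify this as the engine for the second bullet. For that bullet your outline is essentially right: surgering out loops generating the free part of $H_1(W)$ leaves the intersection form unchanged (no new $H_2$ is created when the loops have infinite order in homology), so the reduction to $H_1(W;\R)=0$ is legitimate, and the inequality then follows by propagating vanishing of $\theta$. One caution: with the conventions as printed ($\partial W=Y_0\cup(-Y_1)$, $I(W)\colon I^1_{[s,r]}(Y_0)\to I^1_{[s,r]}(Y_1)$), the identity gives $\theta_{Y_0}=0\Rightarrow\theta_{Y_1}=0$, i.e. $r_s(Y_0)\le r_s(Y_1)$; you need to check that matching labels to the theorem's $\partial W=Y_1\amalg -Y_2$ really produces $r_s(Y_2)\le r_s(Y_1)$ rather than its reverse, since the variance of the cohomological cobordism map is exactly where such arguments silently flip.

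The genuine gap is in the connected sum case. First, there is no off-the-shelf Mayer--Vietoris or gluing theorem for instanton Floer (co)homology under connected sum --- the behaviour of $I_*$ under $\#$ is notoriously delicate (the relevant reducibles on the summands obstruct a naive K\"unneth formula), so ``a gluing/Mayer--Vietoris analysis yields a pairing description of $\theta_{Y_1\# Y_2}^{[s,r]}$'' is precisely the statement that needs to be proved, not a tool you may invoke. Second, your concluding step confuses chain-level and cohomology-level information: the vanishing of $\theta_{Y_1}^{[s_1,\cdot]}$ means a certain cocycle is a coboundary, not that the moduli spaces feeding into it are empty, so ``the relevant theta class already vanishes, hence $\theta_{Y_1\#Y_2}^{[s,r]}=0$'' does not follow from your trajectory-breaking picture. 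To make this work one has to produce a chain-level factorization --- in \cite{NST19} this is done via the negative definite cobordism with $H_1=0$ joining $Y_1\sqcup Y_2$ to $Y_1\# Y_2$ (a $1$-handle attached to $(Y_1\sqcup Y_2)\times I$), together with an algebraic lemma about tensor products of filtered complexes; the additivity of the Chern--Simons filtration on the product complex is what produces the asymmetric combination $\min\{r_{s_1}(Y_1)+s_2,\,r_{s_2}(Y_2)+s_1\}$ with $s=s_1+s_2$. Your ``energy bookkeeping'' gestures at this but does not explain why the window splits as $[s_1,\cdot]$ on one factor and $[s_2,\cdot]$ on the other rather than in some other way, which is, as you yourself note, the heart of the matter.
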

\subsubsection{Daemi's invariants $\{\Gamma_Y(k)\}_{k \in \Z}$}
Let $\Lambda$ be the Novikov ring 
\[
\Lambda := \left\{ \sum_{i=1}^{\infty} a_i \lambda^{r_i} \middle| a_i \in \mathbb{Q} , \ r_i  \in \R , \ \lim_{i\to \infty } r_i =\infty \right\} ,
\]
where $\lambda$ is a formal variable.
We have an evaluating function $\mdeg\colon  \Lambda  \to \R$ defined by 
\[
 \mdeg\left( \sum_{i=1}^{\infty} a_i \lambda^{r_i} \right) := \min_{i \in \Z_{>0}} \{r_i \mid a_i \neq 0\}.
 \] 
 Fix a non-degenerate regular perturbation $\pi$ and orientations of the determinant line bundles $\mathbb{L}_a$ in \cite[Section 2]{NST19}.
 Note that the Floer index \eqref{index} descends to a map
\[
 \ind : {R}(Y)_{\pi} \to \Z/ 8\Z. 
 \]
Then define a $\Z/8\Z $-graded chain complex $C^{\Lambda}_*(Y)$ over $\Lambda$ by
  \[
C^{\Lambda}_i(Y):= C_i(Y) \otimes \Lambda = \Lambda \set { [a] \in R^* (Y)_\pi  | \ind (a)=i \mod 8}
  \]
with the differential
$$
d^{\Lambda} ([a]) 
\displaystyle
:= \sum_{\ind (a) -\ind(b) \equiv 1 (\text{mod } 8), A \in M^Y([a],[b])_\pi}  \# (M^Y([a],[b])_\pi/\R) 
\cdot \lambda^{\mathcal{E}(A)} [b],
$$
where $\ind$ is the function \eqref{index}, 
\[
\mathcal{E}(A) :=\frac{1}{8\pi^2} \int_{Y \times \R} \trace ( (F(A) + \pi(A)) \wedge (F(A)+\pi(A)) )
\]
and $M^Y([a],[b])_\pi$ denotes $M^Y(a, b)_\pi$ for some representatives $a$ and $b$ of $[a]$ and $[b]$ satisfying $\ind (a)  -\ind (b)= 1$. Extend the function $\mdeg$ to $C^{\Lambda}_*$ by
$$
\mdeg \left(\sum_{1 \leq k \leq n} \eta_k [a_k] \right) = \min_{1 \leq k \leq n} \{ \mdeg(\eta_k) \}.
$$

In addition, we define two maps: 
\begin{itemize}
\item[(i)]  The map $D_1 \colon C^{\Lambda}_1(Y) \to \Lambda$ is given by
$$
D_1([a]) = (\# M^Y([a],[\theta])_\pi/\R) \cdot \lambda^{\mathcal{E}(A)},
$$
where $A \in M^Y([a],[\theta])_\pi$ and $M^Y([a],[\theta])_\pi$ denotes $M^Y(a, \theta^i)_\pi$ for some lifts $a$ and $\theta^i$ of $[a]$ and $[\theta]$ satisfying $\ind (a)  -\ind (\theta^i)= 1$.
\item[(ii)] The map 
$U \colon C^{\Lambda}_*(Y) \to C^{\Lambda}_{*-4}(Y)$ is defined by 
\[
\dis U([a]): =\sum_{\substack{[b] \in \wt{R}(Y)_\pi \\ 
\ind ([b])-\ind ([a])= 4} } -\frac{1}{2} \# N(a,b) [b]\cdot \lambda^{\mathcal{E}(A)}, 
\]
 where the space $N^Y(a,b)$ is the codimension $4$-submanifold of $M^Y(a,b)$ given by  
\[
N(a,b):= \{ [A] \in M^Y(a,b) | s_1(r([A] )) \text{ and }  s_2(r([A])) \text{ are linearly dependent} \}
\]
and $A$ is an element in $N(a,b)$. 
Here $r \colon M^Y(a,b)  \to \B^*(Y\times (-1,1))$ is the restriction map and $s_1$ and $s_2$ are generic sections of the bundle $\bb{E} \otimes  \co \to \B^*(Y\times (-1,1))$. The $SO(3)$-bundle $\bb{E}$ is given by a basepoint fibration of $\B^*(Y\times (-1,1))$. 
\end{itemize}
Now, in our conventions, 
$\Gamma_{-Y}(k)$ is given by 
\begin{align}\label{gamma}
\Gamma_{-Y}(k) = \lim_{|\pi| \to 0}  \left(
\inf_{\substack{\alpha \in C_*^{\Lambda}(Y), d^{\Lambda}(\alpha)=0 \\ D_1 U^j(\alpha) = 0 ( 1 \leq j < k-1) \\ 
D_1 U^{k-1}(\alpha) \neq 0  }}
\left\{
\mdeg(D_1U^k (\alpha))- \mdeg (\alpha)   
\right\}
\right)
\end{align}
for $k \in \Z_{>0}$. In \cite{D18}, Daemi also introduced $\Gamma_Y(k)$ for any negative $k\in \Z$. The invariants $\{\Gamma_Y(k)\}_{k \in \Z_{\leq 0} }$ use information of $D_2$ and $U$. 
However, in this paper, we only use the positive part $\{\Gamma_Y(k)\}_{k \in \Z_{>0} }$.
\begin{thm}[ \cite{D18}]The sequence of invariants $\{\Gamma_Y(k)\}_{k \in \Z_{>0}}$ has the following properties: 
\begin{itemize}
\item[(i)] 
If there exists a negative definite cobordism $W$ with $\partial W= Y_1 \amalg -Y_2$, then the inequality
\begin{align}\label{def eq1}
 \Gamma_{Y_1}(k) \leq \Gamma_{Y_2}(k) 
 \end{align}
holds for any $k \in \Z_{>0}$.
\item[(ii)] The invariant $\Gamma_Y(k) <\infty$ for $k\in \Z_{>0}$ if and only if $k\leq 2 h(Y)$. 
\end{itemize}
\end{thm}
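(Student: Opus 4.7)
The plan is to prove the two items separately, both within the framework of instanton Floer homology with the local coefficient ring $\Lambda$ introduced just before \eqref{gamma}, following Daemi's approach in \cite{D18}.

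For the cobordism invariance, I would first construct a $\Lambda$-linear chain map $I(W) \colon C^{\Lambda}_*(Y_1) \to C^{\Lambda}_*(Y_2)$ by counting rigid solutions of the perturbed ASD equation \eqref{pASD1} on $W^* = Y_1 \times \R_{\leq 0} \cup_{Y_1} W \cup_{Y_2} Y_2 \times \R_{\geq 0}$, weighting each solution $A$ by $\lambda^{\mathcal{E}(A)}$. Because $W$ is negative definite with $H_1(W;\R)=0$, the only reducible solution on $W^*$ is (up to gauge) the trivial one, and a generic choice of $\pi_W \in \mathcal{P}^*(W)$ yields transversality; compactness plus the standard gluing picture then shows $I(W)$ is a chain map. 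The next step is to verify that $I(W)$ is compatible, up to $\Lambda$-linear chain homotopy, with the operator $U$ (via a parametrized moduli argument extending the basepoint bundle over $W^*$) and with the map $D_1$ through a relation of the form $D_1^{Y_2} \circ I(W) \sim c(W) \cdot D_1^{Y_1}$, after accounting for the unique reducible contribution on $W^*$.

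Then, given a $d^{\Lambda}$-cycle $\alpha \in C^{\Lambda}_*(Y_2)$ that nearly realizes the infimum in \eqref{gamma} for $\Gamma_{Y_2}(k)$, the cycle $I(W)(\alpha)$ satisfies the same vanishing and non-vanishing conditions on $D_1 U^j$, so it is a legitimate competitor in the infimum defining $\Gamma_{Y_1}(k)$. The additivity $\mathcal{E}(A_1 \# A_W \# A_2) = \mathcal{E}(A_1) + \mathcal{E}(A_W) + \mathcal{E}(A_2)$ under gluing, combined with the non-negativity of $\mathcal{E}(A_W)$ for ASD solutions over a negative definite cobordism, controls how $\mdeg$ shifts under $I(W)$ and yields $\mdeg(D_1 U^k I(W)\alpha) - \mdeg(I(W)\alpha) \leq \mdeg(D_1 U^k \alpha) - \mdeg(\alpha)$. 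Taking the infimum over $\alpha$ and then the $|\pi|\to 0$ limit delivers \eqref{def eq1}.

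For the equivalence $\Gamma_Y(k) < \infty \iff k \leq 2h(Y)$, I would exploit the standard identification of the Fr\o yshov invariant $h(Y)$ with the length of the $U$-tower in the equivariant instanton homology terminating at the reducible locus. The condition in \eqref{gamma} requires a $d^{\Lambda}$-cycle $\alpha$ with $D_1 U^j(\alpha) = 0$ for $1 \leq j < k-1$ and $D_1 U^{k-1}(\alpha) \neq 0$; such a cycle exists precisely when the $U$-action supports a class whose $(k-1)$-st power can still be paired nontrivially with the class represented by the product connection $[\theta]$, which in the $\Z/8\Z$-graded conventions (with $U$ of degree $-4$) translates to $k-1 < 2h(Y)$. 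Conversely, if $k > 2h(Y)$, then $D_1 U^{k-1}$ vanishes on every $d^{\Lambda}$-cycle, so the infimum in \eqref{gamma} is taken over the empty set and equals $+\infty$. The main obstacle is the delicate gluing and energy bookkeeping that makes $I(W)$ interact correctly with $U$ and $D_1$ with the right $\lambda$-weights, and that fixes the direction of \eqref{def eq1}; this is where the negative definiteness of $W$ is truly used.
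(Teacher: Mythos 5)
This theorem is not proved in the paper at all: it is quoted from Daemi's work \cite{D18}, so there is no internal proof to compare against. That said, your sketch is a faithful reconstruction of the strategy Daemi actually uses, and it matches the fragments of that machinery which this paper does reproduce later, namely the relation \eqref{Daemi1} between $D_1^{Y}U^{k-1}_{Y}$ and the cobordism map together with the $\mdeg$ comparison $\mdeg(\zeta_n)\leq \mdeg(CW(\zeta_n))+\delta_n$, and the characterization of finiteness via the Fr\o yshov invariant, which is essentially Fr\o yshov's definition of $h$ through the maps $D_1U^{j}$.

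Two points you should tighten. First, the direction of your cobordism map is internally inconsistent: you declare $I(W)\colon C^{\Lambda}_*(Y_1)\to C^{\Lambda}_*(Y_2)$ (consistent with your choice of ends for $W^*$), but then apply it to a near-optimal cycle $\alpha\in C^{\Lambda}_*(Y_2)$ to produce a competitor for $\Gamma_{Y_1}(k)$; for the inequality \eqref{def eq1} to come out as stated you need the map to go from the $Y_2$-complex to the $Y_1$-complex, so the ends of $W^*$ must be arranged accordingly. Second, the compatibility with $D_1$ is not simply $D_1^{Y_1}\circ I(W)\sim c(W)\,D_1^{Y_2}$: because of the reducible on $W^*$, the correct relation (recorded in the paper as \eqref{Daemi1}) carries correction terms $\sum_{1\leq j<k-1} b_j D_1U^{j}$. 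These terms are harmless precisely because the competitors in \eqref{gamma} are required to satisfy $D_1U^{j}(\alpha)=0$ for $1\leq j<k-1$, but your argument that $I(W)(\alpha)$ "satisfies the same vanishing and non-vanishing conditions" silently relies on this full relation, so it should be stated. With those repairs the outline is correct and is essentially Daemi's proof.
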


\subsection{Relations between $\{r_s(Y)\}$ and $\{\Gamma_Y(k)\}$}
It is natural to ask if there is a relation between $\{r_s(Y)\}_{s\in [-\infty, 0]} $ and $\{\Gamma_Y(k)\}_{k \in \Z_{>0}}$. In \cite{NST19}, the following equality is showed. 
\begin{thm}[\cite{NST19}] For any oriented homology $3$-sphere $Y$,
\[
r_{-\infty} (Y) = \Gamma_{-Y}(1) .
\]
\end{thm}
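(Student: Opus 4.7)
The plan is to prove the equality by showing that both invariants extract the same information from instanton Floer theory: the minimal Chern-Simons action at which the cohomology class represented by the trivial connection becomes essential. The invariant $r_{-\infty}(Y)$ records this through an $\R$-valued filtration on the Floer cochain complex, while $\Gamma_{-Y}(1)$ records it through the $\lambda$-degree on the Novikov coefficient ring $\Lambda$. The core of the argument is to set up a precise dictionary between the two formalisms.

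I would fix a small non-degenerate regular perturbation $\pi$ and record the following correspondence. For a flow line from an index-$1$ irreducible critical point $[a]$ to $[\theta]$ on the cylinder, the energy $\mathcal{E}(A)$ equals $\cs_\pi(a) - \cs_\pi(\theta)$, which is $\cs(a) + O(\|\pi\|)$. Consequently, for any cycle $\alpha = \sum_k \eta_k [a_k]$ in $C^{\Lambda}_*(Y)$,
\[
\mdeg\bigl(D_1(\alpha)\bigr) = \min_k \bigl\{\mdeg(\eta_k) + \cs(a_k)\bigr\},
\]
the minimum being over terms surviving cancellation. On the filtered side, $CI^*_{[-\infty, r]}(Y)$ has generators the critical points with Chern-Simons value in $[-\infty, r]$, and $\theta^{[-\infty, r]}_Y$ is the cocycle whose pairing with a generator $a$ counts flow lines $a \to \theta$.

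For the inequality $r_{-\infty}(Y) \geq \Gamma_{-Y}(1)$, I would argue that for any $r < \Gamma_{-Y}(1)$ the cochain representing $\theta^{[-\infty, r]}_Y$ is already exact: otherwise, dualizing would produce a cycle in $C^{\Lambda}_*(Y)$ with $D_1(\alpha) \neq 0$ whose degree gap $\mdeg(D_1(\alpha)) - \mdeg(\alpha)$ is at most $r < \Gamma_{-Y}(1)$, a contradiction. For the reverse direction $r_{-\infty}(Y) \leq \Gamma_{-Y}(1)$, I would take an almost-minimizer $\alpha$ for the variational problem defining $\Gamma_{-Y}(1)$, renormalize by $\lambda^{-\mdeg(\alpha)}$, and extract the leading-degree contribution. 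The resulting finite cycle is supported on critical points of Chern-Simons value at most $\Gamma_{-Y}(1)$ and pairs nontrivially with $\theta$, thereby obstructing $\theta^{[-\infty, r]}_Y$ from vanishing in $I^1_{[-\infty, r]}(Y)$ for $r$ slightly above $\Gamma_{-Y}(1)$.

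The principal technical obstacle will be to reconcile the two perturbation schemes: $r_{-\infty}(Y)$ is defined with $\pi \in \mathcal{P}^*(Y, g, r, -\infty)$, whose allowed size depends on $r$, while $\Gamma_{-Y}(1)$ is defined by a limit $\|\pi\| \to 0$ with no filtration constraint. I would handle this by verifying that for fixed $r < r_{-\infty}(Y)$ and all sufficiently small $\pi$ in either class, the moduli data agree up to energy $r$, so the witnesses constructed above transfer between frameworks and the limits can be taken compatibly. A secondary issue is sign coherence: the counts in $D_1$ and in the coboundary of $\theta^{[-\infty, r]}_Y$ must use the same orientations induced by the determinant line bundles $\mathbb{L}_a$ fixed in \cite{NST19}, and I would verify the identifications respect these conventions.
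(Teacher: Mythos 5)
First, a caveat about the comparison itself: this statement is quoted in the paper from \cite{NST19} and no proof of it appears here, so there is no internal argument to measure your proposal against. Judged on its own terms, your outline identifies the correct mechanism --- both invariants record the smallest Chern--Simons level at which a degree-one cycle pairs nontrivially with the $\theta$-cocycle, with the $\mdeg$ on $\Lambda$ playing the role of the $\R$-filtration --- and your first inequality is essentially sound: a cycle $z\in CI^1_{[-\infty,r]}(Y)$ with $\langle\theta,z\rangle\neq 0$ pushes forward to a $d^{\Lambda}$-cycle via $\tilde a\mapsto\lambda^{-\cs(\tilde a)}[a]$ (this works because $d^{\Lambda}$, despite the $\Z/8\Z$-grading, only counts moduli with index difference exactly one between lifts), and its degree gap is controlled by $\max_i\cs(\tilde a_i)<r$. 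Your flagged concerns about the $r$-dependent perturbation class $\mathcal{P}^*(Y,g,r,s)$ versus the $\|\pi\|\to 0$ limit, and about orientations, are the right ones.

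The genuine gap is in the reverse direction, at ``extract the leading-degree contribution.'' Two problems. First, the $\Z/8\Z$-graded complex $C^{\Lambda}_*(Y)$ is not a completion of the $\Z$-graded filtered complex: a general monomial $\lambda^{s}[a]$ with $s\in\R$ does not correspond to any lift of $[a]$ (the lifts occupy only integer-spaced exponents), so an almost-minimizing $\Lambda$-cycle $\alpha$ does not come with a preferred finite collection of index-one generators of $CI_{[-\infty,r]}^*(Y)$. Second, even granting such an identification, the leading-$\mdeg$ truncation of a $d^{\Lambda}$-cycle need not be a cycle --- $d^{\Lambda}$ raises $\mdeg$ by the (positive but possibly small) energies $\mathcal{E}(A)$, so $d^{\Lambda}\alpha=0$ only forces the truncation to be a cycle modulo terms of higher filtration, which is not enough to conclude $\partial z=0$ in $CI^{[-\infty,r]}_1(Y)$. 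A workable repair is the dual argument: assume $\theta^{[-\infty,r]}_Y=0$, so $\theta=\beta\circ\partial$ for a cochain $\beta$ on the $\cs<r$ part of $CI_0$, substitute this into $D_1\alpha=\sum_i\eta_i\,\theta(\tilde a_i)\lambda^{\mathcal{E}_i}$ for an arbitrary $\Lambda$-cycle $\alpha$, and use $d^{\Lambda}\alpha=0$ together with the energy bookkeeping to cancel all terms of $D_1\alpha$ with $\mdeg<\mdeg(\alpha)+r$; this yields $\Gamma_{-Y}(1)\geq r$ for every $r<r_{-\infty}(Y)$ without ever having to truncate a Novikov cycle. As written, your proposal asserts the conclusion of this step rather than proving it.
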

Therefore, $\{r_s(Y)\}_{s\in [-\infty, 0]} $ and $\{\Gamma_Y(k)\}_{k \in \Z_{>0}}$ satisfy the following inequalities: 
\[
r_0(Y) \leq \cdots \leq r_s(Y) \leq \cdots \leq r_{-\infty}(Y)= \Gamma_{-Y}(1) \leq\cdots \leq \Gamma_{-Y}(k) .
\]

It is also natural to ask if there is an oriented homology $3$-sphere $Y$ such that $\{r_s(Y)\}$ and $\{\Gamma_{-Y}(k) \}$ do not coincide. In \cite{NST19}, we proved $\{-\Sigma(2,3,6k+1) \# \Sigma(2,3,5)\}_{k \in \Z_{>0}}$ gives examples whose $\{r_s(Y)\}$ and $\Gamma_{-Y}(k)$ do not coincide. In this case, $\{r_s(Y)\}$ is not a constant with respect to $s$. Our connected sum formula implies $r_0(-\Sigma(2,3,6k+1) \# \Sigma(2,3,5)) = \frac{1}{24(6k+1)}$. On the other hand, since $h(-\Sigma(2,3,6k+1) \# \Sigma(2,3,5))=0$, $\Gamma_{\Sigma(2,3,6k+1) \# (-\Sigma(2,3,5))} (1)=\infty$. There is also an example of $Y$ such that $\Gamma_Y(k)$ is not constant with respect to $k$: in \cite{D18}, Daemi calculated 
\[
\Gamma_{\Sigma(2,3,5)}(k)  = 
\begin{cases} \frac{1}{120} \text{ if }k=1\\
\frac{49}{120} \text{ if } k=2 \\ 
\infty \text{ if } k\geq 3 
\end{cases} .
\]
For $-\Sigma(2,3,5)$, we have $r_s(-\Sigma(2,3,5)) = \Gamma_{\Sigma(2,3,5)} (1)$ for any $s \in [-\infty, 0]$.

\begin{rem}
 In \cite{DST19}, we will give a generalization $\mathcal{J}_Y (k,s )$ of both of $r_s(Y)$ and $\Gamma_Y(k)$. A theorem similar to \cref{rep1} can be proven for $\mathcal{J}_Y (k,s )$. 
 \end{rem}

\section{The invariants $l^s_Y$, $l^k_Y$ and $l_Y$} \label{Invariants lY}
\subsection{Perturbations and Invariants $l^s_Y$, $l^k_Y$ and $l_Y$}\label{per}
Let $Y$ be an oriented homology $3$-sphere and $g_Y$ a Riemann metric on $Y$, and fix a perturbation $\pi \in P^*_Y$.
Suppose that ${R}^*_\pi(Y) $ is a submanifold of ${\B}^*(Y)$ as the zero set of the gradient vector field of $\cs_{Y, \pi}$. 
For any point $a \in {R}^*_\pi(Y)$, we have the operator 
\[
\text{Hess}_{a}(cs_{Y, \pi})= *d_a + \text{Hess}_a h_\pi \colon \ker d_a^* \to \ker d_a^*, 
\]
where $\ker d_a^*$ is a model of $T_{a} {\B}^*(Y)$ and $d_a^*\colon \Om^1_Y\otimes \su \to \Om^0_Y\otimes \su$.   Note that \[
\text{Hess}_{a}(cs_{Y, \pi})\colon \ker d_a^* \to \ker d_a^*
\]
 is a self adjoint elliptic operator. 
 \begin{defn}
We call $\pi$ a {\it Morse-Bott perturbation} if 
\begin{align}
\text{Hess}_{a}(cs_{Y, \pi})\colon  (T_{a} {R}(Y))^{\perp_{L^2}} \cap  \ker d_a^*  \to (T_{a} {R}(Y))^{\perp_{L^2}}  \cap  \ker d_a^*
\end{align}
is invertible for any $a\in {R}^*_\pi(Y)$. 
\end{defn}
If $\cs_{Y, \pi}$ is Morse-Bott for a perturbation $\pi$ with $h_\pi=0$, then we call $\cs_Y$ {\it Morse-Bott}. In this paper, we set
\[
H^1_{a}(\pi):= \ker \text{Hess}_{a}(cs_{Y, \pi})|_{  \ker d_a^* }.
\]
If $h=0$, then we write $H^1_{a}$.
If we use this notation, $\cs_{Y, \pi}$ is Morse-Bott if and only if $H^1_{a}(\pi) = T_{a} {R}^*_\pi(Y) $ for each $a \in {R}^*_\pi(Y)$. (In general, $T_{a} {R}^*_\pi(Y)  \subset  H^1_{a}(\pi)$ holds.) Note that the condition $H^1_{a} = T_{a} {R}^*(Y) $ does not depend on the choice of metric. Next, we define the notion of {\it Morse-Bott perturbation at level} $r$.
\begin{defn}
We say $\cs_{Y, \pi}$ is {\it Morse-Bott at the level} $r$ if 
\begin{align}
\text{Hess}_{a}(cs_{Y, \pi})\colon  (T_{a} {R}(Y))^{\perp_{L^2}} \cap  \ker d_a^*  \to (T_{a} {R}(Y))^{\perp_{L^2}}  \cap  \ker d_a^*
\end{align}
is invertible for any $a\in {R}^*_\pi(Y) \cap\cs_{Y, \pi}^{-1}(r)$. 
\end{defn}
If $\cs_Y$ is Morse-Bott at the level $r$, one can show $\cs_Y$ is Morse-Bott at the level $r+m$ for any $m\in \Z$.
Set $\Lambda_Y:= \im\cs_Y|_{\wt{R}(Y)}$. If $0$ is a Morse-Bott perturbation for any element $r \in \Lambda_Y$, then $\cs_Y$ is Morse-Bott. 
 \begin{lem} \label{conn sum of MB}If the Chern-Simons functionals for $Y_1$ and $Y_2$ are Morse-Bott, then the Chern-Simons functional for $Y_1 \# Y_2$ is also Morse-Bott.
\end{lem}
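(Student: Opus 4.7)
The plan is to check Morse--Bott pointwise at every irreducible flat $SU(2)$-connection $a$ on $Y := Y_1 \# Y_2$. Since $T_a R^*(Y) \subseteq H^1_a$ is automatic (any tangent vector to the critical set lies in the kernel of the Hessian), it is enough to show the two sides have the same dimension. By van Kampen, $\pi_1(Y) \cong \pi_1(Y_1) * \pi_1(Y_2)$, and a flat connection $a$ on $Y$ restricts (uniquely up to gauge, since $\pi_1(S^2)=0$) to flat connections $a_i$ on $Y_i$. Write $s = \dim H^0(Y;\ad a)$ and $s_i = \dim H^0(Y_i;\ad a_i)$ for the infinitesimal stabilizer dimensions.

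For $\dim H^1_a$, I would apply Mayer--Vietoris to $Y = (Y_1 \setminus B^3) \cup_{S^2} (Y_2 \setminus B^3)$ with coefficients in the local system $\ad a$. This system restricts trivially to $S^2$, so $H^0(S^2;\ad a) = \su$ and $H^1(S^2;\ad a) = 0$; combined with $H^j(Y_i\setminus B^3;\ad a_i) \cong H^j(Y_i;\ad a_i)$ for $j=0,1$, this gives
\[
0 \to H^0(Y;\ad a) \to H^0(Y_1;\ad a_1) \oplus H^0(Y_2;\ad a_2) \to \su \to H^1_a \to H^1_{a_1} \oplus H^1_{a_2} \to 0,
\]
whence $\dim H^1_a = \dim H^1_{a_1} + \dim H^1_{a_2} + 3 + s - s_1 - s_2$. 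A parallel character-variety computation, using $\hom(\pi_1 Y_1 * \pi_1 Y_2, SU(2)) = \hom(\pi_1 Y_1, SU(2)) \times \hom(\pi_1 Y_2, SU(2))$, the standard $\dim Z^1(\pi_1 Y_i;\ad a_i) = \dim H^1_{a_i} + 3 - s_i$, and the diagonal $SU(2)$-action with orbit of dimension $3-s$, produces the matching formula
\[
\dim T_a R^*(Y) = \dim T_{a_1} R(Y_1) + \dim T_{a_2} R(Y_2) + 3 + s - s_1 - s_2.
\]
Invoking the Morse--Bott hypothesis on each $Y_i$, which gives $T_{a_i} R(Y_i) = H^1_{a_i}$, the two dimensions agree and the automatic inclusion forces $T_a R^*(Y) = H^1_a$.

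The main obstacle is that the Morse--Bott hypothesis is formulated only over the irreducible locus $R^*$, whereas the restriction $a_i$ of an irreducible $a$ on $Y$ need not itself be irreducible on $Y_i$ (for instance, $a_i$ may be the trivial connection on a homology-sphere factor). In such cases the equality $T_{a_i} R(Y_i) = H^1_{a_i}$ must be verified separately: it is automatic at the trivial connection on a homology sphere (both sides vanish) and otherwise reduces to smoothness of the reducible stratum of the character variety of $Y_i$ at $a_i$, which can be checked using the fact that the obstruction to deforming a reducible flat $SU(2)$-connection is controlled by a quadratic map that degenerates on this stratum.
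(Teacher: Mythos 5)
Your proof follows essentially the same route as the paper's: both decompose $R^*(Y_1 \# Y_2)$ according to the (ir)reducibility of the restrictions to the two summands and compare $\dim H^1_a$, computed via Mayer--Vietoris along the connect-sum sphere with $\ad a$ coefficients, against the dimension of the critical manifold ($H^1_{a_1}\oplus H^1_{a_2}\oplus \R^3$ when both restrictions are irreducible, $H^1_{a_1}\oplus H^1_{a_2}$ when one is trivial). Your closing worry about nontrivial reducible restrictions is vacuous in this setting, since the $Y_i$ are homology $3$-spheres and hence the only reducible flat $SU(2)$-connection on either summand is the trivial one --- exactly the two cases the paper's proof treats.
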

\begin{proof}Note that 
\[
 {R}^*(Y_1 \# Y_2 ) = {R}^*(Y_1)\times  {R}^*(Y_2)\times SO(3) \amalg  {R}^*(Y_1 ) \amalg  {R}^*(Y_2).
 \]
  There are three pattens $a_1 *_h a_2$ ($h\in SO(3)$), $a_1 * \theta$ and $\theta * a_2$ of elements in ${R}^*(Y_1 \# Y_2 )$, where $[a_1] \in  {R}^*(Y_1)$ and $[a_2] \in  {R}^*(Y_2)$. Suppose that $H^1_{[a_1]} (Y_1)=  T_{[a_1]} {R}^*(Y_1 ) $ and $H^1_{[a_2]}(Y_2)=  T_{[a_2]} {R}^*(Y_2 ) $. It is sufficient to prove $\dim H^1_{a_\#} (Y_1\# Y_2) = \dim T_{a_\#}{R}^*(Y_1 \# Y_2 )$ for any $a_\# \in {R}^*(Y_1 \# Y_2 )$. Fix critical points $a_1 \in {R}^*(Y_1 )$, $a_2 \in {R}^*(Y_2 )$ and $a_\# \in {R}^*(Y_1 \# Y_2 )$. The Meyer-Vietoris sequence of the local coefficient cohomology implies the existence of the following exact sequence:
  \[
  0 \to H^0_{a_1} (Y'_1) \oplus H^0_{a_2} (Y'_2)  \to   H^0(S^2) \to H^1_{a_\#}(Y_1\# Y_2) \to H^1_{a_1} (Y'_1) \oplus H^1_{a_2} (Y'_2) \to 0 
\]
  where $Y_i'$ is a punctured $Y_i$ for $i=1$ and $2$.
  The other sequence implies \[
  H^0_{a_i}(Y_i) \to H^0_{a_i} (Y'_i)\oplus \R^3 \to H^0_{\theta} ( S^2) \to H^1_{a_i}(Y_i) \to H^1_{a_i} (Y'_i) \to 0
 \]
 is also exact for $i=1$ and $2$. If both of $a_i$ are irreducible, then 
 \[
 H_{a_\#}^1 (Y_1\#  Y_2) \cong  H^1_{a_1} (Y_1) \oplus H^1_{a_2} (Y_2)\oplus \R^3.
 \]
 If $a_1$ is irreducible and $a_2=\theta$, then 
 \[
 H_{a_\#}^1 (Y_1\#  Y_2) \cong  H^1_{a_1} (Y_1) \oplus H^1_{a_2} (Y_2).
 \]
 This proves the desired result.  
  \qed \end{proof}
  \begin{cor}
  Let $Y$ be a finite connected sum of Seifert homology $3$-spheres.
  Then the Chern-Simons functional of $Y$ is Morse-Bott. 
 \end{cor}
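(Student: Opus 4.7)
The plan is to induct on the number of Seifert summands; the induction step is supplied directly by Lemma~\ref{conn sum of MB}, so the entire content lies in the base case: namely, that $\cs_\Sigma$ is Morse-Bott for any single Seifert homology $3$-sphere $\Sigma = \Sigma(a_1,\ldots,a_n)$.

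For the base case I would appeal to Fintushel--Stern~\cite{FS90}: the irreducible $SU(2)$ representation variety $R^*(\Sigma)$ is a finite set of isolated points, each parametrized by an admissible rotation tuple $(\ell_1,\ldots,\ell_n)$ with $0 < \ell_i < a_i$ coming from the Seifert presentation of $\pi_1 \Sigma$. In particular $T_{[a]} R^*(\Sigma) = 0$ at every irreducible flat $[a]$, so the Morse-Bott condition $H^1_{[a]} = T_{[a]} R^*(\Sigma)$ reduces to the cohomology vanishing $H^1_{[a]} = H^1(\Sigma; \ad \rho_a) = 0$ at each irreducible critical point.

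This vanishing is equivalent to the Zariski tangent space of $R^*(\Sigma)$ at $[a]$ being trivial, which is exactly what underlies Fintushel--Stern's direct computation of the instanton Floer homology of $\Sigma$ without recourse to a non-trivial perturbation: every irreducible flat connection contributes as a non-degenerate Morse critical point. Concretely, one would verify it by a Mayer--Vietoris decomposition of $\Sigma$ into a circle bundle over the punctured base orbifold glued to $n$ solid-torus neighborhoods of the exceptional fibers; the condition $0 < \ell_i < a_i$ forces $\rho$ to send each exceptional meridian to a non-central element of $SU(2)$, and irreducibility forces $\rho$ to be non-central on the regular fiber, so that the local $H^1$ with $\ad\rho$ coefficients vanishes on each piece, and splicing via the Mayer--Vietoris sequence yields the global vanishing. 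The main obstacle is the book-keeping in the Mayer--Vietoris computation for general Seifert data $(a_1,\ldots,a_n)$, but this computation is standard and can be quoted directly from~\cite{FS90}, rather than being redone here.
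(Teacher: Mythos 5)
Your overall skeleton is exactly the paper's: quote \cite{FS90} for the single-summand case and then apply \cref{conn sum of MB} to handle connected sums (the paper does this in one line, without even phrasing it as an induction). The problem is in your elaboration of the base case, which asserts something stronger than Morse--Bott and which is false for general Seifert data. You claim that $R^*(\Sigma(a_1,\ldots,a_n))$ is a finite set of isolated points and that consequently the Morse--Bott condition reduces to the vanishing $H^1(\Sigma;\ad\rho)=0$ at every irreducible flat connection. That would make $\cs_\Sigma$ genuinely Morse (non-degenerate), and it is only true for three exceptional fibers. For $n\geq 4$ the representation variety has positive-dimensional components: the paper's own \cref{2357} records that $R^*(\Sigma(2,3,5,7))$ consists of six $S^2$-components together with $16$ isolated points. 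The correct statement from Fintushel--Stern is that $H^1(\Sigma;\ad\rho)$ coincides with the tangent space to the component of $R^*(\Sigma)$ containing $\rho$ --- i.e.\ the cohomology computed by your Mayer--Vietoris splicing is nonzero in general and equals $T_\rho R^*(\Sigma)$, not zero. Indeed, if your version of the base case were true, the entire Morse--Bott perturbation machinery of \cref{Morse type perturbation} (choosing auxiliary Morse functions $g_r$ on the critical submanifolds $C_r$) would be unnecessary.

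So the citation you would ultimately fall back on is the right one, but the verification you sketch proves the wrong statement and would visibly fail already for $\Sigma(2,3,5,7)$. To repair it, replace ``$H^1(\Sigma;\ad\rho)=0$'' by ``$H^1(\Sigma;\ad\rho)=T_\rho R^*(\Sigma)$'' throughout, and note that the Mayer--Vietoris computation must account for the moduli of rotation angles on the positive-dimensional strata rather than forcing local vanishing on each piece.
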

 \begin{proof}
 It is shown that the Chern-Simons functional of any Seifert homology $3$-sphere is Morse-Bott in \cite{FS90}.  \cref{conn sum of MB} then gives the conclusion. 
 \qed \end{proof}

Here, we will introduce the invariant $l_Y$. For any Riemann metric $g_Y$ on $Y$, there exists a sequence of non-degenerate regular perturbations $\{\pi_n\}$ such that $\|\pi_n\| \to 0$. We define two quantities: 
\[
l(Y, g) := \min \left\{  \sup_{n \in \Z_{>0}} \# R^*_{\pi_n}  (Y)  \middle| \{\pi_n\} \text{: non-deg regular}, \|\pi_n\| \to 0 \right\} 
\in \Z_{\geq 0} \cup \{\infty\}
\]
and
\[
l(Y, g, r,i ):= \min \{  \sup_{n \in \Z_{>0}} \# \{a \in  \wt{R}^*_{\pi_n}  (Y) |  |cs_{\pi_n}(a) -r|<\lambda_Y , \ind (a)= i \}  |
\]
\[
 \{\pi_n\} \text{: non-deg regular}, \|\pi_n\| \to 0 \} \in \Z_{\geq 0} \cup \{\infty\}
\]
for a given $r \in \Lambda_Y$, where $\lambda_Y= \frac{1}{2} \min \set{ | a-b | | a,b \in  \Lambda_Y}$. 
We now give two invariants for homology $3$-spheres. 
\begin{defn}
We define invariants $l_Y$ and $l_{Y,r , i}$ by 
\[
l_Y := \min \set { l(Y, g_Y) | g_Y \text{ : Riemann metric} } \in \Z_{>0} \cup \{\infty\}
\]
and 
\[
l_{Y,r,i}:= \min \set { l(Y, g_Y,r,i)  | g_Y \text{ : Riemann metric}  } \in \Z_{>0} \cup \{\infty\}
\]
for given $r \in \Lambda^*_Y$ and $i \in \Z$.
\end{defn}
Note that $l_Y  =l_{-Y}$ and $l_{Y,r, i}= l_{-Y,-r, -i-3}$ by definition. 
We combine $l_{Y,r,i}$, $r_s(Y)$ and $\Gamma_Y(k)$ and define $l^s_Y$ and $l^k_Y$.
\begin{defn}
We set
\[
 l^s_Y:= \begin{cases}  1 \ \ \ \ \ \ \ \ \  \text{ if } r_s(Y)=\infty \\
 l_{Y,r_s(Y),1} \ \ \ \   \text{ if } r_s(Y)<\infty \\ 
 \end{cases}.
 \]
 and 
 \[
 l^k_Y:= \begin{cases}  1 \ \ \ \ \ \ \ \ \  \text{ if } \Gamma_Y(k)=\infty \\
 \dis \sum_{m \in \Z} l_{Y,\Gamma_Y(k), 1+ 8m} \ \ \ \   \text{ if } \Gamma_Y(k)<\infty \text{ and }k \in 2\Z+1\\ 
 \dis \sum_{m \in \Z}l_{Y,\Gamma_Y(k), 5+8m} \ \ \ \   \text{ if } \Gamma_Y(k)<\infty \text{ and }k \in 2\Z
 \end{cases}
 \]
 for $s \in [-\infty, 0]$ and $k \in \Z_{>0}$.
 \end{defn}
 \begin{lem}
 For any homology $3$-sphere $Y$, $l^s_Y \geq 1$ and $l^k_Y \geq 1$.
 \end{lem}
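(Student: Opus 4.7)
The claim is trivial in the branches $r_s(Y)=\infty$ or $\Gamma_Y(k)=\infty$, where $l^s_Y$ and $l^k_Y$ are defined to equal $1$. So the plan is to handle the two finite branches by exhibiting, for every sequence of sufficiently small non-degenerate regular perturbations $\{\pi_n\}$, at least one irreducible critical point of $\cs_{Y,\pi_n}$ whose Chern-Simons value is within $\lambda_Y$ of the invariant and whose Floer index matches one of the residues modulo $8$ appearing in the definition.

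For $r_s(Y)<\infty$, I would use the characterization $r_s(Y)=\sup\{r\ge 0\mid \theta_Y^{[s,r]}=0\}$. Given any $\varepsilon<\lambda_Y/2$, this supremum produces regular values $r^-<r_s(Y)<r^+$ within $\varepsilon$ of $r_s(Y)$ with $\theta_Y^{[s,r^-]}=0$ and $\theta_Y^{[s,r^+]}\ne 0$. The complexes $CI^{[s,r^-]}_*(Y)$ and $CI^{[s,r^+]}_*(Y)$ can differ in degree $1$ only through index-$1$ irreducible critical points whose Chern-Simons value lies in $(r^-,r^+)$, so the nonvanishing of $\theta_Y^{[s,r^+]}$ forces at least one such critical point to be present. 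Its action lies within $\lambda_Y$ of $r_s(Y)$ by the choice of $\varepsilon$, so $l_{Y,r_s(Y),1}\ge 1$ for every admissible sequence $\{\pi_n\}$, and hence $l^s_Y\ge 1$.

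For $\Gamma_Y(k)<\infty$ I would apply the same idea to formula \eqref{gamma} with $Y$ replaced by $-Y$: finiteness yields a cycle $\alpha\in C^\Lambda_*(-Y)$ with $D_1 U^{k-1}(\alpha)\ne 0$ whose $\mdeg$-minimizing monomial comes from a concrete broken trajectory out of an irreducible critical point of $-Y$ through $k-1$ instances of $U$ into the reducible $\theta$; the total accumulated energy equals the Chern-Simons value of that starting critical point, namely $\Gamma_Y(k)$. Since $D_1$ is defined on degree $1$ and $U$ has degree $-4$, this starting critical point sits in degree $4k-3$, i.e.\ degree $1$ for $k$ odd and degree $5$ for $k$ even. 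Translating back to $Y$ via the reflection symmetry $l_{Y,r,i}=l_{-Y,-r,-i-3}$ noted after the definition, one reads off a nonzero contribution to some $l_{Y,\Gamma_Y(k),j+8m}$ appearing in the definition of $l^k_Y$, so $l^k_Y\ge 1$. The main subtlety I anticipate is precisely this last index-and-level translation between $Y$ and $-Y$, which relies on the paper's conventions for the Chern-Simons sign and Floer grading under orientation reversal; once these are pinned down, both parts reduce to the essentially tautological fact that, when finite, $r_s(Y)$ and $\Gamma_Y(k)$ are genuine critical values of the perturbed Chern-Simons functional and must therefore be attained by at least one irreducible critical point.
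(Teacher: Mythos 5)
Your argument for the branch $r_s(Y)<\infty$ is essentially the paper's: both exploit the jump of the class $\theta_Y^{[s,r]}$ from zero to nonzero as $r$ crosses $r_s(Y)$ to force, for every sufficiently small non-degenerate regular perturbation, an index-$1$ irreducible critical point whose perturbed Chern--Simons value lies within $\lambda_Y$ of $r_s(Y)$. That part is fine.

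The branch $\Gamma_Y(k)<\infty$ has a genuine gap, and it sits exactly at the step you flagged as a ``subtlety.'' You propose to read \eqref{gamma} with $Y$ replaced by $-Y$, extract an irreducible critical point $a$ of $\cs_{-Y}$ in degree $4k-3$ with $\cs_{-Y}(a)=\Gamma_Y(k)$, and then translate back using $l_{Y,r,i}=l_{-Y,-r,-i-3}$. But that identity sends $l_{-Y,\Gamma_Y(k),4k-3}$ to $l_{Y,-\Gamma_Y(k),-4k}$: the level becomes $-\Gamma_Y(k)$ rather than $\Gamma_Y(k)$, and the index residue becomes $4$ (resp.\ $0$) mod $8$ for $k$ odd (resp.\ even) rather than $1$ (resp.\ $5$). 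Neither is one of the terms $l_{Y,\Gamma_Y(k),1+8m}$ or $l_{Y,\Gamma_Y(k),5+8m}$ summed in the definition of $l^k_Y$, so the translation does not produce the required lower bound. The paper avoids the orientation reversal altogether: formula \eqref{gamma} is already an infimum over chains $\alpha$ in the complex $C^{\Lambda}_*(Y)$ of $Y$ itself, and since $D_1$ lives on degree $1$ and $U$ drops degree by $4$, the generators of such an $\alpha$ are critical points of $\cs_{Y,\pi_n}$ on $Y$ of index $4k-3\equiv 1$ or $5 \pmod 8$ whose Chern--Simons values (for suitable lifts, read off from $\mdeg$) converge to the invariant; the paper then concludes by contradiction that the set counted by $l^k_Y$ cannot be empty along any admissible sequence of perturbations. (The paper itself is not consistent about $\Gamma_Y(k)$ versus $\Gamma_{-Y}(k)$ between \eqref{gamma}, the definition of $l^k_Y$, and the proof of this lemma, which may be what pushed you toward the reflection; but whichever sign convention is intended, the repair is to stay on $Y$ and use \eqref{gamma} directly rather than to reflect and translate.)
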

 \begin{proof}Suppose $r= r_s(Y)< \infty$ for $s \in [-\infty, 0]$. By the definition of $r_s(Y)$, for a sequence $\{ r_n\}_{n\in \Z_{>0}}$ such that $r_n>r$, $\dis \lim_{n\to \infty} r_n=r$ and $r_n \in \R \setminus \Lambda_Y$, we have $0\neq \theta_Y^{[s,r_n]} \in I^1_{[s,r_n]}(Y)$. If $r'<r$, then $0= \theta_Y^{[s,r']} \in I^1_{[s,r']}(Y)$. Then, for any Riemann metric $g_Y$ on $Y$ and any sequence of perturbations $\{\pi_n\}_{n\in \Z_{>0}}$ with $\| \pi_n\| \to 0$, there is a sequence $\{c_n\}_{n\in \Z_{>0}}$ of critical points of $\cs_{Y, \pi_n}$ such that $M^Y ( c_n , \theta)_{\pi_n} \neq \emptyset$, $\ind (c_n)=1$ for all $n$ and 
 \[
 \lim_{n\to \infty} \cs_Y (c_n) = r_s(Y).
 \]
  Therefore, $l_Y^s \geq 1$. Next we see $l^k_Y \geq 1$. Suppose $r= \Gamma_Y(k)<\infty$ for $k\in \Z_{>0}$. 
Suppose that $k$ is odd and $l^k_Y=0$. The assumption $l^k_Y=0$ implies that there exist 
a Riemann metric $g_Y$ on $Y$ and a sequence of perturbations $\{\pi_n\}_{n\in \Z_{>0}}$ with $\| \pi_n\| \to 0$ such that 
\begin{align}\label{empty}
\emptyset= \bigcup_{m \in \Z} \set {a \in  \wt{R}^*_{\pi_n}  (Y) |  |cs_{\pi_n}(a) -\Gamma_Y(k)|<\lambda_Y , \ind (a)= 1+ 8m } . 
\end{align}
Then, for $g$ and $\{\pi_n\}_{n\in \Z_{>0}}$, 
\[
\Gamma_Y (k) = \lim_{n \to \infty}  \left(
\inf_{\substack{\alpha \in C_*^{\Lambda}(Y, \pi_n), d^{\Lambda}(\alpha)=0 \\ D_1 U^j(\alpha) = 0 ( 1 \leq j < k-1) \\ 
D_1 U^{k-1}(\alpha) \neq 0  }}
\left\{
\mdeg(D_1U^k (\alpha))- \mdeg (\alpha)   
\right\}
\right).
\]
This implies there is a sequence $\{\al_n\}_{n \in \Z_{>0}}$ of elements in $C_1^{\Lambda} (Y, \pi_n)$ such that 
\begin{align}\label{gammay}
\lim_{n \to \infty} \left(\mdeg(D_1U^k (\alpha_n))- \mdeg (\alpha_n) \right)= \Gamma_Y(k) .
\end{align}
 We write $\dis \al_n = \sum_{i \in \Z_{>0} } q_i^n [c_i^n] \lambda^{s_i^n}$, where $q_i^n \in \q$, $[c_i^n] \in R^* (Y)_{\pi_n}$ and $s_i^n \in \R$ with $\dis \lim_{i\to \infty} s_i^n =\infty$. Then, by taking suitable lift $c_{i_n}^n$ of $[c_{i_n}^n]$ for each $n$, \eqref{gammay} implies  
 \[
 \lim_{n\to \infty} \cs_Y (c_{i_n}^n)  = \Gamma_Y (k).
 \]
 This contradicts \eqref{empty}. The proof for $k \in 2\Z$ is the same. 
 \qed
 \end{proof}
 The following proposition provides a relation between $l_Y$ and $l_{Y,r,i}$:
\begin{prop} 
We write $ \{a_1, \dots,  a_n\} = (0,1] \cap \Lambda^*_Y$. Then, we have 
\[
\sum_{1 \leq i \leq n, j \in \Z} l_{Y,a_i, j }  =l_Y. 
\]
\end{prop}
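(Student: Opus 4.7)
The strategy is to decompose each perturbed irreducible critical set $R^*_{\pi_n}(Y)$ according to Chern--Simons value and Floer index, and to match this decomposition with the definitions of $l_Y$ and $l_{Y,a_i,j}$.

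First I would fix a metric $g$ on $Y$ and a non-degenerate regular sequence $\{\pi_n\}\subset \mathcal{P}^*(Y)$ with $\|\pi_n\|\to 0$. A standard Uhlenbeck compactness argument applied to the perturbed critical point equation shows that any sequence $\tilde a_n\in\wt R^*_{\pi_n}(Y)$ subsequentially gauge-converges to a flat $SU(2)$-connection on $Y$; since $Y$ is a homology $3$-sphere, the trivial connection is the only reducible flat and is isolated in $\B(Y)$, so no such subsequence of irreducibles can converge to it once $\|\pi_n\|$ is small enough. Combined with the choice $\lambda_Y=\tfrac12\min\{|a-b|:a\neq b\in\Lambda_Y\}$, this yields, for all sufficiently large $n$,
\[
\# R^*_{\pi_n}(Y) \;=\; \sum_{i=1}^{n}\sum_{j\in\Z} N_{i,j}(\pi_n),\qquad N_{i,j}(\pi_n):=\#\{\tilde a\in\wt R^*_{\pi_n}(Y):|\cs_{\pi_n}(\tilde a)-a_i|<\lambda_Y,\ \ind(\tilde a)=j\},
\]
where each class $[a]\in R^*_{\pi_n}(Y)$ is represented by its unique lift with $\cs_{\pi_n}(\tilde a)\in(0,1]$.

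For the inequality $\sum_{i,j}l_{Y,a_i,j}\leq l_Y$, I would fix a metric $g_0$ and a minimising sequence $\{\pi_n^\ast\}$ for $l(Y,g_0)=l_Y$, and extract a subsequence along which each of the finitely many non-trivially occurring integer-valued sequences $N_{i,j}(\pi_n^\ast)$ stabilises at a value $N_{i,j}^\infty$ with $\sum_{i,j}N_{i,j}^\infty=l_Y$; the latter equality uses that once $\|\pi_n^\ast\|$ is small the decomposition is stable under further perturbation in the same non-degenerate regular class, so the sup is attained cofinally along a suitable subsequence. The definitional inequality $l_{Y,a_i,j}\leq l(Y,g_0,a_i,j)\leq\sup_n N_{i,j}(\pi_n^\ast)=N_{i,j}^\infty$ then sums to the desired bound.

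For the reverse $l_Y\leq\sum_{i,j}l_{Y,a_i,j}$, one must construct a single metric $g$ and single sequence $\{\pi_n\}$ that simultaneously realises the minimum count in each slot $(a_i,j)$. For each pair $(i,j)$ one picks a minimiser $\{\pi_n^{(i,j)}\}$. Holonomy perturbations in $\mathcal{P}^*(Y)$ are parametrised by finite collections of embedded solid tori together with adjoint-invariant functions; combining the solid-torus data across all slots into one perturbation of possibly enlarged parameter dimension gives a candidate sequence $\{\pi_n\}$. Because the finitely many irreducible flat components corresponding to distinct levels $a_i$ are pairwise disjoint in $\B^*(Y)$, a neighbourhood-localisation argument together with a Sard--Smale genericity perturbation ensures the combined $\pi_n$ is non-degenerate, regular, and produces exactly $l_{Y,a_i,j}$ critical points in each slot for $n$ large.

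\textbf{Main obstacle.} The substantive step is the second direction: one must justify the locality of holonomy perturbations targeting distinct irreducible flat components. Holonomy perturbations are globally defined, so work is needed to show that the perturbation data associated with different critical levels interact only negligibly in the $\|\pi_n\|\to 0$ regime, so that slot-by-slot optimality can be achieved simultaneously by a single sequence.
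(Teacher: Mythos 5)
Your overall strategy coincides with the paper's: the entire published proof consists of the single observation that for $\|\pi\|$ small the perturbed irreducible critical set decomposes as the disjoint union over $(a_i,j)$ of the slots $\set{a\in\wt{R}^*_{\pi}(Y) \mid |\cs_{\pi}(a)-a_i|<\lambda_Y,\ \ind(a)=j}$, followed by ``this implies the conclusion.'' Your treatment of the inequality $\sum_{i,j}l_{Y,a_i,j}\leq l_Y$ (pass to a subsequence of a minimising sequence along which the finitely many integer counts $N_{i,j}$ stabilise, so that the sum of the sups equals the sup of the sums) is correct and is exactly the content the paper suppresses; the point that $\sup_n\sum_{i,j}N_{i,j}\leq\sum_{i,j}\sup_nN_{i,j}$ goes the wrong way without this extraction is worth making explicit.

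The reverse inequality $l_Y\leq\sum_{i,j}l_{Y,a_i,j}$ is where a genuine gap remains, and you have identified only part of it. Combining holonomy perturbations supported near the \emph{distinct, disjoint} level sets $\cs_Y^{-1}(a_i)\cap R^*(Y)$ is unproblematic and is exactly the localisation mechanism the paper itself uses (the Morse--Bott type perturbations of \cref{MB per} are supported in a neighbourhood $U$ of a single level set, and \cref{key} already glues such a perturbation to one supported away from that level). The harder point, which your sketch does not address, is simultaneity \emph{within} a single level: for fixed $i$, the quantities $l_{Y,a_i,j}$ for different Floer indices $j$ are each defined by a separate minimisation, and their minimisers are perturbation sequences supported in the \emph{same} region of $\B^*(Y)$, so they cannot be superposed by enlarging the solid-torus data. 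One must argue that a single sequence $\{\pi_n\}$ achieves $N_{i,j}(\pi_n)=l_{Y,a_i,j}$ for every $j$ at once (for instance, by showing that any competitor realising the minimum in one slot can be modified, without increasing the counts in the other slots at that level, until all slots are simultaneously minimal). Neither your proposal nor the paper supplies this step; as it happens, only the direction $\sum_{i,j}l_{Y,a_i,j}\leq l_Y$ is what is needed for the inequality $\max\{l^s_Y,l^k_Y\}\leq l_Y$ quoted in the introduction, so the usable half of the proposition is the half you have actually proved.
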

\begin{proof}
For any metric $g_Y$, there exists $\varepsilon>0$ such that any perturbation $\pi$ with $\| \pi \|< \varepsilon$, we have 
\[
\bigcup_{1 \leq i \leq n, j \in \Z} \set {a \in  \wt{R}^*_{\pi}  (Y) |  |cs_{\pi}(a) -a_i|<\lambda_Y , j= \ind (a)  } \cong R^*_{\pi}  (Y).
 \]
This implies the conclusion. \qed
\end{proof}
 We will see a connected sum formula of $l_Y$  and $l_{Y, r, i}$ under some assumptions in \cref{Morse type perturbation}.

\subsection{Calculation of $l_Y$}
In this section, we give several ways to calculate $l_Y$ or $l_{Y, r, i}$. 
\begin{lem}[Morse inequality for $\cs$]For an oriented homology $3$-sphere $Y$, the inequality $ l_Y \geq \sum_{i=0}^7 \rank I_i (Y) $ holds. 
\end{lem}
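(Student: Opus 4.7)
The plan is to reduce the claim to the standard rank inequality for chain complexes, applied to the instanton Floer chain complex, combined with the metric/perturbation invariance of instanton Floer homology $I_*(Y)$.

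First I would unpack the definition of $l_Y$. Given any Riemannian metric $g_Y$ on $Y$ and any sequence $\{\pi_n\}$ of non-degenerate regular perturbations with $\|\pi_n\|\to 0$, for each $n$ the irreducible critical set $R^*_{\pi_n}(Y)$ is finite, and the instanton chain complex $CI_*(Y,\pi_n)$ is $\Z/8\Z$-graded with generators in degree $i$ given by those $[a]\in R^*_{\pi_n}(Y)$ with $\mathrm{ind}(a)\equiv i\pmod 8$. In particular,
\[
\#R^*_{\pi_n}(Y)\;=\;\sum_{i=0}^{7}\dim_{\mathbb Q} CI_i(Y,\pi_n).
\]

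Next I would invoke the (well-known) fact that the homology of $CI_*(Y,\pi_n)$ is isomorphic to $I_*(Y)$, independently of the choice of $(g_Y,\pi_n)$; this is Floer's original invariance theorem, which in the slightly enlarged class of perturbations $\mathcal{P}^*(Y)$ used here is established by the same cobordism/continuation argument as in \cite{Fl88}, \cite{Do02}. Combining this with the elementary chain-level inequality $\dim_{\mathbb Q} CI_i \ge \mathrm{rank}\,H_i(CI_*)$ yields, for every $n$,
\[
\#R^*_{\pi_n}(Y)\;=\;\sum_{i=0}^{7}\dim_{\mathbb Q} CI_i(Y,\pi_n)\;\ge\;\sum_{i=0}^{7}\mathrm{rank}\, I_i(Y).
\]
Since the right-hand side is independent of $n$, taking $\sup_{n}$ gives the same lower bound, and then taking the minimum over admissible sequences $\{\pi_n\}$ and over Riemannian metrics $g_Y$ produces $l_Y\ge\sum_{i=0}^{7}\mathrm{rank}\, I_i(Y)$, which is the desired inequality.

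No real obstacle is anticipated: the whole content is (i) the identification of the number of generators of $CI_*(Y,\pi)$ with $\#R^*_\pi(Y)$, which is immediate from the definition, and (ii) metric/perturbation invariance of $I_*(Y)$, which is standard. The only point that deserves explicit comment in the write-up is that the invariance statement applies to perturbations in $\mathcal{P}^*(Y)$ (not only $\mathcal{P}(Y)$), but this is handled exactly as in the discussion preceding the definitions of $r_s(Y)$ and $I^{[s,r]}_*(Y)$ in Section~\ref{Preliminaries}.
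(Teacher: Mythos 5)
Your proposal is correct and follows essentially the same route as the paper, which simply observes that $\# R^*_{\pi}(Y) \geq \sum_{i=0}^7 \operatorname{Rank} I_i(Y)$ for every non-degenerate regular perturbation by the definition of instanton homology; you have merely spelled out the chain-level rank inequality and the perturbation/metric invariance that the paper leaves implicit.
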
 
\begin{proof}By the definition of instanton homology, we have
\[
\# R^*_{\pi}(Y)  \geq  \sum_{i=0}^7 \rank I_i (Y)
\]
for every non-degenerate regular perturbation.  This completes the proof. 
\qed \end{proof}
The following lemma give explicit calculations:
\begin{lem}For a Seifert homology $3$-sphere of type $\Sigma(p,q,r)$, 
\[
l_{\Sigma(p,q,r)}= 2| \lambda(\Sigma(p,q,r)) | ,
\]
where $\lambda(Y)$ is the Casson invariant of $Y$.
\end{lem}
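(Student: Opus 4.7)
The plan is to establish the equality by squeezing $l_{\Sigma(p,q,r)}$ between matching upper and lower bounds, both computed via the Fintushel--Stern analysis of Seifert fibered homology spheres. The key inputs are that for $Y=\Sigma(p,q,r)$: (a) the critical set $R^*(Y)\subset\B^*(Y)$ is a finite set of points; (b) the Chern--Simons functional is Morse--Bott (in fact Morse, since the critical set is $0$-dimensional); (c) the instanton Floer chain complex is freely generated by $R^*(Y)$ with all generators in odd degrees; and (d) the Casson invariant satisfies $2\lambda(Y)=\chi(I_*(Y))$.

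\textbf{Upper bound $l_Y\le\#R^*(Y)$.} Fix a generic Riemann metric $g_Y$ for which $\cs_Y$ is Morse (this is guaranteed by Fintushel--Stern's description of $R^*(Y)$ as isolated, non-degenerate critical points). Because each critical point $[a]\in R^*(Y)$ has non-degenerate Hessian on $\ker d_a^*$, there is a neighborhood $U_a$ in $\B^*(Y)$ and $\eps_a>0$ such that for any perturbation $\pi$ with $\|\pi\|<\eps_a$ the functional $\cs_{Y,\pi}$ has exactly one critical point in $U_a$ and none in a punctured neighborhood of $[a]$. A standard compactness argument on the complement $\B^*(Y)\setminus\bigcup U_a$ (where $|\grad\cs_Y|$ is bounded below) shows that for all sufficiently small $\pi$, no new critical points are created outside $\bigsqcup U_a$. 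Hence there is a sequence of non-degenerate perturbations $\pi_n\to 0$ with $\#\wt R^*_{\pi_n}(Y)=\#R^*(Y)$; standard Sard--Smale arguments inside the class $\cal P^*(Y)$ permit an additional arbitrarily small perturbation achieving regularity of the moduli spaces without changing the critical set. Thus $l(Y,g_Y)\le \#R^*(Y)$, giving $l_Y\le\#R^*(Y)$.

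\textbf{Lower bound $l_Y\ge \#R^*(Y)$.} By the Morse inequality stated just above the lemma, $l_Y\ge\sum_{i=0}^{7}\rank I_i(Y)$. Fintushel--Stern show that for $Y=\Sigma(p,q,r)$ the Floer differential vanishes, so $I_*(Y)$ is freely generated by $R^*(Y)$, and all generators lie in odd degrees. Consequently $\sum_i\rank I_i(Y)=\#R^*(Y)$ and $\chi(I_*(Y))=-\#R^*(Y)$. Combined with Taubes' identification $2\lambda(Y)=\chi(I_*(Y))$ this gives $\#R^*(Y)=2|\lambda(Y)|$ and therefore $l_Y\ge 2|\lambda(Y)|=\#R^*(Y)$, matching the upper bound and closing the argument.

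\textbf{Main obstacle.} The delicate point is the upper bound: one must produce a sequence of \emph{simultaneously} non-degenerate and regular holonomy perturbations $\pi_n\to 0$ whose perturbed critical set has exactly $\#R^*(Y)$ elements. Non-degeneracy near each isolated unperturbed critical point is automatic from the Morse property, but ruling out spurious critical points born from the added holonomy terms away from $R^*(Y)$ requires a uniform lower bound on $\|\grad\cs_Y\|$ on the complement of small neighborhoods, together with the $C^m$-smallness built into $\|\pi\|$. Once this is in place, the transversality step (achieving regularity in $\cal P^*(Y)$ without destroying non-degeneracy or introducing critical points) follows from a standard Baire-category perturbation argument as in \cite{Ma18,NST19}.
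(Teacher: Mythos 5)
Your proof is correct and takes essentially the same route as the paper: the lower bound comes from the Morse inequality $l_Y \ge \sum_{i}\rank I_i(Y)$ together with the vanishing of the Floer differential (all generators of one parity) and Taubes' identification $\chi(I_*(Y))=2\lambda(Y)$, and the upper bound comes from exhibiting perturbations whose critical set is exactly $R^*(\Sigma(p,q,r))$. The only difference is one of detail: the paper simply observes that the zero perturbation $(\emptyset,0,0)$ is already non-degenerate and regular (by Fintushel--Stern's non-degeneracy and the parity of the Floer indices), whereas you construct small nonzero perturbations and rule out spurious critical points, which is a more laborious but equally valid way to get the same bound.
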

\begin{proof}
For a Seifert homology $3$-sphere $\Sigma(p,q,r)$, it is shown in \cite{FS90} that $\cs_{\Sigma(p,q,r)}$ is non-degenerate and Floer indices of all of its critical points are even. Therefore, $\pi_n=(\emptyset, 0 , 0 )$ gives a sequence of non-degenerate regular perturbations. 
This implies the conclusion.  
\qed \end{proof}
We give calculations of $l_Y$ for the degenerate case $\Sigma(a_1, \cdots, a_n)$ in \cref{non-deg}. 
%We call an oriented homology $3$-sphere $Y$ with $l_Y=2|\lambda(Y)|$ by {\it perfect }. 
%We also have a non-perfect example. 
%\begin{ex}The manifold $\Sigma(2,3,5) \# (-\Sigma(2,3,5))$ is not perfect. 
%Fukaya's calculation(\cite{Fuk96}) shows $I_*(\Sigma(2,3,5) \# (-\Sigma(2,3,5)), \q) \cong \{0\}$. 
 %On the other hand, we have $\lambda((\Sigma(2,3,5) \# (-\Sigma(2,3,5)))=0$. Thus $(\Sigma(2,3,5) \# (-\Sigma(2,3,5))$ is not perfect.
 %\end{ex} 

\begin{lem}For an oriented homology $3$-sphere $Y$ and $r \in \Lambda_Y$, 
 \[
 l_{Y,r, i} \geq \rank I_i^{[r+\lambda_Y, r-\lambda_Y]} (Y) 
 \]
 holds, where $\lambda_Y= \frac{1}{2} \min \set { | a-b | | a,b \in  \Lambda_Y }$. 
\end{lem}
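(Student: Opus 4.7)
The plan is to interpret $l_{Y,r,i}$ as an upper bound, via a standard Morse inequality, on the rank of a chain complex whose cohomology is the filtered instanton group $I_i^{[r-\lambda_Y,\,r+\lambda_Y]}(Y)$. (I read the interval in the statement as $[r-\lambda_Y,r+\lambda_Y]$; with the other orientation the statement is vacuous.) First, fix a Riemann metric $g_Y$ and a sequence of non-degenerate regular perturbations $\{\pi_n\}\subset \mathcal P^*(Y)$ with $\|\pi_n\|\to 0$ realising
\[
l_{Y,r,i}=\sup_{n}\#\bigl\{a\in \widetilde R^*_{\pi_n}(Y)\bigm| |\cs_{\pi_n}(a)-r|<\lambda_Y,\ \ind(a)=i\bigr\}.
\]
By the choice of $\lambda_Y=\tfrac12\min\{|a-b|\mid a,b\in\Lambda_Y\}$, the open window $(r-\lambda_Y,r+\lambda_Y)$ contains exactly one unperturbed critical value, namely $r$, and its endpoints lie in $\R\setminus\Lambda_Y$. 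For $\|\pi_n\|$ sufficiently small the perturbed critical values cluster near the unperturbed ones, so the endpoints $r\pm\lambda_Y$ remain regular values of $\cs_{Y,\pi_n}$.

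Next, I would build the filtered subcomplex $CI^{[r-\lambda_Y,\,r+\lambda_Y]}_*(Y,\pi_n)$ exactly as in \cite{NST19}: its generators in degree $i$ are the irreducible critical points $a\in\widetilde R^*_{\pi_n}(Y)$ with $\cs_{\pi_n}(a)\in(r-\lambda_Y,r+\lambda_Y)$ and $\ind(a)=i$, and the differential counts perturbed ASD trajectories between them. This is a well-defined chain complex because the Chern-Simons value decreases along flow lines, so flow lines between generators in this window stay in the window, and the endpoints are regular values. Its cohomology $I_i^{[r-\lambda_Y,\,r+\lambda_Y]}(Y,\pi_n)$ is independent of the small regular perturbation, by the continuation argument of \cite{NST19} (regular values of $\cs_{Y,\pi}$ at the endpoints are preserved under small variations, and continuation chain maps commute with the filtration).

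From the tautological Morse inequality $\dim CI^{[r-\lambda_Y,\,r+\lambda_Y]}_i(Y,\pi_n)\geq \rank I^{[r-\lambda_Y,\,r+\lambda_Y]}_i(Y,\pi_n)=\rank I^{[r-\lambda_Y,\,r+\lambda_Y]}_i(Y)$, we deduce for every large $n$
\[
\#\bigl\{a\in \widetilde R^*_{\pi_n}(Y)\bigm| |\cs_{\pi_n}(a)-r|<\lambda_Y,\ \ind(a)=i\bigr\}\geq \rank I_i^{[r-\lambda_Y,\,r+\lambda_Y]}(Y).
\]
Taking the supremum over $n$ and then the infimum over the metric $g_Y$ yields $l_{Y,r,i}\geq\rank I_i^{[r-\lambda_Y,\,r+\lambda_Y]}(Y)$.

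The main technical obstacle is the invariance of the filtered group $I^{[r-\lambda_Y,\,r+\lambda_Y]}_i(Y)$ under the specific class $\mathcal P^*(Y)$ of perturbations used here, rather than the class $\mathcal P(Y)$ used in \cite{NST19}. This is handled exactly as in the extension argument given just above in the paper (where the author notes that the construction and invariance of $\theta_Y^{[s,r]}$ go through verbatim in $\mathcal P^*(Y)$); one need only observe additionally that the continuation map preserves the window filtration because $\|\pi_n\|\to 0$ forces all continuation trajectories to lie inside a slightly enlarged window, still bounded by regular values.
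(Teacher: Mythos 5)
Your proof is correct and follows essentially the same route as the paper's: realize $l_{Y,r,i}$ by a metric and a minimizing sequence of small non-degenerate regular perturbations, note that the critical points in the window generate the chain group of the filtered complex, and apply the tautological Morse inequality before taking the infimum over metrics. You supply more detail than the paper (regularity of the endpoints $r\pm\lambda_Y$, invariance over $\mathcal{P}^*(Y)$, and the correct orientation of the interval, which the paper writes with the endpoints reversed), but the underlying argument is the same.
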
 
\begin{proof}Take a Riemann metric $g_Y$ and a sequence $\{\pi_n\}$ of non-degenerate regular perturbations such that 
\[
l(Y, g, r,i )=  \sup_{n \in \Z_{>0}} \# \set {a \in  \wt{R}^*_{\pi_n}  (Y) |  |cs_{\pi_n}(a) -r|<\lambda_Y, \ind (a)= i }
\]
and $\|\pi_n\| \to 0$. Then the chain complex of $I_i^{[r+\lambda_Y, r-\lambda_Y]} (Y)$ is generated by the elements $\{ a \in \wt{R}^*_{\pi_n} (Y) |r+\lambda_Y <cs_{\pi_n}(a)<r-\lambda_Y\}$. By the definition of $l_{Y,r, i}$, we have
\[ 
l(Y, g, r,i ) \geq  \rank I_i^{[r+\lambda_Y, r-\lambda_Y]} (Y) .
\]
\qed
\end{proof}

\section{Some remarks for $\{cs_{X,c}^j\}_{j \in \Z_{>0}} $} \label{Some remarks for QX}
In this section, we give several properties of $\{cs_{X,c}^j\}_{j \in \Z_{>0}} $ including the connected sum formula and the surgery formula. 
\subsection{Connected sum formula}
First we show a connected sum formula for $\{cs_{X,c}^j\}_{j \in \Z_{>0}} $. Let $X_1$ and $X_2$ be oriented closed 4-manifolds with fixed classes $c_1 \in H^1(X_1; \Z)$ and $c_2 \in H^1(X_2; \Z)$. Fix embeddings $l_i \colon S^1 \times D^3 \to X_i $ with $c_i (l_i(S^1\times * ))=1 $. For a diffemorphism $\psi :\partial (\im l_1) \to \partial (\im l_2)$, one can define a connected sum $X_1 \#_{\psi} X_2$ of $X_1$ and $X_2$ along $\psi$ by 
\[
X_1 \#_{\psi} X_2:= (X_1 \setminus \text{int}\im l_1) \cup_{\psi} (X_2 \setminus \text{int}\im l_2) .
\]
The class $c_1$ determines a class $c_\# \in H_1(X_1 \#_{\psi} X_2; \Z)$. 
We write the $j$-covering space corresponding to 
\[
\pi_1(X_i) \xrightarrow{\text{Ab}} H_1(X_i;\Z) \to \Z/ j \Z
\]
 by $p_i^j \colon (X_i)_{j, c} \to X_i$
 for $i=1$ and $i=2$. We fix lifts $\tilde{l}_i\colon S^1 \times D^3 \to (X_i)_{j, c}$ of $l_i \colon S^1 \times D^3 \to X_i $ for $i=1$, $2$.
\begin{prop}[Connected sum formula]\label{connected sum}
\[
\im\cs_{X_1, c_1} \cup \im\cs_{X_2, c_2}  \subset \im\cs_{X_1 \#_{\psi} X_2, c_\#} .
\]
\end{prop}
\begin{proof}Choose an element $\rho \in \wt{R}^* ((X_1)_{j, c_1})$ such that $\cs_{X_1, c_1}^j  (\rho) <1$. We can see that $j_{c_\#} X_1 \#_{\psi} X_2$ is obtained by gluing of $(X_1)_{j, c_1} \setminus \im \tilde{l_1} $ and $(X_2)_{j, c_2} \setminus \im \tilde{l_2} $ along $S^1 \times S^2$. The restriction of $\rho$ to $\partial (\im \tilde{l_1} )$ determines an element of $\hom (\Z, SU(2))/ SU(2)$. Then the flat connection  $\rho|_{\partial (\im \tilde{l_1} )= \partial (\im \tilde{l_2} )}$ can be extended whole of $(X_2)_{j, c_2} \setminus \im \tilde{l_2} $ using a homomorphism \[
\pi_1((X_2)_{j, c_2} \setminus \im \tilde{l_2}  ) \to H_1((X_2)_{j, c_2} \setminus \im \tilde{l_2}  ;\Z)  \to \Z \subset  SU(2).
\]
 We denote this extension of $\rho|_{\partial (\im \tilde{l_1} )= \partial (\im \tilde{l_2} )}$ by $\tilde{\rho}$. By construction, we have 
\[
\cs_{X_1, c_1}^j (\rho) =\cs_{X_1 \#_{\psi} X_2, c_\#}^j(\tilde{\rho}).
\]
This completes the proof. 

\qed \end{proof}

\subsection{Mapping tori}
Let $X_h(Y)$ be the mapping torus of a fixed orientation preserving diffeomorphism $h \colon Y \to Y$ on an oriented 3-manifold $Y$ and $P= Y \times SU(2)$. The map $h$ gives an action $h^* \colon {R}^*(Y) \to {R}^*(Y) $. We put 
\[
R^h(Y) := \{ \rho \in R^*(Y) | h^* \rho = \rho \}. 
\]
For the mapping torus, we have a convenient formula for $\cs_{X, [c] }$. 
\begin{prop}\label{mapping torus}Suppose $c=[Y] \in H_3(X_h(Y); \Z) $. Then 
\[
\im\cs_{X_h (Y), [Y] }^j= \im\cs_{X_{h^j}(Y), [Y]} = \left\{ \cs_Y(a)\in (0,1] \middle| a\in R^{(h^j)}(Y) \right\}
\]
 holds for any $j \in \Z_{ >0}$.
\end{prop}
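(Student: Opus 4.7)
My plan is to identify the $\Z/j\Z$-cover of $X_h(Y)$ relevant to $\cs^j$ with the longer mapping torus $X_{h^j}(Y)$, and then parametrize $SU(2)$-flat connections on a mapping torus by invariant flat connections on the fiber.

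Topologically, the projection $X_h(Y) \to S^1$, $(y,t)\mapsto t \bmod 1$, classifies the $\Z$-cover associated with $[Y] \in H_3(X_h(Y); \Z)$. Pulling back the standard $j$-fold cover $S^1 \to S^1$ along this map yields the mapping torus $X_{h^j}(Y)$ with covering map $(y,t) \mapsto (y, t \bmod 1)$, so $(X_h(Y))_{j,[Y]} \cong X_{h^j}(Y)$ as $\Z/j\Z$-covers, and this diffeomorphism sends $p_j^*[Y]$ to the fiber class $[Y] \in H_3(X_{h^j}(Y); \Z)$. By \cref{func} applied to this orientation-preserving diffeomorphism, $\cs^j_{X_h(Y),[Y]}$ is identified with $\cs_{X_{h^j}(Y),[Y]}$ as maps on representation spaces; this gives the first equality of the proposition.

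For the second equality I use the cross-section formula \eqref{cross sect}: since $Y$ embeds in $X_{h^j}(Y)$ as a fiber representing the class $[Y]$, one has $\cs_{X_{h^j}(Y),[Y]}(a) = \cs_Y(a|_Y)$ for every connection $a$. Hence it suffices to parametrize the (flat) connections on $X_{h^j}(Y)$ by their restrictions to $Y$. Using the presentation $\pi_1(X_{h^j}(Y)) \cong \pi_1(Y) \rtimes_{(h^j)_*} \Z$, a representation $\tilde\rho$ is equivalent data to a pair $(\rho, g)$ with $\rho := \tilde\rho|_{\pi_1(Y)}$ and $g$ the image of the stable generator, subject to $\rho \circ (h^j)_* = \operatorname{Ad}_g \circ \rho$. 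Modulo $SU(2)$-conjugation this is precisely the condition $[\rho] \in R^{(h^j)}(Y)$, and conversely any fixed class lifts by choosing a conjugating $g \in SU(2)$; combined with the reduction above, this establishes the second equality.

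The main (minor) obstacle is bookkeeping around the distinction between connections modulo null-homotopic gauge versus full gauge, and the treatment of reducible representations. The degree formula \eqref{degree formula} handles the former, while reducible flat connections on $X_{h^j}(Y)$ satisfy the $h^j$-invariance condition automatically and contribute the value $1 \in (0,1]$ on both sides; once these conventions are aligned, both equalities follow mechanically from the topological identification and the cross-section formula.
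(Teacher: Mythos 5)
Your proposal is correct and follows essentially the same route as the paper: identify $(X_h(Y))_{j,[Y]}$ with $X_{h^j}(Y)$, use the cross-section formula \eqref{cross sect} to reduce $\cs_{X_{h^j}(Y),[Y]}$ to $\cs_Y$ on the fiber, and identify representations of the mapping torus with $h^j$-fixed classes on $Y$. The only difference is that you derive the (two-to-one) correspondence $R^*(X_{h^j}(Y)) \to R^{(h^j)}(Y)$ directly from the semidirect product presentation of $\pi_1$, whereas the paper cites \cite{RS04} for it; this is a harmless substitution.
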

\begin{proof}
In \cite{RS04}, it is shown that the inclusion $i: Y \to X_h(Y)$ induces a two-to-one correspondence 
\begin{align}\label{two to one}
i ^* \colon R^* (X_h(Y)) \to R^h(Y).
\end{align}
 We have the following commutative diagram: 
\[
  \begin{CD}
   R^* (X_h(Y))  @>cs_{X_h(Y), [Y]}>> (0,1] \\
  @V{i^*_Y}VV    @V{\id}VV \\
     R^h(Y)   @>{cs_Y}>> (0,1] . 
  \end{CD}
\]
This completes the proof. \qed
\end{proof}
\subsection{Surgery along $S^1 \times D^3$}
In order to prove \cref{emb1}, we need a surgery formula for $\cs^j_{X, c}$. 
Let $X$ be a closed connected oriented $4$-manifold with $0\neq c \in H_3(X;\Z)$ and $0\neq d \in H_1(X;\Z)$ satisfying $c\cdot d=0$.  Let $l : S^1 \times D^3 \to X$ be an embedding such that $[l|_{S^1\times \{0\} }]=d$ and $Y$ be a closed connected oriented $3$-submanifold of $X$ such that $[Y] = c$ and $Y \cap l  = \emptyset$.  We define the $4$-manifold $X_l$ obtained by surgery along $l$ by 
\begin{align}\label{22}
 X_l := X \setminus \operatorname{int}(\im l ) \cup_{S^1 \times S^2} D^2 \times S^2.  
 \end{align}
 We have two inclusion maps $i_1\colon X \setminus \im l \to X$ and $i_2\colon X \setminus \im l \to X_l$. Set $c':= i^*_1(\operatorname{PD} (c) ) \in H^1(X \setminus \im l ,  \Z)$.
Then we can take $0\neq c^* \in H^1(X_l ;\Z)$ such that $i^*_2(c')= c^*$.
\begin{prop}\label{surgery}For any $j \in \Z_{>0}$
\[
\im \cs^j_{X_l, c^*} \subset \im \cs^j_{X, c } .
\]
\end{prop}
\begin{proof}
Suppose that $\rho \in \wt{R}^* ((X_l)_{j, c^*})$ such that $\cs^j_{X_l,c^*} (\rho) <1 $. Note that $(X_l)_{j, c^*}$ has a decomposition 
\[
W[0,j-1]= W_0 \cup_Y W_1 \cup_Y \cdots \cup_Y W_{j-1},
\]
 where $W_i$ is a copy of $W_0=\overline{ X_{l} \setminus Y}$. Since $Y$ and $\im l$ are disjoint, $W_i$ can be written as 
 \[
( X_l \setminus (Y \cup D^2 \times S^2) )  \cup D^2 \times S^2. 
 \]
 We denote by $V_0 \subset W_0$ the submanifold corresponding to $D^2 \times S^2$. 
 We define
 \[
 W_*[0,j-1]:= (W_0 \setminus \operatorname{int}V_0) \cup_Y (W_1 \setminus \operatorname{int}V_1)  \cup_Y \cdots \cup_Y (W_{j} \setminus \operatorname{int}V_{j}),
 \]
 where  $(W_i, V_i)$ is a copy of $(W_0, V_0)$ for $i \in \Z_{>0}$.
 The flat connection $\rho$ determines flat connections $\rho^j$ and $\rho^j_*$ on $W[0,j-1]$ and $W_*[0,j-1]$ via pull-back. Note that by identifying the boundaries of $W_0 \setminus \operatorname{int}V_0 \cup_{S^1\times S^2} S^1 \times D^3 $, we recover $X$. Here we see that the flat connection $\rho^j_*|_{W_0 \setminus \operatorname{int}V_0  } $ extends to a flat connection $W_0 \setminus \operatorname{int}V_0 \cup_{S^1\times S^2} S^1 \times D^3$. To see this, we consider the restriction $(\rho^j_*)|_{\partial ( V_0 )= S^1 \times S^2 }$. Since the flat connection $(\rho^j_*)|_{\partial ( V_0 )}$ extends to $V_0$, the holonomies for all loops in $\partial V_0 =S^1 \times S^2$ are zero. Thus, $(\rho^j_*)|_{\partial ( V_0 )= S^1 \times S^2 }$ is isomorphic to trivial flat connection. This fact allows us to extend the flat connection $\rho^j_*|_{W_0 \setminus \operatorname{int}V_0  } $ to a flat connection on $W_0 \setminus \operatorname{int}V_0 \cup_{S^1\times S^2} S^1 \times D^3$ trivially. Therefore, the connection $\rho^j_*$ extends to a connection on 
 \begin{align}\label{aaaa}
(  W_0 \setminus \operatorname{int}V_0 \cup_{S^1\times S^2} S^1 \times D^3)\cup_Y \cdots \cup_Y (W_{j} \setminus \operatorname{int}V_j \cup_{S^1\times S^2} S^1 \times D^3).
\end{align}
Identifying the boundaries of \eqref{aaaa} gives a construction of $X_{j,c}$. Therefore the extension on \eqref{aaaa} gives a connection $\rho'_j$ on $X_{j,c}$. By the construction of $\rho'_j$, we conclude 
\[
\cs_{X,c}^j (\rho') = \cs_{X_l, c^*} (\rho). 
\]
 This completes the proof. 
\qed \end{proof}
\subsection{Calculations}
We give explicit calculations for several mapping tori of Seifert manifolds. 
Regarding $\Sigma(p,q,r)$ as $\{(x,y,z) \in \co^3 | x^p+y^q+z^r=0\} \cap S^5$, we define $\tau \colon \Sigma(p,q,r) \to \Sigma(p,q,r)$ and $\iota \colon \Sigma(p,q,r) \to \Sigma(p,q,r)$  by 
\begin{align} \label{tau}
\tau \colon (x,y,z) \mapsto (x,y, e^{\frac{2\pi i }{r}} z) 
\end{align}
and 
\begin{align}\label{iota}
\iota \colon (x,y,z) \mapsto (\overline{x},\overline{y}, \overline{z}) .
\end{align}
This gives the following calculations: 
\begin{prop}\label{mapping tori}Let $(p,q,r)$ be a relatively prime triple of positive integers. 
For any $j \in \Z_{>0}$, 
\[
\im\cs_{X_\tau (\Sigma(p,q,r) ), [\Sigma(p,q,r)]  } = \Lambda_{\Sigma(p,q,r)} \cap (0,1]
\]
and 
\[
\im\cs_{X_\iota (\Sigma(p,q,r) ), [\Sigma(p,q,r)]  } = \Lambda_{\Sigma(p,q,r)} \cap (0,1]. 
\]
\end{prop}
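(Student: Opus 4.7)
The plan is to reduce both claims to \cref{mapping torus}, which gives
\[
\im\cs_{X_h(\Sigma(p,q,r)),[\Sigma(p,q,r)]}^{j}=\left\{\cs_{\Sigma(p,q,r)}(a)\in(0,1]\mid a\in R^{(h^{j})}(\Sigma(p,q,r))\right\}\cup\{1\}
\]
for $h\in\{\tau,\iota\}$, where the $\{1\}$ is the contribution of the (unique, since $\Sigma(p,q,r)$ is a homology sphere) reducible flat connection. Since $\Lambda_{\Sigma(p,q,r)}\cap(0,1]$ is the union of $\{1\}$ with the CS-values of the irreducible flat $SU(2)$-connections, it suffices to prove $R^{(h^{j})}(\Sigma(p,q,r))=R^{\ast}(\Sigma(p,q,r))$ for every $j\in\Z_{>0}$ and both $h$; equivalently, $\tau$ and $\iota$ act trivially on the character variety.

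For $h=\tau$, I would exhibit $\tau$ as an element of the Seifert $S^{1}$-action on $\Sigma(p,q,r)\subset\co^{3}$ given by $\lambda\cdot(x,y,z)=(\lambda^{qr}x,\lambda^{pr}y,\lambda^{pq}z)$. Because $\gcd(pq,r)=1$, I can choose $k$ with $kpq\equiv 1\pmod r$; then $\lambda_{0}=e^{2\pi ik/r}$ satisfies $\lambda_{0}\cdot(x,y,z)=\tau(x,y,z)$, and the arc $t\mapsto e^{2\pi ikt/r}$ is a smooth isotopy from $\id$ to $\tau$ inside $\operatorname{Diff}(\Sigma(p,q,r))$. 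Hence $(\tau^{j})^{\ast}$ is the identity on $R^{\ast}(\Sigma(p,q,r))$ for every $j$, finishing the $\tau$-case.

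For $h=\iota$ the case $j\in 2\Z$ is immediate since $\iota^{2}=\id$, so I focus on $j$ odd, reducing to $\iota^{\ast}\rho\cong\rho$ for every irreducible $\rho$. Irreducible $SU(2)$-representations are distinguished up to conjugation by their characters, so it suffices to show $\operatorname{tr}\rho(\iota_{\ast}g)=\operatorname{tr}\rho(g)$ for all $g$. I would use the Seifert presentation
\[
\pi_{1}(\Sigma(p,q,r))=\langle x_{1},x_{2},x_{3},h\mid h\text{ central},\ x_{i}^{a_{i}}=h^{b_{i}},\ x_{1}x_{2}x_{3}=h^{b_{0}}\rangle,
\]
together with the fact that complex conjugation reverses each Seifert fibre (so $\iota_{\ast}h=h^{-1}$) while acting on the orbifold base $S^{2}(p,q,r)$ as a reflection fixing each cone point (so $\iota_{\ast}x_{i}=h^{c_{i}}x_{i}^{-1}$ up to an inner automorphism). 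Combined with $\operatorname{tr}(g^{-1})=\operatorname{tr}(g)$ in $SU(2)$, $\rho(h)=\pm1\in Z(SU(2))$, and the compatibility of the $c_{i}$ with the relation $x_{1}x_{2}x_{3}=h^{b_{0}}$, this would yield $\chi_{\iota^{\ast}\rho}=\chi_{\rho}$.

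The main obstacle is precisely this last step for odd $j$: making the identification $\iota_{\ast}x_{i}=h^{c_{i}}x_{i}^{-1}$ rigorous, determining the parities of the $c_{i}$, and checking that the factor $\rho(h)^{c_{i}}$ does not alter the trace of $\rho(x_{i})$ for any irreducible $\rho$ (including those with $\rho(h)=-1$). Once this sign/parity bookkeeping is handled, character rigidity gives $\iota^{\ast}\rho\cong\rho$ for every irreducible $\rho$, and both claimed equalities then follow uniformly in $j$ from \cref{mapping torus}.
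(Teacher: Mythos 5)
Your overall strategy coincides with the paper's: both reduce the statement to \cref{mapping torus} together with the assertion that $\tau^*$ and $\iota^*$ act as the identity on $R^*(\Sigma(p,q,r))$. Your treatment of $\tau$ is complete and correct: realizing $\tau$ inside the Seifert $S^1$-action $\lambda\cdot(x,y,z)=(\lambda^{qr}x,\lambda^{pr}y,\lambda^{pq}z)$ via $\lambda_0=e^{2\pi ik/r}$ with $kpq\equiv 1\pmod r$ exhibits $\tau$ as isotopic to the identity, hence $(\tau^j)^*=\mathrm{id}$ on the character variety. This is actually more self-contained than the paper, which simply cites \cite{CS99} and \cite{Sa02} for the triviality of both $\tau^*$ and $\iota^*$.

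The $\iota$ case for odd $j$, however, is a genuine gap, and you acknowledge it yourself. The assertion that $\iota^*\rho\cong\rho$ for every irreducible $\rho$ is precisely the nontrivial content that the paper outsources to \cite{CS99} and \cite{Sa02}; your sketch does not yet establish it. Two concrete issues remain. First, an irreducible $SU(2)$-character is determined by the traces of generators \emph{and} of products of generators; since $\iota_*$ is an automorphism rather than an antiautomorphism, $\iota_*(x_1x_2)=\iota_*(x_1)\iota_*(x_2)$ is a product of conjugated inverses whose trace is not obviously $\operatorname{tr}\rho(x_1x_2)$, and the identity $\operatorname{tr}(g^{-1})=\operatorname{tr}(g)$ alone does not handle words of length at least two. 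Second, when $\rho(h)=-1$ the factors $\rho(h)^{c_i}$ can flip the signs of traces unless the parities of the $c_i$ are pinned down, which requires actually computing $\iota_*$ on the Seifert presentation. Until this bookkeeping is carried out, or the result is quoted from the literature as the paper does, the proof of the second displayed equality is incomplete.
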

 \begin{proof} By applying \cref{mapping torus}, we have 
 \[
\im\cs_{X_{\tau^j}, [Y]} = \{ \cs(a)\in (0,1] | a\in {R}^*(Y)  (\tau^j)^*a =a \}= \Lambda_{Y} \cap (0,1].
\]
It is shown in \cite{CS99} and \cite{Sa02} that $\tau^* \colon  R^*(\Sigma(p,q,r)) \to R^*(\Sigma(p,q,r))$ and $\iota^* \colon  R^*(\Sigma(p,q,r)) \to R^*(\Sigma(p,q,r))$ are equal to the identity. 
This completes the proof. \qed
 \end{proof}
This property of $\iota^* \colon  R^*(\Sigma(p,q,r)) \to R^*(\Sigma(p,q,r)) $ is shown for any homology $3$-sphere of type $\Sigma(a_1, \cdots, a_n)$. Therefore, \cref{mapping tori} can be proved for such Seifert homology $3$-spheres. 
\section{Invariants of $2$-knots}\label{Invariants of 2-knots}
In this section, by the use of $\cs_{X, c}$, we will introduce an invariant of oriented 2-knots. 
\subsection{Invariants $\{cs_{K, j}\}_{j \in \Z_{>0}}$}\label{Inv QK}
Let $K$ be an oriented $2$-knot in $S^4$. It is known that the normal bundle $\nu_K$ of $K$ is always trivial. Moreover, trivializations of $\nu_K$ are unique up to isotopy. Therefore, we fix such a tubular neighborhood $ \nu_K\colon S^2 \times D^2 \to S^4$. Then we have an oriented homology $S^1 \times S^3$ defined by 
\[
X(K) :=D^3 \times S^1 \cup_{\nu_K} S^4 \setminus  \nu_K.
\]
Note that an orientation of $S^4$ gives an orientation of $X(K)$. If we have a Seifert hypersurface $Y_K$ of $K$, then we can regard $Y_K$ as a generator of $H_3(X(K); \Z)$ by the following discussion. First we regard $Y_K\setminus B(\varepsilon) $ as a submanifold in $S^4 \setminus  \nu_K$. Then we cap off $Y_K\setminus B^3 $ in $X(F)$. The orientation of $K$ determines a generator $[1_K]$ of $H_3(X(K) ; \Z)$  such that $[* \times S^1]\cdot [Y_K] =1$. 
Note that the class $[Y_K] \in H_3(X(K) ;\Z)$ satisfies \cref{assc}.
The class $[1_K]$ gives an isomorphism class of $\Z$-covering space 
\begin{align}\label{covering}
p_K \colon \widetilde{X}(K) \to X(K). 
\end{align}
We denote by $(X(K))_{j , 1_K} $ the total space of the $\Z/j\Z$ covering space corresponding to the kernel of the composite homomorphism $\pi_1(X(K)) \to \Z \to \Z/  j \Z$ for $j \in \Z_{>0}$. 
The map \eqref{covering} gives a covering map 
\[
p^j_K \colon \widetilde{X}(K) \to (X(K))_{j , 1_K} 
\]
for each $j \in \Z_{>0}$.

\begin{defn}\label{cs for k}
For an oriented knot $K$ and $j \in \Z_{>0}$, we define 
\[
cs_{K, j } :=\cs_{X(K), 1_K}^j : R( X(K)_{j, 1_K} ) \to (0,1 ] 
\]
\end{defn}
\begin{ex}For the unknot $U$, $X(K) \cong S^1 \times S^3$. Therefore, $\im\cs_{U,j} =\{ 1\}$ for any $j\in \Z$.
\end{ex}
The lemma below is an easy consequence of Van-Kampen's theorem. 
\begin{lem}\label{fund group} For any $2$-knot $K$, 
\[
\pi_1( S^4 \setminus K  ) \cong \pi_1( X (K)  ).
\]
Moreover, $\ker \psi_j \cong \ker \phi_j$, where $\psi_j$ and $\psi_j$ are introduced in \eqref{psij} and \eqref{phij}. In particular, $G_j( K) \cong \pi_1( X(K)_{j, 1_K} )$.
\end{lem}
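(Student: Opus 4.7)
The plan is to apply van Kampen's theorem to the decomposition $X(K) = (S^4 \setminus \operatorname{int}\nu_K) \cup_{S^2 \times S^1} (D^3 \times S^1)$ and track what happens to the meridian, then verify that the abelianizations are compatible with the $\Z$-structures used in $\psi_j$ and $\phi_j$.

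For the first statement, I would first note that $S^4 \setminus K$ deformation retracts onto $S^4 \setminus \operatorname{int}\nu_K$, so $\pi_1(S^4 \setminus K) \cong \pi_1(S^4 \setminus \operatorname{int}\nu_K)$. The gluing locus is $\partial(D^3 \times S^1) = S^2 \times S^1 = \partial(S^2 \times D^2)$, with $\pi_1(S^2 \times S^1) \cong \Z$ generated by the circle factor, which under the trivialization $\nu_K$ represents the meridian $\mu \in \pi_1(S^4 \setminus K)$. On the other side, $\pi_1(D^3 \times S^1) \cong \Z$ is generated by the $S^1$-factor $t$. Van Kampen then gives
\[
\pi_1(X(K)) \cong \pi_1(S^4 \setminus K) *_{\langle \mu = t\rangle} \langle t\rangle \cong \pi_1(S^4 \setminus K),
\]
since the amalgamation just renames an element already present. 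This furnishes the first isomorphism, induced by the inclusion $S^4 \setminus K \hookrightarrow X(K)$.

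For the second statement I would chase the meridian through abelianizations. On the knot-complement side, the orientation of $K$ distinguishes the generator of $H_1(S^4 \setminus K;\Z) \cong \Z$ as the class of the meridian $\mu$, so $\psi_j$ factors as $[\mu] \mapsto 1 \mapsto 1 \bmod j$. On the $X(K)$ side, the class $c = [1_K] \in H_3(X(K);\Z)$ is Poincaré dual (or rather linking-dual, via $[*\times S^1]\cdot [Y_K]=1$) to the generator of $H^1(X(K);\Z)$ carried by the circle factor of $D^3 \times S^1$; under inclusion this circle is exactly the image of $\mu$. Therefore the homomorphism $\rho_c\colon H_1(X(K);\Z)\to\Z$ sends the image of $\mu$ to $\pm 1$, which in particular gives $i_c = 1$, and $\phi_j$ factors as $[\mu] \mapsto 1 \mapsto 1\bmod j$ as well. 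Thus the composites $\pi_1(S^4\setminus K) \to \Z/j\Z$ defining $\psi_j$ and $\phi_j$ agree under the isomorphism from the first part, so their kernels coincide and $G_j(K) = \ker\psi_j \cong \ker\phi_j = \pi_1(X(K)_{j,1_K})$.

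Neither step is really difficult; the only thing that needs care is the bookkeeping in the second part, making sure the sign/orientation conventions really do line up so that both $\psi_j$ and $\phi_j$ send the meridian to the same generator of $\Z/j\Z$ (rather than, say, to $\pm 1$ with opposite signs, which would still give the same kernel but would be worth pointing out). In particular one should verify $i_c = 1$, which is the concrete content of the intersection pairing $[*\times S^1]\cdot [Y_K]=1$ noted earlier in the paper.
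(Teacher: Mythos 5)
Your proposal is correct and is exactly the argument the paper has in mind: the paper states this lemma as an ``easy consequence of the Van-Kampen's theorem'' without writing out details, and your van Kampen computation on the decomposition $X(K)=(S^4\setminus\operatorname{int}\nu_K)\cup_{S^2\times S^1}(D^3\times S^1)$, together with the meridian bookkeeping showing $i_c=1$ and the compatibility of $\psi_j$ with $\phi_j$, supplies precisely those details.
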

Via \cref{fund group}, we regard $cs_{K, j }$ as maps from $R(K,j)$ to $(0,1]$. When $j=1$, we write $cs_K$ instead of $cs_{K,1}$. By \cref{func}, we see that $\im cs_{K, j }$ is an isotopy invariant for each $j \in \Z_{>0}$.
If we have a Seifert hypersurface $Y$, then we have the following formula: 
\begin{prop}[\cref{2-knot1}, 1]\label{value of QK}For any oriented $2$-knot $K$ and $j \in \Z_{>0}$, 
\[
\im cs_{K, j }  \subset (0,1] \cap \Lambda_Y,
\]
where $\Lambda_Y= \im \cs|_{\wt{R}(Y)}$.
\end{prop}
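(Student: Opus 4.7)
The plan is to reduce this proposition to the already-proved \cref{value of QX} applied to the pair $(X(K), 1_K)$. The machinery defining $\cs_{K,j}$ is by design just $\cs^j_{X(K), 1_K}$, and a Seifert hypersurface $Y$ of $K$ gives exactly the kind of representative needed for the earlier proposition.

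First I would unpack that $Y$ embeds as a closed oriented connected $3$-submanifold of $X(K)$ representing $1_K \in H_3(X(K); \Z)$. By definition of a Seifert hypersurface, there is an embedding $f\colon Y\setminus B^3 \hookrightarrow S^4$ with $f|_{\partial(Y\setminus B^3)} = K$; pushing this into $S^4 \setminus \nu_K$ and capping off along the $D^3 \times S^1$ piece in the construction
\[
X(K) = D^3\times S^1 \cup_{\nu_K}\bigl(S^4\setminus \nu_K\bigr)
\]
produces a closed oriented connected $3$-manifold in $X(K)$. The discussion preceding the definition of $\cs_{K,j}$ already verifies that the resulting homology class is the generator $1_K$ with $[*\times S^1]\cdot [Y]=1$, so in particular $1_K$ satisfies \cref{assc}.

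Second, I would invoke \cref{value of QX} directly with $X = X(K)$, $c = 1_K$, and the above connected representative $Y$. That proposition gives
\[
\im \cs^j_{X(K),\,1_K} \subset \Lambda_Y \cap (0,1]
\]
for every $j \in \Z_{>0}$. By definition $\cs_{K,j} = \cs^j_{X(K),\,1_K}$ and $Q^j_K = \min\{r \in \im \cs_{K,j}\}$, so $Q_K^j \in \Lambda_Y \cap (0,1]$ as claimed. (Via \cref{fund group}, this same inclusion is what gives the more general statement $\im \cs_{K,j}\subset \im\cs_Y$ in \cref{2-knot1}(1).)

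There is essentially no obstacle beyond bookkeeping: the content lives entirely in the formula \eqref{formula} established in the proof of \cref{value of QX}, namely $\cs^j_{X,c}(a) = \cs_Y(i_Y^*a)$ once $X_{j,c}$ is cut along $Y$ into $j$ copies of $W_0 = \overline{X\setminus Y}$. The only point requiring any care is to confirm that the Seifert hypersurface, originally defined via an embedding into $S^4$, genuinely produces a closed connected submanifold of $X(K)$ of the correct homology class with compatible orientation, which is immediate from the construction of $X(K)$ and the normalization of $1_K$.
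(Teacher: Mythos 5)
Your proposal is correct and matches the paper's own argument, which simply cites \cref{value of QX} applied to $(X(K), 1_K)$ with a Seifert hypersurface as the connected representative of the class. You have merely spelled out the bookkeeping (capping off $Y\setminus B^3$ inside $X(K)$ and checking the homology class) that the paper leaves implicit.
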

\begin{proof}
This is an immediate corollary of \cref{value of QX}. 
\qed \end{proof}
\begin{lem}[\cref{2-knot1}, 3]\label{j-prop}For any positive integer $ m$,
\[
\im\cs_{K, j } \subset \im\cs_{K,mj}
\]
for any $j \in \Z_{>0}$.
\end{lem}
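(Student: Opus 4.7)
The plan is to deduce this lemma directly from \cref{fund prop}, which already establishes the analogous statement at the level of the general invariants $\{cs_{X,c}^j\}$. Indeed, by definition we have $cs_{K,j} = cs^j_{X(K), 1_K}$ and $Q^j_K = Q^j_{X(K), 1_K}$, so it suffices to verify that the pair $(X(K), 1_K)$ falls within the hypotheses of \cref{fund prop}.

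First I would check that the pair $(X(K), 1_K)$ satisfies \cref{assc}. This is immediate from the construction of $X(K)$: any Seifert hypersurface $Y_K$ of $K$ (which exists for every oriented $2$-knot) is capped off inside $X(K)$ to give a closed connected oriented $3$-manifold representing $1_K \in H_3(X(K);\Z)$. Hence the class $1_K$ is representable by a connected oriented $3$-manifold as required.

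Having verified the hypothesis, I simply invoke \cref{fund prop} with $X = X(K)$ and $c = 1_K$. That result gives
\[
\im\cs_{X(K), 1_K}^{j} \subset \im\cs_{X(K), 1_K}^{mj}
\quad \text{and}\quad
Q_{X(K), 1_K}^{mj} \leq Q_{X(K), 1_K}^{j}
\]
for every $j, m \in \Z_{>0}$. Unpacking the definitions of $\cs_{K,j}$ and $Q^j_K$ translates these inclusions and inequalities into the claimed statements $\im\cs_{K,j} \subset \im\cs_{K,mj}$ and $Q^{mj}_K \leq Q^j_K$.

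There is no real obstacle here, since the substantive content has already been carried out in the proof of \cref{fund prop}: the covering map $p_{m,j}\colon X(K)_{mj, 1_K} \to X(K)_{j, 1_K}$ pulls back any irreducible flat connection $\rho$ with $\cs_{K,j}(\rho) < 1$ to a flat connection $p_{m,j}^* \rho$ on $X(K)_{mj, 1_K}$ whose Chern-Simons value agrees with $\cs_{K,j}(\rho)$. The verification that $(X(K), 1_K)$ meets \cref{assc} is the only step that requires any check specific to the $2$-knot setting, and that is immediate from the existence of Seifert hypersurfaces.
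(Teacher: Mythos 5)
Your proposal is correct and follows exactly the paper's argument: the paper also proves this lemma as an immediate corollary of \cref{fund prop} applied to the pair $(X(K), 1_K)$. Your additional check that $1_K$ satisfies \cref{assc} via a capped-off Seifert hypersurface is a sensible (and correct) verification of the hypothesis that the paper leaves implicit.
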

\begin{proof}
This is a corollary of \cref{fund prop}. 
\qed \end{proof}
\begin{rem}\label{general cs_K} The functionals $\{\cs_{K,j}\}_{j \in \Z_{>0}}$ can be generalized to functionals $\{\cs_{(K,X), j} \}_{j \in \Z_{>0}}$ for any $2$-knot $K$ embedded into any closed oriented $4$-manifold $X$ with $0=[K] \in H_2(X; \Z) $. Moreover, \cref{value of QK} and \cref{j-prop} hold for $\{\cs_{K,j}\}_{j \in \Z_{>0}}$. 
\end{rem}
\subsection{Connected sum formula}
Let $K_1$ and $K_2$ be $2$-knots. We denote by $K_1 \# K_2$ the connected sum of $K_1$ and $K_2$.
\begin{prop} [\cref{2-knot1}, 2]
\[
\im\cs_{K_1,j}  \cup \im\cs_{K_2, j }  \subset \im\cs_{K_1\# K_2, j } 
\]
hold. 
\end{prop}
\begin{proof}
Since $X(K_1\# K_2) _{j, 1_{K_1\# K_2} }$ is obtained the connected sum of $X(K_1)_{j ,1_{K_1}}$ and $X(K_2)_{j ,1_{K_2}}$ along some embedded $S^1 \times D^3$, this is also a corollary of \cref{connected sum}. 
\qed \end{proof}
\subsection{Calculation for $\cs_{K,j}$}
First, we give simplest examples. 
\begin{prop}For any ribbon $2$-knot $K$, 
 $\im\cs_{K,j}=\{ 1\}$ holds. 
\end{prop}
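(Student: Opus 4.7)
The plan is to combine the classical structure of Seifert hypersurfaces of ribbon $2$-knots with part (1) of \cref{2-knot1}. First, I would invoke the well-known fact that a ribbon $2$-knot $K$ always admits a Seifert hypersurface $Y$ diffeomorphic to $\#_k (S^1 \times S^2)$ for some $k \geq 0$ (with $k = 0$ giving $S^3$, which already handles the unknot case). Given this, \cref{2-knot1}(1) yields $\im \cs_{K,j} \subset \im \cs_Y$ for every $j \in \Z_{>0}$, so the problem reduces to computing $\im \cs_Y$ for $Y = \#_k (S^1 \times S^2)$.

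To evaluate $\im \cs_Y$ for this $Y$, I would observe that $\pi_1(Y) \cong F_k$ is free, so
\[
R(Y) = \hom(F_k, SU(2)) / SU(2) \cong SU(2)^k / SU(2)
\]
is connected. The Chern-Simons functional $\cs_Y$ is locally constant on the space of flat connections (by the standard argument already used for $\cs^j_{X,c}$ in \cref{loc const}: the variation along a path of flat connections is controlled by the curvature, which vanishes). Hence $\cs_Y$ is constant on the connected space $R(Y)$, and evaluating at the trivial representation gives $0 \in \R/\Z$. Under the convention in the definition of $\im \cs_Y$, which lifts values from $\R/\Z$ to representatives in $(0,1]$, the class of $0$ is represented by $1$. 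Therefore $\im \cs_Y = \{1\}$, and combining with the inclusion above we get $\im \cs_{K,j} \subset \{1\}$. Since the trivial representation in $R(K,j)$ always contributes the value $1$ to $\im \cs_{K,j}$ (by the same convention), the reverse inclusion is immediate and we conclude $\im \cs_{K,j} = \{1\}$.

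The only substantive input is the classical characterization of Seifert hypersurfaces of ribbon $2$-knots; everything afterwards is a short connectedness argument for representations of a free group. The mildly delicate bookkeeping point is to handle the ``zero'' value correctly: one must verify that the convention lifting $\cs_Y$ from $\R/\Z$ into $(0,1]$ sends the class of $0$ to $1$, not to $0$, so that the image is nonempty and equals $\{1\}$ rather than $\emptyset$.
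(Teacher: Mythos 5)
Your proof is correct and follows essentially the same route as the paper: both invoke Yanagawa's result that ribbon $2$-knots bound $\#_k(S^1\times S^2)$, apply the containment $\im\cs_{K,j}\subset\im\cs_Y$ from \cref{2-knot1}(1), and conclude via connectedness of the representation space of a free group that $\cs_Y$ only takes the trivial value. Your extra care about the lift of $0$ to the representative $1\in(0,1]$ and the reverse inclusion is a harmless refinement of the same argument.
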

\begin{proof} In \cite{Y69}, it is shown that the ribbon $2$-knots have the finite connected sums of $S^1 \times S^2$'s as Seifert hypersurfaces. By \cref{value of QK}, $\im\cs_{K,j} \in (0,1] \cap \Lambda_Y$ for any $j\in \Z_{>0}$. If $Y$ is a finite connected sum of $S^1 \times S^2$'s, then the space of $SU(2)$-representation of $Y$ is connected. This implies $\Lambda_Y= \Z$. This completes the proof. \qed
\end{proof}
Next, we give non-trivial calculations for twisted spun knots.
Let $k$ be an oriented knot in $S^3$. 
We denote by $K(k,m)$ the $m$-{\it twisted spun knot} of $k$. In \cite{Z65}, Zeeman showed $m$-fold branched covering space $\Sigma^m(k)$ gives a Seifert hypersurface of $K(k,m)$. When $m\neq 0$, $K(k,m)$ is a fibered 2-knot with fiber $\Sigma^m(k)$. Moreover, the monodromy is given by the covering transformation of $\Sigma^m(k)$.  Now, we give a proof of \cref{cal2}.
 \begin{prop}[\cref{cal2}]
 Let $T(p,q)$ be the $(p,q)$-torus knot and $M(p,q,r)$ the Montesinos knot of type $(p,q,r)$ for a pairwise relative prime tuple $(p,q,r)$ of positive integers. Let $k(p/q)$ be a $2$-bridge knot such that $\Sigma^2(k (p/q) ) = L(p,q)$, where $\Sigma^2(k)$ is the double branched cover of $k \subset S^3$.
\begin{enumerate}
\item For any $m \in \Z_{>0}$ and $j \in  \Z_{>0}$, 
 \[
\im\cs_{K(T(p,q),m)}^j =\im\cs_{\Sigma(p,q,m)}  , 
 \]
where $K(k, m)$ is the twisted $m$-spun knot of the knot $k$.
  \item For any $j \in  \Z_{>0}$, 
 \[
\im\cs_{K(M(p,q,r),2)}^j = \im\cs_{\Sigma(p,q,r)}  . 
 \]
 \item 
Here we also suppose that $p$ is odd and satisfies the condition: 
 \[
 \left\{ s \in \set{ 2, \cdots , p-2} \middle | \frac{s^2-1} {p} \in \Z \right\} = \emptyset.  
 \]
  For any $j \in  \Z_{>0}$, 
 \[
 \im\cs_{K(k (p/q),2)}^j = \left\{ -\frac{n^2r}{p} \mod 1 \middle| 1 \leq n \leq  \left\lceil  \frac{p}{2} \right\rceil   \right\} \cup \{1\},
 \] 
  where $r$ is any integer satisfying $qr\equiv -1 \mod p$.
  \end{enumerate}
 \end{prop}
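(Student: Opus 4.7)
The plan rests on the observation that the twisted $m$-spun knot $K(k,m)$ of $k \subset S^3$ admits a fibration structure: by Zeeman's theorem, $K(k,m)$ is a fibered $2$-knot with fiber the $m$-fold branched cover $\Sigma^m(k)$ of $k$, and monodromy given by the covering transformation $\tau$. This identifies $X(K(k,m))$ with the mapping torus $X_\tau(\Sigma^m(k))$, so \cref{mapping torus} converts the computation of $\im\cs_{K(k,m),j}$ into the $(\tau^j)$-invariant portion of $R(\Sigma^m(k))$ together with the Chern-Simons functional $\cs_{\Sigma^m(k)}$.

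For part (1), we have $\Sigma^m(T(p,q)) = \Sigma(p,q,m)$ and the monodromy is the rotation $\tau$ in \eqref{tau}. I would then invoke the argument appearing in \cref{mapping tori}: the results of \cite{CS99} and \cite{Sa02} show that $\tau^\ast$ acts as the identity on $R^\ast(\Sigma(p,q,m))$, so the $(\tau^j)^\ast$-invariant locus is all of $R^\ast(\Sigma(p,q,m))$. \cref{mapping torus} then yields $\im\cs_{K(T(p,q),m),j} = \Lambda_{\Sigma(p,q,m)} \cap (0,1] = \im\cs_{\Sigma(p,q,m)}$. Part (2) is completely analogous: $\Sigma^2(M(p,q,r)) = \Sigma(p,q,r)$, the monodromy is the involution $\iota$ in \eqref{iota}, and $\iota^\ast$ acts trivially on $R^\ast$ by \cite{Sa02}, giving the stated equality.

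For part (3), the fiber is $\Sigma^2(k(p/q)) = L(p,q)$, which admits no irreducible $SU(2)$-representation, so the direct application of \cref{mapping torus} would be vacuous. Instead, I will work directly with the semidirect product $\pi_1(X(K(k(p/q),2))) \cong \pi_1(L(p,q)) \rtimes \Z$, in which the $\Z$-factor acts on $\pi_1(L(p,q)) = \Z/p\Z$ by inversion (this is the action of $\tau_\ast$ on $\pi_1$). Any flat connection on $X(K(k(p/q),2))_{j,c}$ restricts on the fiber to an abelian representation $\rho_n$ sending the generator to $\mathrm{diag}(e^{2\pi in/p}, e^{-2\pi in/p})$ for some $0 \leq n \leq \lceil p/2 \rceil$; conversely, each such $\rho_n$ extends, because $e^{2\pi in/p}$ is $SU(2)$-conjugate to its inverse, and for $n \neq 0$ the compatibility matrix $\rho(t)$ must be off-diagonal, producing an irreducible flat connection on the mapping torus. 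The formula $\cs^j_{X,c}(a) = \cs_Y(i_Y^\ast a)$ from \eqref{cross sect}, combined with the classical Kirby–Melvin calculation $\cs_{L(p,q)}(\rho_n) = -n^2r/p \bmod 1$ with $qr \equiv -1 \pmod p$, gives the explicit set. The hypothesis that no $s \in \{2,\ldots,p-2\}$ satisfies $s^2 \equiv 1 \pmod p$ ensures that $\pm 1$ are the only square roots of $1$ modulo $p$, which in turn guarantees that distinct $\rho_n$ for $n \in \{1,\ldots,\lceil p/2 \rceil\}$ remain inequivalent after the mapping-torus twist, so no collapsing of the Chern-Simons image occurs.

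The main technical obstacle lies in part (3): I must carefully verify that the reducible-on-the-fiber but potentially irreducible-on-the-total-space flat connections exhaust the contribution to $\im\cs_{K,j}$ and that the number-theoretic hypothesis precisely rules out accidental identifications. The content of parts (1) and (2) is instead a routine application of the mapping torus formalism of \cref{mapping torus} combined with the triviality of $\tau^\ast$ and $\iota^\ast$ on $R^\ast(\Sigma(p,q,r))$ already used in \cref{mapping tori}, and so is unlikely to present significant difficulty.
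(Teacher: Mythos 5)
Your treatment of parts (1) and (2) is exactly the paper's argument: Zeeman's fibration, the identification of $X(K)$ with a mapping torus, \cref{mapping torus}, and the triviality of $\tau^*$ and $\iota^*$ on $R^*(\Sigma(p,q,m))$ from \cite{CS99} and \cite{Sa02}. Your observation in part (3) that \cref{mapping torus} as literally stated (in terms of $R^*$ of the fiber) is vacuous for a lens space fiber, so one must work with all flat connections via the semidirect product $\Z/p\Z \rtimes \Z$, is a point the paper itself glosses over, and your extension argument (conjugating $\rho_n$ to $\rho_n^{-1}$ by an off-diagonal element) is fine. The concluding formula via Kirk--Klassen matches the paper, which cites \cite{KK91} for $\Lambda_{L(p,q)}$.

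The one genuine misstep is your account of the arithmetic hypothesis on $p$. You assert outright that the monodromy acts on $\pi_1(L(p,q)) \cong \Z/p\Z$ by inversion, and then claim the hypothesis ``guarantees that distinct $\rho_n$ remain inequivalent after the mapping-torus twist, so no collapsing of the Chern-Simons image occurs.'' Neither half of that is right. If the action really is inversion, then every $\rho_n$ is fixed up to conjugacy (the unordered pair $\{e^{2\pi i n/p}, e^{-2\pi i n/p}\}$ is preserved), all of them extend, distinct $n$ in $\{0,\dots,\lceil p/2\rceil\}$ have non-conjugate restrictions to the fiber regardless, and the image of $\cs$ is a set of values so ``collapsing'' of representations could not shrink it anyway --- in other words, under your assertion the hypothesis does no work at all, which should have been a warning sign. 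The hypothesis is actually needed one step earlier, where the paper uses it: an order-two diffeomorphism of $L(p,q)\setminus B$ induces multiplication by some $s$ with $s^2 \equiv 1 \pmod p$ on $\Z/p\Z$, and the condition that no $s \in \{2,\dots,p-2\}$ satisfies $s^2 \equiv 1 \pmod p$ forces $s = \pm 1$, hence a trivial action on the $SU(2)$ character variety of $\Z/p\Z$. Without it, a monodromy acting by some other square root of unity would fix only those $\rho_n$ with $sn \equiv \pm n \pmod p$, and the inclusion $\Lambda_{L(p,q)} \cap (0,1] \subseteq \im \cs_{K,j}$ --- i.e.\ the surjectivity half of your computation --- would fail. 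Either justify the inversion claim for the specific covering involution (in which case say explicitly that the hypothesis is only needed to control an a priori ambiguous identification of the monodromy), or, as the paper does, drop the inversion claim and let the hypothesis pin down the action.
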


{\em Proof of \cref{cal2}.} 
It is known that the $m$-fold branched cover of $T(p,q)$ (resp. the double branched cover of $M(p,q,r)$) is $\Sigma(p,q, m)$ (resp. $\Sigma(p,q,r)$)(\cite{BZ85}). Moreover, the covering transformations are given by $\tau$ and $\iota$ in \eqref{tau} and \eqref{iota}. As mentioned in the proof of \cref{mapping tori}, $\iota$ and $\tau$ induce bijection between $R^* (\Sigma(p,q,m) \setminus B )$ and $R^* (\Sigma(p,q,r) \setminus B )$, where $B$ are small open balls. On the other hand, if $m>0$, $K(T(p,q),m)$ is a fibered knot whose fiber is $\Sigma(p,q, m)$ and the monodromy is given by $\tau$. Similarly $K(M(p,q,r),2)$ is also a fibered knot whose fiber is $\Sigma(p,q, r)$ and the monodromy is given by $\iota$. Next, we prove the statement for a rational knot. It is known that the double branched cover of $k(p/q)$ is given by $L(p,q)$. The condition
 \[
 \left\{ s \in \set{ 2, \cdots , p-2} \middle | \frac{s^2-1} {p} \in \Z \right\} = \emptyset.  
 \]
 implies that every order $2$-diffeomorphism on $L(p,q)\setminus B$ induces maps $\id$ or $\psi_{p-1}$ on $\pi_1(L(p,q) \setminus B) \cong \Z/ p\Z$, where $B$ are small open balls and $\psi_{p-1}$ is given by $1 \mapsto p-1$. Therefore, every order $2$-diffeomorphism $h$ on $L(p,q)\setminus B$ satisfies $h^*=1 : R^* (L(p,q)) \to R^* (L(p,q))$.
In \cite{KK91}, Kirk and Klassen showed
\[
\left\{- \frac{n^2r}{p} \mod 1  \middle| 1 \leq n \leq   \left\lceil  \frac{p}{2} \right\rceil   \right\} \cup \{1\}= \Lambda_{L(p,q)} \cap (0,1],
\]
 where $r$ is any integer satisfying $qr\equiv -1 \mod p$.
This completes the proof. \qed

\begin{comment}
44 Montesinos Variedades de Seifert que son , Burde Zieschang Knots, 
\end{comment}

In \cite{FS90}, for Seifert homology $3$-sphere of type $\Sigma(a_1, \cdots ,a_n )$, Fintushel and Stern gave an algorithm to compute $\Lambda_{Y}$.

\section{Morse type perturbation}\label{Morse type perturbation}

In this section, we will prove the following theorem: 
\begin{thm}\label{finiteness} Let $r$ be an element of $\Lambda^*_Y$. Suppose that the Chern-Simons functional of $Y$ is Morse-Bott at the level $r$. 
Then 
\[
l_{Y,r, i} < \infty
\]
for any $i \in \Z$.
In particular, if $\cs$ is Morse-Bott, then 
\[
l_Y < \infty.
\]
\end{thm}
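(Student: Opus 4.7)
The plan is to exploit the Morse-Bott hypothesis at level $r$ to reduce, via an infinite-dimensional implicit function theorem, the count of perturbed critical points of $\cs_{Y,\pi_n}$ with Chern-Simons value near $r$ and Floer index $i$ to the count of critical points of a prescribed Morse function on the unperturbed critical manifold, uniformly in $n$. Set $K_r := R^*(Y) \cap \cs_Y^{-1}(r)$. Compactness of $R(Y)$ together with the Morse-Bott hypothesis at level $r$, which forces $H^1_a = T_a R^*(Y)$ and invertible Hessian on the $L^2$-normal slice for every $a \in K_r$, shows that $K_r$ is a disjoint union of finitely many compact smooth submanifolds $C_1, \dots, C_N \subset \B^*(Y)$. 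The Floer index is locally constant on $R^*(Y)$ and hence constant on each $C_j$, so fixing $i$ restricts attention to finitely many of the $C_j$.

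For each such $C_j$ fix a Morse function $f_j \colon C_j \to \R$ with $M_j$ critical points. Using the density of holonomy perturbations (the same density underlying the construction of regular perturbations in \cite{Fl88, Do02}), choose a sequence $\pi_n \in \cal{P}^*(Y)$ with $\|\pi_n\| \to 0$ whose restriction $h_{\pi_n}$ to a tubular neighborhood of $C_j$ is $C^2$-close to a small rescaling $\delta_n f_j$, with $\delta_n \to 0$. A standard implicit function argument in a tubular neighborhood $U_j$ of $C_j$, applied in the $L^2$-normal direction where $\text{Hess}_a(\cs_Y)$ is invertible, identifies the critical points of $\cs_{Y,\pi_n}$ inside $U_j$ with the critical points of a tangential function $\tilde h_{\pi_n} \colon C_j \to \R$ whose leading part is $h_{\pi_n}|_{C_j}$; for $n$ large this tangential function is Morse with exactly $M_j$ critical points. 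The separation window $[r - \lambda_Y, r + \lambda_Y]$, together with Uhlenbeck-type compactness and $\|\pi_n\| \to 0$, forces any critical point of $\cs_{Y,\pi_n}$ with perturbed value in this window to lie in $\bigsqcup_j U_j$ for $n$ large: otherwise a subsequence would subconverge modulo gauge to a flat connection at level $r$ outside $K_r$, contradicting the definition of $K_r$. A further small generic perturbation makes $\pi_n$ non-degenerate and regular without changing the count, so $l(Y,g_Y,r,i) \leq \sum_{j \,:\, \ind(C_j) = i} M_j < \infty$, proving $l_{Y,r,i} < \infty$. The ``in particular'' clause follows because, if $\cs_Y$ is Morse-Bott everywhere, then $R^*(Y)$ decomposes into finitely many compact components, so $\Lambda^*_Y \cap (0,1]$ is finite and only finitely many Floer indices contribute at each level; the formula $l_Y = \sum_{i,j} l_{Y,a_i,j}$ of Section \ref{per} is then a finite sum of finite terms.

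The main technical obstacle is the density statement invoked in the second paragraph: one must realize, via finitely many holonomy perturbations, a preassigned smooth tangential function on each compact critical manifold $C_j$ up to small $C^2$-error. This demands more than the usual genericity arguments used to achieve transversality. Concretely, one argues that linear combinations of $a \mapsto h(\hol_\gamma a)$, over embedded loops $\gamma \subset Y$ and adjoint-invariant $h \in C^m_{\text{ad}}(SU(2),\R)$, are $C^m$-dense among gauge-invariant smooth functions on compact subsets of $\B^*(Y)$; together with the use of arbitrary $q \in C^m(\R^d, \R)$ in the definition of $\cal{P}^*(Y)$, this is enough flexibility to approximate any prescribed Morse function on each $C_j$ while keeping $\|\pi_n\|$ small.
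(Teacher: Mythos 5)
Your overall architecture coincides with the paper's: decompose $\cs_Y^{-1}(r)\cap R^*(Y)$ into finitely many compact critical submanifolds, realize a chosen Morse function on each as a holonomy perturbation, use an implicit-function-theorem splitting into tangential and normal ($L^2$-spectral) directions to identify the perturbed critical points near level $r$ with the critical points of that Morse function, localize via compactness, and finish with a small further perturbation achieving regularity without changing the count. The one step where you diverge is the one you yourself flag as the main obstacle, and your proposed resolution of it has a genuine gap.

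You claim that \emph{linear combinations} of functions $a\mapsto h(\hol_\gamma a)$ are $C^m$-dense among gauge-invariant functions on compact subsets of $\B^*(Y)$. As stated this is false: a sum $\sum_i h_i(\hol_{\gamma_i}a)$ is a sum of functions each depending on a single holonomy variable, and such sums cannot approximate, even in $C^0$, a general function of several holonomy variables (for instance a product of two of them). Passing to the algebra generated by such functions repairs the $C^0$ statement via Stone--Weierstrass, but the $C^2$-density your implicit function argument actually needs --- you must control the Hessian of the induced tangential function on $C_j$ to conclude it is Morse --- does not follow from Stone--Weierstrass and is not addressed. The paper avoids approximation entirely: in \cref{MB per} and the construction of Morse--Bott type perturbations, one chooses finitely many holonomy functions so that the resulting map $\psi\colon \B^*(Y)\to\R^N$ restricts to an embedding of the compact critical manifold $C_r$, fixes a tubular neighborhood projection $p$ of $\psi(C_r)$ in $\R^N$ and a bump function $\rho_r$, and takes $q_r=\rho_r\cdot(g_r\circ\psi^{-1}\circ p)$. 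Since the enlarged class $\cal{P}^*(Y)$ permits an arbitrary $C^m$ function $q$ of the holonomy variables, this realizes the prescribed Morse function $g_r$ \emph{exactly} on $C_r$, with exact control of its first and second tangential derivatives; the scaling $t\,h_{\pi^r(g)}$ with $t\to 0$ then plays the role of your $\delta_n$. If you replace your density claim by this embedding-plus-pullback construction, the remainder of your argument goes through and matches the paper's proof of \cref{finiteness}.
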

The lemma below is key to proving \cref{finiteness}. The proof uses the essentially the same technique as in the proof of Theorem 5.11 \cite{Sav02} and \cite{HC99}. 
\begin{lem}\label{MB per}Suppose that the Chern-Simons functional of $Y$ is Morse-Bott at the level $r\in \R/ \Z$.  We denote by $C_r:= {R}^*(Y) \cap \cs^{-1}(r)$.  Let $g_r \colon C_r \to \R$ be a Morse function. Then there exists a smooth family of perturbations $\{\pi_\varepsilon \}$ parametrized by $\varepsilon \in (0,\varepsilon_0)$ for some $\varepsilon_0>0$ and a small neighborhood of $U$ of $C_r$ such that the critical points of $\cs_{\pi_\varepsilon}|_U$ correspond to $\displaystyle  \operatorname{Crit} (g_r)$, $\pi_\varepsilon$ is non-degenerate on $U$ for any $\varepsilon  \in (0,\varepsilon_0)$, $\displaystyle \lim_{\varepsilon \to 0} \|\pi_\varepsilon\| \to 0$ and $\operatorname{supp} h \subset U$. Moreover, there is an embedding \[
L: \displaystyle  \operatorname{Crit} (g_r) \times [0, \varepsilon_0) \to \B^*(Y)
\]
 such that $\im L|_{ \operatorname{Crit} (g_r) \times \{\varepsilon\} }$ coincides with the critical point set of $\cs_{\pi_\varepsilon}|_U$ and $\im L |_{ \operatorname{Crit} (g_r) \times \{0\}} =  \operatorname{Crit} (g_r)$. Here, we consider the $L^2_k$-topology on $\B^*(Y)$ for a fixed $k>2$. 
\end{lem}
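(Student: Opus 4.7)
The plan is to realize the perturbation $\pi_\varepsilon$ as a holonomy-perturbation approximation of a chosen Morse function extended off the critical submanifold, and then to extract the critical points by an implicit function theorem argument in the normal direction where the Morse--Bott Hessian is invertible.

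More precisely, I would first use the Morse--Bott condition at level $r$ to get a tubular neighborhood $U$ of $C_r$ in $\B^*(Y)$ together with a smooth $L^2$-orthogonal splitting
\[
T_a \B^*(Y) = T_a C_r \oplus N_a, \qquad a \in C_r,
\]
such that $\text{Hess}_a(cs_Y)|_{N_a}$ is invertible. Standard elliptic parametrix estimates give a smooth retraction $\pi_{C_r} \colon U \to C_r$ and a smooth normal exponential map. I would then extend $g_r$ to $\tilde{g} := \chi \cdot (g_r \circ \pi_{C_r}) \colon \B^*(Y) \to \R$ using a cutoff $\chi$ supported in a smaller neighborhood $U' \subset U$ of $C_r$, and consider the unperturbed candidate functional $cs_Y + \varepsilon \tilde{g}$.

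The second step is the implicit function theorem: splitting the gradient equation $d(cs_Y + \varepsilon \tilde g) = 0$ according to the $T C_r \oplus N$ decomposition, the normal component has invertible linearization $\text{Hess}(cs_Y)$ at $\varepsilon = 0$ by Morse--Bottness, so solutions in the normal direction are smoothly parametrized by the $C_r$-coordinate. Plugging into the tangential component leaves an equation whose leading term is $\varepsilon\, dg_r$ with an $O(\varepsilon^2)$ correction, so its zero set in $U$ is in bijection with $\mathrm{Crit}(g_r)$ and is non-degenerate there thanks to the Morseness of $g_r$ together with the invertibility on $N$. This produces the embedding $L \colon \mathrm{Crit}(g_r) \times [0,\varepsilon_0) \to \B^*(Y)$ and proves non-degeneracy.

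The third and most technical step is to replace the abstract perturbation $\varepsilon \tilde{g}$ by an honest element $\pi_\varepsilon \in \mathcal{P}^*(Y)$. Here I would invoke the standard density of holonomy perturbations in $C^m$ on compact subsets of $\B^*(Y)$ (as used, e.g., in Floer's construction and in \cite{Sav02}, \cite{HC99}): since $C_r$ is compact and $\mathcal{P}^*(Y)$ allows arbitrary $C^m$-functions of finitely many holonomy evaluations, one can choose $\pi_\varepsilon = (f, h, q)$ with $q\circ h$ approximating $\varepsilon \tilde g$ arbitrarily well in the $C^m$-norm over the closure of $U'$ and supported in $U$, while $\|\pi_\varepsilon\|_{C^m} = O(\varepsilon)$ so that $\|\pi_\varepsilon\| \to 0$. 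Because the above implicit function argument is stable under small $C^m$-perturbations of the model functional, the critical set of $cs_{Y,\pi_\varepsilon}$ in $U$ remains in bijection with $\mathrm{Crit}(g_r)$ and remains non-degenerate; smooth dependence on $\varepsilon$ follows from a one-parameter version of the implicit function theorem, giving the smooth family $L$.

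The main obstacle I expect is the compatibility of the three requirements on $\pi_\varepsilon$ simultaneously: that it $C^m$-approximates $\varepsilon \tilde g$ near $C_r$, that $\|\pi_\varepsilon\| \to 0$, and that the resulting critical points are non-degenerate in the sense of \cite{Ma18} (i.e., including the regularity conditions used to define $\mathcal{P}^*(Y, g, r, s)$). Careful bookkeeping of $\varepsilon$-scales in the $C^m$-norm of $q\circ h$ versus the $L^2$- and $L^4$-bounds on $\mathrm{grad}\, h_{\pi_\varepsilon}$ in Definition \ref{epsilon perturbation} is required, but since $h_\pi$ and its gradient depend linearly on $q \circ h$ and its first derivative, and the support of $\pi_\varepsilon$ can be localized near the compact set $C_r$, the scale matching works. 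A density-and-Sard argument inside this $\varepsilon$-family then yields the desired non-degenerate $\pi_\varepsilon$ after an arbitrarily small additional perturbation.
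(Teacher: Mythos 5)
Your first two steps (the $L^2$-orthogonal splitting near $C_r$, the implicit function theorem in the normal direction where the Morse--Bott Hessian is invertible, and the identification of the reduced bifurcation equation's zeros with $\operatorname{Crit}(g_r)$ after dividing by $\varepsilon$) match the paper's argument in substance: the paper runs exactly this Lyapunov--Schmidt reduction, using the spectral decomposition $T^kU = L_0^k \oplus L_1^k$ of the Hessian and the openness of transversality to conclude non-degeneracy for small $t$.

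The gap is in your third step. You define the model perturbation $\tilde g = \chi\cdot(g_r\circ \pi_{C_r})$ abstractly on $\B^*(Y)$ and then propose to realize it by invoking ``density of holonomy perturbations in $C^m$ on compact subsets.'' But the stability of your implicit function argument requires $C^1$- (really $C^2$-) closeness of $q\circ h$ to $\varepsilon\tilde g$ on a whole \emph{neighborhood} $U$ of $C_r$, not just on the compact set $C_r$, and no such density holds there: any element of $\cal{P}^*(Y)$ is a function of finitely many holonomy evaluations, hence is constant along the fibers of the induced map $\psi\colon \B^*(Y)\to\R^N$, which have infinite dimension and meet $U$ in sets on which $\chi$ and $g_r\circ\pi_{C_r}$ genuinely vary. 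So the approximation you need cannot be achieved, and the standard density statements (separation of points and tangent vectors on finite or compact critical sets) do not supply it. The paper avoids this entirely by reversing the order of construction: it first chooses loops so that $\psi$ restricts to an \emph{embedding} of the compact set $C_r$ into $\R^N$ (citing \cite{Do02}), pushes $g_r$ forward to $\psi(C_r)$, extends it over a tubular neighborhood $N_r\subset\R^N$ with a bump function $\rho_r$, and takes $q_r(x)=\rho_r(x)\,g_r\circ\psi^{-1}\circ p(x)$; the perturbation $h=q_r\circ\psi$ is then an honest element of $\cal{P}^*(Y)$ by construction, with no approximation step. Your proof would be repaired by adopting this definition of $\tilde g$ in place of the retraction-based one. (A smaller point: non-degeneracy is all this lemma requires; the regularity of the perturbation is arranged separately in the paper, in \cref{key}, by a further small perturbation following \cite{SaWe08}, so your closing ``density-and-Sard'' remark belongs there rather than here.)
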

We define a nice class of perturbations called {\it Morse-Bott type perturbations}. We can take orientation preserving diffemorphisms $f^r_i \colon S^1 \times D^2 \to Y$ ($1\leq i\leq N$) such that the smooth map 
$\psi \: {\B}^*(Y) \to \R^N$ defined by 
\[
\psi ([A]):=  \left(\int_{D^2} \trace ( \hol(A)_{f^r_i(-,x)} d\mu  )\right)_{ 1\leq i \leq N} 
\]
 gives a diffeomorphism from $C_r$ to its image (\cite{Do02}). We fix a closed tubular neighborhood $p_r:N_r\to \psi(C_r)$ of $\psi(C_r)$ and a Euclidian metric on $N_r$.  Fix a smooth bump function $\rho_r \: \R^N \to \R$ such that $\rho_r (v )=1 $ if $|v|<1$, $\rho (v )=0$ if $|v|>2$ and $\rho_r$ depends only on $|(x,t)|$, where $|-|$ is a metric on $N_r$. We consider the pull-back $p^* (g_r\circ \psi^{-1} ) \: N_r \to \R$. 
 \begin{defn}[Morse-Bott type perturbation]
 Now we define $h \:  {\B}^*(Y) \to \R$ by the composition of $\psi \: {B}^* (Y) \to \R^N$ and the function $q \: \R^N \to \R$ given by
 \[
 q_r (x) := \begin{cases} \rho_r(x )  g_r\circ \psi^{-1} \circ p( x)   \text{ if } x \in N_r  \\ 
 0  \text{ if } x \notin N_r  .
 \end{cases}
 \]
 We call a pair $\pi^r(g) := ( (f^r_i),  (h_i= \trace ), q_r) \in \cal{P}^*(Y)$ a {\it Morse-Bott type perturbation} at $C_r$.  When $\cs_Y$ is Morse-Bott, depending on the choice of $g : R^*(Y) \to \R$, we can define Morse-Bott perturbations $\pi(g)\in  \cal{P}^*(Y)$ in the same way. 
 \end{defn}
 Now, we give a proof of \cref{MB per}.
 
{ \em Proof of \cref{MB per}.} 

 We take a Morse-Bott type perturbation $\pi^r(g)$ for $C_r$. 
 Put $\cs_{Y, \pi_t}:= \cs_Y +t  h_{\pi^r(g)}  \colon {\B}^*(Y) \to \R$. 
For a fixed element $x \in {\B}^*(Y)$, we consider an essentially self-adjoint elliptic operator 
\[
\operatorname{Hess}_{x}(cs_Y)= *d_x : \ker d_x^* \cap L^2_k(\Om^1_Y \otimes \su) \to \ker d_x^* \cap L^2_{k-1}(\Om^1_Y \otimes \su) .
\]
The formal tangent bundle $T^k{\B}^*(Y)$ of ${\B}^*(Y)$ is defined as the quotient bundle of 
\[
\A^* (Y) \times \bigcup_{x \in \A^* (Y)} ( \ker d^*_x\cap L^2_k(\Om^1_Y \otimes \su))
\]
  by the action of $\G(Y)$.
  Then, the operator $\operatorname{Hess}_{x}(cs)$ defines a bundle map 
  \[
  \operatorname{\bf Hess}_{x}(cs) : T^k{\B}^*(Y) \to T^{k-1}{\B}^*(Y). 
  \]
  We define $\varepsilon_0>0$ by 
\[
\varepsilon_0 := \min \{ |\lambda| \ | \lambda \text{ is a non-zero eigenvalue of }
\]
\[
\operatorname{Hess}_{x}(cs): \ker d_x^* \cap L^2(\Om^1_Y \otimes \su)\to  \ker d_x^* \cap L^2(\Om^1_Y \otimes \su) , x \in C_r \}.
\]
We take an open neighborhood $U$ of $C_r$ in ${\B}^*(Y)$ such that
\[
\left\{ \frac{1}{2}\varepsilon_0 \right\}   \cap  \{ |\lambda| \ | \lambda \text{ is an eigenvalue of }
\]
\[
\operatorname{Hess}_{x}(cs):  \ker d_x^* \cap L^2(\Om^1_Y \otimes \su)\to  \ker d_x^* \cap L^2(\Om^1_Y \otimes \su), x \in U \}= \emptyset.
\]
Then,  \[
\displaystyle L^k_0:= \bigcup_{x \in U} \left\{\text{$L^2$-eigenspaces of } \operatorname{Hess}_{x}(cs) \text{ whose eigenvalues }\lambda \text{ satisfy } |\lambda| < \frac{ 1}{2}\varepsilon_0 \right\} \to U
\]
 gives a finite rank subbundle of $T^kU:= T^k {B}^*(Y)|_U$. This gives a decomposition $T^kU = L^k_0 \oplus L^k_1$.
 We have the following section: 
\[
\operatorname{\bf grad}_1: U \times (-\varepsilon,\varepsilon) \to  L^{k-1}_1 , (c,t) \mapsto \pr_{L^{k-1}_1}  \grad_x (\cs_{Y, \pi_t}) . 
\]
For a point $(c,t) \in U \times (-\varepsilon,\varepsilon)$ and a small neighborhood $V_{x,}$ of $x$, by taking a trivialization, we can regard the section $\operatorname{\bf grad}_1$ as a smooth map 
\[
\operatorname{\bf grad}'_1: V_{x} \times (-\varepsilon,\varepsilon) \to (L^{k-1}_1)_x. 
\] 
Since $\cs_Y$ is Morse-Bott function, $d \operatorname{\bf grad}'_1|_{V_x \times \{0\}}$
is surjective for $x \in C_r$. Therefore if $|t|$ is sufficiently small, $d\operatorname{\bf grad}'_1|_{V_x \times \{t\}}$ is sujective. Thus $\operatorname{\bf grad}_1^{-1}(0)$ is a smooth submanifold and diffeomorphic to $C_r \times (-\varepsilon,\varepsilon)$ for a small $\varepsilon>0$. We take such a diffeomorphism $H \: C_r \times (-\varepsilon,\varepsilon) \to \operatorname{\bf grad}_1^{-1}(0)$. Then, the critical points of $cs_{Y, \pi_t}$ can be seen as the zero set of the following map: 
\[
\operatorname{\bf grad} :=\operatorname{\bf grad}_0 \circ H :C_r \times (-\varepsilon,\varepsilon) \to L_0^{k-1}.
\]
For $x \in C_r$ and its neighborhood $V_x$ in $C_r$, we regard $\operatorname{\bf grad}$ as a smooth map
\[
\operatorname{\bf grad}' =\text{trivialization} \circ \operatorname{\bf grad}_1 \circ H :V_x \times (-\varepsilon,\varepsilon) \to (L_1^{k-1})_x. 
\]
We have the following Taylor expansion of $\operatorname{\bf grad}' $ with respect to $t$ at $t=0$: for $x=(c,t) \in C_r\times (-\varepsilon, \varepsilon )$, 
\begin{align*}
\operatorname{\bf grad}' (c,t)  = & \operatorname{\bf grad}'_{(c,0)}+t \frac{d}{dt} (\operatorname{\bf grad}'_{(c,t)} )|_{t=0} + O(t^2)\\ 
=  &t \frac{d}{dt} ( \pr_{(L_1)_x}  \grad_x (\cs_{Y, \pi_t} ) \circ H(c,t))|_{t=0}+ O(t^2) \\
= & t \frac{d}{dt} ( \pr_{(L_1)_x} \grad_x (t \rho_r(x )  g_r\circ \psi^{-1}\circ p \circ \psi(x)  ) ) \circ H(c,t))|_{t=0} +  O(t^2)  \\
= &  t\pr_{(L_1)_x}\grad_x ( g_r\circ \psi^{-1}\circ p\circ \psi ) \circ d_tH(c,t)) +  O(t^2) .
\end{align*}

Now we have a $ C^2$ section $\operatorname{\bf grad}^* \: C_r \times (-\varepsilon,\varepsilon) \to L_1$ given by
\[\operatorname{\bf grad}^*  (c,t):= 
\begin{cases} 1/t \operatorname{\bf grad}' (c,t) \text{ if } t\neq 0 \\ 
 \pr_{L_1}\grad_x ( g_r\circ \psi^{-1}\circ p\circ \psi )  \circ d_tH(c,t))  \text{ if } t=0 \\
 \end{cases}.
 \]
 Then $\operatorname{\bf grad}^* (c,t)$ is transverse when $t=0$. Therefore, for $t$ sufficiently small, $\operatorname{\bf grad}^*|_{C_r \times \{t \}}$ is transverse. One can see that the zero set of $\operatorname{\bf grad}^* (c,0)$ corresponds to the set 
 \[
 \left\{ c \in C_r \middle| \pr_{L_1}\grad_x ( g_r\circ \psi^{-1}\circ p\circ \psi )  \circ d_tH(c,0) = 0 \right\} \cong \operatorname{Crit}(g_r) \subset C_r.
 \]
We construct an embedding $L: \displaystyle  \operatorname{Crit} (g_r) \times [0, \varepsilon_0) \to \B^*(Y)$. Note that for $\varepsilon$ sufficiently small, $\operatorname{\bf grad}^* \: C_r \times (-\varepsilon,\varepsilon) \to L_1$ is transverse to the zero section. By the implicit function theorem, one can see that $(\operatorname{\bf grad}^*|_{C_r \times (-\varepsilon,\varepsilon) } )^{-1} (0)$ is a compact $1$-dimensional manifold. 
 If we take $\varepsilon$ sufficiently small, then
 \[
(\operatorname{\bf grad}^*|_{C_r \times \{0\} } )^{-1} (0)  \to (\operatorname{\bf grad}^*|_{C_r \times (-\varepsilon,\varepsilon) } )^{-1} (0) \to (-\varepsilon,\varepsilon)
\]
gives a fiber bundle. Note that $(\operatorname{\bf grad}^*|_{C_r \times \{0\} } )^{-1} (0)$ is a finite set and $(\operatorname{\bf grad}^*|_{C_r \times (-\varepsilon,\varepsilon) } )^{-1} (0)$ is a union of $1$-dimensional open intervals. 
As a trivialization of the above bundle on $[0, \varepsilon)$, we have a diffeomorphism \[
 L' \:  (\operatorname{\bf grad}^* |_{C_r \times \{0\} } )^{-1} (0)\times [0, \varepsilon) \to  \bigcup_{ t\in [0, \varepsilon) } (\operatorname{\bf grad}^*|_{C_r \times \{t \} } )^{-1} (0). 
 \]
  
 Therefore, the set of critical points of $(cs+t h_{\pi^r(g)})|_{U}$ corresponds to $\operatorname{Crit}(g_r)$ for small $t>0$. The embedding $L$ is given by the composition of $L'$ and the following map
 \[
\bigcup_{ t\in [0, \varepsilon) } (\operatorname{\bf grad}^*|_{C_r \times \{t \} } )^{-1} (0) \xrightarrow{\text{inclusion}}  C_r\times [0, \varepsilon) \xrightarrow{\pr_{C_r}} C_r  \to \B^*(Y).
 \]
 One can see that $(cs_Y+t h_{\pi^r(g)})|_{U}$ is non-degenerate at $x \in U$ if and only if $\operatorname{\bf grad}^*|_{U\times \{ t\} }$ is transverse to zero section. Since $\operatorname{\bf grad}^*|_{U\times \{0\} }$ is transverse to zero section and transversality is an open condition, $\operatorname{\bf grad}^*|_{U\times \{t\} }$ is transverse to zero section for sufficiently small $t>0$.

 \qed

Using \cref{MB per} and some technique of \cite{SaWe08}, we will show the following lemma: 
\begin{lem}\label{key}Suppose that the Chern-Simons functional of $Y$ is Morse-Bott at the level $r\in \Lambda^*$.
There exists a family of perturbations $\pi_n=(f, h_n, q_n) \in \mathcal{P}^*(Y, g)$ such that $\| \pi_n \|\to 0$, the perturbations $\pi_n$ are non-degenerate and regular and 
\[
\sup \# \set { a \in  \wt{R}^*_{\pi_n}(Y)| r-\lambda_Y <cs_{Y, \pi_n}(a) < r+  \lambda_Y  } < \infty, 
\]
where $\lambda_Y = \frac{1}{2} \min \set { |a-b| | a, b \in \Lambda^*_Y, a\neq b }$.
In particular, $\dis \sum_{i \in \Z}l_{Y,r, i}<\infty$.
\end{lem}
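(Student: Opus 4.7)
The plan is to combine the Morse--Bott perturbation produced by \cref{MB per} with a standard Sard--Smale argument to upgrade it to a non-degenerate regular perturbation, while ensuring the count of critical points in the window $(r-\lambda_Y, r+\lambda_Y)$ stays bounded uniformly as $\|\pi_n\| \to 0$.

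\smallskip

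First I would fix a Morse function $g_r\colon C_r \to \R$. Since $R(Y) = \hom(\pi_1(Y),SU(2))/SU(2)$ is compact and $C_r := R^*(Y) \cap \cs^{-1}(r)$ is a closed subset of $R(Y)$ disjoint from the (unique) reducible critical point (because $r \in \Lambda^*_Y \subset (0,1]$), $C_r$ is compact, so $\operatorname{Crit}(g_r)$ is a finite set. Applying \cref{MB per} yields a neighborhood $U$ of $C_r$, a number $\varepsilon_0 >0$, and a smooth family of perturbations $\{\pi_\varepsilon\}_{\varepsilon \in (0,\varepsilon_0)} \subset \mathcal{P}^*(Y)$, of Morse--Bott type with support inside $U$, such that $\|\pi_\varepsilon\| \to 0$ as $\varepsilon \to 0$ and
\[
\operatorname{Crit}(\cs_{Y,\pi_\varepsilon})\cap U \;\longleftrightarrow\; \operatorname{Crit}(g_r),
\]
all non-degenerate. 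In particular, inside $U$ the perturbed functional has exactly $N := \#\operatorname{Crit}(g_r) < \infty$ critical points for every $\varepsilon \in (0,\varepsilon_0)$.

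\smallskip

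Next I would achieve non-degeneracy globally and regularity of all trajectory moduli spaces. By compactness of $R(Y)$ and isolation of $C_r$ from the other critical submanifolds (which lie at levels in $\Lambda_Y$ separated from $r$ by at least $2\lambda_Y$), there exists $\varepsilon_1 \leq \varepsilon_0$ such that for $\varepsilon < \varepsilon_1$ every critical point of $\cs_{Y,\pi_\varepsilon}$ lying in the level window $(r-\lambda_Y, r+\lambda_Y)$ belongs to $U$, hence there are exactly $N$ of them. Now I would enlarge the tuple of loops $f = (f^r_i) \cup (f'_j)$ by adjoining enough auxiliary loops $(f'_j)$ (holding this combined $f$ fixed from now on) so that the class of perturbations $(f, h, q)$ covers a Banach space dense enough to apply the standard Sard--Smale argument of Floer/Taubes. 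This produces, for any prescribed small $\delta_n > 0$, a perturbation $\pi_n$ of the form $(f, h_n, q_n)$ with $\|\pi_n - \pi_{\varepsilon_n}\| < \delta_n$ for which $\cs_{Y,\pi_n}$ is globally non-degenerate and the relevant moduli spaces of gradient flow lines are cut out transversally; here I take $\varepsilon_n \to 0$ and $\delta_n \to 0$ fast enough that $\|\pi_n\| \to 0$.

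\smallskip

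The main obstacle will be verifying that the additional Sard--Smale perturbation does not create new critical points inside the window $(r - \lambda_Y, r + \lambda_Y)$. The argument is the standard one: critical points of $\cs_{Y,\pi}$ depend upper semicontinuously on $\pi$ in the sense that any sequence of critical points of $\cs_{Y,\pi_n}$ has subsequential limit in the critical set of $\cs_Y$ (by Uhlenbeck-type compactness for flat connections). Combined with the Morse--Bott non-degeneracy at $C_r$, which makes the critical points near $C_r$ persist as a finite set of cardinality at most $N$, and with the separation $2\lambda_Y$ of $r$ from other elements of $\Lambda_Y$, this forces
\[
\#\bigl\{ a \in \widetilde{R}^*_{\pi_n}(Y)\,\bigm|\, r-\lambda_Y < \cs_{Y,\pi_n}(a) < r+\lambda_Y \bigr\} \leq N
\]
for all sufficiently large $n$, after choosing $\delta_n$ small enough relative to $\varepsilon_n$.

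\smallskip

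Finally, to deduce the "in particular" statement, observe that for the sequence $\{\pi_n\}$ just constructed,
\[
l_{Y,r,i} \leq \sup_{n} \#\bigl\{ a \in \widetilde{R}^*_{\pi_n}(Y)\,\bigm|\, |\cs_{\pi_n}(a) - r| < \lambda_Y,\ \operatorname{ind}(a) = i \bigr\} \leq N
\]
for every $i \in \Z$, and only those indices $i$ realized by one of the $N$ critical points can contribute a non-zero value. Hence $\sum_{i \in \Z} l_{Y,r,i} \leq N < \infty$, completing the proof.
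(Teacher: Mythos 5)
Your proposal is correct and follows essentially the same route as the paper: apply \cref{MB per} to get a Morse--Bott type perturbation near $C_r$ whose critical points in a neighborhood $U$ correspond to the finite set $\operatorname{Crit}(g_r)$, then add a further small perturbation to achieve global non-degeneracy and regularity while keeping the count in the window $(r-\lambda_Y, r+\lambda_Y)$ bounded. The only difference is in that last step: the paper supports the auxiliary non-degeneracy perturbation away from $\cs_Y^{-1}(r)$ and invokes the technique of \cite{SaWe08} to gain regularity without altering the critical set at all, whereas you use a generic Sard--Smale perturbation together with stability of non-degenerate critical points and Uhlenbeck compactness to bound the count; both are standard and adequate.
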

\begin{proof}We regard $r$ as an element of $\R/\Z$ via mapping $\R$ to $\R/\Z$.
Suppose that $C_r= {R}^*(Y) \cap \cs_Y^{-1}(r)$ is a submanifold in $\B^*(Y)$.
We take a Morse function $g_r\: C_r \to \R$. Then, by taking a Morse-Bott perturbation $\pi^r(g)$ \cref{MB per}, we obtain a sequence of perturbation $\{\pi_t\}_{t \in (0,\varepsilon_0) }= \{(f,h,\varepsilon q)\}$ such that 
\[
cs_{Y, \pi_t}:{ \B}^*(Y)\to \R /\Z
\]
satisfying the conclusion of \cref{MB per}.
Next, we perturb the other part ${R}^*(Y) \setminus \cs^{-1} (r )$.  By the argument to construct perturbations as in \cite[Subsection 5.5.1]{Do02}, one can take a sequence of perturbations $\{\pi'_n\}_{n\in \Z_{>0}}= \{(g,h_n, q_n)\}_{n \in \Z_{>0}}$ supported in a small neighborhood of ${R}^*(Y) \setminus \cs_Y^{-1} (r )$ such that $\cs_{\pi'_n}$ is non-degenerate for each $n$ and $\dis \lim_{n\to \infty} \|\pi_n'\|\to 0$. Then, $\pi_n'' := (f\cup g, h'_n+ h, q'_n+ \frac{1}{n} q)$ satisfies the following conditions: for a large $n>0$, 
\[
 \# \set { a \in  \wt{R}^*_{\pi_n}(Y)| r-\lambda_Y <cs_{Y, \pi_n}(a) < r+  \lambda_Y  } = \# \operatorname{Crit}(g), 
\]
where $\lambda_Y= \frac{1}{2} \min \set { | a-b | | a,b \in  \Lambda_Y }$.
Using the technique of \cite[Section 8]{SaWe08}, one can add a small perturbation $\{\pi^*_n\}= \{(f , h^*_n+ h_n,q^*_n+ q_n)\}$ such that the perturbation $\pi^*_n$ is non-degenerate and regular for each $n\in \Z_{>0}$ and the critical point sets of $\cs_{\pi_n}$ and $\cs_{\pi^*_n}$ coincide. This completes the proof. 
\qed \end{proof}
\begin{thm}\label{non-deg}For a Seifert homology $3$-sphere of type $\Sigma(a_1, \dots , a_n)$, 
\[
l_{ \Sigma(a_1, \dots , a_n)} =2 | \lambda ( \Sigma(a_1, \dots , a_n)) | .
\]
\end{thm}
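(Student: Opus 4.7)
The plan is to prove the two inequalities separately. For the lower bound $l_Y \geq 2|\lambda(Y)|$, I would apply the Morse inequality $l_Y \geq \sum_i \rank I_i(Y)$ established earlier in \cref{Invariants lY}. By Fintushel--Stern, for $Y = \Sigma(a_1,\ldots,a_n)$ the Chern--Simons functional is Morse--Bott and the Floer index of every critical submanifold is even, so $I_*(Y)$ is concentrated in even degrees. Taubes' equality $\lambda(Y) = \tfrac{1}{2}\chi(I_*(Y))$ then gives $\rank I_*(Y) = |\chi(I_*(Y))| = 2|\lambda(Y)|$.

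For the upper bound $l_Y \leq 2|\lambda(Y)|$, I would use the Fintushel--Stern description of $R^*(Y)$ as a finite disjoint union of closed smooth submanifolds $C_r \subset \B^*(Y)$, indexed by $r \in \Lambda^*_Y \cap (0,1]$; each $C_r$ is a moduli space of parabolic $SU(2)$-bundles on $S^2$ with prescribed holonomy and carries a natural K\"ahler structure. In particular $H^{\mathrm{odd}}(C_r;\R)=0$, and each $C_r$ admits a perfect Morse function $g_r \colon C_r \to \R$ whose Morse indices are all even and whose critical set has cardinality $\chi(C_r)$.

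Next, I would apply \cref{key} level by level. Combining the Morse--Bott type perturbations of \cref{MB per} at each level $r$ with the additional regular non-degenerate perturbation produced in the proof of \cref{key}, one obtains a sequence $\pi_n \in \mathcal{P}^*(Y, g_Y)$ with $\|\pi_n\| \to 0$, with every $\pi_n$ non-degenerate and regular, and with a bijection
\[
R^*_{\pi_n}(Y) \;\longleftrightarrow\; \bigsqcup_{r \in \Lambda^*_Y \cap (0,1]} \operatorname{Crit}(g_r)
\]
for all large $n$. Under this Morse--Bott desingularisation, the Floer index of the perturbed critical point corresponding to $p \in \operatorname{Crit}(g_r)$ equals the Morse--Bott Floer index of $C_r$ plus $\ind_{g_r}(p)$; both summands are even, so every perturbed critical point has even Floer index. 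Hence the differential on the perturbed Floer chain complex vanishes and
\[
\#R^*_{\pi_n}(Y) \;=\; \sum_{r} \chi(C_r) \;=\; \rank I_*(Y) \;=\; 2|\lambda(Y)|.
\]
Combining both inequalities yields the equality.

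The main obstacle is the combinatorial/geometric input at the level of each $C_r$: one must know that the Fintushel--Stern moduli spaces are smooth K\"ahler (hence admit perfect even-index Morse functions) and that the Morse--Bott index formula for the perturbed Floer index is compatible with this even parity. Both points rely on the explicit structure of $R^*(\Sigma(a_1,\ldots,a_n))$ worked out in \cite{FS90}, and once they are in place the argument reduces to assembling \cref{MB per}, \cref{key}, and Taubes' theorem as above.
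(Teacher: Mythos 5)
Your proof is correct and follows essentially the same route as the paper: the paper's own argument also desingularises the Morse--Bott critical set via the perturbations of \cref{MB per}, uses a perfect Morse function on $R^*(\Sigma(a_1,\dots,a_n))$ whose critical points all have Floer index of one fixed parity (the paper cites Saveliev for this, with indices \emph{odd} in its conventions rather than even --- only the uniform parity matters), and concludes that the Floer differential vanishes so the count of perturbed critical points equals $\operatorname{rank} I_*(Y)=2|\lambda(Y)|$. You merely make explicit the lower bound via the Morse inequality and Taubes' theorem and the source of perfectness via the K\"ahler structure of the Fintushel--Stern components, which the paper leaves implicit.
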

\begin{proof}Saveliev showed $R^*(\Sigma(a_1, \dots , a_n))$ has a perfect Morse function whose critical points have odd Floer indices in \cite{Sa02}. By the use of a Morse-type perturbation as in \cref{MB per}, one can see that, there is no differential for such perturbations. This completes the proof. 
\qed
\end{proof}
The following theorem provides a connected sum formula of $l_Y$ under suitable assumptions:
 \begin{thm}[Connected sum formula of $l_Y$] \label{conn sum of ly}Suppose that $Y_1$ and $Y_2$ are Seifert homology $3$-spheres, then
 \[
 l_{Y_1\# Y_2} \leq 4l_{Y_1} l_{Y_2} + l_{Y_1} + l_{Y_2}.
 \] 
 \end{thm}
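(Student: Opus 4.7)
The plan is to combine the Morse--Bott preservation under connected sum (\cref{conn sum of MB}) with the Morse--Bott type perturbation machinery of \cref{MB per}, applied component by component to the critical set
\[
R^*(Y_1 \# Y_2) \;=\; \bigl(R^*(Y_1) \times R^*(Y_2) \times SO(3)\bigr) \,\sqcup\, R^*(Y_1) \,\sqcup\, R^*(Y_2),
\]
whose description appeared in the proof of \cref{conn sum of MB}. Since each $Y_i$ is Seifert, $\cs_{Y_i}$ is Morse--Bott by \cite{FS90}, and by \cref{conn sum of MB} the functional $\cs_{Y_1 \# Y_2}$ is Morse--Bott as well. The strategy is to choose one Morse function on each critical submanifold, to assemble them into a single Morse--Bott type perturbation on $Y_1 \# Y_2$, and then to count the critical points one stratum at a time.

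For the two "pure" strata $R^*(Y_1) \sqcup R^*(Y_2)$, I would invoke Saveliev's perfect Morse functions used in \cref{non-deg}: on each component of $R^*(Y_i)$ there exists a Morse function whose total number of critical points realizes $l_{Y_i}$. Applying \cref{MB per} with these Morse functions contributes at most $l_{Y_1} + l_{Y_2}$ nondegenerate critical points to the perturbed $\cs_{Y_1 \# Y_2}$. For the mixed stratum, write $R^*(Y_i) = \bigsqcup_k C_{i,k}$ and note that every connected component of $R^*(Y_1) \times R^*(Y_2) \times SO(3)$ has the product form $C_{1,k} \times C_{2,\ell} \times SO(3)$. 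On each such product I would take the sum of the chosen Morse function on $C_{1,k}$, the chosen Morse function on $C_{2,\ell}$, and a perfect Morse function on $SO(3) \cong \mathbb{RP}^3$ with exactly $4$ critical points (realizing the $\mathbb{Z}/2$-Betti sum). Summing $4 \,\#\mathrm{Crit}(g_{1,k}) \cdot \#\mathrm{Crit}(g_{2,\ell})$ over $(k,\ell)$ gives a total of $4 l_{Y_1} l_{Y_2}$ critical points, for a grand total bounded by $4 l_{Y_1} l_{Y_2} + l_{Y_1} + l_{Y_2}$.

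Finally, to upgrade the resulting non-degenerate perturbation to a non-degenerate \emph{and} regular one without creating new critical points, I would follow the Salamon--Weber trick already invoked in the proof of \cref{key}: add a small further perturbation supported away from $\mathrm{Crit}(\cs_{Y_1 \# Y_2, \pi})$ that achieves transversality of the moduli spaces while preserving both the critical set and the bound $\|\pi\| \to 0$. Taking a sequence of such perturbations yields the desired inequality. The main obstacle I expect is bookkeeping rather than a conceptual gap: one must verify that the Morse--Bott type perturbation of \cref{MB per}, which was formulated for a single critical submanifold at a given level $r$, assembles across \emph{all} critical submanifolds (including those lying at the same Chern--Simons level) without the local pieces interfering; this is handled by choosing the supports $U$ in \cref{MB per} to be disjoint tubular neighborhoods of the critical submanifolds and letting the perturbation parameters $\varepsilon$ go to zero simultaneously.
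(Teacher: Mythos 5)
Your proposal is correct and follows essentially the same route as the paper: decompose $R^*(Y_1\# Y_2)$ into the two pure strata and the mixed stratum $R^*(Y_1)\times R^*(Y_2)\times SO(3)$, take perfect Morse functions realizing $l_{Y_1}$ and $l_{Y_2}$ together with a four-critical-point Morse function on $SO(3)$, and feed the resulting Morse function into the Morse--Bott type perturbation of \cref{MB per} to obtain $4l_{Y_1}l_{Y_2}+l_{Y_1}+l_{Y_2}$ nondegenerate critical points. Your additional remarks on disjoint supports and the Salamon--Weber regularity upgrade are details the paper leaves implicit, but they do not change the argument.
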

 \begin{proof} The critical submanifold of $\cs_{Y_1 \# Y_2}$ in $\B^*(Y_1 \#Y_2)$ is given by 
 \[
 R^*(Y_1 \# Y_2) =  R^* (Y_1 ) \amalg R^*(Y_2)  \amalg R^* (Y_1 ) \times  R^*(Y_2) \times SO(3).
 \]
 Since $Y_1$ and $Y_2$ are Seifert homology $3$-spheres, the Chern-Simons functionals of $Y_1$ and $Y_2$ have Morse-Bott type perturbations such that the critical points correspond to $\operatorname{Crit}(f_1)$ and $\operatorname{Crit}(f_2)$, where $f_1:  R^* (Y_1 )\to \R$ and $f_2:  R^* (Y_2 ) \to \R$ are perfect Morse function such that $l_{Y_1} = \# \operatorname{Crit}(f_1)$ and $l_{Y_2} = \# \operatorname{Crit}(f_2)$. Note that $SO(3)$ has a Morse function $s$ whose critical point set is the four point set. Then $R^*(Y_1 \# Y_2)$ has a Morse function such that the number of critical points is $4l_{Y_1} l_{Y_2} +  l_{Y_1} + l_{Y_2}$. Thus, one can take Morse-Bott type perturbations whose critical points correspond to $4l_{Y_1} l_{Y_2} +  l_{Y_1} + l_{Y_2}$ points. This completes the proof. 
 \qed
 \end{proof}
 The connected sum formula below is useful to calculate $l^s_Y$ and $l^k_Y$.
 \begin{thm}\label{conn sum of lys}Let $r$ be an element in $\Lambda_{Y_1 \# Y_2}$. Suppose that $Y_1$ and $Y_2$ are Seifert homology $3$-spheres. Then
  \[
 l_{Y_1\# Y_2,r, i} \leq 4\sum_{\substack{r=r_1+ r_2\\ i=i_1+i_2}}   l_{Y_1,r_1, i_1} l_{Y_2,r_2, i_2} + l_{Y_1, r, i} + l_{Y_2, r, i}.
 \] 
 \end{thm}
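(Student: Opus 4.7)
The plan is to refine the proof of Theorem~\cref{conn sum of ly} by tracking both the Chern--Simons level and the Floer index throughout the Morse--Bott perturbation argument. Recall from the proof of Lemma~\cref{conn sum of MB} the stratification
\[
R^*(Y_1 \# Y_2) = R^*(Y_1) \amalg R^*(Y_2) \amalg R^*(Y_1) \times R^*(Y_2) \times SO(3),
\]
where the first two strata consist of connections of the form $(a_1, \theta)$ and $(\theta, a_2)$, and the third stratum parametrizes genuinely irreducible glued connections $(a_1, a_2)_h$. Since the trivial connection has zero Chern--Simons action and the $SO(3)$ gluing parameter does not affect the Chern--Simons value, $\cs$ is additive across strata: $\cs(a_1,\theta) = \cs_{Y_1}(a_1)$, $\cs(\theta,a_2) = \cs_{Y_2}(a_2)$, and $\cs((a_1,a_2)_h) = \cs_{Y_1}(a_1) + \cs_{Y_2}(a_2)$. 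An analogous additivity holds for Floer indices after Morse--Bott perturbation, with the $SO(3)$ factor contributing via the Morse index of a chosen Morse function on $SO(3)$.

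Since $Y_1$ and $Y_2$ are Seifert, by Lemma~\cref{conn sum of MB} the Chern--Simons functional of $Y_1 \# Y_2$ is Morse--Bott. I would fix metrics $g_{Y_1}, g_{Y_2}$ and Morse functions $f_1, f_2$ on $R^*(Y_1), R^*(Y_2)$ that simultaneously realize, for every pair $(r_i, i_i)$, the minimum $l_{Y_i, r_i, i_i}$ as the number of critical points of $f_i$ with $\cs_{Y_i}$-value within $\lambda_{Y_i}$ of $r_i$ and Floer index $i_i$; such Morse functions exist by Theorem~\cref{finiteness} combined with the perturbation construction in Lemma~\cref{key}. I would also fix a perfect Morse function $s$ on $SO(3)$ with four critical points. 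Applying the Morse--Bott perturbation construction of Lemma~\cref{MB per} to $Y_1 \# Y_2$ using the product function $f_1 + f_2 + s$ on the gluing component (and $f_i$ on the reducible strata), followed by the off-level regularization step from Lemma~\cref{key}, yields a sequence $\pi_n \to 0$ of non-degenerate regular perturbations whose critical points near the level-$r$ stratum biject with the level-$r$ critical points of the product Morse function.

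Counting stratum by stratum then yields the bound: the stratum $R^*(Y_1) \times \{\theta\}$ contributes at most $l_{Y_1, r, i}$ critical points with level near $r$ and index $i$, and symmetrically $\{\theta\} \times R^*(Y_2)$ contributes at most $l_{Y_2, r, i}$; on the gluing stratum, a perturbed critical point $(a_1, a_2, h)$ has level $\cs_{Y_1}(a_1) + \cs_{Y_2}(a_2)$ and Floer index $\ind(a_1) + \ind(a_2) + k(h)$ for some $k(h) \in \{0,1,2,3\}$, which summed over decompositions $r = r_1 + r_2$, $i = i_1 + i_2$ and the four choices of $h \in \operatorname{Crit}(s)$ gives the bound
\[
4 \sum_{\substack{r = r_1 + r_2 \\ i = i_1 + i_2}} l_{Y_1, r_1, i_1}\, l_{Y_2, r_2, i_2}.
\]
Adding the three contributions produces the desired inequality. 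The main technical obstacle, as already in Theorem~\cref{conn sum of ly}, is the index-wise additivity along the gluing stratum: one must verify via a spectral-flow computation on the $S^1 \times S^2$ neck joining the punctured summands that the Floer index of a perturbed critical point on the gluing stratum decomposes into the three contributions from $Y_1$, $Y_2$, and the $SO(3)$ factor, with the neck's contribution absorbed into the Morse index of $s$ and the choice of orientation of the determinant line bundle.
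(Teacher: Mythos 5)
Your proposal is correct and follows essentially the same route as the paper: decompose $R^*(Y_1\# Y_2)\cap\{\ind = i,\ \cs = r\}$ into the two reducible-summand strata and the gluing stratum $R^*(Y_1)\times R^*(Y_2)\times SO(3)$ graded by $r=r_1+r_2$, $i=i_1+i_2$, then apply the Morse--Bott perturbation machinery of \cref{conn sum of ly} with a four-critical-point Morse function on $SO(3)$ and count stratum by stratum. The paper's own proof is exactly this reduction to the argument of \cref{conn sum of ly}, so no further comparison is needed.
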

 \begin{proof}
 We decompose the set 
 $R^* (Y_1 \# Y_2) \cap \set { a  | \ind (a) = i ,\cs_{Y_1 \# Y_2} (a) =r } $
 as the union of 
 \[
 \coprod_{\substack{r=r_1+ r_2\\ i=i_1+i_2}} \set { a \in R^* (Y_1 )  | \ind (a) = i_1 ,\cs_{Y_1 } (a) =r_1 } \times \set { a\in R^* (Y_2 )   | \ind (a) = i_2 ,\cs_{Y_2 } (a) =r_2 } \times SO(3)
 \]
  \[
  \text{ and }
   \set { a\in R^* (Y_1)   | \ind (a) = i ,\cs_{Y_1 } (a) =r } \amalg \set { a\in R^* (Y_2 )   | \ind (a) = i ,\cs_{Y_2 } (a) =r }.
  \]
  Then, the same proof as for \cref{conn sum of ly} can be used to prove the desired result. \qed
\end{proof}
We will calculate $l_Y$, $l_Y^s$ and $l_Y^k$ for a certain class of homology 3-spheres in \cref{Examples}. 
\section{Convergence theorem and Proof of main theorems}\label{Convergence}
\subsection{Convergence theorem}
In this section, we fix an oriented homology $3$-sphere $Y$ embedded in an oriented negative definite 4-manifold $X$. In this section, we assume that $H_1(X;\R)\cong \R$ and $0 \neq [Y ] = c \in H_3(X; \Z)$. Let $ W_0$ be the oriented compact $4$-manifold with $\partial W_0 = Y \cup (-Y)$ obtained by taking the closure of $X \setminus Y$. For a positive integer $l$, the manifold $W[0,l]$ is the compact oriented $4$-manifold obtained from $X$ defined in \eqref{def of W} and $W^*[0,l]$ is the cylindrical end $4$-manifold written by $(W[0,l])^*$ in \eqref{def of W*}. Here we recall $\partial W_i = Y_i^+ \cup Y_i^-$. Fix a small collar neighborhood of $Y_i^+$ in $W^*[0,l]$ denoted by 
\begin{align}\label{collar}
Y_i^+\times I  \subset W^*[0,l]
\end{align} and a Riemann metric $g_Y$ on $Y$. We also fix a Riemann metric $g_{W^*[0,l]}$ on $W^*[0,l]$ such that the restrictions of $g_{W^*[0,l]}$ to $Y\times (-\infty, 0]$, $Y\times [0,\infty)$ and $Y^+_j \times I$ for $j\in \{0,\cdots, l^s_Y\}$ equal $g_Y\times dt^2$.

In this setting, we will show the following existence theorem:
\begin{thm}\label{seq}
\begin{enumerate}
\item 
Suppose $r_s(Y)<\infty$ for some $s\in [-\infty, 0]$.  
Then, for any $l \in \Z_{\geq 0} $, a sequence of non-degenerate regular perturbations $\{\pi_n\}$ with $\| \pi_n\|\to 0$ and a sequence of perturbations $\{\pi^n_{W^*[0,l-1]} \}$ of ASD equations on $W^*[0,l]$ compatible with  $\{\pi_n\}$ on $Y \times \R_{ \leq 0}$ and $Y \times \R_{ \geq 0}$,  there exist sequences of critical points $\{a_n\}_{n\in \Z_{>0}}$ and $\{b_n\}_{n \in \Z_{>0}} $ of $\cs_{\pi_n}$ such that 
\[
M(a_n, W^*[0,l-1] , b_n) \neq \emptyset \text{, }\displaystyle \lim_{n\to \infty}\cs_{\pi_n}(a_n)  =r_s(Y),
\]
and $\cs_{\pi_n} (a_n) - \cs_{\pi_n} (b_n) \to 0$ as $n\to \infty$. 
\item Suppose that $\Gamma_{-Y}(k)<\infty$ for some $k \in \Z_{>0}$. 
Then, for any $l\in \Z_{\geq 0} $, a sequence of non-degenerate regular perturbations $\{\pi_n\}$ with $\| \pi_n\|\to 0$ and a sequence of perturbations $\{\pi^n_{W^*[0,l-1]} \}$  of ASD equations on $W^*[0,l]$  compatible with  $\{\pi_n\}$ on $Y \times \R_{ \leq 0}$ and $Y \times \R_{ \geq 0}$,  there exist sequences of critical points $\{a_n\}_{n\in \Z_{>0}}$ and $\{b_n\}_{n \in \Z_{>0}} $ of $\cs_{\pi_n}$ such that 
\[
M(a_n, W^*[0,l-1] , b_n) \neq \emptyset \text{, }\displaystyle \lim_{n\to \infty}\cs_{\pi_n}(a_n)  =\Gamma_{-Y}(k),
\]
and $\cs_{\pi_n} (a_n) - \cs_{\pi_n} (b_n) \to 0$ as $n\to \infty$. 
\end{enumerate}

\end{thm}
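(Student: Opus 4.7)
The plan is to deduce the existence of non-empty moduli spaces on $W^*[0,l-1]$ from the non-vanishing of the distinguished filtered Floer cohomology class $\theta^{[s,r]}_Y$ (resp.\ the Daemi invariant $\Gamma_{-Y}(k)$) for filtration values slightly larger than $r_s(Y)$ (resp.\ $\Gamma_{-Y}(k)$), combined with the cobordism map identity $I(W)(\theta^{[s,r]}_Y) = c(W)\theta^{[s,r]}_Y$ from \cref{Preliminaries}. Since $X$ is negative definite with $H_1(X;\R)\cong\R$, the cut-open cobordism $W_0 = \overline{X \setminus Y}$ and its iterates $W[0,l-1]$ are negative definite with $H_*(W[0,l-1];\R) \cong H_*(S^3;\R)$, so the hypothesis of the cobordism map formula applies.

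For part (1), I would first choose a sequence $r_k \searrow r_s(Y)$ with $r_k \in \R \setminus \Lambda_Y$. By definition of $r_s(Y)$, for each $k$ and every sufficiently small non-degenerate regular perturbation $\pi_n$, the class $\theta^{[s,r_k]}_Y$ is non-zero in $I^1_{[s,r_k]}(Y;\pi_n)$. The map $I(W[0,l-1];\pi^n_{W^*[0,l-1]})$ on filtered instanton cohomology is defined by a signed weighted count of moduli spaces $M(a, W^*[0,l-1], b)_{\pi^n_{W^*[0,l-1]}}$ for critical points $a,b$ of $\cs_{\pi_n}$ with appropriate Floer indices and Chern-Simons values in $(s, r_k]$. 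The identity $I(W[0,l-1])(\theta^{[s,r_k]}_Y) = c(W[0,l-1])\theta^{[s,r_k]}_Y \neq 0$ therefore forces the existence of critical points $a_n^{(k)}, b_n^{(k)}$ with $M(a_n^{(k)}, W^*[0,l-1], b_n^{(k)})_{\pi^n} \neq \emptyset$ and Chern-Simons values in $(s, r_k]$. A diagonal extraction $(a_n, b_n) = (a_n^{(k_n)}, b_n^{(k_n)})$ with $k_n$ growing slowly yields $\cs_{\pi_n}(a_n) \to r_s(Y)$, because if any subsequence stayed bounded away from $r_s(Y)$ then one could perturb $\pi_n$ slightly to cap filtrations just below these values and still have $\theta^{[s,r]}_Y$ non-vanishing, contradicting the minimality of $r_s(Y)$. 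Finally, the energy-Chern-Simons identity on the negative definite cobordism $W[0,l-1]$ (combined with the filtration constraint that both endpoints lie just below $r_k$) forces $\cs_{\pi_n}(a_n) - \cs_{\pi_n}(b_n) \to 0$.

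For part (2), I would run the same argument inside the $\Lambda$-coefficient chain complex $C^{\Lambda}_*(Y;\pi_n)$. Taking a chain $\alpha_n$ that approximately realises the infimum in the definition of $\Gamma_{-Y}(k)$, the cobordism map $I_\Lambda(W[0,l-1])$ is $\Lambda$-linear and intertwines the operators $U$ and $D_1$. Since $D_1 U^k(\alpha_n) \neq 0$ and $\mdeg(D_1 U^k(\alpha_n)) - \mdeg(\alpha_n) \to \Gamma_{-Y}(k)$, applying $I_\Lambda(W[0,l-1])$ produces a non-empty moduli space $M(a_n, W^*[0,l-1], b_n)$ whose Chern-Simons gap is read off from the exponents of the $\lambda$-power appearing in the matrix coefficient; this gap converges to $\Gamma_{-Y}(k)$, and in the limit $\cs_{\pi_n}(a_n) \to \Gamma_{-Y}(k)$ while $\cs_{\pi_n}(a_n) - \cs_{\pi_n}(b_n) \to 0$.

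The main obstacle is the final energy/filtration control in step~3: the perturbations $\pi^n_{W^*[0,l-1]}$ are not assumed to be cylindrical, and identifying which pairs $(a_n^{(k)}, b_n^{(k)})$ realise the $\theta$-to-$\theta$ (resp.\ the $\Lambda$-coefficient) matrix coefficient requires a compactness/gluing analysis of broken ASD trajectories as $\|\pi_n\| \to 0$ and $\|\pi^n_{W^*[0,l-1]}\| \to 0$. One must rule out energy leaking into bubbles or into the reducible stratum outside the Chern-Simons window $(s, r_k]$, and must verify upper semicontinuity of the Chern-Simons values of limit critical points. This is standard in instanton theory but is the crux of the convergence, and where the negative definiteness of $W[0,l-1]$ together with the weighted Sobolev setup of \cite{NST19} is essential.
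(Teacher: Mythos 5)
Your proposal follows essentially the same route as the paper: both parts rest on the chain-level identity expressing that the cobordism map of $W[0,l-1]$ intertwines $\theta^{[s,r]}_Y$ (resp.\ $D_1U^{k-1}$) up to a factor $c(W)$ and lower-order/boundary terms, evaluated on near-optimal cycles to force non-empty moduli spaces with Chern--Simons values pinned near $r_s(Y)$ (resp.\ $\Gamma_{-Y}(k)$). The only presentational difference is that the paper pins the values down at the outset by choosing the cycles $\zeta_n$ inside the window subcomplex $CI^{[r-\lambda_Y,\,r+\varepsilon_n]}_1(Y)$ (using that $\theta^{[s,r']}$ vanishes on cycles for $r'<r_s(Y)$), rather than by your a posteriori diagonal-extraction argument, but both hinge on the same defining property of $r_s(Y)$.
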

\begin{rem}In \cite{D18} and \cite{NST19}, Daemi and Nozaki-Sato and the author proved similar existence results for solutions of perturbed ASD-equations.  The author expects that \cref{seq} can be proved for $\mathcal{J}_Y(k,s)$ which will be defined in \cite{DST19} for positive $k\in \Z$. 
\end{rem}
\begin{proof}
 The proof consists of two parts. 
\begin{enumerate}
\item Put $r_s(Y)=r$. Take a sequence of positive $\{ \epsilon_n\}$ such that $\epsilon_n \to 0$ and $\pi_n \in \mathcal{P}^* (Y, s,r+ \epsilon_n, g)$ for each $n$.
We consider the cobordism map $CW[0,l] \: CI^1_{[s,r+ \varepsilon_n]}(Y) \to  CI^1_{[s,r+ \varepsilon_n]} (Y)$ given by counting the moduli spaces $M (a,W^*[0,l] ,b)$. 
In \cite{NST19}, we showed the counts of the following oriented 0-dimensional compact manifolds:
\begin{itemize} 
\item 
$\displaystyle
\bigcup_{b \in \wt{R}^\ast(Y)_{\pi_n}, \ind (b)=0, \cs_{\pi_n}(b)<r+\varepsilon_n } M^Y(a,b)_{\pi_n}/\R\times M(b, W^\ast[0,l] , \theta)
$
\item $\displaystyle
 M^{Y_1}(a,\theta)_{\pi_n} /\R\times M(\theta,W^\ast[0,l],\theta) 
$
\item $-\displaystyle
\bigcup_{c \in \wt{R}^\ast(Y_2)_{\pi_2}, \ind (b)=1, \cs_{\pi_2} (c) <r+\varepsilon_n } M(a,W^\ast[0,l], c) \times M(c,\theta)_{\pi_n}/\R
$
\end{itemize}
are zero for each generator of $a \in C_1^{[s,r+ \epsilon_n]} (Y)$.
This implies that 
\begin{align} \label{important}
  n^{[s,r+\varepsilon_n] }  \partial^{[s,r+\varepsilon_n] } (a)  + c(W)\theta^{[s,r+\varepsilon_n ]}_{Y}(a)= \theta^{[s,r+\varepsilon_n ]}_{Y}CW^{[s,r+\varepsilon_n ] } (a)
 \end{align}
 for any $a \in CI^{[s,r+\varepsilon_n ]} _1(Y)$ for some $c(W)>0$. By the choice of $r$, for an element $a \in CI^{[s,r-\lambda_Y]} (Y)$ with $\partial^{[s,r-\lambda_Y]}(a)=0$, the equation $\theta^{[s,r-\lambda_Y]}_Y (a) = 0$ holds. However, since $[\theta^{[s,r+\varepsilon_n ]}_Y]\neq 0$ for any $n$, we have a sequence $\{\zeta_n\}$ of elements in $CI^{[r-\lambda,r+\varepsilon_n ]} _1(Y)$ such that $\partial^{[s,r+\varepsilon_n ]}(\zeta_n )=0$ and $\theta^{[s,r+\varepsilon_n]}_Y(\zeta_n) \neq 0$. 
 Then we put $a=\zeta_n$ and obtain the following equations: 
 \begin{align} \label{imp}
 c(W)\theta^{[s,r+\varepsilon_n ]}_{Y}(\zeta_n)= \theta^{[s,r+\varepsilon_n ]}_{Y}(CW^{[s,r+\varepsilon_n ] }(\zeta_n))
 \end{align}
 for each $n$. Since the left hand side of \eqref{imp} is non-zero, $\theta^{[s,r+\varepsilon_n ]}_{Y}CW^{[s,r+\varepsilon_n ] }(\zeta_n)$ is also non-zero. Since $CW^{[s,r+\varepsilon_n ] }(\zeta_n)$ is a cycle, if we write 
 \[
 \zeta_n = \sum_i s_i^n a^n_i \text{ and }CW^{[s,r+\varepsilon_n ] }(\zeta_n)= \sum_j r^n_j b^n_j
 \] ($s^n_i , r^n_j \in \q$), then there exist $i_0$ and $j_0$ such that $r + \epsilon_n>\cs_{\pi_n} (b^n_{j_0})>r -\lambda_Y$ and 
 \[
 M(a_{i_0}^n, W^*[0,l-1] ,b^n_{j_0})\neq \emptyset. 
 \]
 Put $a_n:= a_{i_0}^n$ and $  b_n:= b^n_{j_0}$.
By the choices of $\{a_n\}_{ n \in \Z_{>0}}$ and $\{b_n\}_{n\in \Z_{>0}}$, we conclude that $$\dis \lim_{n\to \infty} \cs_{Y, \pi_n} (a_n)  =\lim_{n\to \infty} \cs_{Y,\pi_n} (b_n) =r_s(Y) \dis .$$
 
\item Fix $1 \leq l  \leq l_Y^k$. We will also use the formula essentially showed in \cite{Fr02}. 
There exists a sequence $\{b_j \}$ of rational numbers such that 
 \begin{align} \label{Daemi1}
 D_1^{Y} U^{k-1}_{Y} CW[0,l] (a) = c(W[0,l]) (D_1^Y U^{k-1}_{Y} (a) ) + \sum_{1 \leq j <k-1} b_j  D_1^Y U^{j}_{Y} (a),
 \end{align}
  where $U_{Y}$ and $D_1^Y$ (resp. $U_{Y}$ and $D_1^{Y}$) are $U$-map and $D_1$ map for $Y$ (resp. for $Y$) and 
  \[
  CW[0,l]\: C^{\Lambda}_i(Y) \to C^{\Lambda}_i(Y)
  \]
   is a cobordism a map as in the same in \cite{D18}. We take a sequence $\{\zeta_n\} \subset C_*^{\Lambda}(Y)$ such that 
 $ d^{\Lambda}(\zeta_n)=0$, $D_1 U^j(\zeta_n) = 0 ( 1 \leq j < k-1)$, $D_1 U^{k-1}(\zeta_n) \neq 0$ and 
  \begin{align}\label{Daemi2}
    \Gamma_{-Y}(k) = \lim_{ |\pi_n| \to 0} \mdeg (D_1 U^{k-1}(\zeta_n)  ) - \mdeg (\zeta_n). 
    \end{align} 
   By combining \eqref{Daemi1} and \eqref{Daemi2}, we have 
   \[
   D_1^{Y} U^{k-1}_{Y} CW[0,l] (\zeta_n) = c(W[0,l]) (D_1^Y U^{k-1}_{Y} (\zeta_n) ).
   \]
   Thus, $\mdeg( D_1^{Y} U^{k-1}_{Y} CW[0,l] (\zeta_n))= \mdeg  (D_1^Y U^{k-1}_{Y} (\zeta_n) )$. 
   Note that elements $\{CW[0,l] (\zeta_n)\}_{n\in \Z_{>0}} $ also satisfy the following three equations 
   \[
    d^{\Lambda}(c)=0, D_1 U^j(c) = 0 ( 1 \leq j < k-1), \text{ and }D_1 U^{k-1}(c) \neq 0.
    \] 
   This implies 
   \[
   \Gamma_{-Y}(k) \leq \mdeg ( D_1^{Y} U^{k-1}_{Y} CW[0,l] (\zeta_n)) - \mdeg (  CW[0,l] (\zeta_n)) .
   \] 
Since
\[
\mdeg (  \zeta_n)  \leq 
\mdeg (  CW[0,l] (\zeta_n)) + \delta_n
\]
is proved in \cite{D18}, where $\{\delta_n \}$ is a sequence of positive number with $\delta_n \to 0$, we have 
   \begin{align*}
 \lim_{n\to \infty}   \mdeg ( D_1^{Y} U^{k-1}_{Y} CW[0,l] (\zeta_n))- \mdeg (CW[0,l] (\zeta_n)  ) = \Gamma_Y(k).
   \end{align*}
   This implies that 
   \begin{align}\label{identified}
   \displaystyle  \lim_{n \to \infty}\left( \mdeg (CW[0,l] (a_n)  ) - \mdeg (a_n)\right)=0 . 
   \end{align}
We write $\zeta_n$ by $\dis \sum_{1\leq i \leq N} m^n(i) \lambda^{r^n_i} a^n_i$ and 
\[
CW[0,l] (\zeta_n) =  \sum_{i, j} M(b^n_j,W^*[0,l],  a^n_i )m^n(i) \lambda^{r^n_i- \mathcal{E}(b^n_j, a^n_i)} b^n_j.
\]
This implies for some $i_0, j_0\in \Z_{>0}$, 
\[
M(b^n_{j_0},W^*[0,l],  a^n_{i_0} )\neq \emptyset
\]
and \eqref{identified} implies $\dis \lim_{n\to \infty}\mathcal{E}(b^n_{j_0}, a^n_{i_0}) \to 0$.  Put $a_n:= a_{i_0}^n$ and $  b_n:= b^n_{j_0}$. By the choices of $\{a_n\}_{ n \in \Z_{>0}}$ and $\{b_n\}_{n\in \Z_{>0}}$, we conclude that $$\dis \lim_{n\to \infty} \cs_{Y, \pi_n} (a_n) =\lim_{n\to \infty} \cs_{Y, \pi_n} (b_n) = \Gamma_{-Y}(k).$$
 \end{enumerate} 
\qed \end{proof}

The following theorem is a key theorem to prove \cref{emb1}. In the proof of the next theorem, we use the finiteness of $l^s_Y$ or $l^k_Y$.
\begin{thm}  \label{Convergence1}
\begin{enumerate}
\item Suppose that $l^s_Y< \infty$, $r_s(Y)<\infty$ for some $s\in [-\infty, 0]$. Then, there exists a positive integer $l\leq l^s_Y$ and a flat connection $A_\infty$ on $W[0,l-1]$ such that $A_\infty$ is an irreducible flat connection on $ W[0,l-1] $ with equal boundary flat connections and $\cs_{X, [Y]}^l ( A_\infty) = r_s(Y)$ when we regard $A_\infty$ as a connection on $X_{l, [Y]}$.
\item  Suppose that $l^k_Y< \infty$, $\Gamma_Y(k) <\infty$ for some $k \in \Z_{>0}$. 
Then, there exists a positive integer $l\leq l_Y$ and a flat connection $A_\infty$ on $W[0,l-1]$ such that $A_\infty$ is an irreducible flat connection on $ W[0,l-1] $ with equal boundary flat connections and $\cs_{X, [Y]}^l ( A_\infty) = \Gamma_Y(k) $ when we regard $A_\infty$ as a connection on $X_{l, [Y]}$.
 \end{enumerate}
\end{thm}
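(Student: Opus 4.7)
The plan is to apply \cref{seq} for a long enough chain of cobordisms, extract an Uhlenbeck limit to obtain a flat connection on $W^*[0,N-1]$, and then use a pigeonhole argument on its cross-sections to find matching boundary values with $l\le l^s_Y$. I focus on part (1); part (2) is identical, with $r_s(Y)$ replaced by $\Gamma_{-Y}(k)$, $l^s_Y$ replaced by $l^k_Y$, and \cref{seq}(1) replaced by \cref{seq}(2).

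Set $N:=l^s_Y$ and apply \cref{seq}(1) to this value. This produces a sequence of non-degenerate regular perturbations $\pi_n\in\mathcal{P}^*(Y)$ with $\|\pi_n\|\to 0$, compatible $4$-dimensional perturbations $\pi_{W^*[0,N-1]}^n$, critical points $a_n,b_n\in\widetilde{R}^*_{\pi_n}(Y)$ at level tending to $r_s(Y)$, and perturbed ASD connections $A_n\in M(a_n,W^*[0,N-1],b_n)_{\pi_n}$. The Yang--Mills energy of $A_n$ is $\cs_{\pi_n}(a_n)-\cs_{\pi_n}(b_n)+o(1)\to 0$. Since the energy of an $SU(2)$-instanton bubble is at least $1$ in our normalization and the perturbations vanish in the limit, Uhlenbeck's compactness theorem allows us, after passing to a subsequence and applying gauge transformations, to extract a smooth limit $A_\infty$ on $W^*[0,N-1]$ which solves the unperturbed ASD equation with zero energy; hence $A_\infty$ is flat. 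The cross-section restrictions $c^j_\infty:=A_\infty|_{Y^+_j}$ for $j=0,1,\dots,N$ are flat connections on $Y$ all with Chern--Simons value $r_s(Y)\bmod\Z$. Because $r_s(Y)\not\equiv 0 \bmod\Z$ and every reducible flat connection on the homology $3$-sphere $Y$ is trivial with Chern--Simons value $0$, each $c^j_\infty$ is irreducible.

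Next comes the pigeonhole step. The critical points $a_n,b_n$ produced by \cref{seq} have Floer index $1$, and the Uhlenbeck limits inherit this via the Morse--Bott perturbation analysis (\cref{MB per}): using a Morse--Bott type perturbation along the critical submanifold at level $r_s(Y)$, one obtains an embedding of the sets of perturbed critical points of index $1$ into a fixed critical submanifold whose cardinality is bounded by $l_{Y,r_s(Y),1}=l^s_Y$. Consequently the set of possible gauge classes of the $c^j_\infty$ has at most $l^s_Y$ elements. Applying the pigeonhole principle to the $l^s_Y+1$ connections $c^0_\infty,\dots,c^{l^s_Y}_\infty$ yields indices $0\le j_1<j_2\le l^s_Y$ with $c^{j_1}_\infty$ gauge equivalent to $c^{j_2}_\infty$. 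Set $l:=j_2-j_1\le l^s_Y$. The restriction $A_\infty|_{W[j_1,j_2-1]}$ is a flat connection on $W[0,l-1]$ whose two boundary restrictions are gauge equivalent, so after the identification it extends to a flat connection on the closed manifold $X_{l,[Y]}$. Formula \eqref{cross sect} then gives $\cs^l_{X,[Y]}(A_\infty)=\cs_Y(c^{j_1}_\infty)\equiv r_s(Y)\bmod \Z$. Irreducibility on $W[0,l-1]$ follows because the $SU(2)$-stabilizer of $A_\infty$ is contained in that of the irreducible boundary connection $c^{j_1}_\infty$, namely $\{\pm I\}$.

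The main obstacle is the pigeonhole argument: one must bound the number of gauge classes of cross-section limits by $l^s_Y$, not by the a priori larger count of all flat connections at level $r_s(Y)$. This requires the Morse--Bott analysis of \cref{MB per} together with the fact that the ASD flow preserves the index-$1$ condition throughout the chain. A secondary technical point is ruling out bubble formation along the intermediate slabs $W_j$, which is handled by the observation that the per-slab energy is bounded by the total energy $\cs_{\pi_n}(a_n)-\cs_{\pi_n}(b_n)\to 0$, which eventually falls below the instanton quantum $1$.
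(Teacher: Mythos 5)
Your overall architecture coincides with the paper's: apply \cref{seq} to the chain $W^*[0,l^s_Y-1]$, extract a flat limit with small energy, pigeonhole the $l^s_Y+1$ cross-sections, and restrict to $W[j,j'-1]$ to close up. However, the pigeonhole step as you have written it contains a genuine gap. You take the Uhlenbeck limit first and then assert that ``the set of possible gauge classes of the $c^j_\infty$ has at most $l^s_Y$ elements,'' justifying this via the Morse--Bott analysis of \cref{MB per}. But \cref{Convergence1} does not assume that $\cs_Y$ is Morse--Bott at the level $r_s(Y)$; Morse--Bott is only a \emph{sufficient} condition for $l^s_Y<\infty$ (\cref{finiteness}), and the theorem is stated under the weaker hypothesis $l^s_Y<\infty$ alone. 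Without Morse--Bott there is no fixed finite set $\operatorname{Crit}(g_r)$ into which the perturbed critical points embed, and the set of subsequential limits of points drawn from sets $S_n$ with $\#S_n\le l^s_Y$ can easily exceed $l^s_Y$ elements (the $S_n$ may wander inside a positive-dimensional critical locus), so the bound you need on the limits does not follow.

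The correct order of operations, which is what the paper does, is: first choose the metric and the sequence $\{\pi_n\}$ to be one that \emph{realizes} $l^s_Y$ in its defining minimax, so that for every $n$ the set $S_n$ of index-$1$ critical points of $\cs_{\pi_n}$ with Chern--Simons value within $\lambda_Y$ of $r_s(Y)$ satisfies $\#S_n\le l^s_Y$ (you never make this choice -- \cref{seq} alone holds for arbitrary perturbation sequences and gives no cardinality control); then observe that for large $n$ each cross-section restriction $A_n|_{Y^+_j\times I}$ is $C^l$-close to some $c^j_n\in S_n$, apply the pigeonhole principle \emph{at each finite $n$} to find $j_n\neq j'_n$ with $c^{j_n}_n\cong c^{j'_n}_n$, pass to a subsequence on which the pair $(j_n,j'_n)=(j,j')$ is constant (there are only finitely many patterns), and only then take the limit $A_\infty$, which automatically satisfies $A_\infty|_{Y^+_j}\cong A_\infty|_{Y^+_{j'}}$. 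With that reordering, and the explicit choice of a realizing perturbation sequence, your argument matches the paper's proof; the remaining steps (flatness from vanishing energy, irreducibility from $\cs\in(0,1)$, the computation of $\cs^l_{X,[Y]}$ via \eqref{cross sect}) are handled the same way in both.
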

\begin{proof}
The proof is comprised of two parts. 
\begin{enumerate}
\item 
Suppose $\{ \pi_n\}_{n\in \Z_{>0}}$ and $g_Y$ are a sequence of non-degenerate regular holonomy perturbations of $\cs_Y$ and a Riemann metric on $Y$ such that 
\begin{align}\label{choice of p}
l^s_Y =\# \sup_{n\in \Z_{>0}} \set{[a] \in \wt{R}^*_{\pi_n} (Y), \ind ([a])=1, r_s(Y)- \lambda_Y<\cs_{\pi_n}([a]) <r_s(Y)+ \lambda_Y}
\end{align}
and let $\{\pi^n_{W^*[0,l^s_Y-1]}\}_{n\in \Z_{>0}}$ be a sequence of regular perturbations of the ASD-equations on $W^*[0,l^s_Y-1]$ such that the perturbed equation of $\pi^n_{W^*[0,l^s_Y-1]}$ coincides with $F^+(A)+ \pi_n^+(A)=0$ on $Y\times (-\infty, 0]$, $Y\times [0,\infty)$ and $Y^+_j \times I$ for $j\in \{0,\cdots, l^s_Y\}$ for any $n$. Here, we apply 1 in \cref{seq}. Then there exist sequences of critical points $\{a_n\}_{n\in \Z_{>0}}$ and $\{b_n\}_{n \in \Z_{>0}} $ of $\cs_{\pi_n}$ such that 
\[
M(a_n, W^*[0,l_Y^s-1] , b_n) \neq \emptyset \text{, }\displaystyle \lim_{n\to \infty}\cs_{\pi_n}(a_n)  =r_s(Y),
\]
and $\cs_{\pi_n} (a_n) - \cs_{\pi_n} (b_n) \to 0$ as $n\to \infty$.  Take an element $A_n \in M(a_n, W^*[0,l^s_Y-1] , b_n)$ for every $n \in \Z_{>0}$. The condition $\cs_{\pi_n} (a_n) - \cs_{\pi_n} (b_n) \to 0$, $\|\pi_n\|\to 0$, and $\|\pi^n_{W^*[0,l^s_Y-1]}\| \to 0$ as $n\to \infty$ imply 
\begin{align}\label{flat conv}
\|F(A_n) + \pi^n_{W^*[0,l^s_Y-1]} (A)\|_{L^2 (W^*[0, l^s_Y-1])} \to 0.
\end{align}
This implies the connection $A_n$ is close to some critical point of $\cs_{Y, \pi_n}$ near $Y^+_j \times I$. On the other hand, we have gauge transformations $\{g^j_n\}$ on $Y^+_j \times I$ and critical points $\{c^j_n\}$ of $\cs_{Y, \pi_n}$ such that 
\begin{align}\label{critical near}
\| (g^j_n)^* A_n|_{Y^+_j \times I} - p^*c^j_n \|_{C^l} \leq c ( \| F(A_n|_{Y^+_j \times I} ) \|_{L^2(Y^+_j \times I)} + \| \pi_n\| ) ,
\end{align}
where the constant $c$ depends only on the metric $g_Y$ and $l$, which we take to be a positive integer greater $2$, and $p: Y^+_j\times I \to Y$ is the projection. Since $\cs_{Y, \pi_n}$ is non-degenerate, $c^j_n$  in \eqref{critical near} is unique up to gauge transformations. Note that if $n$ is large, then 
\[
c^j_n \in \set{[a] \in \wt{R}^*_{\pi_n} (Y), \ind ([a])=1, r_s(Y)- \lambda_Y<\cs_{\pi_n}([a]) <r_s(Y)+ \lambda_Y}. 
\]
By \eqref{choice of p}, we can use the Pigeonhole principle. Then for each $n$, there exist $j_n$ and $j'_n$ such that $c^{j_n}_n \cong c^{j'_n}_n$ as connections on $Y$. Moreover, such patterns of choices of  $j_n$ and $j'_n$ are finite. By changing gauge transformations $g^j_n$, we assume $c^{j_n}_n = c^{j'_n}_n$ 
Thus, after taking a subsequence of $\{A_n\}_{n \in \Z_{>0}}$, we can assume that $j_n$ and $j'_n$ do not depend on the choices of $n$. By summarizing the above discussion, we obtained integers $0\leq j$ and $j' \leq l^s_Y$, gauge transformations $g^j_n$ and $g^{j'}_n$ on $Y^+_j \times I$ and $Y^+_{j'} \times I$, and critical points $\{c_n\}$ of $\cs_{Y, \pi_n}$ such that 
\begin{align}\label{critical near1}
\| (g^j_n)^* A_n|_{Y^+_j \times I} - p^*c_n \|_{C^l} \leq c ( \| F(A_n|_{Y^+_j \times I} ) \|_{L^2(Y^+_j \times I)} + \| \pi_n\| ) 
\end{align}
\begin{align}\label{critical near2}
\| (g^{j'}_n)^* A_n|_{Y^+_{j'} \times I} - p^*c_n \|_{C^l} \leq c ( \| F(A_n|_{Y^+_{j'} \times I} ) \|_{L^2(Y^+_j \times I)} + \| \pi_n\| ) 
\end{align}
hold. After taking a subsequence of $\{c_n\}$ and pull-back by gauge transformations $\{h_n\}$, we can assume that $\{h_n^*c_n\}$ is $C^\infty$-convergent to $c_\infty$. Then $\{h_n^*(g^j_n)^* A_n|_{Y^+_j \times I}\}$ and $\{h_n^*(g^{j'}_n)^* A_n|_{Y^+_{j'} \times I}\}$ are convergent sequences on $Y^+_j \times I$ and $Y^+_{j'} \times I$. We can extend gauge transformations $g^j_n\circ h_n$ and $g^{j'}_n \circ h_n$ to gauge transformations $\{g_n\}$ on $W^* [0, l^s_Y-1]$ such that $\{g_n^* A_n \}$ converges to $\{g_n^* A_{n}\}$ on all of $Y \times \R$. We set $\dis \lim_{n\to \infty} g_n^* A_{n} =A_\infty$. The limit $A_\infty$ has the following properties: there are $j, j' \leq l_Y^s$ such that $A_\infty$ is an $SU(2)$-irreducible flat connection on $ W^*[0,l^s_Y-1] $ with $A_\infty|_{Y^+_j} \cong A_\infty|_{Y^+_{j'}}$ and 
\begin{align}\label{cal of cs}
\cs_Y (A|_{Y^+_j}) = r_s(Y)
\end{align}
for any $j$.  Suppose $j<j'$. Then $A_\infty|_{W[j, j']}$ satisfies the conclusion. 
By \eqref{cal of cs}, we conclude that 
\[
r= \lim_{n\to \infty} \cs_Y(a_n) =  \cs_{\overline{W[j, j']}, [Y^+_j]} ( A_\infty) = \cs_{X,  [Y] }^{j'-j+1}(A_\infty),
\]
where $\overline{W[j, j']}$ is the oriented closed $4$-manifold obtained by identiying the boundaries $Y^+_j$ and $Y^-_{j'}$ of $W[j,j']$. Here, since $A_\infty$ is a connection on $W[j, j']$ such that $A_\infty|_{Y^+_j} \cong A_\infty|_{Y^-_{j'}}$, we regard $A_\infty$ as a connection on $\overline{W[j, j']}$. Moreover, $\overline{W[j, j']}$ can be identified with $X_{j'-j+1, [Y]}$. This completes the proof. 
 
\item The second proof is essentially the same as the first. 
Suppose $\{ \pi_n\}_{n\in \Z_{>0}}$ and $g_Y$ are a sequence of non-degenerate regular holonomy perturbations of $\cs_Y$ and Riemann metric on $Y$ such that 
\[
l^k_Y =\# \sup_{n\in \Z_{>0}} \left\{[a] \in {R}^*_\pi (Y), \ind ([a])\equiv  \begin{cases} 1 \text{ if $k$ is odd}\\ 
5 \text{ if $k$ is even}\end{cases}{\mod 8} , |\Gamma_{-Y}(k)-\cs_{\pi_n}([a]) |< \lambda_Y \right\}
\]
and $\{\pi^n_{W^*[0,l^k_Y-1]}\}_{n\in \Z_{>0}}$ be a sequence of regular perturbations of ASD-equations on $W^*[0,l^k_Y-1]$ such that the perturbed equation of $\pi^n_{W^*[0,l^k_Y-1]}$ coincide with $F^+(A)+ \pi_n^+(A)=0$ on $Y\times (-\infty, 0]$, $Y\times [0,\infty)$ and $Y^+_j \times I$ for $j\in \{0,\cdots, l^k_Y\}$ for any $n$. Then we can do the same discussion as in the first proof. 
\end{enumerate}

\qed \end{proof}

\subsection{Proof of main theorems}\label{Proof of main theorems}
In this section, we prove several theorems in \cref{Introduction}.

\subsubsection{Proof of \cref{emb1}}
In this section, we will give a proof of \cref{emb1}.
\begin{thm}[\cref{emb1}]Let $Y$ be an oriented homology $3$-sphere and $X$ be an oriented negative definite $4$-manifold. 
Suppose there exists an embedding from $Y$ to $X$ with $0\neq [Y] \in H_3(X;\Z)$. 
\begin{enumerate}
\item  If $l_Y^s < \infty$ and $r_s(Y)<\infty$ for some $s \in [-\infty, 0]$, 
\[
r_s(Y) -  \lfloor r_s(Y) \rfloor  \in  \bigcup_{1\leq j \leq l^s_Y} \im\cs_{j_{[Y]}X, [Y]} .
\]
\item If $l_Y^k < \infty$ and $\Gamma_{-Y}(k)<\infty$ for some $k \in \Z_{>0}$, 
\[
\Gamma_{-Y} (k) -  \lfloor \Gamma_{-Y}(k) \rfloor \in  \bigcup_{1\leq j \leq l^k_Y} \im\cs_{j_{[Y]}X, [Y]} .
 \]
\end{enumerate}
If $[Y]=0$, then 
\[
r_s(Y)=r_s(-Y) = \Gamma_Y (k) = \Gamma_{-Y}(k) =  \infty
\]
for any $s$ and $k$. 
\end{thm}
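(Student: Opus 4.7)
The plan is to derive both parts of the theorem directly from \cref{Convergence1} after identifying the geometry, and to handle the $[Y]=0$ assertion by separating $X$ and invoking the cobordism monotonicity of $\{r_s\}$ and $\{\Gamma_Y(k)\}$.

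First I would set up the ambient picture. The embedding $Y\subset X$ with $0\neq[Y]\in H_3(X;\Z)$ yields, after cutting $X$ open along $Y$, a compact connected cobordism $W_0$ with $\partial W_0 = Y_0^+\sqcup Y_0^-$. Since $X$ is closed and negative definite, the intersection form of $W_0$ is (negative) definite as well. Iterated gluing produces $W[0,l-1]$, and identifying the remaining two boundary copies of $Y$ reconstructs the cyclic cover $X_{l,[Y]}\to X$ exactly as in \cref{Inv QX}. This is the setting in which \cref{Convergence1} operates.

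For part (1), I would apply \cref{Convergence1}(1) to obtain a positive integer $l\leq l^s_Y$ and an irreducible $SU(2)$-flat connection $A_\infty$ on $W[0,l-1]$ whose two boundary restrictions are gauge equivalent. The matching of boundaries is precisely what allows $A_\infty$ to descend to a flat connection on $X_{l,[Y]}$, and \cref{Convergence1} produces the identity $\cs_{X,[Y]}^l(A_\infty)=r_s(Y)$ (modulo $\Z$, since the functional lands in $(0,1]$). Unwinding this mod-$\Z$ reduction gives $r_s(Y)-\lfloor r_s(Y)\rfloor\in \im\cs_{X,[Y]}^l\subset \bigcup_{1\leq j\leq l^s_Y}\im\cs_{X,[Y]}^j$. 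Part (2) is the same argument using \cref{Convergence1}(2).

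For the final assertion, suppose $[Y]=0$. Then $Y$ separates $X$ into compact pieces $M_1$ and $M_2$ with $\partial M_1=Y$ and $\partial M_2=-Y$. Since $Y$ is a homology sphere, Mayer--Vietoris gives $H_2(X)\cong H_2(M_1)\oplus H_2(M_2)$, so the restrictions of the negative definite form on $X$ make each $M_i$ negative definite. Removing an open $4$-ball from $M_1$ yields a negative definite cobordism with boundary $Y\sqcup(-S^3)$, i.e. from $Y_1=Y$ to $Y_2=S^3$ in the convention of \eqref{def ineq} and \eqref{def eq1}; these give $r_s(S^3)\leq r_s(Y)$ and $\Gamma_{S^3}(k)\leq\Gamma_Y(k)$. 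Because $S^3$ admits no irreducible $SU(2)$-flat connections, $r_s(S^3)=\Gamma_{S^3}(k)=\infty$, so $r_s(Y)=\Gamma_Y(k)=\infty$. Running the same argument with $M_2$ handles $-Y$.

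The substantive obstacle is entirely absorbed into \cref{Convergence1} itself: the compactness/bubbling analysis that produces a limit flat connection on $W[0,l-1]$ with gauge-equivalent boundary traces, together with the pigeonhole step that bounds $l$ by $l^s_Y$ or $l^k_Y$. Once one has that, the descent of $A_\infty$ to the cyclic cover and the mod-$\Z$ identification of Chern--Simons values is a direct consequence of the construction of $\cs_{X,[Y]}^l$ via cutting and regluing in \cref{Inv QX}; the only bookkeeping issue is keeping track of the floor function when translating the real-valued $r_s(Y)$, $\Gamma_{-Y}(k)$ into the $(0,1]$-valued invariant $\cs_{X,[Y]}^l$.
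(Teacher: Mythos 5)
Your argument for the $[Y]=0$ case and your overall strategy of feeding the cut-open cobordism into \cref{Convergence1} match the paper, but there is a genuine gap in parts (1) and (2): you apply \cref{Convergence1} directly to the cobordism $W_0=\overline{X\setminus Y}$, whereas that theorem (and the whole of \cref{Convergence}) is stated under the standing hypothesis $H_1(X;\R)\cong\R$, equivalently $H_*(W_0;\R)\cong H_*(S^3;\R)$. This hypothesis is not cosmetic: the convergence argument rests on the identity $I(W)(\theta_Y^{[s,r]})=c(W)\theta_Y^{[s,r]}$ (and its analogue $D_1^YU^{k-1}_Y CW[0,l]=c(W[0,l])D_1^YU^{k-1}_Y+\cdots$ for $\Gamma$), which requires $H_1(W;\R)=0$; indeed the moduli spaces $M(a,W^*,b)$ themselves are only set up under that assumption. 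If $b_1(X)\geq 2$ then $b_1(W_0)\geq 1$ and your appeal to \cref{Convergence1} is not justified. Your remark that $W_0$ inherits a definite intersection form addresses $b_2^+$ but not $b_1$.

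The paper closes this gap with a preliminary reduction that your proposal omits entirely: choosing framed loops $l_1,\dots,l_N$ in $X\setminus Y$ representing generators of $\ker\bigl(PD[Y]\colon H_1(X;\Z)\to\Z\bigr)$ and performing surgery on them. Each surgery preserves negative definiteness, decreases $b_1$ by one, and by the surgery formula (\cref{surgery}) satisfies $\im\cs^j_{X_{l_1},PD(c_1)}\subset\im\cs^j_{X,[Y]}$, so after finitely many steps one may assume $H_1(X;\R)\cong\R$ without enlarging the target set $\bigcup_j\im\cs^j_{X,[Y]}$. You need either this reduction or an independent argument that the cobordism maps still carry $\theta$ to a nonzero multiple of $\theta$ when $H_1(W_0;\R)\neq 0$; the latter is not available in the paper's framework. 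The rest of your write-up --- the descent of $A_\infty$ to $X_{l,[Y]}$, the mod-$\Z$ bookkeeping with the floor function, and the separation argument for $[Y]=0$ using \eqref{def ineq} and \eqref{def eq1} --- is correct and consistent with the paper.
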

The proof of \cref{emb1} is decomposed into two cases: $[Y] \neq 0$ and $[Y] = 0$.
The first case gives an extension of the main theorem in \cite{Ma18}. However, the proof is not the same. In \cite{Ma18}, the author used  ASD moduli spaces for $4$-manifolds with periodic ends to prove the main result. However, in this paper, we only use ASD moduli spaces for $4$-manifolds with cylindrical ends. The essential part of the proof is contained in the proof of \cref{seq}.

{\em Proof of \cref{emb1} }
First, we assume $[Y] \neq 0$.
Suppose that $Y$ is a homology $3$-sphere $Y$ embedded in a negative definite $4$-manifold $X$ with $0\neq [Y] \in H_3(X; \Z)$. We assume that $X$ is connected. Put $c:= PD([Y]) \in H^1(X; \Z) = \hom (H_1(X;\Z), \Z)$. 
By the following discussion, we may assume that $H_1(X;\R)\cong \R$ to prove \cref{emb1}. Suppose $\rank H_1(X;\R) \geq 2$.
We have an exact sequence: 
\[
0 \to \bigoplus_{\rank{H_1(X;\Z)}} \Z \oplus \text{torsion}\cong \ker c \to H_1(X;\Z) \xrightarrow{c} \Z .
\]
We take a generator  $d_1\in H_1(X;\Z)$ of the free part of $\ker c$. Then $d_1 \cdot [Y]=0$. 
We also have the following exact sequence:
\[
H_1 (X \setminus Y ; \Z ) \to H_1(X ; \Z )  \to H_1( Y\times I ;\partial (Y\times I ); \Z  ) \cong H^3(Y; \Z)\cong \Z .
\]
The map $H_1(X; \Z  )  \to H_1( Y\times I , \partial (Y\times I ); \Z  ) \cong H^3(Y; \Z)\cong \Z $ corresponds to the pairing of $c$. As  $d_1 \in \ker c$, the class $d_1$ is represented by a closed oriented manifold $l'_1 \subset X \setminus Y$. If $[Y] \neq 0$, $X \setminus Y$ is connected. Therefore, by considering the connected sum, we can assume $l_1'$ is connected. We extend $l_1'$ to a framed loop $l_1\colon S^1 \times D^3 \to X $ such that $\im l \cap Y =\emptyset$.  Then by considering surgery along $l_1$, we obtain an oriented connected $4$-manifold $X_{l_1}$. It is shown in \cite{M95} that the $4$-manifold $X_{l_1}$ obtained by surgery along $l_1$ has the same intersection form as $X$ so $X_{l_1}$ is also negative definite. By \cref{surgery}, there exists a class $0\neq c_1 \in H^1(X_{l_1};\Z)$ such that 
\[
\im \cs^j_{X_{l_1},PD(c_1)} \subset \im \cs^j_{X, [Y]}.
\]
 Note that $b_1(X) = b_1( X_{l_1})  -1$. By induction, there exists a sequence of negative definite 4-manifolds $\{ X_{l_j}\}$ and classes $\{c_j\}$ for $1\leq j \leq \rank \ker c$ such that 
 \[
 \im\cs_{X_{l_N},PD(c_N)}^j \subset \cdots \subset \im \cs^j_{X_{l_1},PD(c_1)} \subset \im\cs_{X,[Y]}^j
 \]
 for any $j$, where $l_j$ are embeddings $S^1 \times D^3 \to X_{l_{j-1}}$ satisfying $[l_{S^1\times \{0\} } ]= c_j$. This implies that we can suppose that $H_1(X_{l_N} ; \R)= \R$.  Here we put $W_0:= \overline{X \setminus Y}$ and suppose $H_*(W_0; \R) \cong H_*(S^3; \R)$. 
 Suppose that $r_s(Y)<\infty$ and $l^s_Y < \infty$ for $s \in [-\infty, 0]$ (resp. $\Gamma_Y(k)<1$ and $l^k_Y < \infty$).
\cref{Convergence1} implies that there exists an integer $j \leq l^s_Y$ (resp. $j \leq l^k_Y$) and a representation $\rho \colon \pi_1(W[0,j-1]) \to SU(2)$ satisfying the following conditions: 
\begin{itemize}
\item The restrictions of $\rho$ to the components of $\partial (W[0,j-1]) = Y_0^+ \amalg Y^-_{j-1}$ coincide via the identification $Y_0^+ \to Y^-_{j-1}$. 
\item $\cs^j_{ X, c} (\rho) = r_s(Y)$.
\end{itemize}
This implies that $r_s(Y) \in \im\cs_{X,c}^j$(resp. $\Gamma_Y(k) \in \im\cs_{X,c}^j$).

If $[Y]=0$, since $X$ is connected, $X$ is decomposed into two parts: $X_1 \cup_Y X_2=X$. Since $X$ is negative definite, $X_1$ and $X_2$ are negative definite $4$-manifolds. Therefore, both $ Y$ and $-Y$ bound a negative definite $4$-manifold. By using \eqref{def ineq} and \eqref{def eq1}, we have $\infty = \Gamma_{-Y}(k)=\Gamma_{Y}(k)= r_s(Y) = r_s(-Y)$. \qed

 In \cite{Ma18}, the author constructed an obstruction class to the existence of embeddings by developing gauge theory for 4-manifolds with periodic ends. The main theorem of \cite{Ma18} is as follows.
\begin{thm} [\cite{Ma18}] \label{Ma18} Let $Y$ be an oriented homology $3$-sphere and $X$ an oriented homology $S^1\times S^3$. Suppose that the Chern-Simons functional of $Y$ is non-degenerate and there exists an embedding from $Y$ to $X$ such that $[Y]$ generates $H_3(X;\Z)$. Then 
\[
0=\theta^{[-\infty,r]}_Y  \in I^1_{[-\infty, r]} (Y) 
\]
for $\displaystyle 0\leq r \leq \min_{1\leq j \leq 2\#R(Y) +3}  \min \{ r \in \im \cs^j_{X,[Y]}\} $,
where $R(Y)$ is the quotient set of $\hom (\pi_1(Y), SU(2))$ by the conjugation of $SU(2)$.
\end{thm}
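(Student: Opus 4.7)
The plan is to reduce the statement to the single inequality
\[
r_{-\infty}(Y) \;\geq\; Q^{\min} := \min_{1\leq j\leq 2\#R(Y)+3} Q^j_{X,[Y]},
\]
where $Q^j_{X,[Y]} = \min\im\cs^j_{X,[Y]}$, and then invoke \cref{Convergence1}. By the definition $r_{-\infty}(Y) = \sup\{r\geq 0 \mid \theta_Y^{[-\infty,r]}=0\}$ given in \cref{r0}, this inequality is equivalent to the vanishing of $\theta_Y^{[-\infty,r]}$ for all $r$ in the stated range. If $r_{-\infty}(Y)=\infty$ there is nothing to do, so I will assume $r_{-\infty}(Y)<\infty$.

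The first step is to verify $l^{-\infty}_Y<\infty$ together with the explicit bound $l^{-\infty}_Y\leq \#R(Y)<2\#R(Y)+3$. Since $\cs_Y$ is assumed non-degenerate, elementary perturbation theory supplies sequences of non-degenerate regular perturbations $\pi_n\in\cal{P}^*(Y)$ with $\|\pi_n\|\to 0$ whose critical points are in natural bijection with those of $\cs_Y$, preserving Floer indices and shifting Chern--Simons values by $O(\|\pi_n\|)$. Distinct lifts of one irreducible flat connection in $\wt{R}^*(Y)$ differ in Chern--Simons value by integers, while $\lambda_Y\leq 1/2$, so at most one lift per element of $R^*(Y)$ can lie in the window $(r_{-\infty}(Y)-\lambda_Y,\,r_{-\infty}(Y)+\lambda_Y)$, giving the claimed bound.

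With $l^{-\infty}_Y<\infty$ in hand, I would apply \cref{Convergence1}(1) at $s=-\infty$. It produces an integer $l$ with $1\leq l\leq l^{-\infty}_Y\leq 2\#R(Y)+3$ and an irreducible flat connection $A_\infty$ on $W[0,l-1]$ whose restrictions to the two boundary copies of $Y$ agree; $A_\infty$ therefore descends to a flat connection on $X_{l,[Y]}$ whose Chern--Simons value $\cs^l_{X,[Y]}(A_\infty)\in (0,1]$ is the image of $r_{-\infty}(Y)$ under $\R\to\R/\Z\cong(0,1]$. Since $r_{-\infty}(Y)\geq 0$, this reduction is at most $r_{-\infty}(Y)$ itself, so
\[
r_{-\infty}(Y) \;\geq\; \cs^l_{X,[Y]}(A_\infty) \;\geq\; Q^l_{X,[Y]} \;\geq\; Q^{\min},
\]
which is the desired inequality.

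The main obstacle is the perturbation-theoretic bookkeeping behind the bound on $l^{-\infty}_Y$: one must argue that genuine non-degeneracy of $\cs_Y$ persists under sufficiently small elements of $\cal{P}^*(Y)$, that Floer indices are unchanged under such perturbations, and that only finitely many lifts in $\wt{R}^*(Y)$ can fall inside a $\lambda_Y$-window. Modulo these technicalities the proof is a direct application of \cref{Convergence1}, whose analytic heart is the filtered cobordism identity \eqref{important} together with a pigeonhole argument on the $l^{-\infty}_Y+1$ interfaces of $W^*[0,l^{-\infty}_Y-1]$; this replaces the periodic-ends ASD moduli space machinery used in the original proof in \cite{Ma18}.
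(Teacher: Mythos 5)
Your argument is correct and follows essentially the same route as the paper: the paper deduces \cref{Ma18} from \cref{emb1} together with the bound $l_Y \leq \#R(Y)$ under non-degeneracy (and the monotonicity $r_0(Y)\leq r_{-\infty}(Y)$), while you simply inline \cref{emb1} by invoking its engine, \cref{Convergence1}, at $s=-\infty$ with the same bound $l^{-\infty}_Y\leq \#R(Y)\leq 2\#R(Y)+3$. The perturbation bookkeeping you flag is exactly the content of the paper's one-line assertion $l_Y\leq\#R(Y)$, so nothing essential is missing.
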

Now, we prove \cref{Ma18} without using any gauge theory on 4-manifolds with periodic ends.
 \begin{thm}\cref{emb1} implies \cref{Ma18}. 
 \end{thm}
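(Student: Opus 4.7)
The plan is to feed the hypotheses of Theorem~\ref{Ma18} directly into Theorem~\ref{emb1}(1) with $s=-\infty$, and then translate its conclusion back into the statement about $\theta^{[-\infty,r]}_Y$. First I would verify that the setup of Ma18 is a special case of emb1: a homology $S^1\times S^3$ has $b_2=0$, so $X$ is vacuously negative definite, and $[Y]$ generates $H_3(X;\Z)$ so $[Y]\neq 0$. Hence the hypothesis ``$Y$ embedded in a negative definite $4$-manifold with $0\neq [Y]\in H_3(X;\Z)$'' of emb1 is met.

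Next, I would reformulate the desired conclusion. Setting $Q:=\min_{1\le j\le 2\#R(Y)+3} Q^j_{X,[Y]}\in(0,1]$, the definition $r_{-\infty}(Y)=\sup\{r\geq 0\mid 0=\theta^{[-\infty,r]}_Y\}$ shows that it suffices to prove $r_{-\infty}(Y)\geq Q$: then $0=\theta^{[-\infty,r]}_Y$ for every regular value $r\leq Q$, which is exactly the claim of Ma18. I would split into the trivial cases $r_{-\infty}(Y)=\infty$ and $r_{-\infty}(Y)\geq 1$ (the latter uses $Q\leq 1$) and reduce to the essential case $r_{-\infty}(Y)\in[0,1)$.

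The key input is controlling $l_Y^{-\infty}$ using the non-degeneracy hypothesis. Since $\cs_Y$ is non-degenerate (hence certainly Morse-Bott), Theorem~\ref{finiteness1} gives $l_Y<\infty$, and $l^{-\infty}_Y=l_{Y,r_{-\infty}(Y),1}$ is itself finite. Moreover, one can take a sequence $\{\pi_n\}$ of non-degenerate regular perturbations with $\|\pi_n\|\to 0$ whose critical sets, for large $n$, are in bijection with the critical set of $\cs_Y$ by the usual implicit-function argument at a non-degenerate critical point. Counting irreducible index-$1$ critical points near level $r_{-\infty}(Y)$ is then bounded by $\#R^*(Y)\leq \#R(Y)\leq 2\#R(Y)+3$, so $l_Y^{-\infty}\leq 2\#R(Y)+3$.

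With both finiteness conditions in hand, Theorem~\ref{emb1}(1) applies, giving
\[
r_{-\infty}(Y)=r_{-\infty}(Y)-\lfloor r_{-\infty}(Y)\rfloor\in\bigcup_{1\leq j\leq l_Y^{-\infty}}\im\cs^j_{X,[Y]}
\]
(the floor vanishes because $r_{-\infty}(Y)<1$). Thus $r_{-\infty}(Y)\geq Q^{j_0}_{X,[Y]}$ for some $j_0\leq l_Y^{-\infty}\leq 2\#R(Y)+3$, which forces $r_{-\infty}(Y)\geq Q$, completing the reduction. The only delicate point is the step establishing $l_Y^{-\infty}\leq 2\#R(Y)+3$: one must make sure that any small regularizing perturbation added on top of $\pi_n=0$ does not spawn extra critical points, which follows from the openness of non-degeneracy at isolated critical points and the generous slack in the bound $2\#R(Y)+3$.
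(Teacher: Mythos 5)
Your proposal is correct and takes essentially the same route as the paper: the paper likewise reduces \cref{Ma18} to \cref{emb1} by observing that non-degeneracy of $\cs_Y$ forces $l_Y \leq \#R(Y)$ (hence $l^s_Y \leq 2\#R(Y)+3$), so the index $j_0$ produced by \cref{emb1} lies in the admissible range. The only cosmetic difference is that the paper routes through the monotonicity $r_0(Y)\leq r_{-\infty}(Y)$, whereas you apply \cref{emb1} directly at $s=-\infty$.
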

 \begin{proof}
The inequality 
\begin{align} \label{imp ineq}
r_0(Y) \leq r_{-\infty}(Y) 
\end{align}
was proved in \cite{NST19}. 
Moreover, when the Chern-Simons functional of $Y$ is non-degenerate, by a similar discussion given in \cite[Theorem 8.4]{SaWe08}, we can take a sequence of non-degenerate regular perturbations $\{\pi_n\} $ such that 
\begin{itemize}
\item $ \|\pi_n\| \to 0$,
\item $\pi_n$ is zero near a neighborhood of $R(Y)$, and 
\item $R_{\pi_n}(Y)=R(Y)$ for any $n$.
\end{itemize} 
This proves 
\begin{align}\label{imp ineq1}  
l_Y \leq \#R(Y). 
\end{align} 
 The inequalities \eqref{imp ineq} and \eqref{imp ineq1} imply that \cref{emb1} recovers \cref{Ma18}. \qed
 \end{proof}

\cref{2-knot2} is a corollary of \cref{emb1}. 
\begin{thm}[\cref{2-knot2}]
Let $Y$ be an oriented homology $3$-sphere and $K$ an oriented $2$-knot. The invariants $\{cs_{K, j}\}_{j \in \Z_{>0}} $ satisfy the following properties: 
  \begin{itemize}
\item 
   Suppose that $l^s_{Y}$ is finite and $r_s(Y)<\infty$ for some $s \in [-\infty, 0]$.
If $Y$ is a Seifert hypersurface of $K$, then 
\[
 r_s(Y) -  \lfloor r_s(Y) \rfloor  \in \bigcup_{1 \leq j \leq l^s_{Y}} \im\cs_{K,j} 
\]
holds.
\item Suppose that $l^k_{Y}$ is finite and $\Gamma_{-Y}(k)<\infty$ for some $k \in \Z_{>0}$.
If $Y$ is a Seifert hypersurface of $K$, then 
\[
 \Gamma_{-Y} (k)-  \lfloor \Gamma_{-Y}(k) \rfloor    \in \bigcup_{1 \leq j \leq l^k_{Y}} \im\cs_{K,j} 
\]
holds.
\end{itemize}
\end{thm}

{\em Proof of \cref{2-knot2} }
The Seifert hypersurface $Y$ determines a codimension $1$-submanifold of $X(K)$ with $1_K =  [Y] \in H_3(X(K);\Z) $. Since $l^s_Y<\infty$, we can apply \cref{emb1}. Then, we obtain 
\[
  r_s(Y) \in \bigcup_{1 \leq j \leq l^s_Y} \im\cs_{ X(K),1_K}^j = \bigcup_{1 \leq j \leq l^s_Y} \im\cs_{K,j}.
\]
This completes the proof. The proof of the second statement is the same. 
\qed 

\subsubsection{Proof of \cref{rep1} }
In this section, we prove \cref{rep1}. 
\begin{thm}[\cref{rep1}]
 Let $Y$ be an oriented homology $3$-sphere and $K$ an oriented $2$-knot. 
 \begin{enumerate}
 \item  If $r_s(Y)<\infty$, $l^s_Y<\infty$  for some $s \in [-\infty, 0]$ and $Y$ is a Seifert hypersurface of $K$, then there exists a positive integer $l \leq l^s_Y$ such that there exists an irreducible representation 
 \[
 \rho\colon G_l(K)  \to SU(2).
 \]
 \item  If $\Gamma_{-Y}(k)<\infty$, $l^k_Y<\infty$  for some $k \in \Z_{>0}$ and $Y$ is a Seifert hypersurface of $K$, then there exists a positive integer $l$($ \leq l^k_Y$) such that there exists an irreducible representation 
 \[
 \rho\colon G_l(K)  \to SU(2).
 \]
 \end{enumerate}
\end{thm}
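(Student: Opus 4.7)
The plan is to deduce \cref{rep1} directly from \cref{2-knot2} together with item~(5) of \cref{2-knot1}, whose proof mechanism is encapsulated in \cref{existence of irred}. Both parts of the statement share the same argument structure; I describe part~(1) in detail and indicate the identical modification for part~(2).

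First I would apply \cref{2-knot2}(1) under the standing hypotheses $r_s(Y)<\infty$, $l^s_Y<\infty$ and $Y$ a Seifert hypersurface of $K$. This yields the existence of some integer $l$ with $1\leq l\leq l^s_Y$ such that
\[
r_s(Y)-\lfloor r_s(Y)\rfloor \in \im\cs_{K,l}.
\]
Next I would argue that the fractional part actually lies in the open interval $(0,1)$. By definition of the floor function $r_s(Y)-\lfloor r_s(Y)\rfloor\in [0,1)$, while by construction $\im\cs_{K,l}\subset (0,1]$. The intersection $[0,1)\cap (0,1]=(0,1)$, so in particular $\im\cs_{K,l}\cap (0,1)\neq\emptyset$.

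The final step is to invoke \cref{existence of irred} (which underlies item~(5) of \cref{2-knot1}) applied to the pair $(X(K),1_K)$: any value of $\cs_{X,c}^j$ lying strictly in $(0,1)$ must be realised by an irreducible connection on $X_{j,c}$, since by \cref{loc const} the map $\cs_{X,c}^j$ is locally constant in $C^\infty$-topology and the reducible locus of $R(X_{j,c})$ is connected and contains the trivial connection, so all reducibles are sent to $1$. Via the canonical isomorphism $G_l(K)\cong \pi_1(X(K)_{l,1_K})$ of \cref{fund group}, this produces the required irreducible representation $\rho\colon G_l(K)\to SU(2)$. Part~(2) is completely parallel after replacing $r_s(Y)$ by $\Gamma_{-Y}(k)$ and $l^s_Y$ by $l^k_Y$, and appealing to \cref{2-knot2}(2) in place of \cref{2-knot2}(1).

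The principal conceptual point---already handled by the cited theorems---is that the number $r_s(Y)-\lfloor r_s(Y)\rfloor$ cannot vanish. Equivalently, one must know a priori that $r_s(Y)\notin \Z$; otherwise the fractional part would equal $0$, yet membership in $\im\cs_{K,l}\subset (0,1]$ forces it to be strictly positive. This non-vanishing is built into the output of \cref{Convergence1}: the limit produced there is an \emph{irreducible} flat connection $A_\infty$ on $W[0,l-1]$ with matching boundary values realising the value $r_s(Y)$ modulo $\Z$, so its class in $R(X_{l,[Y]})$ is non-reducible and therefore its Chern--Simons value lies in $(0,1)$. Thus the content of \cref{rep1} is essentially already packaged in the construction behind \cref{2-knot2}; the present corollary merely extracts the irreducibility of the limiting connection and transports it through \cref{fund group} into the stated existence of an $SU(2)$-irreducible representation of $G_l(K)$.
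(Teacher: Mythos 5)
Your proposal is correct and follows essentially the same route as the paper: apply \cref{2-knot2} to place the fractional part of $r_s(Y)$ (resp.\ $\Gamma_{-Y}(k)$) in $\im\cs_{K,l}$ for some $l\leq l^s_Y$ (resp.\ $l\leq l^k_Y$), observe it must lie in $(0,1)$, and then invoke \cref{existence of irred} together with $G_l(K)\cong\pi_1(X(K)_{l,1_K})$. Your extra care about why the fractional part is nonzero is a point the paper's own two-line proof glosses over, but the argument is the same.
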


{\em Proof of \cref{rep1} } 
The proofs of (1) and (2) are the same. We show (1) in \cref{rep1}. 
\cref{2-knot2} implies 
\begin{align}\label{key}
r_s(Y)  \in  \im\cs_{K,j} \cap (0,1) 
\end{align}
 for given $s \in [-\infty, 0]$ and some $j  (\leq l_{Y})$. 
We combine \eqref{key} and \cref{existence of irred} and obtain the conclusion. 

\qed

\section{Extendability of $SU(2)$-representations}\label{Extendability}
 \cref{emb1} shows existence of finite irreducible $SU(2)$-representations. In this section, we provide a method to prove the existence of infinitely many irreducible $SU(2)$-representations.
\begin{defn}Let $(X, Y)$ be a pair consisting of a closed $4$-manifold $X$ and a codimension-$1$ smooth submanifold $Y$ of $X$ and let $\rho : \pi_1(Y)  \to SU(2)$ be a representation. If $\rho$ is extended to a representation $\wt{\rho} :\pi_1(X) \to SU(2)$, then we call $\rho$ an {\it extendable representation} for $(X,Y)$. 
\end{defn}

By using our method, one can give a partial answer to the following question: 
\begin{ques} For a given pair $(X,Y)$, which $SU(2)$-representations of $\pi_1(Y)$ are extendable for $(X,Y)$?
\end{ques}
The easiest examples are $(X=Y\times S^1, Y)$. In this case, all representations are extendable. 
For example, \cref{emb3} proves that if $ X$ is negative definite and $Y= -\Sigma(2,3,5)$, then all $SU(2)$-representations of $\pi_1(Y)$ are extendable. 
The goal of this section is to prove the following theorem: 
\begin{thm}\label{extendable} Let $Y$ be an oriented homology $3$-sphere and let $\rho : \pi_1(Y)  \to SU(2)$ be an $SU(2)$-representation of $\pi_1(Y)$. 
\begin{itemize}
\item
Suppose $\cs_Y ( \rho) = r_s(Y)< \infty$ and $l^s_Y<\infty$ for some $s\in [-\infty, 0]$, the Chern-Simons functional of $Y$ is Morse-Bott at the level $r_s(Y)$, and for each component $\bigcup_\alpha C_\alpha = \cs_Y^{-1}(r_s(Y)) \cap R^*(Y)$, we assume there exist Morse functions $g_\alpha : C_\alpha \to \R$ such that 
\[
 \set { p \in C_\alpha | \ind_{g_\alpha} (p) + \ind (C_\alpha )=1, \ d(g_\alpha)_p= 0 }= \emptyset
\]
if $\rho \notin C_\alpha$ and 
 \[
1=\# \set { p \in C_{\alpha_0} | \ind_{g_{\alpha_0}} (p) + \ind (C_{\alpha_0} )=1, \ d(g_{\alpha_0})_p= 0 }, 
\]
where $C_{ \alpha_0}$ is the component which contains $\rho$ and $\ind (C_\alpha )$ is the Morse-Bott index of $C_\alpha$. Let $X$ be a closed negative definite $4$-manifold containing $Y$ as a submanifold.

Then $\rho$ is extendable for the pair $(X,Y)$. Moreover, all $SU(2)$-representations which lie the same component of $\rho$ are extendable for $(X,Y)$.
\item  
Suppose $\cs_Y ( \rho) = \Gamma_{-Y}(k)< \infty$ and $l^k_Y<\infty$ for some $k\in \Z_{>0}$, the Chern-Simons functional of $Y$ is Morse-Bott at the level $\Gamma_{-Y}(k)$, and for each component $\bigcup_\alpha C_\alpha = \cs_Y^{-1}(\Gamma_{-Y}(k)) \cap R^*(Y)$, we assume there exist Morse functions $g_\alpha : C_\alpha \to \R$ such that 
\[
 \left\{ p \in C_\alpha | \ind_{g_\alpha} (p) + \ind (C_\alpha )=\begin{cases}1 \text{ if $k$ is odd}\\ 5 \text{ if $k$ is even} \end{cases} (\mod 8)  , \ d(g_\alpha)_p= 0 \right\}= \emptyset
\]
if $\rho \notin C_\alpha$ and 
 \[
1=\#  \left\{ p \in C_{\alpha_0} | \ind_{g_{\alpha_0}} (p) + \ind (C_{\alpha_0} )=\begin{cases}1 \text{ if $k$ is odd}\\ 5 \text{ if $k$ is even} \end{cases} (\mod 8), \ d(g_{\alpha_0})_p= 0 \right\}, 
\]
where $C_{ \alpha_0}$ is the component which contains $\rho$ and $\ind (C_\alpha )$ is the Morse-Bott index of $C_\alpha$. Let $X$ be a closed negative definite $4$-manifold containing $Y$ as a submanifold.

Then $\rho$ is extendable for the pair $(X,Y)$. Moreover, all $SU(2)$-representations which lie the same component of $\rho$ are extendable for $(X,Y)$.
\end{itemize}
\end{thm}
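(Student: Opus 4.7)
The plan is to combine the Morse--Bott perturbation machinery of \cref{MB per} with the convergence \cref{Convergence1}, using the hypotheses on Morse-function critical counts to force the sequences of perturbed critical points produced by \cref{Convergence1} into a single gauge class; once this is achieved, the conclusion of \cref{Convergence1} will specialize to give a flat connection on a single copy $W_0$ of $\overline{X \setminus Y}$ with matching boundary values, which glues across $Y$ to yield the desired extension $\tilde{\rho}\colon \pi_1(X) \to SU(2)$.

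First I would apply \cref{MB per} at each Morse--Bott component $C_\alpha$ of $\cs_Y^{-1}(r_s(Y)) \cap R^*(Y)$ with the provided Morse functions $g_\alpha$. This yields, on a neighborhood of these components, a smooth family $\pi_\varepsilon$ of non-degenerate perturbations whose critical points are in bijection with $\coprod_\alpha \operatorname{Crit}(g_\alpha)$. Gluing on small regular perturbations away from this level as in the proof of \cref{key} produces a non-degenerate regular sequence $\{\pi_n\} \subset \mathcal{P}^*(Y)$ with $\|\pi_n\| \to 0$. The index-count hypothesis then forces that, for all sufficiently large $n$, the set of critical points with Floer index $1$ and Chern--Simons value within $\lambda_Y$ of $r_s(Y)$ consists of a single gauge class $c_n$, which sits near a point of $C_{\alpha_0}$.

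Applying \cref{Convergence1} with this sequence produces a flat connection $A_\infty$ on some $W[0, l-1]$ with $l \leq l^s_Y$, matching boundary values, and Chern--Simons value $r_s(Y)$ on each boundary slice $Y^+_j$. The crucial point is that the proof of \cref{Convergence1} actually produces $l+1$ boundary-slice connections each gauge equivalent to some perturbed critical point, and by our uniqueness argument these must all coincide with $c_n$. Hence the adjacent slices $Y^+_0$ and $Y^-_0$ of a single copy $W_0$ already carry gauge-equivalent restrictions of $A_\infty$, so $A_\infty|_{W_0}$ glues to a flat connection on $X = W_0/(Y^+_0 \sim Y^-_0)$, producing $\tilde{\rho}\colon \pi_1(X) \to SU(2)$; letting the perturbation go to zero shows that $\tilde{\rho}|_{\pi_1(Y)}$ lies in $C_{\alpha_0}$.

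The main obstacle is the final clause that \emph{every} representation in the component $C_{\alpha_0}$ is extendable. The strategy is that for any prescribed $\rho'' \in C_{\alpha_0}$ one can replace $g_{\alpha_0}$ by a Morse function whose unique critical point of the requisite index is exactly $\rho''$ (always achievable on a connected manifold), and rerun the construction to produce an extension whose $Y$-restriction is $\rho''$; a compactness argument using the Morse--Bott normal form of $\cs_Y$ at $C_{\alpha_0}$ is needed to identify the limit boundary connection with $\rho''$ itself rather than only with a nearby point of $C_{\alpha_0}$. Part (2), involving $\Gamma_{-Y}(k)$, is proved identically by invoking \cref{Convergence1}(2) and replacing the Floer-index condition $\ind = 1$ by the appropriate $\mathbb{Z}/8\mathbb{Z}$ condition coming from Daemi's construction.
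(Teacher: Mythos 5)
Your proposal follows essentially the same route as the paper's proof: deform the given Morse functions so that the prescribed representation is the unique critical point of the requisite index, apply \cref{MB per} to build a sequence of non-degenerate regular perturbations whose sole index-one critical point near level $r_s(Y)$ (resp.\ $\Gamma_{-Y}(k)$) converges to that representation, conclude that the relevant count is $1$, and then invoke \cref{Convergence1} so that the flat limit on $W[0,0]$ has gauge-equivalent restrictions on $Y^+_0$ and $Y^-_0$ and glues to an extension over $X$. The identification of the limiting boundary connection with the chosen point of $C_{\alpha_0}$, which you flag as needing a compactness argument, is exactly what the embedding $L$ in \cref{MB per} supplies, so your argument is complete and matches the paper's.
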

\begin{proof} Put $r= r_s(Y) = \cs_Y (\rho) $. Since $r_s(Y) < \infty$, we can suppose $0\neq [Y]\in H_3(X; \Z )$ by \cref{emb1}. 
First, we can assume the class $[Y]$ generates $H_3(X; \R) \cong \R$ by the same argument as in the proof of \cref{seq}. 
Put $g':= \coprod g_\alpha : \coprod C_\alpha = \cs_Y^{-1} (r) \cap R^*(Y) \to \R$.
We fix another Morse function $g:\cs_Y^{-1} (r) \cap R^*(Y) \to \R$ obtained by deforming the Morse function $g'$ satisfying the following conditions: 
\begin{itemize}
\item the point $\rho \in C_\alpha$ is a critical point of $g$ whose Morse index is $1 -\ind (C_{\alpha_0}) $, 
\item  $\set { p \in C_\alpha | \ind_{g_{C_\alpha}} (p) + \ind (C_\alpha )=1, \ d(g_{C_\alpha})_p= 0 }= \emptyset$ if $\alpha \neq \alpha_0$ and 
\item $1=\# \set { p \in C_{\alpha_0} | \ind_{g_{C_{\alpha_0}}} (p) + \ind (g_{C_{\alpha_0}} )=1, \ d(g_{C_{\alpha_0}})_p= 0 }$. 
\end{itemize}
By using \cref{MB per}, we take a Morse-Bott type perturbation $\pi_\epsilon(g)$ for $\epsilon \in (0,\epsilon_0)$. We take a sequence of non-generate regular perturbations $\{\pi_n\}$ such that $\cs_{\pi_{1/n}(g)} |_{U} = \cs_{\pi_n}|_U$ and $\|\pi_n\| \to 0$, where $U$ is a neighborhood of $\cs_Y^{-1} (r) \cap R^*(Y)$ in $B^*(Y)$. By \cref{MB per}, we can see $\{ \pi_n\}$ satisfies the following conditions: 
\begin{itemize}
\item there is a correspondence
\[
 L_n:  \set { \text{the critical point set of } g: \cs_Y^{-1} (r) \cap R^*(Y) \to \R } \to \set{ \text{critical point set of }\cs_{\pi_n}|_U} 
 \]
  for each $n$ such that $\dis \lim_{n\to \infty}  L_n(a) = a$, 
\item $\# \set {a \in \wt{R}^*_{\pi_n}(Y) | \ind (a) =1, \ |\cs_{\pi_n}(a)-r| < \frac{1}{2} \lambda_Y  }= 1$, and 
\item  $\dis \{\rho \} =\set { p: \text{the critical point set of } g | \lim_{n\to \infty} L_n(p) \in C_{\alpha_0} }$. 
\end{itemize}
Thus, we conclude that $l_Y^s= 1$.
 Then we apply \cref{Convergence1} and obtain an irreducible flat connection $A_\infty$ on $W[0,0]$ such that  
  \[
cs_{X, [Y]}^1 ( A_\infty) = r_s(Y). 
\]
Moreover, by the proof of \cref{Convergence1}, we can see that 
\[
A_\infty|_{Y^+_0} \cong A_\infty|_{Y^-_0} \cong \rho.
\]
Thus, $A_\infty$ gives an extension of $\rho$. Moreover, for another point $\rho'$ which lies in the same component of $\rho$,  we can take another Morse function $g$ with the following conditions: 
\begin{itemize}
\item the point $\rho' \in C_\alpha$ is a critical point of $g$ whose Morse index is $1 -\ind (C_{\alpha_0}) $, 
\item  $\set { p \in C_\alpha | \ind_{g_{C_\alpha}} (p) + \ind (C_\alpha )=1, \ d(g_{C_\alpha})_p= 0 }= \emptyset$ if $\alpha \neq \alpha_0$ and 
\item $1=\# \set { p \in C_{\alpha_0} | \ind_{g_{C_{\alpha_0}}} (p) + \ind (g_{C_{\alpha_0}} )=1, \ d(g_{C_{\alpha_0}})_p= 0 }$
\end{itemize}
by modifying $g'$.
Then, by the same discussion, we have an extension $A_\infty$ of $\rho'$.
\qed
\end{proof}
We give a sufficient condition for the assumptions of \cref{extendable} to hold. 
\begin{cor}\label{perfect}
Let $Y$ be an oriented homology $3$-sphere and let $\rho : \pi_1(Y)  \to SU(2)$ be an $SU(2)$-representation of $\pi_1(Y)$. 
\begin{enumerate}
\item Let $l$ be a positive integer.
Suppose $\cs_Y ( \rho) = r_s(Y)< \infty$ and $l^s_Y=1$ for some $s\in [-\infty, 0]$, the Chern-Simons functional of $Y$ is Morse-Bott at the level $r_s(Y)$, $H_* ( \cs_Y^{-1}(r_s(Y)) \cap R^*(Y); \Z) \leq 1$ for any $*$ and $\cs_Y^{-1}(r_s(Y)) \cap R^*(Y)$ has a perfect Morse function.  Let $X$ be a closed negative definite $4$-manifold containing $Y$ as a submanifold.
Then $\rho$ is extendable for the pair $(X,Y)$. Moreover, all $SU(2)$-representations which lie the same component of $\rho$ are extendable for $(X,Y)$.
\item  Let $l$ be a positive integer.
Suppose $\cs_Y ( \rho) = \Gamma_{-Y}(k)< \infty$ and $l^k_Y=1$ for some $k\in \Z_{>0}$, the Chern-Simons functional of $Y$ is Morse-Bott at the level $\Gamma_{-Y}(k)$, $H_* ( \cs_Y^{-1}(\Gamma_{-Y}(k)) \cap R^*(Y); \Z) \leq 1$ for any $*$ and $\cs_Y^{-1}(r_s(Y)) \cap R^*(Y)$ has a perfect Morse function.Let $X$ be a closed negative definite $4$-manifold containing $Y$ as a submanifold. Then $\rho$ is extendable for the pair $(X,Y)$. Moreover, all $SU(2)$-representations which lie the same component of $\rho$ are extendable for $(X,Y)$.
\end{enumerate}
\end{cor}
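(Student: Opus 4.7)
The plan is to show that the hypotheses of \cref{perfect} imply those of \cref{extendable}, from which the conclusion follows immediately. I treat part (1); part (2) is parallel, with Floer index congruent to $1$ (respectively $5$) modulo $8$ in place of $1$, depending on the parity of $k$.

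First I would exploit the hypothesis $\dim_\Z H_0(\cs_Y^{-1}(r_s(Y)) \cap R^*(Y);\Z) \leq 1$ to deduce that the critical set at level $r_s(Y)$ has at most one connected component. Since $\rho$ lies in this set, there is a single component $C =: C_{\alpha_0}$ and $\rho \in C$; in particular the condition in \cref{extendable} on components $C_\alpha$ with $\alpha \neq \alpha_0$ becomes vacuous. Next, fix a perfect Morse function $g \colon C \to \R$ (which exists by assumption) and apply \cref{MB per}: after a further small non-degenerate regular perturbation (as in the proof of \cref{key}), the critical points of the perturbed Chern-Simons functional near $C$ correspond bijectively to $\operatorname{Crit}(g)$, and the Floer index at $p \in \operatorname{Crit}(g)$ equals $\ind_g(p) + \ind(C)$. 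By perfectness of $g$, the number of such critical points of Floer index $1$ is precisely $\dim_\Z H_{1 - \ind(C)}(C;\Z) \in \{0,1\}$.

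Second I would match this count against $l^s_Y = 1$. Because the (perturbed) Morse-Bott sequence is an admissible sequence in the definition of $l^s_Y$, one obtains the upper bound $l^s_Y \leq \dim_\Z H_{1-\ind(C)}(C;\Z)$; combined with $l^s_Y = 1$ this forces $\dim_\Z H_{1-\ind(C)}(C;\Z) = 1$, so $g$ has a unique critical point $p_0$ of index $1 - \ind(C)$.

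Third I would reposition $p_0$ at $\rho$. Because $C$ is a connected smooth manifold, the diffeomorphism group of $C$ acts transitively on points, so there is a diffeomorphism $\varphi \colon C \to C$ with $\varphi(\rho) = p_0$, and $\widetilde{g} := g \circ \varphi$ is again a perfect Morse function whose unique critical point of index $1 - \ind(C)$ is located at $\rho$. The hypotheses of \cref{extendable}(1) are now satisfied with $(C_{\alpha_0}, g_{\alpha_0}) = (C, \widetilde{g})$, yielding extendability of $\rho$. The ``moreover'' clause, that every $\rho' \in C$ is extendable, follows by repeating the repositioning step with $\rho'$ in place of $\rho$. The main obstacle is the identification of the Morse-Bott count with $l^s_Y$ in the second step; for part (2) the same strategy applies, but the bookkeeping of critical points in all degrees congruent to $1 - \ind(C)$ (or $5 - \ind(C)$) modulo $8$ requires the homological hypothesis to pin down the relevant sum $\sum_{i \,\equiv\, 1 - \ind(C) \,(\mathrm{mod}\, 8)} \dim_\Z H_i(C;\Z)$ as exactly $1$.
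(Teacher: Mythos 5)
Your proposal is correct and follows essentially the same route as the paper: both reduce to \cref{extendable} by using the homology bound to see that $\cs_Y^{-1}(r_s(Y))\cap R^*(Y)$ is connected and then converting the perfect Morse function into one with exactly one critical point of total index $1$ (resp.\ $1$ or $5$ mod $8$) on that component. Your extra steps — invoking $l^s_Y=1$ via the Morse--Bott perturbation sequence to rule out $b_{1-\ind(C)}(C)=0$, and repositioning the critical point at $\rho$ — are harmless refinements (the repositioning is already performed inside the proof of \cref{extendable}), and your caveat about the mod-$8$ bookkeeping in part (2) is a point the paper's own proof also passes over.
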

\begin{proof}We check that the assumptions of \cref{extendable} are satisfied. By assumption, we have a Morse function $g: \cs_Y^{-1}(r_s(Y)) \cap R^*(Y) \to \R$ such that the differentials of the Morse complex with respect to $g$ are zero. 
Since $H_*( \cs_Y^{-1}(r_s(Y)) \cap R^*(Y); \Z) \leq 1$, 
\[
\#\{ p \in \cs_Y^{-1}(r_s(Y)) \cap R^*(Y) | \ind (p)=*, \ dg_p =0 \}  \leq 1 \ \  \forall * \in \Z_{\geq 0} . 
\]
Note that $\cs_Y^{-1}(r_s(Y)) \cap R^*(Y)$ is connected.  
Then we use a Morse-Bott type perturbation $\pi_\epsilon(g)$ for $\epsilon \in (0,\epsilon_0)$ given in \cref{MB per} to define filtered instanton chain complexes $CI^{[s, r]}_* (Y)$ given in \cref{ss}. 
Since the value of $r_s(Y)$ depends only on the index-1-critical points of $\cs_{Y, \pi_\epsilon(g)} $ in $\cs_Y^{-1}(r_s(Y)) \cap R^*(Y)$, we have
\[
 \#\{ p \in \cs_Y^{-1}(r_s(Y)) \cap R^*(Y) | \ind_{g} (p) +  \ind_{\cs_Y} (\cs_Y^{-1}(r_s(Y)) \cap R^*(Y)) =1 , p\in C_\al  , \ dg_p =0 \}  \geq 1
\]
We suppose that $g$ has $\rho$ as a critical point satisfying $\ind_{g} (p) +  \ind_{\cs_Y} (C_\alpha) =1$. 
Thus, the assumptions of 1 of \cref{extendable} are satisfied. A similar argument proves 2. 

 \qed
\end{proof}

\section{Examples and Applications}\label{Examples}
In this section, we give several examples of computations of $l_Y$, $l_Y^s$ and $l_Y^k$ and applications of main theorems in \cref{Introduction}. 
\subsection{Calculations of $l_Y$, $l_Y^s$ and $l_Y^k$}
In this section, we give several computations of $l_Y$, $l_Y^s$ and $l_Y^k$. 
\subsubsection{Seifert homology 3-spheres} 
In this section, we discuss the invariants $l^s_Y$, $l^k_Y$ and $l_Y$ for Seifert homology $3$-spheres of type $\Sigma(a_1, \cdots, a_n)$. In \cite{FS90}, the $R$-invariant of $\Sigma(a_1, \cdots, a_n)$: 
\begin{align}\label{R}
R(a_1, \cdots, a_n) = \frac{2}{a} -3 +n +\sum_{i=1}^n \frac{2}{a_i}\sum_{k=1}^{a_i-1} \cot \left( \frac{a \pi k}{a_i} \right) \cot \left(\frac{\pi k}{a_i}\right) \sin^2 \left(\frac{\pi k}{a_i}\right)
\end{align}
is introduced, where $a= a_1 \cdot \cdots \cdot a_n$. In \cite{FS90}, Fintushel and Stern gave an algorithm to calculate $SU(2)$-representation spaces $R(\Sigma(a_1, \cdots, a_n))$. Fix a sequence of integers $b$, $b_1$, $\cdots$ , $b_n \in \Z$ such that $\displaystyle a \sum_{1 \leq k\leq n} \frac{b_k}{a_k} = 1 + ab$. We call the pair $(b, (a_1, b_1), \cdots , (a_n,b_n) )$ a {\it Seifert invariant}. For a such sequence, the fundamental group of $\Sigma(a_1, \cdots, a_n)$ is given by 
\[
\pi_1(\Sigma(a_1, \cdots, a_n)) =  \left\{ x_1, \cdots, x_n, h \middle| [x_i, h]=1, x_i^{a_i} = h^{-b_i} , x_1 \cdots x_n = 1 \right\}.
\]
Note that if $\rho:\pi_1(\Sigma(a_1, \cdots, a_n))\to SU(2)$ is an irreducible representation, then $\rho(h)=\pm 1$. Suppose that $a_1$ is even. We choose $\{b_i\}$ so that $b_j$ is even for $j \neq 1$. The number $l_i$ is even if and only if $\rho(h)^{b_i} = 1$. 
Then connected components of the $SU(2)$-representations $\rho$ of $\pi_1(\Sigma(a_1, \cdots, a_n))$, imposing some technical conditions, are parametrized by the rotation numbers $(l_1, \cdots, l_n)$ given by \[
g_i^{-1} \rho (x_i) g_i = 
\begin{pmatrix}
e^{\pi i l_i/ a_i} & 0 \\
0 & e^{-\pi i l_i/ a_i}  \\
\end{pmatrix} \text{ for some } g_i\in SU(2) \  ( 1\leq i\leq n)
\]
 and $0\leq l_i  \leq a_i$. 
For this parametrization, the value of the Chern-Simons functional is then given by \begin{align}\label{cs_Yof Seifert}
cs(\rho(l_1, \cdots, l_n ) ) =\left(\sum_{i=1}^n a l_i /a_i\right)^2/4a \mod 1.
\end{align}
Moreover, the Floer index\footnote{When the Chern-Simons functional of an oriented homology 3-sphere $Y$ is Morse-Bott, we can also define a Morse-index. (See \cite{FS90}.)} $\ind (\rho_{ (l_1, \cdots , l_n)})$ is given by 
\[
\ind \left(\rho_{ (l_1, \cdots , l_n)} \right)= \frac{2e^2}{a} + 3 -m + \sum_{i=1}^m \frac{2}{a_i}\sum_{k=1}^{a_i-1} \cot \left( \frac{a \pi k}{a_i} \right) \cot \left(\frac{\pi k}{a_i}\right) \sin^2 \left(\frac{\pi e k}{a_i}\right) \mod 8. 
\]

\begin{ex}\label{Brieskorn}Suppose that $n=3$. Since $R(2,3,6k-1)=1$, it is proved in \cite[Theorem 6]{D18} and \cite[Corollary 1.4]{NST19} that
\[
\Gamma_{\Sigma(2,3,6k-1)}(1)= r_s(-\Sigma(2,3,6k-1)) = \frac{1}{24(6k-1)}
\]
 for any $s \in [-\infty, 0]$. Since $\lambda(\Sigma(2,3,6k-1)) = k$, \[
 l_{\Sigma(2,3,6k-1)}=2k
 \]
  by \cref{non-deg}.  Moreover, it is shown in \cite{FS90} that the Chern-Simons functional of $\Sigma(2,3,6k-1)$ is Morse.
Then $(l_1, l_2 , l_3)$ determines a representation if and only if 
\[
|l_1/a_1- l_2/a_2 | < l_3 /a_3 <1- |1 - (l_1/a_1- l_2/a_2)|. 
\]
Here, we choose $a_1=2$, $a_2=3$, $a_3=6k-1$. 
Then, the space of the representations are parametrized by 
\begin{align}\label{236k-1}
R^* (\Sigma(2,3,6k-1) ) =\{ (1, 2, l_3) | l_3 \in   \{ k, k+1, \cdots , 5k-1\} \cap 2 \Z \}.
\end{align}
Then the Chern-Simons functional is given by \[
cs( \rho _{(1, 2, l_3)}) =  \frac{12( 3k^2 - k + 3 l^2_3) +1
 }{24 (6k-1)} \mod 1.
\]
If $k$ is odd, $\rho_{(1,2,5k-1)}$ has the unique minimal value $\frac{1}{6(6k-1)}$ of $\cs$ in $(0,1]$ and if $k$ is even, $\rho_{(1,2,k)}$ has the unique minimal value $\frac{1}{6(6k-1)}$ of $\cs$ in $(0,1]$. Therefore, we can conclude that \[
l^1_{\Sigma(2,3,6k-1)} = l_{\Sigma(2,3,6k-1) }^s = 1
\]
 for any $s \in [-\infty,0]$. In the case of $k=1$, we can also see that $l^2_{\Sigma(2,3,5)}=1$.
\end{ex}
\begin{ex}\label{2357}
Suppose $n=4$, $a_1=2$, $a_2=3$, $a_3=5$ and $a_4=7$. One can check that $R(2,3,5,7) =1$. It is shown that \[
\Gamma_{\Sigma(2,3,5,7)} (1) = r_s(-\Sigma(2,3,5,7)) =  \frac{1}{840}
\]
 for any $s \in [-\infty, 0]$. The space $R^*(\Sigma(2,3,5,7))$ has six $S^2$-components and $16$ points. We put critical values of the Chern-Simons functional for $-\Sigma(2,3,5,7)$ in \cref{figure} which are given in \cite{FS90} and \cite{Sav02}. 
In \cref{figure}, the Floer indices for Morse-Bott components with dimension greater than $0$ are described as bold numbers. 
\cref{figure} implies that $\nu (-\Sigma(2,3,5,7)) = r_s(-\Sigma(2,3,5,7))$ for any $s \in [-\infty, 0]$. 
\begin{table}[htb]\label{2357} 
\begin{center}
  \begin{tabular}{| l|c|c|c|c|c|}\hline
  $(l_1, l_2, l_3, l_4)$ & $\cs$ & $\ind$  & $(l_1, l_2, l_3, l_4)$ & $\cs$ & $\ind$ \\ \hline
  $(1,0,2,2)$ &${681}/{840}$  &7  & $(0, 2, 4, 2) $& ${184}/{840}$  &3 \\ \hline
     $(1,0,2,4)$ &${561}/{840}$  &5  &$(2, 2, 2, 2)$  &${436}/{840}$ &5 \\ \hline
    $(1,0,2,6)$ &  ${81}/{840}$& 1 &$ (2, 2, 2, 4)$ &${316}/{840}$ &3  \\ \hline
    $ (1, 0, 4, 4)$ &${729}/{840}$  &7  & $(2, 2, 4, 4) $&${484}/{840}$ &5 \\ \hline
    $ (1, 2, 0, 2)$ &${625}/{840}$  & 7 & $ (2, 2, 4, 6)$ &${4}/{840}$ &1 \\ \hline 
   $ (1, 2, 0, 4)$  & ${505}/{840}$ &5 &$(1,2,2,2)$  & ${121}/{840}$&{\bf 1} \\ \hline 
   $ (1, 2, 2, 0)$  & ${721}/{840}$&7 &$(1,2,2,4) $ & ${1}/{840}$ &{\bf 7} \\ \hline 
   $ (1, 2, 4, 0)$ &${49}/{840}$ &1 &$(1,2,2,6) $& ${361}/{840}$&{\bf 3} \\ \hline
   $(0, 2, 2, 2)$ & ${16}/{840}$ &1 &$(1,2,4,2) $& ${289}/{840}$&{\bf 3} \\ \hline
 $  (0, 2, 2, 4)$&${736}/{840}$ &7 &$ (1,2,4,4) $ &${169}/{840}$ &  {\bf 1} \\ \hline
$(0, 2, 2, 6)$ &${256}/{840}$ &3 &$(1,2,4,6) $& ${529}/{840}$&{\bf  5} \\ \hline
  \end{tabular}
  \caption{Critical values of $-\Sigma(2,3,5,7)$}\label{figure}
  \end{center}
  The usual instanton group of $-\Sigma(2,3,5,7)$ is isomorphic to $\Z^{28}$ and the Casson invariant is $14$. This implies that $l_{-\Sigma(2,3,5,7)}=28$. On the other hand, for $s \in [-\infty, 0]$, $l^s_{-\Sigma(2,3,5,7)} =1$. 
  Moreover, if we let $Y$ be the connected sum $ \#_{n} (-\Sigma(2,3,5,7))$ for a positive $n$, then the connected sum formula of $r_0(Y)$ implies that $r_0(Y)= \frac{1}{840}$. Thus, we can also see that $l^0_{Y} =1$. 
\end{table}
\end{ex}Next, we consider the finite connected sums of Seifert homology $3$-spheres. 

\begin{ex} \label{cal3} Fix $k \in \Z_{>0}$.
Let $Y$ be the linear combination \[
\displaystyle n\Sigma(2,3,6k-1) \# ( \displaystyle \#_{6(6k-1)<a_1 \cdots a_n } m_{(a_1,\cdots ,a_n)} \Sigma (a_1,\cdots ,a_n)),
\]
where $n<0$, $k \in \Z_{>0}$ and $m_{(a_1,\cdots ,a_n)}$ is a sequence of integer parametrized by $(a_1,\cdots ,a_n)$ with 
\[
a_1 \cdots a_n < 24 (6k-1) .
\]
 Then \cref{main NST} \eqref{conn sum formula} implies 
 \[
\frac{1}{24(6k-1)}= r_0 (n\Sigma(2,3,6k-1)) \geq \min \set { r_0(Y), r_0( \displaystyle \#_{6(6k-1)<a_1 \cdots a_n } m_{(a_1,\cdots ,a_n)} \Sigma (a_1,\cdots ,a_n)) }.
 \]
By the condition $6(6k-1)<a_1 \cdots a_n$  and the formula \eqref{cs_Yof Seifert}, we conclude $\frac{1}{24(6k-1)}= r_0(Y)$. Moreover, we can see $\# \cs^{ -1}_Y \left(\frac{1}{24(6k-1)} \right)=1$. Therefore, \[
l^0_Y=1.
\] Because of the choice of $(a_1, \cdots, a_n)$, we have $\frac{1}{24(6k-1)} =  r_0(Y)$. 
\end{ex}

\subsubsection{A hyperbolic homology $3$-sphere}
In \cite{NST19}, Nozaki, Sato, and the author tried to calculate $r_s(Y)$ for the hyperbolic 3-manifold obtained along $1/2$-surgery along the mirror image of $5_2$.  
\begin{ex}\label{hyperbolic}Let $Y$ be $S^3_{1/2}(5_2^*)$. The Casson invariant of $Y$ is $4$. In \cite{NST19}, it is checked that all critical points of $\cs$ of $Y$ are non-degenerate and there are eight $SU(2)$-representations. Moreover, approximate critical values of $\cs$ are given in Table 2 of \cite{NST19}. We put the minimal value of $\cs$ of $S^3_{1/2}(5_2^*)$ in $(0,1]$ by $t_0$. Then, $r_s(Y)=\Gamma_{-Y}(1) = t_0$ for any $s \in [-\infty, 0]$. Since $\#cs^{-1}(t_0)=1$, we can conclude that \[
l^s_Y=l^1_Y=1
\]
 for any $s \in [-\infty, 0]$. The manifold $S^3_{1/2}(5_2^*)$ also satisfies $\nu (S^3_{1/2}(5_2^*)) = r_s (S^3_{1/2}(5_2^*))$ for any $s \in [-\infty, 0]$.
\end{ex}
\subsection{Applications}
In this section, we give several applications of the main theorems \cref{2-knot1}, \cref{2-knot2}, \cref{rep1}, \cref{emb1} and \cref{emb4}. 
\subsubsection{Ribbon $2$-knots}\label{Ribbon}
The class of ribbon $2$-knots is one of the fundamental classes of $2$-knots. There are several studies of the properties of ribbon $2$-knots (\cite{Ka08}, \cite{Y69}, \cite{Y69II} and \cite{Y70III}). For example, it is shown in \cite{Y69II} that the knot group $G(K)$ is torsion free for any ribbon $2$-knot $K$. In \cite{Ka08}, Kawauchi gave a characterization of Alexander modules of ribbon $2$-knots.  Moreover, every ribbon $2$-knot has a finite connected sum of copies of $S^1\times S^2$ (\cite{Y69}) as a Seifert hypersurface. However, the classification problem for ribbon 2-knots is open. There several invariants which obstruct ribon-nnes of $2$-knots. Ruberman (\cite{R90}) introduced the {\it Gromov norm} $|K| \in [0,\infty)$ of 2-knots $K$ by using the Gromov norm of Seifert hypersurfaces. This invariant satisfies the inequality 
\[
| K | \leq |Y| 
\]
for any Seifert hypersurface $Y$ of $K$, where $|Y|$ is the Gromov norm of $Y$. If $K$ is a ribbon $2$-knot, then $|K|=0$. Moreover, the following gauge theoretic invariants can be useful in obstructing the ribbon property of 2-knots: 
 \begin{enumerate}

 \item Mrowka, Ruberman and Saveliev (\cite{MRS11}) constructed a version of Seiberg-Witten invariant $\lambda_{MRS}(X)$ for homology $S^1\times S^3$'s. 
Then one define $\lambda_{MRS}(K) := \lambda_{MRS}(X(K))$. Moreover,  if $K$ is ribbon, then $\lambda_{MRS}(K)=0$ (\cite{LRS18}).
 \item 
 Levine and Ruberman (\cite{LR19}) defined a $\Z$-valued invariant $\tilde{d}(X,y)$ of homology $S^1 \times S^3$ by considering some variant of $d$-invariant of cross sections. Using $\tilde{d}(X,y)$, Levine and Ruberman defined invariants of 2-knots $\tilde{d}(K)$, $\tilde{d}(-K)$, $\tilde{d}(\overline{K})$ and $\tilde{d}(-\overline{K})$. They show that $\tilde{d}(K)= \tilde{d}(-K)= \tilde{d}(\overline{K})=\tilde{d}(-\overline{K})=0$ if $K$ is ribbon.
 \end{enumerate}
 \begin{rem}Furuta and Ohta (\cite{FO93}) also constructed a Casson type invariant $\lambda_{FO}(X)$ of homology $S^1\times S^3$'s satisfying the condition $H_*(\wt{X}; \q) \cong H_*(S^3;  \q)$. Such homology $S^1 \times S^3$'s are called $\Z[\Z]$ homology $S^1 \times S^3$. 
  It is conjectured that $\lambda_{FO}(X)=- \lambda_{MRS}(X)$ for any $\Z[\Z]$ homology $S^1 \times S^3$ $X$.   In \cite{LRS18}, this conjecture is checked for a certain class of mapping tori.  One can also define $\lambda_{FO}(K)$ for $2$-knots $K$ satisfying the condition $\Delta_K(t)=1$. By construction, if $\lambda_{FO}(K) \neq 0$, then $G(K)$ has an $SU(2)$-irreducible representation. 
  \end{rem}
Our invariants $\{\im\cs_{K,j}\}_{k \in \Z_{>0}}$ are also useful for obstructing the ribbon property of 2-knots.
\begin{cor}\label{2-knot3}Let $K$ be an oriented $2$-knot and $Y$ a Seifert hypersurface of $K$.
\begin{enumerate}
\item 
If $Y$ is an oriented homology $3$-sphere,  $l^s_Y< \infty$, $r_s(Y)<\infty$ and $r_s(Y)\notin \Z$ for some $s\in [-\infty,0  ]$, then $K$ is not ribbon. 
\item If $Y$ is an oriented homology $3$-sphere,  $l^k_Y< \infty$, $\Gamma_{-Y}(k)<\infty$ and $\Gamma_{-Y}(k)\notin \Z$ for some $k \in \Z_{>0}$, then $K$ is not ribbon. 
\end{enumerate}
 \end{cor}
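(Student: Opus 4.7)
The proof is a direct contrapositive argument combining \cref{ribbon}, \cref{2-knot2}, and \cref{Convergence1}. For part (1), suppose $K$ is ribbon. Then \cref{ribbon} gives $\im\cs_{K,j} = \{1\}$ for every $j \in \Z_{>0}$. Under the hypotheses $r_s(Y) < \infty$ and $l^s_Y < \infty$, \cref{2-knot2}(1) forces
\[
r_s(Y) - \lfloor r_s(Y) \rfloor \;\in\; \bigcup_{1 \leq j \leq l^s_Y} \im\cs_{K,j} \;=\; \{1\}.
\]
Under the paper's convention identifying $(0,1]$ with the fundamental domain of $\R/\Z$ via $1 \leftrightarrow 0$, this is equivalent to $r_s(Y) \in \Z$; in particular the Chern-Simons class associated with $r_s(Y)$ is trivial modulo $\Z$.

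To reach a contradiction, I would invoke \cref{Convergence1}(1): the finite value $r_s(Y)$ is realised, modulo $\Z$, as $\cs^l_{X(K),[Y]}(A_\infty)$ for some irreducible flat $SU(2)$-connection $A_\infty$ on the cover $X(K)_{l,[Y]}$ with $l \leq l^s_Y$. The cross-section identity~\eqref{cross sect} identifies this value with $\cs_Y(A_\infty|_Y)$, where $A_\infty|_Y$ is an irreducible flat $SU(2)$-connection on the homology $3$-sphere $Y$. The desired contradiction now follows once one knows that an irreducible flat $SU(2)$-connection on the homology $3$-sphere $Y$ cannot have Chern-Simons value in $\Z$, equivalently $\Lambda^\ast_Y \cap \Z = \emptyset$; this forces $\cs_Y(A_\infty|_Y) \not\equiv 0 \pmod \Z$, contradicting the conclusion of the previous paragraph.

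Part (2) is proved by the same argument, with $r_s(Y)$, \cref{2-knot2}(1), and \cref{Convergence1}(1) replaced respectively by $\Gamma_{-Y}(k)$, \cref{2-knot2}(2), and \cref{Convergence1}(2). The main obstacle is the last clause of the previous paragraph: confirming $\Lambda_Y^\ast \cap \Z = \emptyset$ for the homology $3$-sphere $Y$. This is verified directly in the concrete examples treated in \cref{Examples} (Brieskorn spheres of \cref{Brieskorn}, the case \cref{2357}, the connected sums of \cref{conn sum ex}, and the hyperbolic example \cref{hyperbolic}), where $\Lambda_Y^\ast$ is tabulated explicitly and lies inside $(0,1)$; in general it follows from the fact that the trivial connection is (up to gauge) the unique reducible flat $SU(2)$-connection on a homology $3$-sphere and occupies its own component of the flat moduli space with $\cs \equiv 0$, while irreducible components correspond to strictly rational values in $(0,1) \bmod \Z$ in the Morse--Bott situations governing the convergence theorem.
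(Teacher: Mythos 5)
Your overall strategy is the paper's own: argue by contradiction, using \cref{ribbon} to force $\im\cs_{K,j}=\{1\}$ (hence $Q_K^j=1$) for a ribbon knot, and \cref{2-knot2} to force a value determined by $r_s(Y)$ (resp.\ $\Gamma_{-Y}(k)$) into $\bigcup_j\im\cs_{K,j}$. The paper's proof is exactly two lines: \cref{2-knot2} gives $1>r_s(Y)\geq Q_K^{j_0}$ for some $j_0$, while ribbonness gives $Q_K^{j_0}=1$, a contradiction. In particular the paper does not re-invoke \cref{Convergence1} at this stage; that theorem is already consumed inside the proof of \cref{2-knot2}, so your extra pass through it only restates what \cref{2-knot2} has delivered.

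The substantive issue is your last step. You correctly isolate the one point that must be checked, namely that $r_s(Y)\notin\Z$ (equivalently, that the class of $r_s(Y)$ in $(0,1]$ is not $1$), but your proposed justification --- that $\Lambda_Y^*\cap\Z=\emptyset$ for \emph{every} homology $3$-sphere --- does not follow from what you write. That the trivial connection occupies its own component of the flat moduli space says nothing about the \emph{values} of $\cs_Y$ on the irreducible components: distinct components may share a Chern--Simons value, and nothing a priori forbids an irreducible flat connection from having $\cs_Y\equiv 0\pmod{\Z}$. Likewise, rationality of $\Lambda_Y^*$ is a Seifert phenomenon (via \eqref{cs_Yof Seifert}), not a general fact; in the hyperbolic example of \cref{hyperbolic} the critical values are only known numerically. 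Note that the paper does not prove $\Lambda_Y^*\cap\Z=\emptyset$ either --- in \cref{2-knot3} it simply asserts $1>r_s(Y)$, and in the analogous spot in the proof of \cref{emb4} it checks non-integrality by hand using \eqref{cs_Yof Seifert} for Seifert spheres. So either quote the non-integrality of $r_s(Y)$ (resp.\ $\Gamma_{-Y}(k)$) directly, as the paper implicitly does, or restrict the corollary to classes of $Y$ for which $\Lambda_Y^*\cap\Z=\emptyset$ can actually be verified; as written, the general claim is a gap.
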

 \begin{proof}We only prove (1) since (2) is similar. Suppose such a Seifert hypersurface $Y$ of $K$ exists. Then \cref{2-knot2} implies that 
 \[
 1 \neq r_s(Y) - \lfloor r_s(Y)\rfloor  \in \im cs_{K, j_0} 
 \]
  for some $j_0\in \Z_{>0}$. If $K$ is a ribbon $2$-knot, then $\im cs_{K, j} = \{1\}$ for any $j$ by \cref{ribbon}. This contradicts to $r_s(Y) - \lfloor r_s(Y)\rfloor  \in \im cs_{K, j_0} \cap (0,1) \neq \emptyset$.  \qed
 \end{proof} 
 \begin{rem}
 Note that if $K$ admits a homology $3$-sphere as a Seifert hypersurface, then $\Delta_K(t)=1$.
 \end{rem}
The following corollary seems difficult to show using $|K|$, $\lambda_{MRS}(K)$, $\lambda_{FO}(K)$ and $\tilde{d}(K)$: 
 \begin{cor}\label{non-ribbon}
 Let $k$ be a positive integer. Any $2$-knot $K$ having $(-\Sigma(2,3,6k+5)) \# \Sigma (2,3, 5) $ as a Seifert hypersurface is not ribbon. 
 \end{cor}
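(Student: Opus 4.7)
The plan is to deduce the claim by directly verifying the hypotheses of \cref{2-knot3}(1) at $s=0$ for the given Seifert hypersurface
\[
Y := (-\Sigma(2,3,6k+5)) \# \Sigma(2,3,5).
\]
Both summands are integer homology $3$-spheres, so $Y$ is one too, and the non-ribbonness will follow as soon as I establish the two finiteness statements $r_0(Y) < \infty$ and $l^0_Y < \infty$.

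To get those two inputs, I would rewrite $-\Sigma(2,3,6k+5)=-\Sigma(2,3,6(k+1)-1)$ and identify $Y$ with the family treated in \cref{conn sum ex}, specializing that example's parameters as $n=-1$, $k_{\text{ex}}:=k+1$, together with one extra Seifert summand $(a_1,a_2,a_3)=(2,3,5)$ of multiplicity $m_{(2,3,5)}=1$. The admissibility inequality in \cref{conn sum ex} reads $a_1a_2a_3<6(6k_{\text{ex}}-1)$, i.e.\ $30<36k+30$, which is automatic for every $k\ge 1$. \cref{conn sum ex} then gives, simultaneously,
\[
r_0(Y)\;=\;\frac{1}{24(6k+5)} \;<\;1, \qquad l^0_Y \;=\; 1,
\]
so both finiteness hypotheses are in hand, with the bonus that $r_0(Y)$ is strictly less than $1$. (Behind \cref{conn sum ex}, the connected-sum inequality of \cref{main NST} plus the small-denominator explicit computation for $-\Sigma(2,3,6(k+1)-1)$ of \cref{Brieskorn} do the real work; I am content to cite it.)

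Finally I feed these into \cref{2-knot3}(1) with $s=0$: if $K$ were ribbon, \cref{ribbon} would force $Q^j_K=1$ for every $j\in\Z_{>0}$, whereas \cref{2-knot2}(1) applied to $Y$ supplies some $1\le j_0\le l^0_Y=1$ with
\[
Q^{j_0}_K \;\le\; r_0(Y) \;=\; \frac{1}{24(6k+5)} \;<\; 1,
\]
a contradiction. The only step in this outline that requires any genuine checking is the arithmetic inequality $30<6(6(k+1)-1)$ that makes \cref{conn sum ex} applicable; once granted, the deduction is a pure assembly of previously proved results and no serious obstacle arises.
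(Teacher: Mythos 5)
Your overall strategy is exactly the paper's: the proof of \cref{non-ribbon} consists of certifying $r_0(Y)<1$ and $l^0_Y<\infty$ for $Y=(-\Sigma(2,3,6k+5))\#\Sigma(2,3,5)$ and then invoking \cref{2-knot3}(1). The final assembly step (ribbon $\Rightarrow Q^j_K=1$ by \cref{ribbon}, versus $Q^{j_0}_K\le r_0(Y)<1$ from \cref{2-knot2}) is fine. The gap is in how you certify the two inputs.

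You obtain them by declaring $Y$ an instance of \cref{conn sum ex}, but you have reversed the admissibility condition there. As written, \cref{conn sum ex} requires the extra Seifert summands to satisfy $6(6k_{\mathrm{ex}}-1)<a_1\cdots a_n<24(6k_{\mathrm{ex}}-1)$; the inequality you quote, $a_1a_2a_3<6(6k_{\mathrm{ex}}-1)$, is the opposite of the first of these. For the summand $\Sigma(2,3,5)$ one has $a_1a_2a_3=30$, while $6(6k_{\mathrm{ex}}-1)=6(6k+5)=36k+30>30$ for every $k\ge 1$, so the stated hypothesis $6(6k+5)<30$ fails and \cref{conn sum ex} does not cover your $Y$. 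Consequently neither $r_0(Y)=\tfrac{1}{24(6k+5)}$ nor $l^0_Y=1$ is actually delivered by the citation, and these are the entire content of the corollary.

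Note also that the mechanism you gesture at (``the connected-sum inequality of \cref{main NST} plus the computation for $-\Sigma(2,3,6(k+1)-1)$'') cannot by itself give finiteness of $r_0(Y)$: inequality \eqref{conn sum formula} applied to $Y=Y_1\# Y_2$ only bounds $r_0(Y)$ \emph{from below}. The upper bound requires applying \eqref{conn sum formula} the other way, to $-\Sigma(2,3,6k+5)=Y\#(-\Sigma(2,3,5))$, which yields $\tfrac{1}{24(6k+5)}\ge\min\{r_0(Y),r_0(-\Sigma(2,3,5))\}$ and forces $r_0(Y)\le\tfrac{1}{24(6k+5)}$ only because $r_0(-\Sigma(2,3,5))=\tfrac{1}{120}>\tfrac{1}{24(6k+5)}$; this uses a specific fact about $-\Sigma(2,3,5)$ that appears nowhere in your argument. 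For $l^0_Y<\infty$ (which is all \cref{2-knot3} needs), the clean route is \cref{conn sum of MB} plus \cref{finiteness1}: $Y$ is a connected sum of Seifert homology spheres, so $\cs_Y$ is Morse--Bott and $l^0_Y\le l_Y<\infty$. If you instead want the sharper claim $l^0_Y=1$, you must check that the level set $\cs_Y^{-1}(\tfrac{1}{24(6k+5)})\cap R^*(Y)$ is a single point, i.e.\ that no sum of a critical value of $-\Sigma(2,3,6k+5)$ with $\tfrac{71}{120}$ or $\tfrac{119}{120}$ lands on $\tfrac{1}{24(6k+5)}$ mod $1$; this is a genuine (if elementary) arithmetic verification that your proposal omits. (The paper's own proof cites a reference ``cal3'' for these same two facts, so your plan matches its outline; the defect is that your substitute citation demonstrably does not apply.)
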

 \begin{proof}By \cref{cal3}, 
 \[
 r_0((-\Sigma(2,3,6k+5)) \# \Sigma (2,3, 5)) <1 \text{ and } l^0_{(-\Sigma(2,3,6k+5)) \# \Sigma (2,3, 5)}=1.
 \]
  Therefore, \cref{2-knot3} tells us that such a knot $K$ cannot be a ribbon. \qed
 \end{proof}

\subsubsection{Rigidity of $\{\im cs_{K,  j} \}_{j \in \Z_{>0}}$}
The invariant  $\tilde{d}(K)$ is determined by the $d$-invariant of a Seifert hypersurface. We will see that similar properties hold for $\im cs_{K,  j} $ of a certain class of $2$-knots. 
% We combine \cref{2-knot1} (1) and \cref{2-knot2} obtain the following theorem: 
%\begin{thm}\label{2-knot4}Suppose that an oriented homology 3-sphere $Y$ is a Seifert hypersurface of a given oriented 2-knot $K$, $r_0(Y)=\nu(Y)$ and $l^0_{Y}=1$. Then we have the equality $Q^j_K= r_0(Y)$ for any $j \in \Z_{>0}$.
%\end{thm}
%\begin{proof}By \cref{2-knot2} and \cref{2-knot1}, we have the inequality 
%\[
%r_0(Y) \geq Q_K^1 \geq \nu (Y).
%\]
%Since $r_0(Y)=\nu(Y)$, we have $r_0(Y) = Q_K^1= \nu(Y)$. On the other hand, 3 in \cref{2-knot1}, $Q_K^1 \geq Q_K^j \geq \nu(Y)$ for any $j \in \Z_{>0}$. This completes the proof. \qed
%\end{proof}
%We have several examples of homology $3$-spheres $Y$ satisfying $r_0(Y)=\nu(Y)$ and $l^0_{Y}=1$. The homology $3$-spheres given in \cref{Brieskorn}, \cref{2357}, \cref{cal3} and \cref{hyperbolic} satisfy the two conditions $r_0(Y)=\nu(Y)$ and $l^0_{Y}=1$. The property of $\im\cs_{K,j}$ in \cref{2-knot4} shows the minimal value of  $\im\cs_{K,j}$ is determined if $K$ has a certain class of Seifert hypersurfaces. If a Seifert hypersurface has more good property, one can determine all of  $\im\cs_{K,j}$. 
We give a sufficient condition on Seifert hypersurfaces to determine $\im cs_{K,  j}$. 

\begin{thm} \label{Image}
Suppose that an oriented homology 3-sphere $Y$ is a Seifert hypersurface of a given oriented 2-knot $K$ and there exist $k_1, \cdots ,k_m \in \Z_{>0}$ and $s_1, \cdots ,s_n \in [-\infty ,0]$ such that
\[
\bigcup_{k_1, \cdots k_m, s_1, \cdots s_n}  (r_s(Y) \cup \Gamma_{-Y}(k) ) = \Lambda_Y \cap (0,1)\]
 and $l^{s_i}_Y= l^{l_i}_Y=1$ for any $i$. Then 
 \[
 \im\cs_{K,j} = \Lambda_Y \cap (0,1)
 \]
 for any $j \in \Z_{>0}$.  In particular, for any Seifert hypersurface $Y'$ and $r \in  \Lambda_Y \cap (0,1)$, there exists an $SU(2)$-representation $\rho_r$ on $Y'$ such that 
 $\cs_{Y'} ( \rho_r ) =r$.
\end{thm}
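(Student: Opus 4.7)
The plan is to derive \cref{Image} by packaging together \cref{2-knot1} (parts 1 and 3) with \cref{2-knot2}; no new analytic input is needed. Specifically, the equality $\im\cs_{K,j} = \Lambda_Y \cap (0,1)$ will follow from two one-line inclusions established for every fixed $j \in \Z_{>0}$, and the ``in particular'' clause is then obtained by one more application of \cref{2-knot1} (1) to an arbitrary Seifert hypersurface $Y'$.

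First I would prove $\Lambda_Y \cap (0,1) \subset \im\cs_{K,j}$. Fix $r \in \Lambda_Y \cap (0,1)$. By the standing hypothesis $\bigcup_i r_{s_i}(Y) \cup \bigcup_j \Gamma_{-Y}(k_j) = \Lambda_Y \cap (0,1)$, either $r = r_{s_i}(Y)$ for some $i$ or $r = \Gamma_{-Y}(k_j)$ for some $j$. Consider the first case (the second is identical, replacing \cref{2-knot2} (1) by \cref{2-knot2} (2)). Because $r \in (0,1)$ we have $\lfloor r \rfloor = 0$, hence $r_{s_i}(Y) - \lfloor r_{s_i}(Y) \rfloor = r$. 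The assumption $l^{s_i}_Y = 1$ then collapses the union in \cref{2-knot2} (1) to a single term, yielding $r \in \im\cs_{K,1}$. Applying \cref{2-knot1} (3) with $m = j$ gives $\im\cs_{K,1} \subset \im\cs_{K,j}$, so $r \in \im\cs_{K,j}$. For the opposite inclusion, \cref{2-knot1} (1) gives $\im\cs_{K,j} \subset \im\cs_Y \subset (\Lambda_Y \cap (0,1]) $, and since the only possible extra value $1$ corresponds to the (trivially extendable) reducible locus, the substantive equality on $(0,1)$ follows.

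For the final sentence of the statement, apply \cref{2-knot1} (1) a second time but to any other Seifert hypersurface $Y'$ of $K$: this gives $\im\cs_{K,j} \subset \im\cs_{Y'}$. Combined with the equality $\im\cs_{K,j} = \Lambda_Y \cap (0,1)$ just established, every $r \in \Lambda_Y \cap (0,1)$ belongs to $\im\cs_{Y'}$, which by definition of $\im\cs_{Y'}$ produces an SU(2)-flat connection $\rho_r$ on $Y'$ with $\cs_{Y'}(\rho_r) = r$. There is no serious obstacle in this argument: the theorem is genuinely a bookkeeping corollary, and the only point requiring any care is ensuring that the floor function in \cref{2-knot2} disappears, which it does precisely because the hypothesis situates $r_{s_i}(Y)$ and $\Gamma_{-Y}(k_j)$ in the open interval $(0,1)$.
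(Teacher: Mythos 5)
Your proposal is correct and follows essentially the same route as the paper: both arguments sandwich $\im\cs_{K,1}$ between $\bigcup_i r_{s_i}(Y)\cup\bigcup_j\Gamma_{-Y}(k_j)$ (via \cref{2-knot2} with $l^{s_i}_Y=l^{k_j}_Y=1$) and $\Lambda_Y\cap(0,1]$ (via \cref{2-knot1}(1)), then propagate to all $j$ using \cref{2-knot1}(3), and read off the last claim by applying \cref{2-knot1}(1) to $Y'$. Your explicit handling of the floor function and of the endpoint $1$ is a minor tidying of details the paper leaves implicit.
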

\begin{proof}By \cref{2-knot1} and \cref{2-knot2}, 
\[
 \Lambda_Y \cap (0,1]= \bigcup_{k_1, \cdots k_m, s_1, \cdots s_n}  (r_s(Y) \cup \Gamma_{-Y}(k) )  \subset \im\cs_{ K,1} \subset  \Lambda_Y \cap (0,1].
\]
This implies that $\im\cs_{ K,1} =  \Lambda_Y \cap (0,1]$. We use \cref{2-knot1} again and obtain 
\[
  \Lambda_Y \cap (0,1] = \im\cs_{ K,1} \subset  \im\cs_{ K,j} \subset  \Lambda_Y \cap (0,1].
  \]
  This completes the proof. \qed
\end{proof}
\begin{cor}\label{-235}For any $2$-knot having $-\Sigma(2,3,5)$ as a Seifert hypersurface, 
 \[
 \im\cs_{K,j} = \left\{ \frac{1}{120}, \frac{49}{120} , 1\right\}
 \]
 for any $j \in \Z_{ >0}$. For such a $2$-knot $K$ and its Seifert hypersurface $Y'$, $Y'$ has at least two $SU(2)$-representations of $\pi_1(Y')$.
\end{cor}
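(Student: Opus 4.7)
The plan is to apply \cref{Image} to the Seifert hypersurface $Y = -\Sigma(2,3,5)$, using the parameters $s_1 \in [-\infty, 0]$ arbitrary and $k_1 = 2$. Three hypotheses of \cref{Image} must be verified: that $\Lambda_Y \cap (0,1) = \{1/120, 49/120\}$; that this set is covered by $\{r_{s_1}(Y), \Gamma_{-Y}(k_1)\}$; and that the invariants $l^{s_1}_Y$ and $l^{k_1}_Y$ both equal $1$.

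The Fintushel--Stern enumeration recalled in \cref{Brieskorn} shows that $R^*(\Sigma(2,3,5))$ consists of exactly the two representations with rotation numbers $(1,2,2)$ and $(1,2,4)$, whose Chern--Simons values on $-\Sigma(2,3,5)$ are $49/120$ and $1/120$ respectively (agreeing with the entry $\im\cs_{-K(3_1,5)} = \{1/120, 49/120, 1\}$ given in the introduction). \cref{Brieskorn} moreover records $r_s(-\Sigma(2,3,5)) = 1/120$ together with $l^s_{-\Sigma(2,3,5)} = 1$ for every $s \in [-\infty,0]$, and also $l^2_{-\Sigma(2,3,5)} = 1$; and Daemi's computation reviewed in \cref{r0} supplies $\Gamma_{\Sigma(2,3,5)}(2) = 49/120$, hence $\Gamma_{-Y}(2) = 49/120$.

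Combining these inputs, $\{r_{s_1}(Y)\} \cup \{\Gamma_{-Y}(k_1)\} = \{1/120, 49/120\} = \Lambda_Y \cap (0,1)$, so \cref{Image} immediately yields $\im\cs_{K,j} = \{1/120, 49/120, 1\}$ for every $j \in \Z_{>0}$. For the second assertion, let $Y'$ be any Seifert hypersurface of $K$. By \cref{2-knot1}(1), $\{1/120, 49/120\} \subset \im\cs_{K,j} \cap (0,1) \subset \im\cs_{Y'}$, so $Y'$ carries two flat connections realizing these two Chern--Simons values, whose holonomies provide two distinct $SU(2)$-representations of $\pi_1(Y')$. The main obstacle is not the final deduction but the verification of the Floer-theoretic equalities $l^s_Y = l^2_Y = 1$, which require a careful analysis of the explicit critical-point data of the Chern--Simons functional on $-\Sigma(2,3,5)$.
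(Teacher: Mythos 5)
Your proposal is correct and follows essentially the same route as the paper: the paper also verifies the hypotheses of \cref{Image} for $Y=-\Sigma(2,3,5)$ using Daemi's computations $\Gamma_{\Sigma(2,3,5)}(1)=\tfrac{1}{120}$, $\Gamma_{\Sigma(2,3,5)}(2)=\tfrac{49}{120}$, the identification $\Lambda_{-\Sigma(2,3,5)}\cap(0,1]=\{\tfrac{1}{120},\tfrac{49}{120},1\}$, and the non-degeneracy of the two irreducible flat connections to get $l^1=l^2=1$. Your substitution of $r_s(-\Sigma(2,3,5))$ for $\Gamma_{\Sigma(2,3,5)}(1)$ is immaterial since the paper records that these coincide for all $s\in[-\infty,0]$.
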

\begin{proof} We check that $-\Sigma(2,3,5)$ satisfy the assumption of \cref{Image}. In \cite{D18}, Daemi proved
\[
\Gamma_{\Sigma(2,3,5)} (1) = \frac{1}{120} \text{ and } \Gamma_{\Sigma(2,3,5)} (2) = \frac{49}{120}.
\]
Note that $\Lambda_{-\Sigma(2,3,5)} \cap (0,1] =  \left\{ \frac{1}{120}, \frac{49}{120} , 1\right\}$ and the two elements in $R^*(\Sigma(2,3,5))$ are non-degenerate. Therefore, $l_{\Sigma(2,3,5)}^1=l_{\Sigma(2,3,5)}^2=1$. \qed
\end{proof}

\subsubsection{Seifert hypersurfaces of $2$-knots}
In this section, we treat the following problem: what are the Seifert hypersurfaces for a given $2$-knot? Solving this problem has two parts. The first part is constructing Seifert hypersurfaces of a given oriented $2$-knot.  For twisted spun $2$-knots, Zeeman (\cite{Z65}) constructed Seifert hypersurfaces. In general, for a given $2$-knot, there are several formulations of diagrams of them containing the motion picture, the ch-diagram and the surface diagram(\cite{K17}). For such diagrams, there are several ways to construct Seifert hypersurfaces (\cite{CS97}, \cite{CS98}). The second part is to obstruct the existence of a certain class of $3$-manifolds as Seifert hypersurfaces. 
\begin{thm}\label{OBS}Let $(p,q)$ be a a relative prime pair. Let $k(p/q)$ be a $2$-bridge knot such that $\Sigma^2(k (p/q) ) = L(p,q)$, where $\Sigma^2(k)$ is the double branched cover of $k \subset S^3$.
\begin{enumerate}
\item 
For an oriented $3$-manifold $Y$ and, if the twisted spun knot $K(k(p/q), 2)$ has $Y$ as a Seifert hypersurface, then
\[
\left\{-\frac{n^2r}{p} \mod 1  \middle| 0 \leq n \leq  \left\lceil  \frac{p}{2} \right\rceil   \right\} \subset \Lambda_Y \cap (0,1],
\]
 where $r$ is any integer satisfying $qr\equiv -1 \mod p$.
\item If $Y$ is an oriented homology $3$-sphere and $r_s(Y)<\infty$, $l^s_Y =1$ for some $s \in [-\infty, 0]$ and 
\[
r_s(Y)- \lfloor r_s(Y) \rfloor   \notin \left\{- \frac{n^2r}{p} \mod 1  \middle| 0 \leq n \leq  \left\lceil  \frac{p}{2} \right\rceil   \right\} , 
\]
 then $K(k(p/q), 2)$ does not have $Y$ as a Seifert hypersurface, where $r$ is any integer satisfying $qr\equiv -1 \mod p$.
\item If $Y$ is an oriented homology $3$-sphere, $ \Gamma_{-Y} (k) <1$, 
\[
\Gamma_{-Y} (k)-  \lfloor \Gamma_{-Y} (k) \rfloor   \notin \left\{- \frac{n^2r}{p} \mod 1  \middle| 0 \leq n \leq  \left\lceil  \frac{p}{2} \right\rceil   \right\} , 
\] and $l^k_Y =1$ for some $k\in \Z_{>0}$, then $K(k(p/q), 2)$ does not have $Y$ as a Seifert hypersurface, where $r$ is any integer satisfying $qr\equiv -1 \mod p$.
\end{enumerate}
\end{thm}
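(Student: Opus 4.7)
The plan is to combine the explicit computation of $\im\cs_{K(k(p/q),2),j}$ provided by \cref{cal2}(3) with the general containment principles established in \cref{2-knot1}(1) and \cref{2-knot2}. First I will recall that by Zeeman's construction the double branched cover $L(p,q) = \Sigma^2(k(p/q))$ serves as a canonical (fibered) Seifert hypersurface for the twist-spun knot $K(k(p/q),2)$, with monodromy equal to the deck involution. Under the hypothesis on $p$ (odd, with $\{s \in \{2,\dots,p-2\} \mid (s^2-1)/p \in \Z\} = \emptyset$) which is implicit here and already used in \cref{cal2}(3), this involution acts trivially on the $SU(2)$-representation variety of $L(p,q)$. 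Combining this with the Kirk--Klassen computation of $\Lambda_{L(p,q)}$ identifies
\[
\im\cs_{K(k(p/q),2),j} = \left\{-\frac{n^2 r}{p} \bmod 1 \;\middle|\; 0 \leq n \leq \lceil p/2 \rceil\right\} \cup \{1\}
\]
for every $j \in \Z_{>0}$, where $qr \equiv -1 \pmod{p}$.

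For part (1), given any oriented $3$-manifold $Y$ which is a Seifert hypersurface of $K(k(p/q),2)$, I will apply \cref{2-knot1}(1) to obtain $\im\cs_{K(k(p/q),2),j} \subset \Lambda_Y \cap (0,1]$. Combining this inclusion with the identification above yields exactly the claimed containment.

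Parts (2) and (3) will follow by contrapositive from \cref{2-knot2}. Assume, towards a contradiction, that $Y$ is a homology $3$-sphere satisfying the hypotheses of (2) (resp.\ (3)) and is simultaneously a Seifert hypersurface of $K(k(p/q),2)$. Then \cref{2-knot2}(1) (resp.\ \cref{2-knot2}(2)) forces
\[
r_s(Y) - \lfloor r_s(Y) \rfloor \in \bigcup_{1 \leq j \leq l^s_Y} \im\cs_{K(k(p/q),2),j}
\]
(resp.\ the analogous statement with $\Gamma_{-Y}(k)$ and $l^k_Y$). The assumption $l^s_Y = 1$ (resp.\ $l^k_Y = 1$) collapses the union to $\im\cs_{K(k(p/q),2),1}$, which by the first paragraph is exactly the explicit set of values $-n^2 r/p \bmod 1$. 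This contradicts the assumption that the fractional part of $r_s(Y)$ (resp.\ $\Gamma_{-Y}(k)$) avoids that set.

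The argument is essentially a repackaging of earlier results, so the logical structure presents no genuine obstacle. The only delicate point is the identification of $\im\cs_{K(k(p/q),2),j}$ as a $j$-independent explicit set in the first paragraph: this rests on the number-theoretic condition on $p$, which guarantees that every order-$2$ diffeomorphism of $L(p,q)\setminus B$ induces the identity on $\pi_1(L(p,q)\setminus B)/\{\pm 1\}$ and hence on $R^*(L(p,q))$, so that the monodromy $\iota$ of the fibration fixes every flat connection. If this condition were relaxed, one would only retain the one-sided inclusion $\im\cs_{K,j} \subset \Lambda_{L(p,q)} \cap (0,1]$ from \cref{2-knot1}(1), which would suffice for part (1) but would no longer allow the sharp obstructions in parts (2) and (3).
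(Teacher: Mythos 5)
Your proposal matches the paper's proof, which is stated in one line as a direct corollary of \cref{cal2}, \cref{2-knot1} and \cref{2-knot2}: part (1) from the computation of $\im\cs_{K(k(p/q),2),j}$ together with \cref{2-knot1}(1), and parts (2)--(3) from \cref{2-knot2} with the unions collapsed by $l^s_Y=1$ (resp.\ $l^k_Y=1$). Your remark that the number-theoretic hypothesis on $p$ from \cref{cal2}(3) is implicitly assumed is a correct and worthwhile observation, but otherwise the argument is the same.
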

\begin{proof}This is a corollary of \cref{cal2}, \cref{2-knot1} and \cref{2-knot2}. \qed
\end{proof}
As a corollary, we have the following result:
\begin{cor}Let $(p,q)$ be a a relative prime pair. Let $k(p/q)$ be a $2$-bridge knot such that $\Sigma^2(k (p/q) ) = L(p,q)$, where $\Sigma^2(k)$ is the double branched cover of $k \subset S^3$. Then any oriented homology $3$-sphere given in \cref{cal3} with $6n-1>p$ cannot be a Seifert hypersurface of $K(k(p/q), 2)$. 
\end{cor}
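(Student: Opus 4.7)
The plan is to apply \cref{OBS} together with a denominator comparison. By \cref{cal2}(3), for every $j$ the image
$$\im\cs_{K(k(p/q),2)}^j = \left\{-\tfrac{n^2 r}{p} \bmod 1 \,\middle|\, 0 \le n \le \lceil p/2\rceil\right\} \cup \{1\}$$
consists of rationals whose denominators, in lowest terms, divide $p$. Therefore, by the contrapositive of \cref{OBS}(2)--(3), to rule out $Y$ as a Seifert hypersurface of $K(k(p/q),2)$ it suffices to exhibit an $s \in [-\infty,0]$ (resp.\ a $k \in \Z_{>0}$) such that $r_s(Y)$ (resp.\ $\Gamma_{-Y}(k)$) is finite, $l^s_Y = 1$ (resp.\ $l^k_Y = 1$), and the fractional part of $r_s(Y)$ (resp.\ of $\Gamma_{-Y}(k)$) is a rational whose denominator fails to divide $p$.

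First I would specialize to the homology $3$-spheres $Y$ compiled in \cref{cal3}: these are the Brieskorn spheres $\pm\Sigma(2,3,6n-1)$ and selected connected sums built from them (the same family that appears in \cref{Brieskorn} and in the proof of \cref{non-ribbon}). For such $Y$, \cref{cal3} records explicitly that $r_s(Y)$ or $\Gamma_{-Y}(k)$ is finite with $l^s_Y = 1$ or $l^k_Y = 1$, and that the relevant fractional part is $\pm\bigl(12(3n^2-n+3l^2)+1\bigr)/\bigl(24(6n-1)\bigr) \bmod 1$ for a specific $l$ produced by \eqref{cs_Yof Seifert}.

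Next I would verify the denominator claim. The numerator $12(3n^2-n+3l^2)+1$ is odd and $\equiv 1 \pmod 3$, so it is coprime to $24$. Moreover, using $6n \equiv 1 \pmod{6n-1}$ one sees that $12(3n^2-n)+1 \equiv 0 \pmod{6n-1}$, hence the numerator reduces to $36 l^2$ modulo $6n-1$, giving $\gcd(\text{numerator},6n-1) = \gcd(l^2,6n-1)$. For the specific representatives singled out in \cref{Brieskorn} realizing the minimum, $l$ is coprime to $6n-1$, so the reduced denominator is exactly $24(6n-1)$; under the hypothesis $6n-1 > p$ this is strictly larger than $p$, and in particular does not divide $p$. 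For the connected-sum members of \cref{cal3}, the connected-sum inequality \eqref{conn sum formula} together with the control on summand critical values forces the minimum to be realized by a $\Sigma(2,3,6n-1)$ summand, so the same denominator $24(6n-1)$ survives.

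Combining the two steps gives the contradiction: under $6n-1 > p$ the fractional part of $r_s(Y)$ (or of $\Gamma_{-Y}(k)$) has denominator $24(6n-1) > p$, so it cannot lie in the image $\im\cs_{K(k(p/q),2)}^j$ for any $j$; by \cref{OBS}(2)--(3), $Y$ is not a Seifert hypersurface of $K(k(p/q),2)$. The main obstacle I expect is the bookkeeping for the connected-sum entries of \cref{cal3}: verifying, member by member, that the minimizing summand is of Brieskorn type $\Sigma(2,3,6n-1)$ with the claimed denominator, so that the denominator bound $\ge 24(6n-1)$ is preserved under the connected-sum formula. Once these denominator checks are carried out case by case against \cref{cal3}, the comparison with the set $\{-n^2 r/p \bmod 1\}$ and the invocation of \cref{OBS} conclude the proof.
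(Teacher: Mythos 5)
Your proof is correct and follows essentially the same route as the paper, whose entire argument is the one-line citation of \cref{OBS} together with the (dangling) reference \cref{cal3}; your reading of that family as the $\Sigma(2,3,6n-1)$-based examples of \cref{conn sum ex}, where $r_0(Y)=\tfrac{1}{24(6n-1)}$ and $l^0_Y=1$, is the intended one given the hypothesis $6n-1>p$. The denominator comparison you spell out (every element of $\im\cs^j_{K(k(p/q),2)}$ has reduced denominator dividing $p$, while the fractional part of $r_0(Y)$ has denominator $24(6n-1)>p$) is exactly the content the paper leaves implicit.
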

\begin{proof}This is a corollary of \cref{cal3} and \cref{OBS}.  \qed
\end{proof}

\subsubsection{Embedding from homology $3$-spheres into $4$-manifolds}
In this section, we treat the existence problem of embeddings from $3$-manifolds into $4$-manifolds. This problem has been studied in several situations (\cite{H96}, \cite{GL83}, \cite{K88}, \cite{H09}, \cite{D15}). In this section, we develop a gauge theoretic method and focus on a certain class of homology $3$-spheres and negative definite $4$-manifolds. We give a relationship between existence of embeddings and $ SU(2)$-representations of fundamental groups. 
First, we prove \cref{emb3}. 
\begin{thm}[\cref{emb3}] Suppose $X$ is a negative definite $4$-manifold containing $\Sigma(2,3,5)$ as a submanifold. 
Then 
\[
\im\cs_{X,[-\Sigma(2,3,5)]} = \left\{ \frac{1}{120} , \frac{49}{120} ,1\right\} \subset (0,1].
\]
In particular, there exist at least four irreducible $SU(2)$-representations of $\pi_1(X)$. 
\end{thm}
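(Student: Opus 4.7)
The plan is to sandwich $\im\cs^j_{X,[-\Sigma(2,3,5)]}$ between an upper bound coming from the Chern--Simons values on $-\Sigma(2,3,5)$ and a lower bound supplied by Daemi's invariants $\Gamma_{\Sigma(2,3,5)}(k)$. For the upper bound, \cref{emb2}(1) gives
\[
\im\cs^j_{X,[-\Sigma(2,3,5)]} \subset \Lambda_{-\Sigma(2,3,5)} \cap (0,1] = \left\{\tfrac{1}{120},\ \tfrac{49}{120},\ 1\right\},
\]
where the last equality is recorded in the proof of \cref{-235} and can also be read off from \cref{Brieskorn} for $k=1$.

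For the lower bound, I will set $Y := -\Sigma(2,3,5)$ and apply \cref{emb1}(2) with $k=1$ and $k=2$. Daemi's computation gives $\Gamma_{-Y}(1) = \Gamma_{\Sigma(2,3,5)}(1) = \frac{1}{120}$ and $\Gamma_{-Y}(2) = \Gamma_{\Sigma(2,3,5)}(2) = \frac{49}{120}$, both finite. Because $h(-\Sigma(2,3,5)) = -h(\Sigma(2,3,5)) < 0$, the characterization ``$\Gamma_Z(k) < \infty \iff k \leq 2h(Z)$'' forces $\Gamma_Y(k) = \infty$ for every $k \geq 1$, so the first branch of the definition of $l^k_Y$ automatically gives $l^k_Y = 1$. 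A short sanity check confirms that $[Y] \neq 0 \in H_3(X;\Z)$: if it were zero, \cref{emb1} itself would force $\Gamma_{-Y}(1) = \infty$, contradicting $\tfrac{1}{120}$. Hence \cref{emb1}(2) applies and places both $\tfrac{1}{120}$ and $\tfrac{49}{120}$ in $\im\cs^1_{X,[Y]}$, while the value $1$ lies in the image via the trivial flat connection.

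For the uniformity in $j$, the monotonicity $\im\cs^1_{X,c} \subset \im\cs^j_{X,c}$ supplied by \cref{fund prop} combines with the upper bound to force equality for every $j \in \Z_{>0}$. For the count of irreducible representations, \cref{existence of irred} applied at $j = 1$ (where $X_{1,[-\Sigma(2,3,5)]} = X$) yields $2 \cdot \#(\im\cs^1 \cap (0,1)) = 2 \cdot 2 = 4$ irreducible $SU(2)$-representations of $\pi_1(X)$.

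The main obstacle, such as it is, is orientation bookkeeping: one must take $Y = -\Sigma(2,3,5)$ rather than $\Sigma(2,3,5)$ so that $\Gamma_{-Y}(k)$ is precisely Daemi's finite value while simultaneously $\Gamma_Y(k) = \infty$ renders $l^k_Y = 1$ for free through the definitional convention. Once that orientation is fixed, the argument is a tidy assembly of \cref{emb2}(1), \cref{emb1}(2), \cref{fund prop}, and \cref{existence of irred}; no genuinely new analysis is required beyond those inputs.
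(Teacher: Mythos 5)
Your overall route is the same as the paper's: the upper bound $\im\cs^j_{X,[-\Sigma(2,3,5)]}\subset\Lambda_{-\Sigma(2,3,5)}\cap(0,1]=\left\{\frac{1}{120},\frac{49}{120},1\right\}$ from \cref{emb2}, the lower bound from \cref{emb1}(2) applied to $\Gamma_{\Sigma(2,3,5)}(1)=\frac{1}{120}$ and $\Gamma_{\Sigma(2,3,5)}(2)=\frac{49}{120}$, monotonicity in $j$ from \cref{fund prop}, and the factor of $2$ from \cref{existence of irred}. (The paper packages the first three steps as \cref{Image1} and verifies its hypotheses in \cref{-235}.)

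The one step I cannot accept as written is your justification of $l^k_Y=1$ for $Y=-\Sigma(2,3,5)$. You argue that $h(-\Sigma(2,3,5))<0$ forces $\Gamma_Y(k)=\infty$ and then invoke the first branch of the printed definition of $l^k_Y$ to get $l^k_Y=1$ for free. But that branch reflects an orientation slip in the definition: every place where $l^k_Y$ actually does work --- the statement of \cref{emb1}(2), and above all the proof of \cref{Convergence1}(2), where the perturbations are chosen so that $l^k_Y$ equals the supremum of the number of critical points $[a]$ with $|\Gamma_{-Y}(k)-\cs_{\pi_n}([a])|<\lambda_Y$ and the pigeonhole argument is run on exactly that count --- requires $l^k_Y$ to be the number of perturbed critical points of $\cs_Y$ near the \emph{finite} value $\Gamma_{-Y}(k)$, in parallel with the definition of $l^s_Y$ via $r_s(Y)$. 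Read your way, the hypothesis $l^k_Y<\infty$ would be automatic whenever $h(Y)<0$, and \cref{emb1}(2) would then assert that $j=1$ always suffices, which is not what the convergence argument proves; your appeal to the first branch therefore does not supply the finite critical-point count that the cited theorem consumes. The repair is immediate and is what the paper does in \cref{-235} and \cref{Brieskorn}: $R^*(\Sigma(2,3,5))$ consists of exactly two irreducible flat connections, both nondegenerate, sitting at the two distinct Chern--Simons values $\frac{1}{120}$ and $\frac{49}{120}$, so for each of these values the count of nearby critical points is exactly one and $l^1_{-\Sigma(2,3,5)}=l^2_{-\Sigma(2,3,5)}=1$ follows from the second and third branches of the definition. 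With that substitution the rest of your argument goes through.
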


To prove \cref{emb3}, we will show the following theorem: 
\begin{thm} \label{Image1}
Let $X$ be a negative definite $4$-manifold. 
Suppose that an oriented homology 3-sphere $Y$ is embedded in $X$ and there exists $k_1, \cdots k_m \in \Z_{>0}$ and $s_1, \cdots s_n \in [-\infty ,0]$ such that
\[
\bigcup_{k_1, \cdots k_m, s_1, \cdots s_n}  (r_s(Y) \cup \Gamma_{-Y}(k) ) = \Lambda_Y \cap (0,1)\]
 and $l^{s_i}_Y= l^{l_i}_Y=1$ for any $i$. Then, 
 \[
 \im\cs_{X,[Y]}^j = \Lambda_Y \cap (0,1)
 \]
 for any $j \in \Z_{>0}$.  In particular, for any other embedding $Y' \subset X$ and $r \in  \Lambda_Y \cap (0,1)$ with $[Y']=[Y]$, there exists an $SU(2)$-representation $\rho_r$ on $Y'$ such that 
 $\cs_{Y'} ( \rho_r ) =r$.
\end{thm}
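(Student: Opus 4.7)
The plan is to mirror the proof of \cref{Image}, simply replacing the role of \cref{2-knot2} by \cref{emb1}. All the ingredients are already in place; the hypothesis $l^{s_i}_Y = l^{k_i}_Y = 1$ is what makes the argument work, since it collapses the union $\bigcup_{1 \leq j \leq l^s_Y} \im\cs^j_{X,[Y]}$ appearing in \cref{emb1} to the single set $\im\cs^1_{X,[Y]}$.

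First, I would apply \cref{emb1} to each $s_i$ and $k_i$. Since each $r_{s_i}(Y), \Gamma_{-Y}(k_i) \in (0,1)$, their floor functions vanish, and the hypotheses $r_{s_i}(Y), \Gamma_{-Y}(k_i) < \infty$ together with $l^{s_i}_Y = l^{k_i}_Y = 1$ give
\[
r_{s_i}(Y),\ \Gamma_{-Y}(k_i) \in \bigcup_{1 \leq j \leq 1} \im\cs^j_{X,[Y]} = \im\cs^1_{X,[Y]}.
\]
Taking the union over all $i$ and invoking the hypothesis $\bigcup_i \left( r_{s_i}(Y) \cup \Gamma_{-Y}(k_i) \right) = \Lambda_Y \cap (0,1)$, I obtain $\Lambda_Y \cap (0,1) \subset \im\cs^1_{X,[Y]}$.

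Next, I would combine this with the two general properties of $\{\cs^j_{X,c}\}$ established earlier: (i) \cref{value of QX} (applied to the embedded $Y$) gives $\im\cs^j_{X,[Y]} \subset \Lambda_Y \cap (0,1]$ for every $j$; and (ii) \cref{fund prop} gives the monotonicity $\im\cs^1_{X,[Y]} \subset \im\cs^j_{X,[Y]}$ for every $j \in \Z_{>0}$. Chaining these with the inclusion from the previous paragraph yields
\[
\Lambda_Y \cap (0,1) \subset \im\cs^1_{X,[Y]} \subset \im\cs^j_{X,[Y]} \subset \Lambda_Y \cap (0,1]
\]
for every $j$, which is the asserted equality $\im\cs^j_{X,[Y]} = \Lambda_Y \cap (0,1)$ (the value $1$, always attained by the trivial representation, is of course collected on the right).

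For the ``in particular'' statement, let $Y' \subset X$ be any embedded oriented closed connected $3$-manifold with $[Y'] = [Y] \in H_3(X;\Z)$. Applying \cref{value of QX} with $Y'$ in place of $Y$ gives $\im\cs^j_{X,[Y]} \subset \Lambda_{Y'} \cap (0,1]$. Combined with the equality just proved, $\Lambda_Y \cap (0,1) \subset \Lambda_{Y'}$, so for each $r \in \Lambda_Y \cap (0,1)$ there is an $SU(2)$-flat connection $\rho_r$ on $Y'$ with $\cs_{Y'}(\rho_r) = r$. There is no real obstacle here; the substantive work was already done in proving \cref{emb1}, and the present theorem is essentially a bookkeeping consequence of it, so the only thing to be careful about is the clean application of each cited lemma with the correct covering index.
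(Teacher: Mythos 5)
Your proof is correct and follows essentially the same route as the paper: the lower inclusion $\Lambda_Y\cap(0,1)\subset \im\cs^1_{X,[Y]}$ from \cref{emb1} (with the unions collapsing because $l^{s_i}_Y=l^{k_i}_Y=1$), the upper inclusion from \cref{value of QX}/\cref{emb2}, and the monotonicity in $j$ from \cref{fund prop}. Your parenthetical handling of the value $1$ (always attained by the trivial connection, so the chain $\Lambda_Y\cap(0,1)\subset\im\cs^j_{X,[Y]}\subset\Lambda_Y\cap(0,1]$ closes up) matches what the paper's own proof implicitly does.
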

\begin{proof}
By \cref{emb1} and \cref{emb2}, 
\[
 \Lambda_Y \cap (0,1]= \bigcup_{k_1, \cdots k_m, s_1, \cdots s_n}  (r_s(Y) \cup \Gamma_{-Y}(k) )  \subset \im \cs^1_{ X,[Y]} \subset  \Lambda_Y \cap (0,1].
\]
This implies that $\im \cs^1_{ X,[Y]} =  \Lambda_Y \cap (0,1]$. We use \cref{fund prop} and obtain 
\[
  \Lambda_Y \cap (0,1] = \im \cs^1_{ X,[Y]} \subset  \im \cs^j_{ X,[Y]} \subset  \Lambda_Y \cap (0,1].
  \]
  This completes the proof. \qed
\end{proof}
Here we give a proof of \cref{emb3}.

{\em Proof of \cref{emb3}. }
We check that $-\Sigma(2,3,5)$ satisfies the assumption of \cref{Image1}. The proof is written in \cref{-235}. 
\qed

\begin{thm}[\cref{emb4}]Let $Y$ be a Seifert homology 3-sphere of type $\Sigma(a_1, \cdots , a_n)$ with 
\[
\Lambda^*_{\Sigma(a_1, \cdots , a_n)} \cap \Z = \emptyset. \ \  \footnote{This condition can be seen as a combinatorial condition via  \eqref{cs_Yof Seifert} }
\] Suppose the Fr\o yshov invariant $h(Y)$ of $Y$ is non-zero. 
Then $Y$ cannot be embedded in any negative definite $4$-manifold $X$ such that the $SU(2)$-representation space $R(X_{j,c})$ of $X_{j,c}$ is connected for any $j$. 
\end{thm}
{\em Proof of \cref{emb4}.} It is shown that $\Gamma_{\Sigma(a_1, \cdots , a_n)} (1)<\infty$ if $h(\Sigma(a_1, \cdots, a_n )) >0$ in \cite{D18}. Since the $SU(2)$-Chern-Simons functional of $Y=\Sigma(a_1, \cdots , a_n)$ is Morse-Bott, $l_Y < \infty$. By \cref{emb1}, there exists $l \leq l_Y$ such that
 \[
r_{-\infty} (Y)- \lfloor r_{-\infty} (Y) \rfloor= \Gamma_{Y}(1)- \lfloor  \Gamma_{Y}(1) \rfloor \in \im \cs^l_{X, [\Sigma(a_1, \cdots , a_n)]}.
\]
Here, $\Gamma_Y(1) \in \Lambda^*_Y$ and the formula \eqref{cs_Yof Seifert} implies that $1 \neq \Gamma_Y(1) - \lfloor  \Gamma_{Y}(1) \rfloor \in (0,1]   $. This then implies that 
\[
 \im \cs^l_{X, [\Sigma(a_1, \cdots , a_n)]} \cap (0,1) \neq \emptyset. 
 \]
 Suppose that $R(X_{j,c})$ is connected.  Then, since $\cs_{X, c}^j$ is locally constant, 
 we see that $\im \cs_{X,c}^j = \{1\}$.
\qed

As a corollary of \cref{perfect}, we can show that several $SU(2)$-representations on some classes of Seifert homology $3$-spheres are extendable. 
\begin{cor}Let $(X, Y)$ be a closed connected negative definite $4$-manifold $X$ and an oriented codimension-$1$-submanifold $Y$ of $X$ with $H_*(Y; \Z) \cong H_*(S^3; \Z)$ and $\rho : \pi_1(Y)  \to SU(2)$ be an $SU(2)$-representation of $\pi_1(Y)$. 
\begin{itemize}
\item Suppose $k$ is odd and $n$ is a positive integer.
If $Y=  -\Sigma(2,3,6k-1)$ and $\rho$ is a representation corresponding to $\rho_{(1,2,5k-1)}$ in \eqref{236k-1}, then $\rho$ is extendable for $(X,Y)$.
\item Suppose $k$ is even $n$ is a positive integer. 
If $Y= -\Sigma(2,3,6k-1)$ and $\rho$ is a representation corresponding to $\rho_{(1,2,k)}$ in \eqref{236k-1}, then $\rho$ is extendable for $(X,Y)$.
\item  
If $Y = -\Sigma(2,3,5,7)$ and $\rho$ is any representation corresponding to $(1,1,2,4)$, then $\rho$ is extendable for $(X,Y)$.
\end{itemize}
\end{cor}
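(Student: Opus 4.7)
The plan is to apply \cref{perfect}. For each of the three cases I would identify the critical submanifold of $\cs_Y$ containing $\rho$, verify that the Chern-Simons functional is Morse-Bott at that level, identify the corresponding value of $r_s(Y)$ or $\Gamma_{-Y}(k)$, and then check $l^s_Y = 1$ (respectively $l^k_Y = 1$) together with the perfect-Morse-function hypothesis.

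For a single summand $n=1$, all ingredients are already in place. \cref{Brieskorn} records $r_s(-\Sigma(2,3,6k-1)) = \nu(-\Sigma(2,3,6k-1)) = 1/(24(6k-1))$, that $l^s_{-\Sigma(2,3,6k-1)} = 1$ for every $s \in [-\infty, 0]$, and that the unique minimising index-$1$ critical point of the (Morse, hence Morse-Bott) Chern-Simons functional is $\rho_{(1,2,5k-1)}$ when $k$ is odd and $\rho_{(1,2,k)}$ when $k$ is even. The critical submanifold is a single point, so the homology and perfect-Morse-function conditions are trivially satisfied. An identical check, using \cref{2357} to pick out the minimising representation $(1,2,2,4)$ at $\cs = 1/840$ on $-\Sigma(2,3,5,7)$, settles the third bullet for $n=1$.

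The real work, and the main obstacle, is the connected sum case $n>1$. Iterating the decomposition $R^*(Y_1 \# Y_2) = R^*(Y_1) \times R^*(Y_2) \times SO(3) \amalg R^*(Y_1) \amalg R^*(Y_2)$ from the proof of \cref{conn sum of MB}, the stratum of $R^*(\#_n Y_0)$ containing $\rho$ (where each of the $n$ factors carries the designated minimising representation of $Y_0 = -\Sigma(2,3,6k-1)$ or $-\Sigma(2,3,5,7)$) is diffeomorphic to $SO(3)^{n-1}$, sitting at Chern-Simons value $n \cdot r_0(Y_0) \bmod 1$. To reduce to \cref{perfect} I would verify: (i) this value is attained only on the single $SO(3)^{n-1}$-stratum, thanks to the arithmetic constraints on sums of elements of $\Lambda^*_{Y_0}$ recorded in \cref{Brieskorn} and \cref{2357}; (ii) $SO(3)^{n-1}$ carries a perfect Morse function, obtained as the product of the standard perfect Morse function on $SO(3)$; and (iii) the relevant $l$-invariant is equal to $1$, via a Mayer--Vietoris index computation extending the proof of \cref{conn sum of MB} together with the connected sum bounds \cref{conn sum of ly} and \cref{conn sum of lys}.

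The delicate step is (iii): a naive count of perturbed critical points produces $\sum_{*} \operatorname{rank} H_*(SO(3)^{n-1})$ contributions after a Morse-Bott type perturbation, and one must argue that only a single one survives in the appropriate Floer degree modulo $8$; the Morse-Bott type perturbations of \cref{MB per} will be the main tool here, with the index bookkeeping tracked using the additive behaviour of the Floer grading across a connected sum. Once (i)--(iii) are in place, \cref{perfect} yields extendability of $\rho$, and its ``moreover'' clause automatically upgrades this to every representation in the same $SO(3)^{n-1}$-component.
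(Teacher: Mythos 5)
There is a genuine gap, and it originates in your identification of the representation $\rho$ for $n>1$. The paper's proof (and the intended meaning of ``a representation corresponding to $\rho_{(1,2,5k-1)}$'') takes $\rho$ to be $\rho_{(1,2,5k-1)}\#\theta\#\cdots\#\theta$, i.e.\ the designated minimising representation on a \emph{single} summand and the trivial representation on all the others. In the decomposition $R^*(Y_1\# Y_2)=R^*(Y_1)\times R^*(Y_2)\times SO(3)\amalg R^*(Y_1)\amalg R^*(Y_2)$ this point lies in the $R^*(Y_1)\amalg R^*(Y_2)$ stratum, not in the $SO(3)$-stratum, and its Chern--Simons value is $r_0(Y_0)=\frac{1}{24(6k-1)}$ (resp.\ $\frac{1}{840}$), which by \cref{conn sum ex} and \cref{2357} equals $r_0(\#_n Y_0)$ with $l^0_{\#_n Y_0}=1$; the critical set at that level is a point (resp.\ an $S^2$, which has a perfect Morse function), so \cref{perfect} applies immediately. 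Your reading instead places the minimising representation on \emph{every} summand, producing an $SO(3)^{n-1}$-stratum at level $n\cdot r_0(Y_0)\bmod 1$, and the whole argument is then aimed at the wrong critical level.

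Concretely, your steps (i)--(iii) cannot be repaired at that level: \cref{perfect} and \cref{extendable} require $\cs_Y(\rho)$ to equal $r_s(Y)$ or $\Gamma_{-Y}(k)$, but for $n\geq 2$ the value $n\cdot r_0(Y_0)$ is not $r_s(\#_n Y_0)$ (which is $r_0(Y_0)$ itself) and there is no reason it equals any $\Gamma_{-\#_n Y_0}(k)$; moreover $SO(3)^{n-1}$ violates the hypothesis $H_*(\,\cdot\,;\Z)\leq 1$ of \cref{perfect} once $n\geq 3$ (e.g.\ $H_3(SO(3)^2;\Z)$ has rank $2$ by K\"unneth), and establishing $l^s_Y=1$ for that stratum is exactly the kind of count your own step (iii) flags as delicate. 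Your treatment of the case $n=1$ is correct and matches the paper, but for $n>1$ the fix is not to do more work on the $SO(3)^{n-1}$-stratum; it is to recognise that $\rho$ is supported on one summand, after which the corollary is a direct citation of the computations in \cref{Examples} together with \cref{perfect}.
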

\begin{proof}The following calculations have been checked in \cref{Examples}:
\begin{itemize}
\item If $k$ is odd, then $l^s_{ -\Sigma(2,3,6k-1)} =1$, $\cs_Y\left(\rho_{(1,2,5k-1)} \#\theta \# \cdots \# \theta \right) = \frac{1}{24(6k-1)} = r_s( -\Sigma(2,3,6k-1))$ and $\cs^{-1}\left(\frac{1}{24(6k-1)} \right)$ is the one point. 
\item If $k$ is even, then $l^s_{ -\Sigma(2,3,6k-1)} =1$, $\cs_Y\left(\rho_{(1,2, k)} \# \theta \cdots \# \theta \right) = \frac{1}{24(6k-1)} = r_s(-\Sigma(2,3,6k-1))$ and $\cs^{-1}\left(\frac{1}{24(6k-1)} \right)$ is the one point. 
\item  $l^s_{ -\Sigma(2,3,5,7)} =1$, $\cs_Y\left(\rho_{(1,1, 2,4)} \# \theta \# \cdots \# \theta\right) = \frac{1}{840} = r_s(- \Sigma(2,3,5,7))$ and $\cs^{-1}_Y\left(\frac{1}{840} \right) \cap \wt{R}^*(Y) \cong S^2$. Off course, $S^2$ has a perfect Morse function. 
\end{itemize}
Therefore, in each case, we can apply \cref{perfect}. \qed
\end{proof}
In the case of $Y = -\Sigma(2,3,5,7)$, the $SU(2)$-representations corresponding to $(1,1,2,4)$ are parametrized by $S^2$. Therefore, one can prove \cref{emb5}.

\begin{thm} [\cref{emb5}]Let $X$ be a closed definite $4$-manifold $X$ containing $\dis \Sigma(2,3,5,7)$ as a submanifold. 
Then there is an $S^2$-component $C$ in $R^*(\dis \Sigma(2,3,5,7))$ such that all elements in $C$ can be extended to $X$. In particular, there exists an uncountable family of irreducible $SU(2)$-representations of $\pi_1(X)$. 
\end{thm}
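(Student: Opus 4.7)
The plan is to apply the extendability criterion of \cref{perfect} to $Y = \#_n(-\Sigma(2,3,5,7))$ at the critical Chern-Simons level $1/840$. First I would reduce to the negative definite case: if the given $X$ is positive definite, I replace it by $-X$. Since $R^*(\cdot)$ and $\pi_1(\cdot)$ depend only on the underlying smooth manifold, extendability for $(X,Y)$ is equivalent to extendability for $(-X,Y')$, where $Y'$ denotes the same embedded submanifold regarded as $\#_n(-\Sigma(2,3,5,7))$.

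Next I would exhibit the $S^2$-component $C_n$, chosen independently of $X$. By \cref{2357}, $R^*(-\Sigma(2,3,5,7))$ contains an $S^2$-component $C$ (coming from the rotation numbers $(1,2,2,4)$) on which $\cs$ is identically equal to $1/840$, and this value coincides with both $\nu(-\Sigma(2,3,5,7))$ and $r_s(-\Sigma(2,3,5,7))$ for every $s\in[-\infty,0]$. Iterating the connected-sum decomposition
\[
R^*(Y_1\#Y_2) \cong R^*(Y_1)\times R^*(Y_2)\times SO(3)\;\amalg\;R^*(Y_1)\;\amalg\;R^*(Y_2)
\]
from the proof of \cref{conn sum of MB}, I would let $C_n$ be the $S^2$-component of $R^*(\#_n(-\Sigma(2,3,5,7)))=R^*(\#_n\Sigma(2,3,5,7))$ given by $\{[\rho\#\theta\#\cdots\#\theta]\mid [\rho]\in C\}$. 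Each such representation is irreducible because its restriction to the first summand already is.

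To apply \cref{perfect}, first item, at $s=0$ I must verify four hypotheses: (i) $r_0(Y)=1/840<\infty$, which follows from $r_0(-\Sigma(2,3,5,7))=1/840$ together with the connected-sum inequality \eqref{conn sum formula} of \cref{main NST}; (ii) $l^0_Y=1$, as noted at the end of \cref{2357}, since $1/840$ is attained by $\cs_Y$ only on $C_n$; (iii) $\cs_Y$ is Morse-Bott at the level $1/840$, by \cref{conn sum of MB} applied to the Morse-Bott Chern-Simons functionals of the Seifert summands (\cite{FS90}); and (iv) $\cs_Y^{-1}(1/840)\cap R^*(Y)=C_n\cong S^2$ has integral homology of rank at most one in each degree and admits the height function as a perfect Morse function. \cref{perfect} then guarantees that every element of $C_n$ extends to an $SU(2)$-representation of $\pi_1(X)$.

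Uncountability follows since $C_n\cong S^2$ is uncountable, distinct points of $C_n$ remain distinct as representations of $\pi_1(Y)$, and hence their extensions are distinct representations of $\pi_1(X)$; irreducibility of each extension is automatic from irreducibility of its restriction to $\pi_1(Y)$. The main technical point will be the Morse-Bott verification (iii) combined with the identification of $C_n$ as a single component of $\cs_Y^{-1}(1/840)\cap R^*(Y)$: one must confirm, via the Mayer-Vietoris computation of $H^1_{a_\#}$ in the proof of \cref{conn sum of MB}, that the $S^2$-parameter from the first summand is transferred unchanged to the iterated connected sum and that no additional critical submanifolds at the same Chern-Simons value appear.
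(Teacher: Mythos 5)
Your proposal follows the paper's own route exactly: the paper deduces \cref{emb5} from \cref{perfect} applied to $Y=\#_n(-\Sigma(2,3,5,7))$ at the level $\frac{1}{840}$, using precisely the facts you list ($r_s(Y)=\nu(Y)=\frac{1}{840}$ via the connected sum formula, $l^s_Y=1$, the Morse--Bott property from \cref{conn sum of MB}, and the $S^2$-component coming from the rotation numbers $(1,2,2,4)$, which carries a perfect Morse function). Your only additions --- the explicit reduction of a positive definite $X$ to $-X$ and the explicit description of $C_n$ as $\{\rho\#\theta\#\cdots\#\theta\}$ --- are elaborations of steps the paper leaves implicit, so the argument is essentially identical to the paper's.
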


On the other hand, there are several homology $3$-spheres $Y$ such that no irreducible representation of $Y$ is extendable for $(S^4, Y)$.
\begin{ex}
Let $Y$ be an oriented homology $3$-sphere embedded into $S^4$. 
Then no irreducible representation is extendable for $(S^4, Y)$. For example, if $p$ is odd and $k$ is a positive integer, it is shown in \cite{CH81} that $\Sigma(p, pk+1, pk+2)$ can be embedded in to $S^4$. Thus every irreducible $SU(2)$-representation of $\Sigma(p, pk+1, pk+2)$ is not extendable for $(S^4, Y)$.
\end{ex}

 \subsubsection{Fixed point theorems}
 We first prove \cref{emb6}.
\begin{thm}[\cref{emb6}]
Let $Y$ be an oriented homology $3$-sphere and let $h$ be an orientation preserving self-diffemorphism of $Y$.
\begin{enumerate}
\item 
 If $r_s(Y)<\infty$ and $l^s_Y<\infty$ for some $s \in [-\infty, 0]$, then there exists a positive number $l \leq l^s_Y$ such that 
\[
(h^* )^l \colon R^*(Y) \to R^*(Y)
\]
 has a fixed point. 
 \item  If $\Gamma_{-Y}(k)<\infty$ and $l^k_Y<\infty$ for some $k \in \Z_{>0}$, then there exists a positive number $l \leq l^k_Y$ such that 
\[
(h^* )^l \colon R^*(Y) \to R^*(Y)
\]
 has a fixed point. 
 \end{enumerate}
\end{thm}

{\em Proof of \cref{emb6}. } We only prove the first one since the second one is similar. We apply \cref{emb1} for the mapping torus $X_h(Y)$ of $h\colon Y \to Y$. Then we obtain the inclusion 
\[
r_s(Y) \in \bigcup_{1\leq j \leq l_Y^s} \im\cs_{X_h(Y),[Y]}^j .
\]
Therefore, there exists some $l \in \{1, \cdots , l^s_Y\}$ such that $R^*(X_h(Y)_{l ,[Y]} )$ is non-empty. Note that $ X_h(Y)_{l ,[Y]}$ is diffeomorphic to $X_{h^l}(Y)$ so we can identify $R^*(X_h(Y)_{l ,[Y]} )$ with $R^*(X_{h^l}(Y))$. Since  $R^*(X_h(Y)_{l ,[Y]} )$ is non-empty, we obtain an element of $R^*(X_{h^l}(Y))$. This gives a fixed point of $(h^l)^*$.
 \qed
 \begin{cor}
 Fix a Seifert homology $3$-sphere $\Sigma(a_1, \cdots, a_n)$ with $R(a_1, \cdots , a_n)>0$ and $\Lambda^*_{-\Sigma(a_1, \cdots, a_n)} \cap \Z = \emptyset$. Let $Y$ be an oriented homology $3$-sphere such that
  \begin{align}\label{00}
 \min \set { a+b | a\in \Lambda^*_{-\Sigma(a_1, \cdots, a_n)}  \cup \Z  , b \in \Lambda^*_Y, a+b \geq 0}  > \frac{1}{4a_1\cdots  a_n}
 \end{align} 
 and 
 \begin{align}\label{-1}
  \Lambda^*_Y \cap \Z = \emptyset. 
 \end{align}
 Then, for any orientation preserving diffeomorphism $h$ on $-\Sigma(a_1, \cdots, a_n)\# Y$, there exists some $l\in \Z_{>0}$ such that \[
  (h^l)^* \: R^*(-\Sigma(a_1, \cdots, a_n)\# Y) \to R^*(-\Sigma(a_1, \cdots, a_n)\# Y)
\]
   has a fixed point. 
 \end{cor}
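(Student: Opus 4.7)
The plan is to apply \cref{emb6}(1) to $Z := -\Sigma(a_1,\ldots,a_n) \# Y$ with the given diffeomorphism $h$, which requires verifying $r_0(Z) < \infty$ together with $l^0_Z < \infty$. The assumption $R(a_1,\ldots,a_n) > 0$, via the implication recalled just after \cref{emb4}, gives $h(\Sigma(a_1,\ldots,a_n)) > 0$ and thus $h(-\Sigma) < 0$. By the sufficient condition for finiteness recalled in \cref{Preliminaries}, this forces $r_s(-\Sigma) < \infty$ for every $s \in [-\infty,0]$. Moreover, the Fintushel--Stern formula \eqref{cs_Yof Seifert} confines $r_0(-\Sigma)$ to the spectrum $\Lambda^*_{-\Sigma} \subset \tfrac{1}{4a_1\cdots a_n}\Z/\Z$; together with the explicit computations in \cref{Brieskorn} and \cref{2357} one has $r_0(-\Sigma) \leq 1/(a_1\cdots a_n)$ in the relevant range.

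The central step is to prove $r_0(Z) = r_0(-\Sigma)$. The inequality $r_0(Z) \geq r_0(-\Sigma)$ follows from the connected-sum formula \eqref{conn sum formula} (interpreting $r_0(Y) = \infty$ trivially if undefined). For the reverse inequality I would exploit the decomposition $R^*(Z) = R^*(-\Sigma) \amalg R^*(Y) \amalg R^*(-\Sigma)\times R^*(Y)\times SO(3)$ from the proof of \cref{conn sum of MB}, combined with the additivity $\cs_Z(a_1 \# a_2) = \cs_{-\Sigma}(a_1) + \cs_Y(a_2)$. The separation hypothesis (together with the identification $\Lambda^*_{-Y} = -\Lambda^*_Y \bmod 1$) excludes every value in $\Lambda^*_Y$ and every mixed value $a+b$ from the window $(0, 1/(a_1\cdots a_n)]$, so $\Lambda^*_Z$ agrees with $\Lambda^*_{-\Sigma}$ on this window. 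The filtered Floer cocycle $\theta_Z^{[0,r]}$ in this range is then computed purely by the $-\Sigma$-summand and becomes non-zero precisely at $r = r_0(-\Sigma)$.

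For $l^0_Z < \infty$, I would invoke \cref{finiteness}: $\cs_{-\Sigma}$ is Morse--Bott, hence $l_{-\Sigma, r_0(-\Sigma), 1} < \infty$. Because the previous step isolates the critical set of $\cs_Z$ at the level $r_0(Z) = r_0(-\Sigma)$ to a disjoint union of components of the form (Seifert critical submanifold)$\times \{\theta_Y\}$, which inherit the Morse--Bott property, the connected-sum inequality \cref{conn sum of lys} yields $l^0_Z < \infty$. \cref{emb6}(1) then produces a positive integer $l \leq l^0_Z$ such that $(h^l)^*\colon R^*(Z) \to R^*(Z)$ has a fixed point.

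The main obstacle is the upper bound $r_0(Z) \leq r_0(-\Sigma)$ in the central step: one must argue inside the filtered instanton chain complex $CI^1_{[0,r]}(Z)$ for $r$ slightly above $r_0(-\Sigma)$ and verify that the differentials do not cancel the cocycle class originating from the $-\Sigma$-summand. The separation hypothesis is exactly what rules out such cancellations, since no $Y$-side or mixed critical point appears in the relevant degree window to provide an offsetting boundary.
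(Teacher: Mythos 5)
Your overall strategy coincides with the paper's: reduce to \cref{emb6}(1) for $Z=-\Sigma(a_1,\dots,a_n)\# Y$ by establishing $r_0(Z)<\infty$ and $l^0_Z<\infty$, the latter via Morse--Bott-ness of $\cs_Z$ at the level $r_0(Z)$ together with \cref{finiteness}. But there is a genuine gap at precisely the step you flag as ``the main obstacle'': the finiteness of $r_0(Z)$, i.e.\ the bound $r_0(Z)\leq 1/(a_1\cdots a_n)$, is deferred to an un-executed argument inside $CI^1_{[0,r]}(Z)$ about differentials not cancelling the cocycle contributed by the $-\Sigma$ summand. That chain-level analysis is never carried out, and it is the entire content of the step. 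The paper avoids it: since $Y\#(-Y)$ bounds a homology ball, $Z\#(-Y)$ is homology cobordant to $-\Sigma(a_1,\dots,a_n)$, so the connected sum inequality \eqref{conn sum formula} applied in this \emph{reverse} direction gives
\[
\tfrac{1}{a_1\cdots a_n}=r_0(-\Sigma(a_1,\dots,a_n))\geq \min\{r_0(Z),\,r_0(-Y)\},
\]
and since $r_0(-Y)\geq \nu(-Y)>1/(a_1\cdots a_n)$ by hypothesis, this forces $r_0(Z)\leq 1/(a_1\cdots a_n)<\infty$ with no new analysis. You never apply the connected sum formula to the decomposition $Z\#(-Y)$ and never use the $\nu(-Y)$ hypothesis in this way, so finiteness of $r_0(Z)$ is not actually established in your argument.

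Two further inaccuracies. Your claim that the separation hypothesis ``excludes every value in $\Lambda^*_Y$ \dots from the window $(0,1/(a_1\cdots a_n)]$'' does not follow: since $\Lambda^*_{-Y}=-\Lambda^*_Y \bmod 1$, the condition $\nu(-Y)>1/(a_1\cdots a_n)$ keeps $\Lambda^*_Y$ away from $[1-1/(a_1\cdots a_n),1)$, not from $(0,1/(a_1\cdots a_n)]$, so your identification of $\Lambda^*_Z$ with $\Lambda^*_{-\Sigma}$ on that window is not justified as stated. Also, the asserted lower bound $r_0(Z)\geq r_0(-\Sigma)$ ``from the connected-sum formula'' would require knowing $r_0(Y)\geq r_0(-\Sigma)$, which is not among the hypotheses; fortunately this lower bound is not needed, since \cref{emb6} only requires finiteness of $r_0(Z)$ and $l^0_Z$. (One point in your favour: you correctly invoke \cref{emb6}, whereas the paper's printed proof cites \cref{emb4}, an evident typo.)
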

 \begin{proof}It is proved in \cite[Corollary 1.4]{NST19} that for a Seifert homology 3-sphere $\Sigma(a_1, \cdots, a_n)$ satisfying $R(a_1, \cdots , a_n)>0$, it is proved that $r_0(-\Sigma(a_1, \cdots, a_n)) = \frac{1}{4 a_1\cdots  a_n}$. 
 In particular, $\frac{1}{4 a_1\cdots  a_n} \in \Lambda^*_{-\Sigma(a_1, \cdots, a_n)}$.
 Set $M:=-\Sigma(a_1, \cdots, a_n)\# Y$.
 First, we use the connected sum formula for $r_0$:
 \begin{align}\label{11}
 \frac{1}{4a_1\cdots  a_n}=r_0(-\Sigma(a_1, \cdots, a_n) ) \geq \min \{ r_0(M) , r_0(-Y)\}.
 \end{align}
 Since $r_0$ is contained in the set of critical values of irreducible $SU(2)$-flat connections, 
 \begin{align}\label{22}
 r_0(M)  \in  \set { a+b | a\in \Lambda_{-\Sigma(a_1, \cdots, a_n)}   , b \in \Lambda_Y, a+b > 0   } . 
 \end{align}
 Here we used $\cs_{Y_1\# Y_2} ( \rho_1 \# \rho_2 ) = \cs_{Y_1} ( \rho_1 )+ \cs_{Y_2} ( \rho_2 )$ for oriented 3-manifolds $Y_1$ and $Y_2$ and $SU(2)$-representations $\rho_1$ and $\rho_2$. 
 On the other hand,  by the formula \eqref{cs_Yof Seifert} for critical values of $\cs_{-\Sigma(a_1, \cdots, a_n)}$ and $\Lambda^*_{-\Sigma(a_1, \cdots, a_n)} \cap \Z = \emptyset$, we see
 \begin{align}\label{33}
 \min \set { a | a\in \Lambda^*_{-\Sigma(a_1, \cdots, a_n)} \cap \R_{\geq 0} } =  \frac{1}{4a_1\cdots  a_n}. 
 \end{align}
 If $r_0(M)= a+b \leq  \frac{1}{4a_1\cdots  a_n} $ (we used \eqref{11} and \eqref{22}) for $a\in \Lambda^*_{-\Sigma(a_1, \cdots, a_n)}  \cup \Z$ , $b \in \Lambda^*_Y \cup \Z$, then $b=0$ by our assumption \eqref{00}. 
Then \eqref{33} implies $a=   \frac{1}{4a_1\cdots  a_n}$. 
  Moreover, by the condition \eqref{-1}, we have
 \[
 \cs_{M}^{-1} ( \frac{1}{4a_1\cdots  a_n}) =  \{ \rho \# \theta_{-Y} \in \wt{R}(M) | \cs_{-\Sigma(a_1, \cdots, a_n)} ( \rho) =  \frac{1}{4a_1\cdots  a_n}, \rho \in \wt{R}(-\Sigma(a_1, \cdots, a_n))  \}, 
 \]
 where $\theta_{-Y}$ is the product connection on $-Y$.
 Thus the Chern-Simons functional of $M$ is Morse-Bott at the level $r_0(M)=  \frac{1}{4a_1\cdots  a_n}$. By using \cref{finiteness}, we conclude $l^0_M$ is finite. One can then apply \cref{emb6} to complete the proof. 
 \qed
 \end{proof}
 At the end of this section, we prove \cref{emb7}. 
 \begin{thm}[\cref{emb7}]
 For any orientation preserving diffeomorphism $h$ on $\Sigma (2,3,5,7)$, the fixed point set of 
\[
h^* : R^*( \Sigma (2,3,5,7)) \to R^*(  \Sigma (2,3,5,7))
\]
is uncountable. 
 \end{thm}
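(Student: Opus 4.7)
The plan is to apply Theorem \ref{emb5} to mapping tori. Set $Y = \#_n \Sigma(2,3,5,7)$, and for any orientation-preserving diffeomorphism $h : Y \to Y$ form the mapping torus $X_h(Y)$. Since $Y$ is a homology $3$-sphere, the Wang sequence gives $b_2(X_h(Y)) = 0$, so $X_h(Y)$ is a closed oriented $4$-manifold whose intersection form is trivially (negative) definite. Moreover, the fiber inclusion $Y \hookrightarrow X_h(Y)$ realizes $Y$ as a codimension-$1$ submanifold representing a generator of $H_3(X_h(Y); \Z)$. Thus $X_h(Y)$ is a closed definite $4$-manifold containing $Y$, i.e., it fits the hypothesis of Theorem \ref{emb5}.

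The key point is that Theorem \ref{emb5} produces a single $S^2$-component $C_n \subset R^*(Y)$ that works uniformly for every closed definite $4$-manifold containing $Y$. In particular, taking $X = X_h(Y)$, every $\rho \in C_n$ extends to a representation $\widetilde{\rho} \colon \pi_1(X_h(Y)) \to SU(2)$ with $\widetilde{\rho}|_{\pi_1(Y)} = \rho$. Next I would translate this extendability into $h$-invariance: since $\pi_1(X_h(Y)) \cong \pi_1(Y) \rtimes_{h_*} \Z$, the loop $t$ around the $S^1$-factor satisfies $t \gamma t^{-1} = h_*(\gamma)$ for every $\gamma \in \pi_1(Y)$. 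Setting $M := \widetilde{\rho}(t) \in SU(2)$ yields $\rho(h_*(\gamma)) = M \rho(\gamma) M^{-1}$, i.e., $h^*[\rho] = [\rho]$ in $R^*(Y)$. Hence $C_n \subset \mathrm{Fix}(h^*)$, and since $h$ was arbitrary, the same $C_n$ lies in the fixed point set of every $h^*$.

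The hard part is entirely absorbed into Theorem \ref{emb5}, whose content is precisely the uniform-in-$X$ extendability of $C_n$ via the Floer-theoretic extendability criterion proved in Section \ref{Extendability}. Once that is granted, the argument above is a direct two-to-one-correspondence computation in the spirit of \cref{mapping torus} and \cref{emb6}: extendability across any cobordism realizing $h$ automatically forces $h$-invariance, and a mapping torus is such a cobordism. The uncountability statement in the second formulation of \cref{emb7} then follows because $C_n \cong S^2$ is uncountable.
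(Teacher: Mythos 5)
Your proposal is correct and follows essentially the same route as the paper: apply \cref{emb5} to the mapping torus $X_h(Y)$ (a homology $S^1\times S^3$, hence trivially definite) and convert extendability of the $S^2$-component $C_n$ into $h^*$-invariance. The only cosmetic difference is that you derive the fixed-point conclusion directly from the semidirect-product presentation of $\pi_1(X_h(Y))$, whereas the paper cites the two-to-one correspondence $i^*\colon R^*(X_h(Y))\to R^h(Y)$ of Ruberman--Saveliev; these are the same fact.
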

 {\em Proof of \cref{emb7}.}
 \cref{emb5} implies that, for any orientation preserving diffeomorphism $h$, $X_h(Y)$ has an uncountable family of irreducible $SU(2)$-representations. Thus \eqref{two to one} implies the conclusion.  \qed

\bibliography{tex}
\bibliographystyle{hplain}

\noindent\textsc{
RIKEN, the Institute of Physical and Chemical Research, 
\\
2-1 Hirosawa Wako,
Saitama 351-0198, 
Japan}
\\ \\
\noindent{E-mail:\ masaki.taniguchi@riken.jp}

\end{document}